\newtheorem{theorem}{Theorem}[section]
\newtheorem{thm}{Theorem}
\theoremstyle{definition}
\newtheorem{definition}[theorem]{Definition}
\newtheorem{rmk}{Remark}
\newcommand{\be}{\begin{equation}}
\newcommand{\ee}{\end{equation}}
\newcommand{\bsubeq}{\begin{subequations}}
	\newcommand{\esubeq}{\end{subequations}}
\renewcommand{\div}{\text{div}}
\newcommand{\ds}{\displaystyle}
\newcommand{\calL}{{\mathcal{L}}}
\newcommand{\calN}{{\mathcal{N}}}
\newcommand{\calF}{{\mathcal{F}}}
\newcommand{\calB}{{\mathcal{B}}}
\newcommand{\calD}{{\mathcal{D}}}
\newcommand{\calA}{{\mathcal{A}}}
\newcommand{\calC}{{\mathcal{C}}}
\newcommand{\calM}{{\mathcal{M}}}
\newcommand{\calV}{{\mathcal{V}}}
\newcommand{\BA}{\mathbb{A}}
\newcommand{\BR}{\mathbb{R}}
\newcommand{\BC}{\mathbb{C}}
\newcommand{\wti}{\widetilde}
\newcommand{\what}{\widehat}
\newcommand{\bpm}{\begin{pmatrix}}
	\newcommand{\epm}{\end{pmatrix}}
\newcommand{\bbm}{\begin{bmatrix}}
	\newcommand{\ebm}{\end{bmatrix}}
\numberwithin{equation}{section}
\numberwithin{thm}{section}
\numberwithin{rmk}{section}
\newcommand\rfrac[2]{{}^{#1}\!/_{#2}}
\newcommand{\lso}{\bs{L}^{q}_{\sigma}(\Omega)}
\newcommand{\lqo}{L^{q}(\Omega)}
\newcommand{\lo}[1]{\bs{L}^{#1}_{\sigma}(\Omega)}
\newcommand{\Bso}{\bs{B}^{2-\rfrac{2}{p}}_{q,p}(\Omega)}
\newcommand{\BsO}{B^{2-\rfrac{2}{p}}_{q,p}(\Omega)}
\newcommand{\Bt}{\widetilde{B}^{2-\rfrac{2}{p}}_{q,p}}
\newcommand{\Bto}{\widetilde{\bs{B}}^{2-\rfrac{2}{p}}_{q,p}(\Omega)}
\newcommand{\xbttpq}{\wti{\bs{X}}^T_{p,q}}
\newcommand{\xttpqs}{\bs{\wti{X}}^T_{p,q,\sigma}}
\newcommand{\xipqs}{\bs{X}^{\infty}_{p,q,\sigma}}
\newcommand{\yttpq}{\wti{Y}^T_{p,q}}
\newcommand{\xipq}{X^{\infty}_{p,q}}
\newcommand{\xbipq}{\bs{X}^{\infty}_{p,q}}
\newcommand{\Fs}{F_{\sigma}}
\newcommand{\Fbs}{\bs{F}_{\sigma}}
\newcommand{\lqaq}{\big( \lso, \calD(A_q) \big)_{1-\frac{1}{p},p}}
\newcommand{\norm}[1]{\left\lVert#1\right\rVert}
\newcommand{\abs}[1]{\left\lvert#1\right\rvert}
\newcommand{\lplqs}{L^p \big( 0,\infty; \lso \big)}
\newcommand{\ip}[2]{\left\langle #1, #2 \right\rangle}
\newcommand{\bs}[1]{\boldsymbol{#1}}
\newcommand{\bls}{\bs{L}^q_{\sigma}(\Omega)}
\newcommand{\SqsO}{\bs{W}^q_{\sigma}(\Omega)}
\newcommand{\VqpO}{\bs{V}^{q,p}(\Omega)}
\newcommand{\VbqpO}{\bs{V}_b^{q,p}(\Omega)}
\newcommand{\WqsuN}{\left( \bs{W}^q_{\sigma} \right)^u_N}
\newcommand{\Wqs}{\bs{W}^q_{\sigma}}
\newcommand{\WqsO}{\bs{W}^q_{\sigma}(\Omega)}
\newcommand{\Wqss}{\left( \bs{W}^q_{\sigma} \right)^*}
\newcommand{\lsoo}{\bs{L}^{q'}_{\sigma}(\Omega)}
\newcommand{\lqoo}{L^{q'}(\Omega)}
\newcommand{\nin}{\noindent}
\newcommand{\thistheoremname}{}
\newtheorem*{genericthm*}{\thistheoremname}
\newenvironment{namedthm*}[1]
{\renewcommand{\thistheoremname}{#1}%
	\begin{genericthm*}}
	{\end{genericthm*}}
\newcommand{\lplq}{L^p \big( 0,\infty; L^q(\Omega) \big)}
\begin{document}

	
   

  	\title{Finite dimensional boundary uniform stabilization of the Boussinesq system in Besov spaces by critical use of Carleman estimate-based inverse theory \thanks{The research of I. L. and R. T. was partially supported by the National Science Foundation under Grant DMS-1713506. The research of B. P. was supported by the ERC advanced grant 668998 (OCLOC) under the EU's H2020 research program.}}
  
  \author{Irena Lasiecka \thanks{Department of Mathematical Sciences, The University of Memphis, Memphis, TN 38152, USA; IBS, Polish Academy of Sciences	Warsaw, Poland}
  \and Buddhika Priyasad \thanks{Institute for Mathematics and Scientific Computing, University of Graz, Heinrichstrasse 36, A-8010 Graz, Austria. (b.sembukutti-liyanage@uni-graz.at).}
\and Roberto Triggiani \thanks{Department of Mathematical Sciences, The University of Memphis, Memphis, TN 38152, USA.}}
\maketitle	
  \abstract{We consider the d-dimensional Boussinesq system defined on a sufficiently smooth bounded domain, and subject to a pair $\{ v, \bs{u} \}$ of controls localized on $\{ \wti{\Gamma}, \omega \}$. Here, $v$ is a scalar Dirichlet boundary control for the thermal equation, acting on an arbitrary small connected portion $\wti{\Gamma}$ of the boundary $\Gamma = \partial \Omega$. Instead, $\bs{u}$ is a $d$-dimensional internal control for the fluid equation acting on an arbitrary small collar $\omega$ supported by $\wti{\Gamma}$ (Fig 1). The initial conditions for both fluid and heat equations are taken of low regularity. We then seek to uniformly stabilize such Boussinesq system in the vicinity of an unstable equilibrium pair, in the critical setting of correspondingly low regularity spaces, by means of an explicitly constructed, finite dimensional feedback control pair $\{ v, \bs{u} \}$ localized on $\{ \wti{\Gamma}, \omega \}$. In addition, they will be minimal in number, and of reduced dimension: more precisely, $\bs{u}$ will be of dimension $(d-1)$, to include necessarily its $d$\textsuperscript{th} component, and $v$ will be of dimension $1$. The resulting space of well-posedness and stabilization is a suitable, tight Besov space for the fluid velocity component (close to $\bs{L}^3(\Omega$) for $ d = 3 $) and a corresponding Besov space for the thermal component, $ q > d $. Unique continuation inverse theorems for suitably over determined adjoint static problems play a critical role in the constructive solution. Their proof rests on Carleman-type estimates, a topic pioneered by M. V. Klibanov since the early 80's, after the 1939- breakthrough publication \cite{Car}.}

\section{Introduction.}\label{Sec-1}
	
	\subsection{Controlled dynamic Boussinesq equations.}\label{Sec-1.1}
	
	\noindent In this paper, we consider the following Boussinesq approximation equations in a bounded connected region $\Omega$ in $\BR^d$ with sufficiently smooth boundary $\Gamma = \partial \Omega$. More specific requirements will be given below.  Let $Q \equiv (0,T) \times \Omega$ and $\Sigma \equiv (0,T) \times \partial \Omega $ where $T > 0$. Further, let $\omega$ be an arbitrary small open smooth subdomain of the region $\Omega$, $\omega \subset \Omega$, thus of positive measure which is a local collar supported by a corresponding connected arbitrary small portion $\wti{\Gamma}$ of the boundary $\Gamma = \partial \Omega$, Fig 1. 
	
	\begin{center}
		\begin{tikzpicture}[x=10pt,y=10pt,>=stealth, scale=.53]
		\draw[thick]
		(-15,5)
		.. controls (-20,-9) and (-5,-18) .. (15,-7)
		.. controls  (24,-1) and (15,13).. (6,6)
		.. controls (3,3) and (-3,3) .. (-6,6)
		.. controls (-9,9) and (-13,9) .. (-15,5);
		\draw[thick,shade,opacity=.5]
		(-15.8,-2)
		.. controls (-11,-4) and (-10,-7) .. (-10,-10)
		.. controls (-15,-7) and (-15.5,-4) .. (-15.8,-2);

		
		\draw (-9,0) node[scale = 1] {$\omega$};
		\draw (-14.5,-8.5) node[scale = 1] {$\wti{\Gamma}$};
		\draw (-15.8,-2) node {$\bullet$};
		\draw (-10,-10) node {$\bullet$};
		\draw (5,-2) node[scale = 1] {$\Omega$};
		\draw (15,-10) node[scale = 1] {$\Gamma$};
		\draw[->,line width = 0.8pt]  (-10,-1) -- (-13.5,-5);
		
		\draw (0,-15) node[scale=1] {Fig 1};
		\end{tikzpicture}
	\end{center}	
	Let $m$ denote the characteristic function of $\omega$: $ m(\omega) \equiv 1, \ m(\Omega / \omega) \equiv 0$.\\
		
	\noindent \textbf{Notation:} Vector-valued functions and corresponding function spaces will be boldfaced. Thus, for instance, for the vector valued ($d$-valued) velocity field or external force, we shall write say $\bs{y,f} \in \bs{L}^q(\Omega) $ rather than $ y,f \in (L^q(\Omega))^d$.\\
		
	\nin We consider the Boussinesq system under the action of a control pair $\{ v, \bs{u} \}$ localized on $\{ \wti{\Gamma}, \omega \}$. Here $v$ is a scalar Dirichlet boundary control for the thermal equation acting on $\wti{\Gamma}$, while $\bs{u}$ is a $d$-dimensional vector interior control acting as $m(x)\bs{u}(t,x)$ on $\omega$:
	\begin{subequations}\label{1.1}
		\begin{empheq}[left=\empheqlbrace]{align}
		\bs{y}_t - \nu \Delta \bs{y} + (\bs{y} \cdot \nabla) \bs{y} - \gamma (\theta - \bar{\theta}) \bs{e}_d+ \nabla \pi &= m(x)\bs{u}(t,x) + \bs{f}(x)   \text{ in } Q \label{1.1a}\\
		\theta_t - \kappa \Delta \theta + \bs{y} \cdot \nabla \theta &=  g(x) \text{ in } Q \label{1.1b}\\
		\text{div }\bs{y} &= 0   \text{ in } Q \label{1.1c}\\
		\bs{y} = 0, \ \theta &= v \text{ on } \Sigma \label{1.1d}\\
		\bs{y}(0,x) = \bs{y}_0, \quad \theta(0,x) & = \theta_0 \text{ on } \Omega. \label{1.1e}
		\end{empheq}
	\end{subequations}	
	
	\noindent In the Boussinesq approximation system, $\bs{y} = \{y_1, \dots, y_d\}$ represents the fluid velocity, $\theta$ the scalar temperature of the fluid, $\nu$ the kinematic viscosity coefficient, $\kappa$ the thermal conductivity. The scalar function $\pi$ is the unknown pressure. The term $\bs{e}_d$ denotes the vector $(0, \dots, 0, 1)$. Moreover $\ds \gamma = \rfrac{\bar{g}}{\bar{\theta}}$ where $\bar{g}$ is the acceleration due to gravity and $\bar{\theta}$ is the reference temperature. The $d$-vector valued function $\bs{f}(x)$ and scalar function $g(x)$ correspond to an external force acting on the Navier-Stokes equations and a heat source density acting on the heat equation, respectively. They are given along with the I.C.s $\bs{y}_0$ and $\theta_0$, which are assumed of low regularity. Note that $ \bs{y} \cdot \nabla \theta = \div (\theta \bs{y})$.\\
	
	\noindent The Boussinesq system  models heat transfer in a viscous incompressible heat conducting fluid. It consists of  the Navier-Stokes equations (in the vector velocity $\bs{y}$) \cite{Te:1979} coupled with the convection-diffusion equation (for the scalar temperature $\theta$). The external body force $\bs{f}(x)$ and the heat source density $g(x) $ may render the overall system unstable in the technical sense described below by \eqref{1.38}. The goal of the paper is to  exploit  the  localized  controls $\bs{u}$ on $\omega$ and $v$ on $\wti{\Gamma}$, sought to be finite dimensional and in  feedback form, in order to stabilize the overall system. Their minimal number will be equal to the maximal geometric multiplicity of the unstable eigenvalues in \eqref{1.38}. As an additional benefit of our investigation, the feedback fluid component of $ \bs{u} $ will be of reduced dimension $(d-1)$ rather than $d$, to include necessarily the $d$\textsuperscript{th} component of $\bs{u}$, while the feedback heat component of $v$ will be 1-dimensional. This is a consequence of the Unique Continuation Inverse Theory type of property expressed by Theorem \ref{Thm-B.1}, Appendix \ref{app-B} for the adjoint static problem. Its proof \cite{TW.1} after \cite{RT:2009} rests on Carleman-type estimates, a topic consistently pursued by M. V. Klibanov since his pioneering work in the early 80's, after the breakthrough publication \cite{Car}. This theme will be further explored in Section \ref{Sec-2.4}. For now, we note that a Unique Continuation Property is not only a critical theoretical tool. It is also fundamental in computations, where it is realized numerically, as in recent work of M. V. Klibanov and his team (See papers \cite{K.4} and \cite{K.5} and references cited therein) on coefficient inverse problems. Here use is made of a globally convergent numerical method, referred as convexification principle, to solve numerically boundary value problems with Cauchy boundary data for coupled systems of quasilinear elliptic equations. To this end, the convexification constructs a weighted globally strict convex Tikhonov-like functional, which involves a Carleman-type weight. As far as practical applications of the Boussinesq model are concerned, one may consider the situation of controlling the temperature and humidity in a bounded  restricted environment - see \cite{BH:2013}, \cite{BHH:2016} for an eloquent description  of  the physical phenomenon. Due to the physical  significance of  the Boussinesq system, the problem of its stabilization has been considered in the literature - with both localized and boundary controls - following of course prior developments concerning the Navier-Stokes model alone. See subsection \ref{Sec-2.4} for a review of the literature.\\
	
	\nin \textbf{Motivation: why studying uniform stabilization of the Boussinesq problem (\ref{1.1}) in the Besov functional setting of the present paper?}\\
	
	\noindent In short: Stimulated by recent research achievements \cite{LPT.1}, \cite{LPT.3} on the uniform stabilization of the Navier-Stokes equations - to be elaborated below in Section \ref{Sec-2.4} - the present paper sets the stage as a preliminary, needed step toward the authors' final goal of solving the Boussinesq uniform stabilization problem with a set of three localized controls $\{ v_h, \bs{v}_f, \bs{u} \}$ all \uline{finite dimensional}, in \uline{feedback} form and of minimal number: a scalar control $v_h$ acting on heat $\theta$-component as in interior control localized on the small set $\omega$, and a pair $\{ \bs{v}_f, \bs{u} \}$ of controls acting on the Navier-Stokes fluid $\bs{y}$-component, as in \cite{LPT.3}: that is, $\bs{v}_f$ as a boundary tangential control acting on $\wti{\Gamma}$ and $\bs{u}$ as an interior tangential-like control acting on $\omega$. See Fig 1. To obtain finite dimensionality of $\bs{v}_f$ for $d = 3$, it is critical to seek uniform stabilization on the tight Besov setting of the present paper, which for $d = 3$ is ``close" to the space $\bs{L}^3$, well-known  \cite{ESS:1991}, \cite{JS:2013}, \cite{RS:2009} to be a critical space for the well-posedness of the \textit{uncontrolled} $3$-d Navier-Stokes equation in $\BR^3$. Thus, the present paper serves also as a testing ground toward such final goal.
	
	\subsection{Stationary Boussinesq equations.}
	
	\noindent Our starting point is the following result.
	
	\begin{thm} \label{Thm-1.1}
		Consider the following steady-state Boussinesq system in $\Omega$
		\begin{subequations}\label{1.2}
			\begin{empheq}[left=\empheqlbrace]{align}
			- \nu \Delta \bs{y}_e + (\bs{y}_e \cdot \nabla) \bs{y}_e - \gamma (\theta_e - \bar{\theta}) \bs{e}_d+ \nabla \pi_e &= \bs{f}(x)   &\text{in } \Omega \label{1.2a}\\
			-\kappa \Delta \theta_e + \bs{y}_e \cdot \nabla \theta_e &= g(x) &\text{in } \Omega \label{1.2b}\\
			\text{div } \bs{y}_e &= 0   &\text{in } \Omega \label{1.2c}\\
			\bs{y}_e = 0, \ \theta_e &= 0 &\text{on } \partial \Omega. \label{1.2d}
			\end{empheq}
		\end{subequations}
		Let $1 < q < \infty$. For any $\bs{f},g \in \bs{L}^q(\Omega), L^q(\Omega)$, there exists a solution (generally not unique) $(\bs{y}_e,\theta_e, \pi_e) \in (\bs{W}^{2,q}(\Omega) \cap \bs{W}^{1,q}_{0}(\Omega)) \times (W^{2,q}(\Omega) \cap W^{1,q}_{0}(\Omega)) \times (W^{1,q}(\Omega)/\mathbb{R})$.
	\end{thm}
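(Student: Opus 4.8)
The plan is to solve the steady-state system \eqref{1.2} by a fixed-point argument based on the well-posedness of the linearized (Stokes-plus-heat) problem and the compactness of the nonlinear terms, combined with an a priori bound obtained via an energy estimate. First I would set up the linear map: given $(\bs{w},\phi) \in (\bs{W}^{2,q}\cap \bs{W}^{1,q}_0) \times (W^{2,q}\cap W^{1,q}_0)$ (or, in a first pass, in a lower-regularity ball), solve the decoupled linear system
\begin{subequations}
\begin{align}
-\nu \Delta \bs{y} + \nabla \pi &= \bs{f} - (\bs{w}\cdot\nabla)\bs{w} + \gamma(\phi - \bar\theta)\bs{e}_d \text{ in }\Omega, \quad \div\,\bs{y}=0,\quad \bs{y}|_{\partial\Omega}=0,\\
-\kappa \Delta \theta &= g - \bs{w}\cdot\nabla\phi \text{ in }\Omega, \quad \theta|_{\partial\Omega}=0,
\end{align}
\end{subequations}
which defines the map $\calF:(\bs{w},\phi)\mapsto(\bs{y},\theta)$. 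The $L^q$-theory for the stationary Stokes operator with Dirichlet boundary conditions (e.g.\ Cattabriga / Amrouche--Girault) and the classical $L^q$-elliptic regularity for the Laplacian give $(\bs{y},\theta)\in(\bs{W}^{2,q}\cap\bs{W}^{1,q}_0)\times(W^{2,q}\cap W^{1,q}_0)$, provided the right-hand sides lie in $\bs{L}^q\times L^q$; the point is that for $1<q<\infty$ the Sobolev embedding $W^{1,q}\hookrightarrow L^{2q}$ (in $d\le 3$, or the appropriate analogue with a lower-order interpolation in general $d$, possibly after a bootstrap) makes the quadratic terms $(\bs{w}\cdot\nabla)\bs{w}$ and $\bs{w}\cdot\nabla\phi$ belong to $L^q$, so $\calF$ is well defined. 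Compactness of $\calF$ follows because the solution map gains two derivatives while the nonlinear substitution loses only one, so $\calF$ factors through a compact embedding.

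The second step is the a priori estimate needed to apply Schauder (or Leray--Schauder) fixed point: I would test \eqref{1.2a} with $\bs{y}_e$ and \eqref{1.2b} with $\theta_e$. Using $\div\,\bs{y}_e=0$ and $\bs{y}_e|_{\partial\Omega}=0$, the trilinear terms $((\bs{y}_e\cdot\nabla)\bs{y}_e,\bs{y}_e)$ and $(\bs{y}_e\cdot\nabla\theta_e,\theta_e)$ vanish, so
\begin{align*}
\nu\|\nabla\bs{y}_e\|_{L^2}^2 &= (\bs{f},\bs{y}_e) + \gamma(\theta_e-\bar\theta,\, (\bs{y}_e)_d), \qquad
\kappa\|\nabla\theta_e\|_{L^2}^2 = (g,\theta_e),
\end{align*}
and Poincaré plus Young's inequality yields a bound on $\|\bs{y}_e\|_{H^1}+\|\theta_e\|_{H^1}$ in terms of the data. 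One then bootstraps: with $\bs{y}_e,\theta_e\in H^1$ the nonlinear terms lie in some $L^r$, feeding elliptic regularity to raise integrability step by step until reaching $\bs{W}^{2,q}\times W^{2,q}$; in dimension $d\le 4$ this closes directly, and for larger $d$ one iterates the bootstrap finitely many times. This provides a ball in the lower-regularity space that $\calF$ maps into itself, giving a fixed point $(\bs{y}_e,\theta_e)$, which by the regularity of $\calF$ automatically lies in the asserted spaces; the pressure $\pi_e\in W^{1,q}(\Omega)/\BR$ is then recovered from the Stokes equation (de Rham / the standard pressure-reconstruction lemma).

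The main obstacle is dimension-uniformity of the nonlinear estimate: for large $d$ the embedding $W^{1,q}\hookrightarrow L^{2q}$ fails, so $(\bs{w}\cdot\nabla)\bs{w}$ need not be in $L^q$ in one shot, and the self-mapping/compactness of $\calF$ must be set up in an intermediate space (e.g.\ $\bs{W}^{1,q}$ with $q$ large enough, or an $L^s$-based space with $s$ chosen by Sobolev arithmetic) with a finite bootstrap to reach $\bs{W}^{2,q}$. One must also be careful that the a priori $H^1$-bound is genuinely nonlinearity-independent — which it is, precisely because the convective terms are skew-symmetric and drop out when tested against the solution itself — so no smallness of the data is required and non-uniqueness is expected (and is consistent with the statement). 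The linear solvability, compactness, and pressure recovery are all standard; the only real care is the Sobolev bookkeeping in the bootstrap, which I would phrase once in general $d$ and then invoke.
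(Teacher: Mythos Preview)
The paper does not actually prove Theorem~\ref{Thm-1.1}; immediately after the statement it simply cites the literature (Acevedo Tapia's thesis and the Acevedo--Amrouche--Conca papers for $q\neq 2$; Constantin--Foias, Foias--Temam, Santos da Rocha--Rojas-Medar, Villamizar-Roa et al., and Kim for the Hilbert setting). So there is no ``paper's own proof'' to compare against.

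Your outline is essentially the standard route taken in those references: a Leray--Schauder fixed-point argument built on the $L^q$-Stokes and $L^q$-Laplacian solvability, with the a~priori bound coming from the fact that the convective trilinear forms vanish when tested against the solution (so no smallness is needed), followed by a bootstrap from $H^1$ to $\bs{W}^{2,q}\times W^{2,q}$. That is exactly how the cited $L^q$-Boussinesq papers proceed, so your proposal is in line with the existing proofs the paper defers to. The one place to be a bit more careful than you are is the initial step of the bootstrap for general $1<q<\infty$ and general $d$: the $H^1$ a~priori bound gives $(\bs{y}_e\cdot\nabla)\bs{y}_e$ and $\bs{y}_e\cdot\nabla\theta_e$ only in $L^{r}$ with $r$ dictated by Sobolev arithmetic, and for small $q$ (say $q$ close to $1$) you also need the $L^q$-Stokes estimates in that range, which is precisely what the Amrouche--Girault / Cattabriga theory supplies. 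You flag this correctly, so there is no gap, just bookkeeping to fill in.
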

	\noindent See \cite{Ac}, \cite{AAC.1}, \cite{AAC.2} for $ q \ne 2$. In the Hilbert space setting, see \cite{CF:1980}, \cite{FT:1984}, \cite{S-R-R}, \cite{V-R-R}, \cite{Kim}.
	 
	\begin{rmk}\label{Rmk-1.1}
		\begin{enumerate}[(i)]
			\item Let the given external force $\bs{f}$ be conservative, $\bs{f} = \nabla \phi$ and the heat source density $g \equiv 0$. Then, $\bs{y}_e \equiv 0, \ \theta_e \equiv 0, \ \nabla \pi_e = \nabla f - \bar{g} \bs{e}_d$ is a solution of problem (\hyperref[1.2]{1.2a-d}). 
			\item In the case of equilibrium solution $\bs{y}_e$ for just the Navier-Stokes equation, it is well-known \cite{Lad:1969}, \cite{Li:1969}, \cite{Te:1979} that the stationary solution is unique when ``the data is small enough, or the viscosity is large enough" \cite[p 157; Chapt 2]{Te:1979} that is, if the ratio $\ds \rfrac{\norm{f}}{\nu_o^2}$ is smaller than some constant that depends only on $\Omega$ \cite[p 121]{FT:1984}. When non-uniqueness occurs, the stationary solutions depend on a finite number of parameters \cite[Theorem 2.1, p 121]{FT:1984} asymptotically, in the time dependent case.
			\item In this paper, we take one equilibrium solution $\{\bs{y}_e, \theta_e \}$, when non-unique and make the subsequent analysis related to such choice.
		\end{enumerate}	
	\end{rmk} 
	
	\subsection{A first quantitative description of the main goal of the present paper.}\label{Sec-1.3}
	
	The starting point of the present paper is the following: that under a given external force $\bs{f}(x)$ for the fluid equations, a given heat source $g(x)$ for the thermal equation, and given viscosity coefficient $\nu$ and thermal conductivity $\kappa$, the equilibrium solution $\{\bs{y}_e, \theta_e\}$ is unstable, in a quantitative sense to be made more precise in sub-section \ref{Sec-1.7}, specifically in \eqref{1.38}. This will mean that the free dynamics \underline{linear} operator $\mathbb{A}_q$ defined in \eqref{1.34} - which has compact resolvent, and is the  generator of a s.c. analytic semigroup in the appropriate functional setting (Theorem \ref{Thm-1.2}) -  has $N$ unstable eigenvalues.\\
	
	\noindent The main goal of the present paper is then - at first qualitatively - \uline{to feedback stabilize the non-linear Boussinesq model (\ref{1.1}) subject to rough (non-smooth) initial conditions $\{\bs{y}_0,\theta_0\}$, in the vicinity of an (unstable) equilibrium solution $\{\bs{y}_e,\theta_e\}$ in (\ref{1.2})}, by means of a \uline{finite dimensional} localized feedback control pair $\{v, m\bs{u} \}$ as acting on $\{ \wti{\Gamma}, \omega \}$, see Fig 1. Thus this paper pertains to the general issue of ``turbulence suppression or attenuation" in fluids. The general topic of turbulence suppression (or attenuation) in fluids has been the object of many studies over the years, mostly in the engineering literature – through experimental studies and via numerical simulation - and under different geometrical and dynamical settings. The references cited in the present paper by necessity pertain mostly to the mathematical literature. A more precise description of our paper is as follows: establish localized exponential stabilization of problem (\ref{1.1}) near an unstable equilibrium solution $\{\bs{y}_e,\theta_e\}$ by means of a \underline {finite dimensional localized}, spectral-based feedback control pair $\{ v, m\bs{u} \}$, in the important case of initial condition $\bs{y}_0$ of low regularity, as technically expressed by $\bs{y}_0$ being in a suitable Besov space with tight indices, and $\theta_0$ being in a corresponding $L^q$-space $ q > d $, or even in a corresponding Besov space, see Remark \ref{Rmk-1.3}. More precisely, the resulting state space for the pair $\{\bs{y},\theta\}$, where uniform stabilization will be achieved is the space
	\begin{subequations}\label{1.3}
		\begin{equation}\label{1.3a}
		\quad \VqpO \equiv \Bto \times \lqo, \ 1 < p < \frac{2q}{2q - 1} ; \ q > d , \, d = 2,3,
		\end{equation}
		\nin or even
		\begin{equation}\label{1.3b}
		\quad \VbqpO \equiv \Bto \times B^{2-\rfrac{2}{p}}_{q,p}(\Omega), \ 1 < p < \frac{2q}{2q - 1} ; \ q > d , \, d = 2,3,
		\end{equation}		
	\end{subequations}
	where $ \Bto $ is a suitable subspace, see below in \eqref{1.11}, of the Besov space
	\begin{equation}\label{1.4}
	\big( \bs{L}^q(\Omega), \bs{W}^{2,q}(\Omega) \big)_{1-\rfrac{1}{p},p} = \Bso, \quad 1 < p < \frac{2q}{2q -1}; \ q > d,\ d = 2,3,
	\end{equation}
	as a real interpolation space between $ \bs{L}^q (\Omega)$ and $ \bs{W}^{2,q}(\Omega)$. Similarly for $\ds B^{2-\rfrac{2}{p}}_{q,p}(\Omega)$.	This setting will be further elaborated after introducing the Helmholtz decomposition below. In particular, local exponential stability for the velocity field $\bs{y}$ near an equilibrium solution $\bs{y}_e$ will be achieved in the topology of the Besov subspace $ \Bto $ in \eqref{1.11}. Note the tight index: $\ds 1 < p < \rfrac{6}{5} $ for $q > d = 3$, and $ 1 < p <  \rfrac{4}{3}$ for $ d = 2 $. For $d = 3$, such space is ``close" to $\bs{L}^3(\Omega)$. It will be documented below in Remark \ref{Rmk-1.2} that in such a setting, the compatibility conditions on the boundary of the initial conditions \uline{are not recognized}. This feature is \uline{precisely our key objective within the stabilization problem} of the $3$-dimensional Navier-Stokes equations per se \cite{LPT.3}, or for the Boussinesq system with controls $\{ \bs{v}_f, \bs{u} \}$ on the N-S component, as described in the ``motivation" paragraph at the end of Section \ref{Sec-1.1}.
	
	\subsection{Helmholtz decomposition.}\label{Sec-1.5}
	\nin A first difficulty one faces in extending the local exponential stabilization results for fluids such as the Navier-Stokes equations or Boussinesq systems from the Hilbert-space setting as in \cite{Lef}, \cite{Wang} to the $\bs{L}^q$-based setting is the question of the existence of a Helmholtz (Leray) projection for the domain $\Omega$ in $\mathbb{R}^d$. More precisely: Given an open set $\Omega \subset \mathbb{R}^d$, the Helmholtz decomposition answers the question as to whether $\bs{L}^q(\Omega)$ can be decomposed into a direct sum of the solenoidal vector space $\lso$ and the space $\bs{G}^q(\Omega)$ of gradient fields. Here,	
	\begin{equation}\label{1.5}
	\begin{aligned}
	\lso &= \overline{\{\bs{y} \in \bs{C}_c^{\infty}(\Omega): \div \ \bs{y} = 0 \text{ in } \Omega \}}^{\norm{\cdot}_q}\\
	&= \{\bs{g} \in \bs{L}^q(\Omega): \div \ \bs{g} = 0; \  \bs{g}\cdot \nu = 0 \text{ on } \partial \Omega \},\\
	& \hspace{3cm} \text{ for any locally Lipschitz domain } \Omega \subset \mathbb{R}^d, d \geq 2 \\
	\bs{G}^q(\Omega) &= \{\bs{y} \in \bs{L}^q(\Omega):\bs{y} = \nabla p, \ p \in W_{loc}^{1,q}(\Omega) \} \ \text{where } 1 \leq q < \infty.
	\end{aligned}
	\end{equation}
	
	\noindent Both of these are closed subspaces of $\bs{L}^q$.
	
	\begin{definition}\label{Def-1.1}
		Let $1 < q < \infty$ and $\Omega \subset \mathbb{R}^n$ be an open set. We say that the Helmholtz decomposition for $\bs{L}^q(\Omega)$ exists whenever $\bs{L}^q(\Omega)$ can be decomposed into the direct sum (non-orthogonal for $q \ne 2$, orthogonal for $q = 2$)
		\begin{equation}
		\bs{L}^q(\Omega) = \lso \oplus \bs{G}^q(\Omega).\label{1.6}
		\end{equation}
		The unique linear, bounded and idempotent (i.e. $P_q^2 = P_q$) projection operator $P_q:\bs{L}^q(\Omega) \longrightarrow \lso$ having $\lso$ as its range and $\bs{G}^q(\Omega)$ as its null space is called the Helmholtz projection. For additional information we refer to \cite{Ga2:1994}, \cite[Appendix B]{LPT.2} and references therein. In particular, throughout the paper we shall use freely that
		$\ds \big( \lso \big)' = \lo{q'}, \ \rfrac{1}{q} + \rfrac{1}{q'} = 1$.
	\end{definition}
	\nin While for $q = 2$ a Helmholtz decomposition (in fact orthogonal decomposition) exists for any open set $\Omega \subset \BR^d$, this is not the case for $q \neq 2$ \cite{MS:1986}. However, for a bounded convex domain $\Omega \subset \BR^d, \ d \geq 2, \ 1 < q < \infty$ \cite{FMM:1998} or for a bounded $C^1$-domain in $\BR^d$ \cite{FMM:1998}, the Helmholtz decomposition is known to be true. This is the case in the present paper. \noindent We can now provide further critical information on the Besov space  $ \Bto$ which is the fluid component of the state space \eqref{1.3} where well-posedness and uniform stabilization take place for the resulting closed-loop feedback problem.\\
	
	\noindent \textbf{Definition of Besov spaces $\bs{B}^s_{q,p}(\Omega)$ on domains of class $C^1$ as real interpolation of Sobolev spaces:} Let $m$ be a positive integer, $m \in \mathbb{N}, 0 < s < m, 1 \leq q < \infty,1 \leq p \leq \infty,$ then we define the Besov space \cite{HA:1995}
	
	\begin{equation} \label{1.7}
	\bs{B}^{s}_{q,p}(\Omega) = (\bs{L}^q(\Omega),\bs{W}^{m,q}(\Omega))_{\frac{s}{m},p} 
	\end{equation}
	as a real interpolation space between $\bs{L}^q(\Omega)$ and $\bs{W}^{m,q}(\Omega)$.This definition does not depend on $\ds m \in \mathbb{N}$ \cite[p xx]{W:1985}. This clearly gives
	\begin{equation} \label{1.8}
	\bs{W}^{m,q}(\Omega) \subset \bs{B}_{q,p}^s(\Omega) \subset \bs{L}^q(\Omega) \quad \text{ and } \quad \norm{\bs{y}}_{\bs{L}^q(\Omega)} \leq C \norm{\bs{y}}_{\bs{B}_{q,p}^s(\Omega)}.
	\end{equation}

	\noindent We shall be particularly interested in the following special real interpolation space of the $\bs{L}^q$ and $\bs{W}^{2,q}$ spaces $\Big( m = 2, s = 2 - \frac{2}{p} \Big)$:
	\begin{equation}\label{1.9}
	\bs{B}^{2-\frac{2}{p}}_{q,p}(\Omega) = \big(\bs{L}^q(\Omega),\bs{W}^{2,q}(\Omega) \big)_{1-\frac{1}{p},p}.
	\end{equation}
	\noindent Our interest in \eqref{1.9} is due to the following characterization \cite[Thm 3.4]{HA:2000}: if $A_q$ denotes the Stokes operator to be introduced in \eqref{1.14} below, then

	\begin{align}
	\Big( \lso,\mathcal{D}(A_q) \Big)_{1-\frac{1}{p},p} &= \Big\{ \bs{g} \in \Bso : \text{ div } \bs{g} = 0, \ \bs{g}|_{\Gamma} = 0 \Big\} \nonumber \\
	& \hspace{4cm} \text{if } \frac{1}{q} < 2 - \frac{2}{p} < 2 \label{1.10}\\
	\Big( \lso,\mathcal{D}(A_q) \Big)_{1-\frac{1}{p},p} &= \Big\{ \bs{g} \in \Bso : \text{ div } \bs{g} = 0, \ \bs{g}\cdot \nu|_{\Gamma} = 0 \Big\} \nonumber\\
	 \equiv \Bto & \text{ if } 0 < 2 - \frac{2}{p} < \frac{1}{q}; \text{ or } 1 < p < \frac{2q}{2q - 1}.  \label{1.11}
	\end{align}	
	
	\begin{rmk}\label{Rmk-1.2}
		Notice that, in \eqref{1.11}, the condition $\ds \bs{g} \cdot \nu |_{\Gamma} = 0$ is an intrinsic condition of the space $\ds \lso$ in (\ref{1.3}), not an extra boundary condition as $\ds \bs{g}|_{\Gamma} = 0$ in \eqref{1.10}. \qedsymbol
	\end{rmk}	
		
	\noindent \textbf{Orientation:} As already noted,ultimately, we shall seek to obtain uniform feedback stabilization of the fluid component $\bs{y}$ in the Besov subspace $\Bto$, dim $\ds \Omega = d < q < \infty$, $ \ds 1 < p < \rfrac{2q}{2q-1}$, defined by real interpolation in \eqref{1.4}, \eqref{1.11}; The reason being that such a space \uline{ does not recognize boundary conditions}, as noted above in Remark \ref{Rmk-1.2}. Analyticity and maximal regularity of the Stokes problem will require $q > 1$, Appendix \ref{app-A}.\\
	
	\noindent By way of orientation, we state at the outset two main points. For the linearized $\bs{w}$-problem \eqref{1.31} or \eqref{1.32} below in the feedback form \eqref{2.4} or \eqref{5.3}, the corresponding well-posedness and global feedback uniform stabilization result, Theorem \ref{Thm-2.1} or Theorems \ref{Thm-6.1} and \ref{Thm-6.2}, hold in general for $1 < q < \infty$. Instead, the final, main well-posedness and feedback uniform, local stabilization results, Theorems \ref{Thm-2.2} and \ref{Thm-2.3}, the latter for the nonlinear feedback problem (\ref{2.11}) or (\ref{8.3}) corresponding to the original problem (\ref{1.1}) via its translation \eqref{1.13}, will require $q > 3$ to obtain the embedding $\ds \bs{W}^{1,q}(\Omega) \hookrightarrow \bs{L}^{\infty}(\Omega)$ in our case of interest $d = 3$, see (\ref{8.24}), hence $\ds 1 < p < \rfrac{6}{5}$; and $q > 2$, hence $\ds 1 < p < \rfrac{4}{3}$ in the $d = 2$-case. The ultimate main result for the original problem \eqref{1.1} is Theorem \ref{Thm-2.4}.
		
	\subsection{Translated nonlinear Boussinesq problem and its abstract model.}
	
	\noindent \textbf{PDE Model:} We return to Theorem \ref{Thm-1.1} which provides an equilibrium triplet $\{\bs{y}_e, \theta_e, \pi_e\}$. Then, we translate by $\{\bs{y}_e, \theta_e, \pi_e\}$ the original Boussinesq problem (\ref{1.1}). Thus we introduce new variables
	\begin{align}\label{1.12}
	\bs{z} = \bs{y} - \bs{y}_e\ \  (\mbox{a $d$-vector}), \quad h = \theta - \theta_e \ \ (\mbox{a scalar}), \quad \chi = \pi - \pi_e \ \  (\mbox{a scalar})
	\end{align}
	and obtain the translated problem
	\begin{subequations}\label{1.13}
		\begin{align}
		\bs{z}_t - \nu \Delta \bs{z} + (\bs{y}_e \cdot \nabla)\bs{z} + (\bs{z} \cdot \nabla)\bs{y}_e + (\bs{z} \cdot \nabla) \bs{z} - \gamma h \bs{e}_d + \nabla \chi &= m\bs{u} \text{ in } Q \label{1.13a}\\
		h_t - \kappa \Delta h + \bs{y}_e \cdot \nabla h + \bs{z} \cdot \nabla h + \bs{z} \cdot \nabla \theta_e &= 0 \text{ in } Q \label{1.13b}\\
		\div \ \bs{z} &= 0  \text{   in } Q \label{1.13c}\\
		\begin{picture}(0,0)
		\put(-218,12){$\left\{\rule{0pt}{60pt}\right.$}\end{picture}
		\bs{z} = 0, \ h & = v  \text{ on } \Sigma \label{1.13d}\\
		\bs{z}(0,x) = \bs{z}_0 = \bs{y}_0 - \bs{y}_e, \quad h(0,x) = h_0 = \theta &- \theta_e \text{ on } \Omega \label{1.13e} \\[1mm]
		L_e(\bs{z}) = (\bs{y}_e \cdot \nabla)\bs{z} + (\bs{z} \cdot \nabla)\bs{y}_e \quad (\mbox{Oseen perturbation}). & \label{1.13f}
		\end{align}
	\end{subequations}
	
	\noindent \textbf{Abstract Nonlinear Translated Model.}	First, for $1 < q < \infty$ fixed, the Stokes operator $A_q$ in $\lso$ with Dirichlet boundary conditions  is defined by
	\begin{equation}\label{1.14}
	A_q \bs{z} = -P_q \Delta \bs{z}, \quad
	\mathcal{D}(A_q) = \bs{W}^{2,q}(\Omega) \cap \bs{W}^{1,q}_0(\Omega) \cap \lso.
	\end{equation}
	The operator $A_q$ has a compact inverse $A_q^{-1}$ on $\lso$, hence $A_q$ has a compact resolvent on $\lso$. Its properties are collected in Appendix \ref{app-A}. Next, we introduce the first order operator $A_{o,q}$,	
	\begin{equation}\label{1.15}
	A_{o,q} \bs{z} = P_q[(\bs{y}_e \cdot \nabla )\bs{z} + (\bs{z} \cdot \nabla )\bs{y}_e], \quad \mathcal{D}(A_{o,q}) = \mathcal{D}(A_q^{\rfrac{1}{2}}) \subset \lso,
	\end{equation}
	where the $\ds \calD(A^{\rfrac{1}{2}}_q)$ is defined explicitly by complex interpolation
	\begin{equation}\label{1.16}
	[ \calD(A_q), \lso ]_{\frac{1}{2}} = \calD(A_q^{\rfrac{1}{2}}) \equiv \bs{W}_0^{1,q}(\Omega) \cap \lso.
	\end{equation}
	Thus, $A_{o,q}A_q^{-\rfrac{1}{2}}$ is a bounded operator on $\lso$, and thus $A_{o,q}$ is bounded on $\calD(A_q^{\rfrac{1}{2}})$
	\begin{equation*}
	\norm{A_{o,q}f} = \norm{A_{o,q} A_q^{-\rfrac{1}{2}} A_q^{-\rfrac{1}{2}} A_q f} \leq C_q \norm{A_q^{\rfrac{1}{2}} f}, \quad f \in \calD(A_q^{\rfrac{1}{2}}).
	\end{equation*}
	This leads to the definition of the Oseen operator for the fluid
	\begin{equation}\label{1.17}
	\calA_q  = - (\nu A_q + A_{o,q}), \quad \calD(\calA_q) = \calD(A_q) \subset \lso.
	\end{equation}
	\nin We next define the differential operator of the heat component in \eqref{1.13b}
	\begin{equation}\label{1.18}
		\calB_q f = -\kappa \Delta f + \bs{y}_e \cdot \nabla f, \quad \mathcal{D}(\calB_q) = W^{2,q}(\Omega) \cap W^{1,q}_0(\Omega).
	\end{equation}
	Then, we define the projection of the nonlinear portion of the fluid operator in \eqref{1.13a}
	\begin{equation}\label{1.19}
	\calN_q(\bs{z}) = P_q [(\bs{z} \cdot \nabla) \bs{z}], \quad \calD(\calN_q) = \bs{W}^{1,q}(\Omega) \cap \bs{L}^{\infty} (\Omega) \cap \lso.	
	\end{equation}
	(Recall that $W^{1,q}(\Omega) \hookrightarrow L^{\infty} (\Omega)$ for $q > d = \mbox{dim } \Omega$ \cite[p. 74]{SK:1989}). Next, we define the nonlinear coupled term of the heat equation as
	\begin{equation}\label{1.20}
	\calM_q[\bs{z}](h) = \bs{z} \cdot \nabla h, \quad \calD(\calM_q[\bs{z}]) = W^{1,q}(\Omega) \cap L^{\infty} (\Omega).  	
	\end{equation}
	Finally, we define the coupling linear terms as bounded operators on $\lqo, \lso$ respectively, $q > d$:
	\begin{align}
	\text{[from the NS equation]} \quad \calC_{\gamma} h &= -\gamma P_q (h \bs{e}_d), \ \calC_{\gamma} \in \calL (L^q(\Omega),\lso), \label{1.21}\\
	\text{[from the heat equation]} \quad \calC_{\theta_e} \bs{z} &=  \bs{z} \cdot \nabla \theta_e, \ \calC_{\theta_e} \in \calL(\lso,\lqo). \label{1.22}
	\end{align}	 
	\nin Next, in preparation to the abstract version of the non-linear $\bs{z}$-system \eqref{1.13}, we introduce the Dirichlet map $D$ \cite{LT3:2015} with reference to the Dirichlet boundary controlled thermal equation \eqref{1.13b} in $h$:
	\begin{subequations}\label{1.23}
		\begin{empheq}[left=\empheqlbrace]{align}
		\psi &= Dv \iff \left\{ \kappa \Delta \psi - \bs{y}_e \cdot \nabla \psi = 0 \text{ in } \Omega, \ \psi|_{\Gamma} = v \text{ on } \Gamma \right\} \label{1.23a}\\
		D &:L^q(\Gamma) \longrightarrow W^{\rfrac{1}{q}, q}(\Omega) \subset W^{\rfrac{1}{q} - 2 \varepsilon, q}(\Omega) \equiv \calD \left( B_q^{\rfrac{1}{2q} - \varepsilon} \right)\nonumber \\ & \hspace{7cm} \text{continuously}. \label{1.23b}
		\end{empheq}
		\nin \cite[Theorem III.2.3, p91]{W:1985} where $B_q$ is the Dirichlet Laplacian on $L^q(\Omega)$
		\begin{equation}\label{1.23c}
			B_q f = -\Delta f, \quad \mathcal{D}(B_q) = \mathcal{D}(\calB_q) = W^{2,q}(\Omega) \cap W^{1,q}_0(\Omega).
		\end{equation}
	\end{subequations}
	\nin Accordingly, we rewrite Eq \eqref{1.13b} via \eqref{1.18}, \eqref{1.23}
	\begin{equation}\label{1.24}
		h_t - (\kappa \Delta - \bs{y}_e \cdot \nabla) (h - Dv) + \bs{z} \cdot \nabla h + \bs{z} \cdot \nabla \theta_e = 0 \text{ in } Q
	\end{equation}
	
	\nin where $\ds [h - Dv]_{\Gamma} = 0$ by \eqref{1.13d} and \eqref{1.23a}. Accordingly, invoking the operators $\calB_q$ from \eqref{1.18} as well as $\calM_q[\bs{z}], \ \calC_{\theta_e}$ from \eqref{1.20}, \eqref{1.22} respectively, we can rewrite Eq \eqref{1.24} abstractly as
	\begin{equation}\label{1.25}
		h_t + \calB_q (h - Dv) + \calM_q[\bs{z}]h + \calC_{\theta_e} \bs{z} = 0.
	\end{equation}
	\nin Extending as usual \cite{LT3:2015}, the original operator $\ds \calB_q: L^q(\Omega) \supset \calD(\calB_q) \longrightarrow L^q(\Omega)$ into $\ds (L^q(\Omega))' = L^{q'}(\Omega) \longrightarrow \left[\calD(\calB_q^*) \right]'$ (duality w.r.t. $L^q$) and retaining the same symbol $\calB_q$ for the extension, we see that we can rewrite Eq \eqref{1.25} as 
	\begin{equation}\label{1.26}
	h_t + \calB_qh + \calM_q[\bs{z}]h + \calC_{\theta_e} \bs{z} = \calB_qDv \in \left[\calD(B_q^*) \right]'.
	\end{equation}
	\nin Next we apply the Helmholtz projector $P_q$ on the coupled N-S equation \eqref{1.13a}, invoke the operators introduced above - specifically, $\calA_q$ from \eqref{1.17}, $\calN_q$ from \eqref{1.19}, and $\calC_{\gamma}$ from \eqref{1.21}- and obtain the following abstract version of the controlled fluid equation
	\begin{equation}\label{1.27}
		\bs{z}_t - \calA_q \bs{z} + \calN_q \bs{z} + \calC_{\gamma} h = P_q(m\bs{u}) \text{ in } \lso\\
	\end{equation}
	\nin In conclusion, combining Eq \eqref{1.27} with Eq \eqref{1.26} we obtain the abstract version of the original boundary controlled Boussinesq system (\hyperref[1.13]{1.13a-e})
	\begin{subequations}\label{1.28}
		\begin{empheq}[left=\empheqlbrace]{align}
		\bs{z}_t - \calA_q \bs{z} + \calN_q \bs{z} + \calC_{\gamma} h &= P_q(m\bs{u}) &\text{ in } \lso\\
		h_t + \calB_q h + \calM_q[\bs{z}] h + \calC_{\theta_e} \bs{z} &= \calB_qDv &\text{ in } \left[\calD(B_q^*) \right]' \subset L^{q'}(\Omega)\\
		\bs{z}(x,0) &= \bs{z}_0(x) &\text{ in } \lso\\
		h(x,0) &= h_0(x) &\text{ in } \lqo;
		\end{empheq}
	\end{subequations}
	\nin or in matrix form
	\begin{subequations}\label{1.29}
		\begin{align}
		\frac{d}{dt}
		\bbm \bs{z} \\ h \ebm &= \bbm \calA_q & -\calC_{\gamma} \\ -\calC_{\theta_e} & 0 \ebm \bbm \bs{z} \\ h \ebm - \bbm  \calN_q  & 0  \\ 0 & \calM_q[\bs{z}] \ebm \bbm \bs{z} \\ h \ebm + \bbm P_q (m\bs{u}) \\ -\calB_q(h - Dv) \ebm \text{ in } \Wqs(\Omega) \nonumber\\[2mm]
		\begin{picture}(0,0)
		\put(-4,17){$\left\{\rule{0pt}{35pt}\right.$}\end{picture}
		\bbm \bs{z}(0) \\ h(0) \ebm &= \bbm \bs{z}_0 \\ h_0 \ebm \in \bs{W}^q_{\sigma}(\Omega),\label{1.29a}
		\end{align}
		\nin or alternatively
		\begin{multline}\label{1.29b}
			\frac{d}{dt}
			\bbm \bs{z} \\ h \ebm = \bbm \calA_q & -\calC_{\gamma} \\ -\calC_{\theta_e} & -\calB_q \ebm \bbm \bs{z} \\ h \ebm - \bbm  \calN_q  & 0  \\ 0 & \calM_q[\bs{z}] \ebm \bbm \bs{z} \\ h \ebm + \bbm P_q (m\bs{u}) \\ \calB_qDv \ebm \\ \text{ in } \lso \times \left[\calD(B_q^*) \right]'
		\end{multline}		
	\end{subequations}
	\begin{equation}\label{1.30}
		\bs{W}^q_{\sigma}(\Omega) \equiv \lso \times \lqo. 
	\end{equation}	

	\subsection{The linearized $\bs{w}$-problem of the translated $\bs{z}$-model.}
	\noindent Next, still for $1 < q < \infty$, we introduce the linearized controlled system of the translated PDE-model (\ref{1.13}), in the variable $\bs{w} = \{\bs{w}_f, w_h\} \in \lso \times \lqo \equiv \bs{W}^q_{\sigma}(\Omega)$:
	\begin{subequations}\label{1.31}
		\begin{empheq}[left=\empheqlbrace]{align}
		\frac{d \bs{w}_f}{dt} - \nu \Delta \bs{w}_f + L_e(\bs{w}_f) - \gamma w_h \bs{e}_d + \nabla \chi &= m \bs{u}   &\text{ in } Q \label{1.31a}\\
		\frac{d w_h}{dt} - \kappa \Delta w_h + \bs{y}_e \cdot \nabla w_h + \bs{w}_f \cdot \nabla \theta_e &= 0 &\text{ in } Q \label{1.31b}\\
		\text{div } \bs{w}_f &= 0   &\text{ in } Q \label{1.31c}\\
		\bs{w}_f \equiv 0, \ w_h &\equiv v &\text{ on } \Sigma \label{1.31d}\\
		\bs{w}_f(0,\cdot) = \bs{w}_{f,0}; \quad w_h(0,\cdot) & = w_{h,0} &\text{ on } \Omega. \label{1.31e}
		\end{empheq}
	\end{subequations}	
	\noindent with I.C. $\{\bs{w}_f(0), w_h(0)\} \in \bs{W}^q_{\sigma}(\Omega) = \lso \times \lqo$. Its corresponding abstract version is, referring also to \eqref{1.20}, (\hyperref[1.29]{1.29a-b}) rewritten, for $\bs{w} = \{ \bs{w}_f, w_h \}$ as
	\begin{subequations}\label{1.32}
		\begin{equation}\label{1.32a}
		\frac{d \bs{w}}{dt}
		= \bbm \ds \frac{d\bs{w}_f}{dt} \\[4mm] \ds \frac{dw_h}{dt} \ebm = \bbm \ds \calA_q \bs{w}_f -\calC_{\gamma} w_h + P_q (m\bs{u}) \\[2mm] \ds -\calB_q (w_h - Dv) - \calC_{\theta_e} \bs{w}_f \ebm \text{ in } \Wqs(\Omega)
		\end{equation}
		\begin{equation}\label{1.32b}
		\frac{d \bs{w}}{dt} = \frac{d}{dt} \bbm \bs{w}_f \\ w_h \ebm
		= \bbm \calA_q & -\calC_{\gamma} \\ -\calC_{\theta_e} & 0 \ebm \bbm \bs{w}_f \\ w_h \ebm + \bbm P_q (m\bs{u}) \\ -\calB_q (w_h - Dv) \ebm \text{ in } \Wqs(\Omega)
		\end{equation}
		\nin We may also write
		\begin{equation}
			\frac{d \bs{w}}{dt}
			= \bbm \ds \frac{d\bs{w}_f}{dt} \\[4mm] \ds \frac{dw_h}{dt} \ebm = \bbm \calA_q & -\calC_{\gamma} \\ -\calC_{\theta_e} & -\calB_q \ebm \bbm \bs{w}_f \\ w_h \ebm + \bbm P_q (m\bs{u}) \\ \calB_qDv \ebm  \text{ in } \lso \times \left[\calD(B_q^*) \right]'
		\end{equation}		
	\end{subequations}
	\nin The corresponding linearized uncontrolled problem $\{ v \equiv 0, \bs{u} \equiv 0 \}$ is
	\begin{equation}\label{1.33}
	\frac{d \bs{w}}{dt} = \frac{d}{dt} \bbm \bs{w}_f \\ w_h \ebm
	= \BA_q \bbm \bs{w}_f \\ w_h \ebm = \bbm \calA_q & -\calC_{\gamma} \\ -\calC_{\theta_e} & -\calB_q \ebm \bbm \bs{w}_f \\ w_h \ebm
	\end{equation}
	\begin{multline}\label{1.34}
	\BA_q = \bbm \calA_q & -\calC_{\gamma} \\ -\calC_{\theta_e} & -\calB_q \ebm : \bs{W}^q_{\sigma}(\Omega) \equiv \lso \times \lqo \supset \calD(\BA_q) = \calD(\calA_q) \times \calD(\calB_q) \\ = (\bs{W}^{2,q}(\Omega) \cap \bs{W}^{1,q}_{0}(\Omega) \cap \lso) \times (W^{2,q}(\Omega) \cap W^{1,q}_{0}(\Omega)) \longrightarrow \bs{W}^q_{\sigma}(\Omega).
	\end{multline}	
	\nin Properties of the operator $\ds \BA_q$ in \eqref{1.34}-critical for the proper setting of the present stabilization analysis will be given in the next Section \ref{Sec-1.7}.
	
	\subsection{Properties of the operator $\BA_q$ in \eqref{1.34}.}\label{Sec-1.7}
	We shall use throughout the following notation (recall \eqref{1.3}, \eqref{1.5}, (\ref{1.32}) and \eqref{1.11})
	\begin{equation}\label{1.35}
	\bs{W}^q_{\sigma}(\Omega) \equiv \lso \times \lqo; \quad
	\bs{V}^{q,p}(\Omega) \equiv  \Bto \times \lqo.
	\end{equation}	
	\begin{rmk}\label{Rmk-1.3}
		By using the maximal regularity of the heat equation, instead of the state space $ \VqpO $ in \eqref{1.3}, \eqref{1.35} we could take the state space  $\bs{V}_{b}^{q,p}(\Omega)$ to be the product of two Besov spaces, i.e. $\bs{V}_{b}^{q,p}(\Omega) \equiv \Bto \times B^{ 2-2/p}_{q,p}( \Omega)$ as in \eqref{1.3b}. Here, the second Besov component is the real interpolation between $\lqo$ and $\calD(\calB_q)$, see \cite{PSch2001}. This remark applies to all results involving $\VqpO$, but it will not necessarily be noted explicitly case by case, in order not to overload the notation. \qedsymbol
	\end{rmk}	
	\noindent Accordingly, we shall look at the operator $\BA_q$ in \eqref{1.34} as defined on either space
	\begin{equation}\label{1.36}
	\BA_q: \bs{W}^q_{\sigma}(\Omega) \supset \calD(\BA_q) \to \bs{W}^q_{\sigma}(\Omega) \
	\mbox{or} \
	\BA_q: \bs{V}^{q,p}(\Omega) \supset \calD(\BA_q) \to \bs{V}^{q,p}(\Omega).
	\end{equation}
	The following result collects basic properties of the operator $\BA_q$. It is essentially a corollary of Theorems \ref{A-Thm-1.4} and \ref{A-Thm-1.5} in Appendix \ref{app-A} for the Oseen operator $\calA_q$, as similar results hold for the operator $\calB_q$, while the operator $\calC_{\gamma}$ and $\calC_{\theta_e}$ in the definition \eqref{1.32} of $\BA_q$ are bounded operators, see \eqref{1.21}, \eqref{1.22}.
	
	\begin{thm}\label{Thm-1.2}
		With reference to the Operator $\BA_q$ in \eqref{1.34}, \eqref{1.36}, the following properties hold true:
		\begin{enumerate}[(i)]
			\item $\ds \BA_q$ is the generator of strongly continuous analytic semigroup on either $\bs{W}^q_{\sigma}(\Omega)$ or $\VqpO$ or $\VbqpO$ for $t > 0$;
			\item $\BA_q$ possesses the $L^p$-maximal regularity property on either $\bs{W}^q_{\sigma}(\Omega)$ or $\VqpO$ or $\VbqpO$ over a finite interval:
			\begin{equation}\label{1.37}
			\BA_q \in MReg (L^p(0,T;*)), \ 0 < T < \infty, \quad
			(*) = \bs{W}^q_{\sigma}(\Omega) \mbox{ or } \bs{V}^{q,p}(\Omega).
			\end{equation}
			\item $\ds \BA_q$ has compact resolvent on either $\bs{W}^q_{\sigma}(\Omega)$ or $\bs{V}^{q,p}(\Omega)$.
		\end{enumerate}
	\end{thm}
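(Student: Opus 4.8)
\textbf{Proof plan for Theorem \ref{Thm-1.2}.}

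The plan is to reduce everything to the known properties of the Oseen operator $\calA_q$ (Appendix \ref{app-A}, Theorems \ref{A-Thm-1.4}, \ref{A-Thm-1.5}) and of the analogous operator $\calB_q$ for the heat equation, and then to handle the off-diagonal couplings $\calC_\gamma, \calC_{\theta_e}$ as a bounded (hence "lower-order") perturbation. First I would record that $-\calB_q$ generates a strongly continuous analytic semigroup on $\lqo$ and enjoys $L^p$-maximal regularity: this is classical for the Dirichlet Laplacian $-\Delta$ on $L^q(\Omega)$, and the first-order term $\bs{y}_e\cdot\nabla$ is $\calB_q^{1/2}$-bounded (indeed $\calD(\calB_q^{1/2}) = W^{1,q}_0(\Omega)$ and $\bs{y}_e \in \bs{W}^{2,q}(\Omega)\hookrightarrow\bs{L}^\infty(\Omega)$ since $q>d$), so it is an infinitesimally small perturbation of $-\kappa\Delta$ and preserves both analyticity and maximal regularity. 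Combined with the corresponding statements for $\calA_q$ from Appendix \ref{app-A}, the diagonal part $\mathrm{diag}(\calA_q,-\calB_q)$ on the product space $\WqsO = \lso\times\lqo$ is the generator of a s.c. analytic semigroup with $L^p$-maximal regularity, on the natural domain $\calD(\calA_q)\times\calD(\calB_q) = \calD(\BA_q)$.

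Next I would add back the off-diagonal block $\bbm 0 & -\calC_\gamma \\ -\calC_{\theta_e} & 0 \ebm$. By \eqref{1.21}, \eqref{1.22} this is a bounded operator on $\WqsO$ (it maps $\lqo\to\lso$ and $\lso\to\lqo$ boundedly). A bounded perturbation of a generator of a s.c. analytic semigroup is again such a generator, with the same domain; and a bounded perturbation of an operator with $L^p$-maximal regularity on a finite interval $(0,T)$ again has $L^p$-maximal regularity there (the perturbation series / fixed-point argument converges because on a finite interval $\norm{\calC (\BA_q-\mathrm{perturbation})^{-1}}$-type terms are controlled, or more simply because maximal regularity is stable under lower-order bounded perturbations over bounded time intervals). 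This gives (i) and (ii) on $\WqsO$.

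For the Besov settings $\VqpO = \Bto\times\lqo$ and $\VbqpO = \Bto\times B^{2-2/p}_{q,p}(\Omega)$, the point is that $\Bto$ is, by \eqref{1.11}, exactly the real-interpolation space $(\lso,\calD(A_q))_{1-1/p,p}$, i.e. the trace space naturally associated with $L^p$-maximal regularity of the Stokes/Oseen semigroup; likewise $B^{2-2/p}_{q,p}(\Omega)$ is $(\lqo,\calD(\calB_q))_{1-1/p,p}$ (Remark \ref{Rmk-1.3}, \cite{PSch2001}). An analytic semigroup with $L^p$-maximal regularity on a Banach space $X$ restricts to an analytic semigroup with $L^p$-maximal regularity on the interpolation space $(X,\calD)_{1-1/p,p}$; invoking this abstract fact (real interpolation commutes with the generator, and the trace space is preserved) transfers (i) and (ii) from $\WqsO$ to $\VqpO$ and $\VbqpO$. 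Finally, (iii): $A_q$ has compact resolvent on $\lso$ (stated right after \eqref{1.14}) because $\calD(A_q)\hookrightarrow\lso$ compactly via Rellich; the lower-order term $A_{o,q}$ is $A_q^{1/2}$-bounded so $\calA_q$ still has compact resolvent; similarly $\calB_q$ has compact resolvent on $\lqo$; hence the diagonal operator, and then $\BA_q$ (bounded perturbation does not destroy compactness of the resolvent), has compact resolvent on $\WqsO$; and since $\calD(\BA_q)\hookrightarrow\VqpO$ is still compact (the embedding $\bs{W}^{2,q}\hookrightarrow\Bto$ and $\bs{W}^{2,q}\hookrightarrow B^{2-2/p}_{q,p}$ are compact as $2 > 2-2/p$), compactness of the resolvent persists on $\VqpO$ and $\VbqpO$.

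The main obstacle is the Besov-space transfer in the second paragraph: one must be careful that maximal regularity and analyticity genuinely descend from $\WqsO$ to the real-interpolation trace space, and that the coupling operators $\calC_\gamma,\calC_{\theta_e}$ — bounded on $\WqsO$ — remain bounded on $\VqpO$ (which they are, since $\Bto\hookrightarrow\lso$ by \eqref{1.8}, so $\calC_{\theta_e}:\Bto\to\lso\to\lqo$ and $\calC_\gamma:\lqo\to\lso$, and $\lso\hookrightarrow\Bto$ fails but is not needed because the first component of $\VqpO$ sits inside $\lso$). Thus the perturbation argument must be run \emph{already at the Besov level}, using that the diagonal part generates an analytic maximal-regularity semigroup there; the bounded-perturbation stability then closes the argument identically to the $\WqsO$ case.
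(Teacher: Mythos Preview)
Your overall strategy---establish analyticity and maximal regularity for the diagonal operator $\mathrm{diag}(\calA_q,-\calB_q)$ using the Appendix \ref{app-A} results for $\calA_q$ and the analogous classical facts for $\calB_q$, then add the off-diagonal block $\calC_\gamma,\calC_{\theta_e}$ as a bounded perturbation, and finally pass to the Besov setting by interpolation---is exactly the approach the paper takes. The paper's own justification is no more than the sentence ``It is essentially a corollary of Theorems \ref{A-Thm-1.4} and \ref{A-Thm-1.5} \ldots\ as similar results hold for the operator $\calB_q$, while the operators $\calC_\gamma$ and $\calC_{\theta_e}$ \ldots\ are bounded,'' together with the remark that analyticity on $\lso$ and on $\calD(A_q)$ implies analyticity on their real interpolation space $\Bto$.

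There is, however, a genuine slip in your final paragraph. You write that $\calC_\gamma:\lqo\to\lso$ and that the failure of $\lso\hookrightarrow\Bto$ ``is not needed because the first component of $\VqpO$ sits inside $\lso$.'' This is backwards: for the off-diagonal block to be a \emph{bounded} perturbation on $\VqpO=\Bto\times\lqo$, the output of $\calC_\gamma$ must land in the first component of the \emph{target} space, namely $\Bto$, not merely $\lso$. The embedding you cite concerns the input, not the output. So the plan of ``running the perturbation argument already at the Besov level'' does not close as stated. The clean route---and the one implicit in the paper---is to first prove everything for $\BA_q$ on $\WqsO$; then, since an analytic generator on $X$ is automatically an analytic generator on $\calD(\BA_q)$ with the graph norm, real interpolation between $\WqsO$ and $\calD(\BA_q)=\calD(\calA_q)\times\calD(\calB_q)$ yields the result directly on $\VbqpO$ (and, with minor extra care on the heat component, on $\VqpO$). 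This avoids ever needing $\calC_\gamma$ to be bounded into $\Bto$. Your compact-resolvent argument in (iii) is fine.
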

	
	\noindent Analyticity of $\ds e^{\calA_qt}$ (resp. $e^{\calB_qt}$) in $\lso$ (resp. $L^q(\Omega)$) implies analyticity of $\ds e^{\calA_qt}$ (resp. $\ds e^{\calB_qt}$) on $\ds \calD(\calA_q) = \calD(A_q)$ (resp. $\ds \calD(\calB_q) = \calD(B_q)$), hence analyticity of $\ds e^{\calA_qt}$ (resp. $\ds e^{\calB_qt}$) on the interpolation space $\ds \Bto$  in \eqref{1.11}. (or in $\ds \Bso$) in \eqref{1.9} (resp. \eqref{1.4} in the scalar case).\\
	
	\noindent For the notation of, and the results on, maximal regularity, see \cite{HA:1995}, \cite{Dore:2000}, \cite{GGH:2012}, \cite{HS:2016}, \cite{KW:2001}, \cite{KW:2004}, \cite{PS:2016}, \cite{We:2001}, \cite{W:1985}, etc. In particular, we recall that on a Banach space, maximal regularity implies analyticity of the semigroup, \cite{DeS} but not conversely \cite{Dore:2000}, \cite{KW:2004}. We refer to Appendix \ref{app-A}.\\
	
	\noindent \textbf{Basic assumption:} By Theorem \ref{Thm-1.2}, the operator $ \BA_q $ in \eqref{1.34} has the eigenvalues (spectrum) located in a triangular sector of well-known type. Then our basic assumption - which justifies the present paper - is that such operator $\ds \BA_q$ is unstable: that is, $\BA_q$ has a finite number, say $N$, of eigenvalues $\lambda_1, \lambda_2 ,\lambda_3 ,\dots,\lambda_N$ on the complex half plane $\{ \lambda \in \mathbb{C} : Re~\lambda \geq 0 \}$ which we then order according to their real parts, so that	
	\begin{equation}\label{1.38}
	\ldots \leq Re~\lambda_{N+1} < 0 \leq Re~\lambda_N \leq \ldots \leq Re~\lambda_1,
	\end{equation}
	
	\noindent each $\lambda_i, \ i=1,\dots,N$, being an unstable eigenvalue repeated according to its geometric multiplicity $\ell_i$. Let $M$ denote the number of distinct unstable eigenvalues $\lambda_i$ of $\BA_q$, so that $\ell_i$ is equal to the dimension of the eigenspace corresponding to $\lambda_i$. Instead, $\ds N = \sum_{i = 1}^{M} N_i$ is the sum of the corresponding algebraic multiplicity $N_i$ of $\lambda_i$, where $N_i$ is the dimension of the corresponding generalized eigenspace.\\
	
	\begin{rmk} \label{Rmk-1.4}
		Condition \eqref{1.38} is intrinsic to the notion of `stabilization', whereby then one seeks to construct a feedback control that transforms an original unstable problem (with no control) into a stable one. However, as is well-known \cite{LPT.1}, the same entire procedure can be employed to \uline{enhance at will the stability} of an originally stable system ($ Re~\lambda_1 < 0 $) by feedback control. This is the case of Remark \ref{Rmk-1.1}(i).
	\end{rmk}	
	
	\section{Main results.}\label{Sec-2}
	
	\nin As in our past work \cite{BT:2004}, \cite{BLT1:2006}, \cite{BLT22:2006}, \cite{BLT33:2006}, \cite{LT2:2015}, \cite{LT3:2015}, \cite{LPT.1}, \cite{LPT.2}, we shall henceforth let $\lso$ denote the complexified space $\lso + i\lso$, and similarly for $\lqo$, whereby then we consider the extension of the linearized problem \eqref{1.32} to such complexified space $\lso \times \lqo$. Thus, henceforth, $ \bs{w}$ will mean $ \bs{w} + i \wti{\bs{w}}$, $\bs{u}$ will mean $\bs{u} + i \wti{\bs{u}}$, $\bs{w}_0$ will mean $\bs{w}_0 + i \wti{\bs{w}}_0$. Our results would be given in this complexified setting. How to return to the real-valued formulation of the results was done in these past reference (see e.g. \cite{BT:2004}, \cite{BLT1:2006}, \cite{LT2:2015}, \cite[Section 2.7]{LPT.1}). Because of space constraints, such real-valued statements will not be explicitly listed on the present paper. We refer to the above references.
	
	\subsection{Orientation.}\label{Sec-2.1}
	\nin A main additional feature of the results below is that the feedback control $ \bs{u}_k$ corresponding to the fluid equation is of \uline{reduced} dimension: that is, of dimension $(d-1)$ rather than of dimension $d$. More precisely, setting $\ds \bs{u} = \{ u^{(1)}, u^{(2)}, \dots, u^{(d)} \}$ to express the vector control $\bs{u}$ acting on $\omega$ in terms of its $d$ coordinates, the only constraint is that the last component $u^{(d)}$ is always needed, along with additional $(d-2)$ components with no preference. Thus, for $d = 2$, a feedback control $\bs{u}$ may be used invoking only the component $u^{(2)}$, with no need of component $u^{(1)}$. For $d = 3$, a feedback control $\bs{u}$ may be used involving either the components $\{ u^{(1)}, u^{(3)} \}$ or else components $\{ u^{(2)}, u^{(3)} \}$. This is due to the UCP of Theorem \ref{Thm-B.1} reported in Appendix \ref{app-B}. To express above facts, we introduce appropriate notation. Recall that the vector $\{ \bs{\varphi}, \psi\} = \{\varphi^{(1)}, \varphi^{(2)}, \dots, \varphi^{(d)}, \psi\}$  in $\ds \Wqs(\Omega) \equiv \lso \times L^q(\Omega)$ has $(d+1)$ coordinates, the first $d$ coordinates corresponds to the fluid space, while the last coordinate corresponds to heat space. Motivated by the above considerations, ultimately by the UCP of Theorem \ref{Thm-B.1} of Appendix \ref{app-B}, we shall introduce the notation $\widehat{ \bs{L}}^q_{\sigma} (\Omega)$ to denote	
	\begin{multline}\label{2.1}
	\widehat{ \bs{L}}^q_{\sigma} (\Omega) \equiv \text{ any $(d-1)$-dimensional sub-space obtained from } \bs{L}^q_{\sigma} (\Omega)\\ \text{ after omitting one specific coordinate, except the $d$\textsuperscript{th} coordinate,} \\ \text{from the vectors of }\bs{L}^q_{\sigma} (\Omega).
	\end{multline}  
	
	\subsection{Global well-posedness and uniform exponential stabilization of the linearized $\bs{w}$-problem \eqref{1.32} on either the space $ \bs{W}^q_{\sigma}(\Omega) \equiv \lso \times \lqo$ or the space $\ds \bs{V}^{q,p}(\Omega) \equiv  \Bto \times \lqo$, $\ds 1 < q < \infty, 1< p < \rfrac{2q}{2q-1}$.}\label{Sec-2.2}
		
	\begin{thm}\label{Thm-2.1}
		Let the operator $\BA_q$ in \eqref{1.34} have $N$ possibly repeated unstable eigenvalues $\ds \{ \lambda_j \}_{j = 1}^N$  as in \eqref{1.38}, of which $M$ are distinct. Let $\ell_i$ denote the geometric multiplicity of $\lambda_i$. Set $K = \sup \{ \ell_i; \ i = 1, \dots, M \}$. Let $\ds \WqsuN$ be the $N$-dimensional subspace of $\Wqs(\Omega)$ defined in \eqref{3.2} below, which is the generalized eigenspace of $\BA_q$ corresponding to the unstable eigenvalues $\ds \{ \lambda_j \}_{j = 1}^N$. Recall the space $ \widehat{ \bs{L}}^q_{\sigma} ( \Omega )$ from \eqref{2.1} and let likewise $\ds (\widehat{\bs{W}}^q_{\sigma})^u_N$ be any space obtained from $ (\bs{W}^q_{\sigma})^u_N$ by omitting one specific coordinate from the vectors of $\ds  (\bs{W}^q_{\sigma})^u_N $ except the $d$\textsuperscript{th} coordinate. Then, one may construct finite dimensional feedback operators $F$ and $J$, as desired
		\begin{subequations}\label{2.2}
		\begin{align}
		v &= F \bs{w} = \sum_{k = 1}^K \ip{P_N \bs{w}}{\bs{p}_k}f_k, \ f_k \in \calF \subset W^{2 - \rfrac{1}{q},q}(\Omega), \nonumber \\& \hspace{1cm} \bs{p}_k \in (\bs{W}^u_N)^* \subset \bs{L}^{q'}_{\sigma}(\Omega) \times L^q(\Omega), \ q \geq 2, f_k \text{ supported on } \wti{\Gamma}. \label{2.2a}\\
		Dv &= DF \bs{w} \in \bs{W}^{2,q}(\Omega) \label{2.2b} 
		\end{align}		
		\end{subequations}		
		\begin{multline}			
			J \bs{w} = P_q m (\bs{u}) = P_qm \left(\sum_{k = 1}^K \ip{P_N \bs{w}}{\bs{q}_k} \bs{u}_k \right), \ \bs{u}_k \in \widehat{ \bs{L}}^q_{\sigma} (\Omega), \\ \bs{q}_k \in (\bs{W}^u_N)^* \subset \bs{L}^{q'}_{\sigma}(\Omega) \times L^q(\Omega),\ \bs{u}_k \text{ supported on } \omega. \label{2.3}
		\end{multline}		
		such that, with $\ds \bs{w}=\{ \bs{w}_f,w_h\},\ \bs{w}_N = P_N\bs{w}$, $P_N$ the projector in \eqref{3.1a}, once inserted in the $\bs{w}$-problem \eqref{1.32}, yield a
		\nin resulting closed-loop linearized $\bs{w}$-problem in feedback form
		\begin{subequations}\label{2.4}
			\begin{equation}\label{2.4a}
			\frac{d \bs{w}}{dt} = \bbm \calA_q & -\calC_{\gamma} \\ -\calC_{\theta_e} & 0 \ebm \bs{w} + \bbm P_q \left( m \ds \sum_{k = 1}^K \ip{P_N \bs{w}}{\bs{q}_k}\bs{u}_k \right) \\ -\calB_q \left(w_h - D \ds \sum_{k = 1}^K \ip{P_N \bs{w}}{\bs{p}_k}\bs{f}_k \right) \ebm
			\end{equation}
			\nin or by \eqref{2.2}, \eqref{2.3}
			\begin{equation}\label{2.4b}
			\frac{d \bs{w}}{dt} = \bbm \calA_q & -\calC_{\gamma} \\ -\calC_{\theta_e} & 0 \ebm \bs{w} + \bbm J \bs{w} \\ -\calB_q(w_h - DF\bs{w}) \ebm \equiv \BA_{_{F,q}} \bs{w}.
			\end{equation}
			\nin This way, the feedback operator $\ds \BA_{_{F,q}}$ is defined for the linearized $\bs{w}$-problem in feedback form, as 
			\begin{equation}\label{2.4c}
			\BA_{_{F,q}} = \hat{\BA}_{_{F,q}} + \Pi
			\end{equation}
		\end{subequations}	
		\nin so that \eqref{2.4b} is rewritten as
		\begin{equation}\label{2.5}
		\frac{d \bs{w}}{dt} = \BA_{_{F,q}} \bs{w} = \hat{\BA}_{_{F,q}} \bs{w} + \Pi \bs{w}
		\end{equation}
		\begin{subequations}\label{2.6}
			\begin{align}
			\hat{\BA}_{_{F,q}} \bs{w} &= \bbm -A_q \bs{w}_1 \\ -\calB_q \left( w_2 - DF\bs{w} \right)  \ebm, \quad \Pi \bs{w} = \bbm A_{o,q} & -\calC_{\gamma} \\ -\calC_{\theta_e} & 0 \ebm \bs{w} + \bbm J \bs{w} \\ 0 \ebm \label{2.6a}\\
			\calD \left( \BA_{_{F,q}} \right) &= \calD \left( \hat{\BA}_{_{F,q}} \right) = \left\{ \bs{w} = \bbm \bs{w}_1  \\  w_2 \ebm \in \bs{W}^q_{\sigma}(\Omega) = \bls \times L^q(\Omega): \right. \nonumber \\ &\hspace{2cm} \bs{w}_1 \in \calD(A_q),\ \left( w_2 - DF\bs{w} \right) \in \calD(\calB_q) = \calD(B_q)  \bigg\} \label{2.6b} \\
			\calD(\BA_{_F}) &\subset \calD(A_q) \times \bs{W}^{2,q}(\Omega), \  \calD(\Pi) = \calD(A_{o,q}) \times L^q(\Omega) \label{2.6c}
			\end{align}
		\end{subequations}
		\nin recalling \eqref{1.15}, \eqref{1.17}, \eqref{1.18}, \eqref{1.23c}, \eqref{2.2b}. Moreover;
		\begin{enumerate}[(i)]
			\item The operator $\ds \BA_{_{F,q}}$ in \eqref{2.4}, \eqref{2.6} generates a s.c. analytic semigroup $\ds e^{\BA_{F,q}t}$ on $\bs{W}^q_{\sigma}(\Omega) \equiv \lso \times \lqo$ as well as in the space $\bs{V}^{q,p}(\Omega) \equiv  \Bt \times \lqo$, or $\VbqpO$ see also Remark \ref{Rmk-1.3}.
			\item Such semigroup $\ds e^{\BA_{F,q}t}$ is uniformly (exponentially) stable in  either of these spaces
			\begin{equation}\label{2.7}
			\norm{e^{\BA_{F,q}t} \bs{w}_0}_{(\cdot)} \leq C_{\gamma_0} e^{-\gamma_0 t}\norm{\bs{w}_0}_{(\cdot)}, \ t \geq 0
			\end{equation}
			\noindent where $(\cdot)$ denotes either $\ds \lso \times \lqo \equiv \SqsO$ or else $\ds \Bto \times \lqo \equiv \VqpO$, or $\VbqpO$. In \eqref{2.7}, $\gamma_0$ is any positive number such that $Re~\lambda_{N+1} < -\gamma_0 < 0$.			
			\item Finally, $\BA_{F,q}$ has maximal $L^p$-regularity up to $T = \infty$ on either of these spaces:
			\begin{empheq}[left=\empheqlbrace]{align}\label{2.8}
			\BA_{F,q} &\in MReg (L^p(0,\infty; \ \cdot \ )), \text{ where } (\cdot) \text{ denotes} \nonumber\\
			\text{either } &\lso \times \lqo \equiv \SqsO \nonumber\\
			\text{ or else } &\ds \Bto \times \lqo \equiv \VqpO, \text{ or } \VbqpO.
			\end{empheq}
		\end{enumerate} 
	\end{thm}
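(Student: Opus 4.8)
The plan is to construct the feedback operators $F$ and $J$ via a spectral decomposition of the problem into the unstable finite-dimensional part and the stable infinite-dimensional part, carry out the stabilization on the finite-dimensional piece by a Hautus/Kalman rank argument (whose surjectivity hypothesis is supplied by the UCP of Theorem \ref{Thm-B.1}), and then transfer analyticity and maximal regularity from the known properties of $\BA_q$ (Theorem \ref{Thm-1.2}) to the perturbed generator $\BA_{F,q}$ using that the feedback terms are finite-rank, hence relatively bounded with relative bound zero. First I would introduce the spectral projection $P_N$ onto the generalized eigenspace $(\bs{W}^q_\sigma)^u_N$ associated with the $N$ unstable eigenvalues $\{\lambda_j\}_{j=1}^N$ of $\BA_q$ lying in $\{\Re\lambda \ge 0\}$; since $\BA_q$ has compact resolvent (Theorem \ref{Thm-1.2}(iii)) and generates an analytic semigroup, this projection is well defined, bounded, of finite rank, and commutes with $\BA_q$, giving a direct-sum decomposition $\bs{W}^q_\sigma(\Omega) = (\bs{W}^q_\sigma)^u_N \oplus (\bs{W}^q_\sigma)^s$ that reduces $\BA_q$. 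On the stable subspace the restricted semigroup already decays like $e^{-\gamma_0 t}$ for any $\gamma_0$ with $\Re\lambda_{N+1} < -\gamma_0 < 0$, so the entire task is to stabilize the $N\times N$ system $\dot{\bs{w}}_N = (\BA_q)|_N \bs{w}_N + P_N B (v,\bs{u})$, where $B$ encodes the control actions $\calB_q D(\cdot)$ on the thermal boundary and $P_q m(\cdot)$ on the fluid collar.

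The key step is the finite-dimensional controllability argument. I would show that the pair $((\BA_q)|_N, P_N B)$ is controllable — equivalently, that no nonzero generalized eigenvector $\{\bs\varphi,\psi\}$ of the adjoint $\BA_q^*$ associated with an unstable $\bar\lambda_j$ can be orthogonal to the range of $B^*$. Testing against $\bs\varphi$ and $\psi$ and unwinding the definitions of $\calB_q D$ and $P_q m$ reduces this to: if the adjoint eigenfunction has $\psi|_{\wti\Gamma}$-normal-derivative data vanishing on $\wti\Gamma$ and the $(d-1)$ selected fluid components of $\bs\varphi$ vanishing on $\omega$, then $\{\bs\varphi,\psi\} \equiv 0$. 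This is precisely the over-determined adjoint static unique continuation statement of Theorem \ref{Thm-B.1} in Appendix \ref{app-B} — and this is why only $(d-1)$ fluid components (necessarily including the $d$\textsuperscript{th}) and a single scalar boundary function are needed. Granting this, a standard rank-condition construction (as in \cite{BLT1:2006}, \cite{LT3:2015}, \cite{LPT.1}) produces vectors $\bs{p}_k, \bs{q}_k \in (\bs{W}^u_N)^*$ and finitely many profiles $f_k \in \calF$, $\bs{u}_k \in \widehat{\bs{L}}^q_\sigma(\Omega)$, $k=1,\dots,K$ with $K = \sup_i \ell_i$, such that the feedback $\{v,\bs{u}\}$ in \eqref{2.2}, \eqref{2.3} assigns the spectrum of $(\BA_q)|_N + (\text{feedback})$ to the open left half-plane, with decay rate exceeding $\gamma_0$; here $K$ equals the maximal geometric multiplicity because the construction places one rank-one actuator per Jordan chain within each eigenvalue.

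For part (i), I would write $\BA_{F,q} = \BA_q + \calP$ where $\calP$ is the finite-rank perturbation built from the feedback maps composed with $\calB_q D$ and $P_q m$; the sole subtlety is that $\calB_q D$ is unbounded on $\bs{W}^q_\sigma$, but by the regularity $DF\bs{w} \in \bs{W}^{2,q}(\Omega)$ in \eqref{2.2b} (since $f_k \in W^{2-1/q,q}$) the perturbation is bounded into $[\calD(B_q^*)]'$ and, after the change of variables $w_2 \mapsto w_2 - DF\bs{w}$ reflected in the domain \eqref{2.6b}, one recognizes $\hat{\BA}_{F,q}$ as a triangular operator with $-A_q$ and $-\calB_q$ on the diagonal — analytic and maximally regular — plus the bounded remainder $\Pi$ of \eqref{2.6a}; bounded perturbation preserves both analyticity and, via the Kalton–Weis / Prüss–Schnaubelt machinery, $L^p$-maximal regularity on finite intervals. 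Then (ii) is the spectral-decomposition estimate \eqref{2.7}: on the stable part the bound is inherited, on the unstable part it follows from the assigned spectrum, and the Besov-space version $\VqpO$ (or $\VbqpO$) follows because $e^{\BA_{F,q}t}$ restricts to the interpolation space between $\bs{W}^q_\sigma$ and $\calD(\BA_q)$, as recorded after Theorem \ref{Thm-1.2}. Finally (iii) upgrades finite-interval maximal regularity to $T=\infty$ by combining the exponential decay of (ii) with the finite-interval property, a routine gluing argument. The main obstacle is the controllability/UCP step — everything else is perturbation theory and bookkeeping — but that step is exactly what Theorem \ref{Thm-B.1} is designed to settle, so the proof reduces to invoking it correctly with the right choice of omitted fluid coordinate.
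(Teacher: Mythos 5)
Your proposal follows essentially the same route as the paper: Riesz-projection splitting into the unstable $N$-dimensional part and the stable complement (Section \ref{Sec-3}), verification of finite-dimensional controllability of the projected pair by reducing the failure of the rank condition to an over-determined adjoint eigenproblem killed by the UCP of Theorem \ref{Thm-B.1} (Theorem \ref{Thm-4.1}, with the $(d-1)$-component refinement in Appendix \ref{app-C}), pole placement with arbitrary decay on the unstable block (Theorem \ref{Thm-4.2}), and then perturbation theory plus a gluing argument for analyticity, uniform decay, and maximal regularity up to $T=\infty$ (Theorems \ref{Thm-6.1}, \ref{Thm-6.2}, \ref{Thm-7.1}). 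Your Hautus-style phrasing of the controllability test is equivalent to the paper's Kalman rank condition \eqref{4.31}, and parts (ii) and (iii) are argued exactly as in the paper.

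The one step you under-justify is generation of the analytic semigroup by $\hat{\BA}_{_{F,q}}$ in part (i). Your claim that the feedback perturbation is ``finite-rank, hence relatively bounded with relative bound zero'' is not accurate as stated: the thermal feedback enters through $\calB_q D F$, and $\calB_q D$ is unbounded on $\Wqs(\Omega)$ (it maps only into $[\calD(B_q^*)]'$), so finite rank of $F$ alone does not give a bounded or Kato-small perturbation of $\BA_q$. Likewise, ``after the change of variables $w_2 \mapsto w_2 - DF\bs{w}$ one recognizes $\hat{\BA}_{_{F,q}}$ as a triangular operator with $-A_q$ and $-\calB_q$ on the diagonal'' is not carried out, and the similarity transformation would produce commutator terms that still need to be controlled. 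The paper's actual mechanism is different and sharper: it passes to the adjoint $\hat{\BA}^*_{_{F,q}} = \mathrm{diag}(-A_q^*,-\calB_q^*) + F^*D^*\calB_q^*$ on the reflexive dual space and shows, using the Dirichlet-map regularity $D^*\calB_q^{*\gamma}\in\calL(L^{q'}(\Omega),L^{q'}(\Gamma))$ with $\gamma = \rfrac{1}{2q}-\varepsilon$, that $F^*D^*\calB_q^*$ is $(\hat{\BA}_q^*)^{\theta_0}$-bounded for $\theta_0 = 1-\rfrac{1}{2q}+\varepsilon<1$ (estimates \eqref{6.4}--\eqref{6.6}); this fractional-power relative boundedness is what makes the standard perturbation theorem for analytic generators and the Kunstmann--Weis/Dore perturbation results for maximal regularity applicable. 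Your outline would need to be completed by this (or an equivalent) quantitative argument; everything else matches the paper.
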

	\nin The proof of Theorem \ref{Thm-2.1} begins in Section \ref{Sec-3} and proceeds through Section \ref{Sec-7}. Such proof gives the feedback vectors $\bs{u}_k$ in \eqref{2.3} as being in $\lso$. The refinement of asserting that such vectors $\bs{u}_k$ can be taken in $\widehat{\bs{L}}^q_{\sigma}(\Omega)$ defined in \eqref{2.1} is given in Appendix \ref{app-C}. More precisely, analyticity in (i) is proved in Theorem \ref{Thm-6.1}; uniform decay in (ii) is proved in Theorem \ref{Thm-6.2}, while maximal $L^p$-regularity in (iii) is established in Theorem \ref{Thm-7.1}.
	
	\subsection{Local well-posedness and uniform (exponential) null stabilization of the translated nonlinear $\{\bs{z}, h\}$-problem \eqref{1.29} or \eqref{1.13} by means of a finite dimensional explicit, spectral based feedback control pair $\ds \{ v, \bs{u} \}$ localized on $\{ \wti{\Gamma}, \omega \}$.}\label{Sec-2.3}
	
	Starting with the present subsection, the nonlinearity of problem (\ref{1.1}) will impose for $d = 3$ the requirement $q > 3$, see \eqref{8.24} below. As our deliberate goal is to obtain the stabilization result for the fluid component $\bs{y}$ in the space $\ds \Bto$ \underline{which does not recognize boundary conditions}, Remark \ref{Rmk-1.2}, then the limitation $\ds p < \rfrac{2q}{2q - 1}$ of this space applies, see \eqref{1.11}. In conclusion, our well-posedness and stabilization results will hold under the restriction $\ds q > 3,\ 1 < p < \rfrac{6}{5}$ for $d = 3$, and $\ds q > 2, 1 < p < \rfrac{4}{3}$ for $d = 2$.
	
	\begin{thm}\label{Thm-2.2}
		Let $\ds d = 2,3, \ q > d, \ 1 < p < \frac{2q}{2q-1}$. Consider the nonlinear open-loop $\{\bs{z},h\}$-problem \eqref{1.29a}
		\begin{subequations}\label{2.9}
			\begin{align}
			\frac{d}{dt}
			\bbm \bs{z} \\ h \ebm &= \bbm \calA_q & -\calC_{\gamma} \\ -\calC_{\theta_e} & 0 \ebm \bbm \bs{z} \\ h \ebm - \bbm  \calN_q  & 0  \\ 0 & \calM_q[\bs{z}] \ebm \bbm \bs{z} \\ h \ebm + \bbm P_q (m\bs{u}) \\ -\calB_q (h - Dv) \ebm \text{ in } \Wqs(\Omega) \label{2.9a}\\[2mm]
			\begin{picture}(0,0)
			\put(-4,25){$\left\{\rule{0pt}{45pt}\right.$}\end{picture}
			\bbm \bs{z}(0) \\ h(0) \ebm &= \bbm \bs{z}_0 \\ h_0 \ebm \in \bs{W}^q_{\sigma}(\Omega) \label{2.9b}
			\end{align}
		\end{subequations}
	\nin which, upon application of feedback control pair $\ds \left \{ v, P_qm(\bs{u}) \right\} = \left\{ F \bbm \bs{z} \\ h \ebm, J \bbm \bs{z} \\ h \ebm  \right\}$ of the same structure \eqref{2.2a}, \eqref{2.3} as for the linearized feedback system \eqref{1.32b}
	\begin{equation}\label{2.10}
	\bbm P_q m(\bs{u}) \\ v \ebm = \bbm J \bbm \bs{z} \\ h \ebm \\[3mm] F \bbm \bs{z} \\ h \ebm \ebm = \bbm P_q \Bigg( m \bigg(\ds \sum_{k = 1}^{K} \ip{P_N \bbm \bs{z} \\ h \ebm}{\bs{q}_k} \bs{u}_k \bigg) \Bigg) \\[2mm]  \ds \sum_{k = 1}^{K} \ip{P_N \bbm \bs{z} \\ h \ebm}{\bs{p}_k} f_k  \ebm
	\end{equation}
	\nin $P_N$ defined in \eqref{3.1a} is rewritten in a closed loop feedback form as 
	\begin{subequations}\label{2.11}
		\begin{empheq}[left=\empheqlbrace]{align}
		\frac{d\bs{z}}{dt} - \calA_q \bs{z} + \calC_{\gamma} h + \calN_q \bs{z} &= P_q \Bigg( m \bigg( \sum_{k = 1}^{K} \ip{P_N \bbm \bs{z} \\ h \ebm}{\bs{q}_k} \bs{u}_k \bigg) \Bigg) \label{2.11a}\\
		\frac{dh}{dt} + \calC_{\theta_e} \bs{z} + \calM_q[\bs{z}]h &= -\calB_q \bigg( h - D \ds \sum_{k = 1}^{K} \ip{P_N \bbm \bs{z} \\ h \ebm}{\bs{p}_k} f_k \bigg). \label{2.11b}
		\end{empheq}
	\end{subequations}
	\nin or in matrix form, recalling \eqref{2.4b}, \eqref{2.6b}
	\begin{multline}\label{2.12}
	\frac{d}{dt} \bbm \bs{z} \\ h \ebm = \BA_{F,q} \bbm \bs{z} \\ h \ebm - \bbm \calN_q & 0 \\ 0 & \calM_q[\bs{z}] \ebm \bbm \bs{z} \\ h \ebm; \ \BA_{F,q} \bbm \bs{z} \\ h \ebm = \bbm \calA_q & -\calC_{\gamma} \\ -\calC_{\theta_e} & 0 \ebm \bbm \bs{z} \\ h \ebm \\+ \bbm J \bbm \bs{z} \\ h \ebm \\ -\calB_q \left(h - DF \bbm \bs{z} \\ h \ebm \right) \ebm.
	\end{multline}	
	\nin Let $\ds d = 2,3, \ q > d, \ 1 < p < \frac{2q}{2q-1}$. There exists a positive constant $r_1 > 0$ (identified in the proof below in \eqref{8.31} such that if
	\begin{equation}\label{2.13}
		\norm{\{ \bs{z}_0, h_0 \}}_{\Bto \times \BsO} < r_1,
	\end{equation}
	then Eq \eqref{2.11}, or \eqref{2.12} has a unique fixed point non-linear semigroup solution,
	\begin{equation}\label{2.14}
	\bbm \bs{z} \\ h \ebm (t) =  e^{\BA_{F,q}t} \bbm \bs{z}_0 \\ h_0 \ebm - \int_{0}^{t} e^{\BA_{F,q}(t - \tau)} \bbm \calN_q \bs{z}(\tau) \\ \calM_q[\bs{z}]h(\tau) \ebm d \tau, \ \text{in the space } \xipqs \times \xipq
	\end{equation}
	\begin{equation}\label{2.15}
	\{\bs{z},h\} \in \xipqs \times \xipq \equiv L^p(0,\infty; \calD(\BA_{F,q})) \cap W^{1,p}(0,\infty;\bs{W}^q_{\sigma}(\Omega)),
	\end{equation}	
	\nin Moreover, (referred to as trace theorem)
	\begin{align}
	\xipqs & \hookrightarrow C([0, \infty ];\Bso) \label{2.16}\\[3mm]
	\xipq & \hookrightarrow  C([0, \infty ];\BsO)\label{2.17}
	\end{align}
	so that
	\begin{equation}\label{2.18}
	\xipqs \times \xipq \hookrightarrow C([0, \infty ];\VbqpO),
	\end{equation}
	\nin The space $ \xipqs \times  \xipq $  defined above is the space of $L^p$-maximal regularity for the generator $\BA_{F,q}$.
	\end{thm}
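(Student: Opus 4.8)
\textbf{Proof of Theorem \ref{Thm-2.2} (plan).}
My plan is to run a Banach fixed-point (contraction) argument for the closed-loop system \eqref{2.11}--\eqref{2.12}, recast via variation of parameters around the stable feedback generator $\BA_{F,q}$ as the fixed-point equation \eqref{2.14}. Concretely I would set
\[
\Phi(\bs{z},h)(t) = e^{\BA_{F,q}t}\bbm \bs{z}_0 \\ h_0 \ebm - \int_0^t e^{\BA_{F,q}(t-\tau)}\bbm \calN_q\bs{z}(\tau) \\ \calM_q[\bs{z}]h(\tau) \ebm d\tau
\]
and look for a fixed point in a small ball of $\xipqs\times\xipq$. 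Two structural inputs, both delivered by Theorem \ref{Thm-2.1}, are what make this work \emph{uniformly on the whole half-line}. First, $\BA_{F,q}$ has $L^p$-maximal regularity up to $T=\infty$ on $\Wqs(\Omega)$ (equivalently on $\VbqpO$, Remark \ref{Rmk-1.3}), so the convolution $\bs{G}\mapsto\int_0^{\cdot}e^{\BA_{F,q}(\cdot-\tau)}\bs{G}(\tau)\,d\tau$ is bounded from $L^p(0,\infty;\Wqs(\Omega))$ into $\xipqs\times\xipq$ with a constant independent of $T$. Second, by the trace theory of maximal regularity the admissible initial-data space is the real interpolation space $\big(\Wqs(\Omega),\calD(\BA_{F,q})\big)_{1-\frac1p,p}=\VbqpO=\Bto\times\BsO$, and $\{\bs{z}_0,h_0\}\mapsto e^{\BA_{F,q}\cdot}\{\bs{z}_0,h_0\}$ maps this space boundedly into $\xipqs\times\xipq$; this same trace theory also yields the embeddings \eqref{2.16}--\eqref{2.18}, $\xipqs\times\xipq\hookrightarrow C([0,\infty];\VbqpO)$.

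The heart of the argument is the pair of quadratic nonlinear estimates
\[
\norm{\calN_q\bs{z}}_{L^p(0,\infty;\lso)}\le C\,\norm{\bs{z}}_{\xipqs}^2, \qquad \norm{\calM_q[\bs{z}]h}_{L^p(0,\infty;\lqo)}\le C\,\norm{\bs{z}}_{\xipqs}\norm{h}_{\xipq},
\]
together with the companion difference estimates that come for free from the bilinearity of $(\bs{z},\bs{z}')\mapsto P_q[(\bs{z}\cdot\nabla)\bs{z}']$ and $(\bs{z},h)\mapsto\bs{z}\cdot\nabla h$. Granting these, $\Phi$ maps the ball $B_\rho\subset\xipqs\times\xipq$ into itself and contracts it as soon as $C_0 r_1+C_1\rho^2\le\rho$ and $2C_1\rho<1$, which is arranged by taking $\rho\sim 2C_0 r_1$ and then $r_1$ small — this is what pins down the constant $r_1$ of \eqref{8.31}. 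The resulting unique fixed point is the asserted nonlinear semigroup solution \eqref{2.14}--\eqref{2.15}; continuity in time and the value of the trace at $t=0$ follow from \eqref{2.16}--\eqref{2.18}, and global existence on $[0,\infty)$ together with the decay encoded in $L^p(0,\infty)$-membership is then immediate since the underlying feedback semigroup decays like $e^{-\gamma_0 t}$ by Theorem \ref{Thm-2.1}(ii).

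The genuine obstacle is the first nonlinear estimate, and this is precisely where the \emph{critical} choice of $\Bto$ bites. For the tight range $1<p<\frac{2q}{2q-1}$ one has $2-\frac2p<\frac1q\le\frac dq$, so $\Bso=\bs{B}^{2-2/p}_{q,p}(\Omega)$ does \emph{not} embed into $\bs{L}^\infty(\Omega)$, and the naive bound $\norm{(\bs{z}\cdot\nabla)\bs{z}}_{\lqo}\le\norm{\bs{z}}_{\bs{L}^\infty}\norm{\nabla\bs{z}}_{\lqo}$ cannot be closed from $\bs{z}\in C([0,\infty];\Bso)$ alone. My remedy is to exploit the full mixed space--time content of the maximal-regularity norm: by the mixed-derivative theorem $\xipqs\hookrightarrow H^{\theta,p}(0,\infty;\calD(A_q^{1-\theta}))$ for every $\theta\in[0,1]$, hence by Sobolev embedding in time $\bs{z}\in L^{r}(0,\infty;\calD(A_q^{1-\theta}))$ with $\frac1r=\frac1p-\theta$; then split $(\bs{z}\cdot\nabla)\bs{z}$, place the factor carrying fewer spatial derivatives in $\bs{L}^\infty(\Omega)$ via $\calD(A_q^\alpha)\hookrightarrow\bs{L}^\infty(\Omega)$ — legitimate exactly because $q>d=\dim\Omega$ forces the admissible $\alpha$ to lie strictly below $\tfrac12$ and thus inside $(0,1)$, cf. \eqref{1.19} — keep the other factor in $\bs{L}^q(\Omega)$, and balance the time exponents by Hölder. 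The analogous splitting, with $h$ in the non-solenoidal maximal-regularity class $\xipq$ built on $\calB_q\sim B_q$, handles $\calM_q[\bs{z}]h=\bs{z}\cdot\nabla h$. Checking that all these spatial and temporal exponents can be made simultaneously compatible, \emph{and} that the constants remain uniform up to $T=\infty$ (so that the solution genuinely decays rather than merely exists locally), is the delicate bookkeeping on which the whole proof turns; once it is carried out, the contraction closes routinely.
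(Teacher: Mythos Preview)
Your overall architecture --- Banach fixed point on the variation-of-parameters map, driven by maximal $L^p$-regularity of $\BA_{F,q}$ up to $T=\infty$ and the trace identification of the initial-data space as $\VbqpO$ --- is exactly the paper's (Section~8, Theorems~8.1--8.3). The two ingredients you single out from Theorem~\ref{Thm-2.1}, and the quadratic estimates you aim for, are precisely (8.14)--(8.16).

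Where you diverge is in the route to the nonlinear estimate, and here you have made life much harder than necessary. Your ``genuine obstacle'' --- that $\Bso$ fails to embed into $\bs{L}^\infty(\Omega)$ in the tight range --- is a non-issue, because the paper never puts the undifferentiated factor into $\bs{L}^\infty$ in space. The paper's splitting (displayed in full for $\calM_q[\bs{f}_1]f_2=\bs{f}_1\cdot\nabla f_2$ in (8.17)--(8.26), and cited from \cite{LPT.1} for $\calN_q$) goes the other way: the \emph{low-derivative} factor $\bs{f}_1$ is placed only in $L^\infty\big(0,\infty;\lso\big)$ via the trace embedding $\xipqs\hookrightarrow C([0,\infty];\Bso)\hookrightarrow L^\infty(0,\infty;\lso)$ --- just $L^q$ in space --- while the \emph{gradient} factor $\nabla f_2$ is placed in $L^p\big(0,\infty;\bs{L}^\infty(\Omega)\big)$ using the elementary spatial embedding $W^{1,q}(\Omega)\hookrightarrow L^\infty(\Omega)$ for $q>d$, applied to $\nabla f_2\in L^p(0,\infty;\bs{W}^{1,q}(\Omega))$ coming from the $L^p_tW^{2,q}_x$ part of the maximal-regularity norm. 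A single H\"older pairing $L^\infty_tL^q_x\cdot L^p_tL^\infty_x\to L^p_tL^q_x$ then closes the estimate, with no mixed-derivative theorem, no time-Sobolev embedding, and no fractional-domain interpolation. This is also where the restriction $q>d$ enters, transparently.

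Your mixed-derivative route would very likely work too, but you defer the exponent bookkeeping that is its entire content; the paper's argument is elementary by comparison and leaves nothing to check.
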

	
	\begin{thm}\label{Thm-2.3}
		Assume the setting of Theorem \ref{Thm-2.2}, in particular $\ds d = 2,3, \ q > d, \ 1 < p < \frac{2q}{2q - 1}$ and the smallness condition \eqref{2.13} for the I.C. Then, the solution guaranteed by Theorem \ref{Thm-2.2} is 
		 uniformly stable in the space
		$\VbqpO \equiv  \Bto \times \BsO$, see also Remark \ref{Rmk-1.3}: there exists $\widetilde{\gamma}>0, \ M_{\widetilde{\gamma}} > 0$ such that said solution satisfies
		\begin{equation}
		\label{2.19}
		\norm{\bbm \bs{z} \\ h \ebm(t)}_{\VbqpO} \leq M_{\widetilde{\gamma}} e^{-\widetilde{\gamma} t}         \norm{\bbm \bs{z}_0 \\ h_0 \ebm}_{\VbqpO}, \quad
		t \geq 0.
		\end{equation}
	\end{thm}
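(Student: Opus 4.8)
\textbf{Proof proposal for Theorem \ref{Thm-2.3}.}

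The plan is to extract the exponential decay of the nonlinear solution from the two ingredients already in hand: (a) the uniform exponential stability of the linear feedback semigroup $e^{\BA_{F,q}t}$ on $\VbqpO$ from Theorem \ref{Thm-2.1}(ii), together with its maximal $L^p$-regularity up to $T=\infty$ from Theorem \ref{Thm-2.1}(iii); and (b) the fixed-point representation \eqref{2.14} of the nonlinear solution in the space $\xipqs \times \xipq$, with the smallness \eqref{2.13}. The standard device is a change of unknown: set $\bs{z}_\gamma(t) = e^{\widetilde{\gamma} t}\bs{z}(t)$, $h_\gamma(t) = e^{\widetilde{\gamma} t}h(t)$ for a suitably small $\widetilde{\gamma} \in (0,\gamma_0)$, where $-\gamma_0$ dominates $\Re\lambda_{N+1}$. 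Then $\{\bs{z}_\gamma, h_\gamma\}$ solves the shifted equation
\begin{equation*}
\frac{d}{dt}\bbm \bs{z}_\gamma \\ h_\gamma \ebm = (\BA_{F,q} + \widetilde{\gamma} I)\bbm \bs{z}_\gamma \\ h_\gamma \ebm - e^{-\widetilde{\gamma} t}\bbm \calN_q \bs{z}_\gamma \\ \calM_q[\bs{z}_\gamma] h_\gamma \ebm,
\end{equation*}
and the operator $\BA_{F,q} + \widetilde{\gamma} I$ still generates an exponentially stable analytic semigroup on $\VbqpO$ (with rate $\gamma_0 - \widetilde{\gamma} > 0$) and still enjoys maximal $L^p$-regularity up to $T = \infty$, since shifting by a small real scalar preserves the spectral bound strictly below zero.

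The core step is then a re-run of the contraction/fixed-point argument of Theorem \ref{Thm-2.2} but in the \emph{weighted} space $X_\gamma := \{ (\bs{z}, h) : (e^{\widetilde{\gamma}\cdot}\bs{z}, e^{\widetilde{\gamma}\cdot}h) \in \xipqs \times \xipq \}$. One must check that the nonlinear map
\begin{equation*}
\calT\bbm \bs{z}_\gamma \\ h_\gamma \ebm(t) = e^{(\BA_{F,q}+\widetilde{\gamma}I)t}\bbm \bs{z}_0 \\ h_0 \ebm - \int_0^t e^{(\BA_{F,q}+\widetilde{\gamma}I)(t-\tau)} e^{-\widetilde{\gamma}\tau}\bbm \calN_q \bs{z}_\gamma(\tau) \\ \calM_q[\bs{z}_\gamma]h_\gamma(\tau) \ebm d\tau
\end{equation*}
maps a small ball of $\xipqs \times \xipq$ into itself and is a contraction there. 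The linear term is handled by Theorem \ref{Thm-2.1}(ii) applied with the initial datum in $\VbqpO$ via the trace embedding \eqref{2.18}. For the Duhamel term one invokes maximal $L^p$-regularity of $\BA_{F,q}+\widetilde{\gamma}I$ on $(0,\infty)$, reducing matters to estimating $\| e^{-\widetilde{\gamma}\cdot}\{\calN_q\bs{z}_\gamma, \calM_q[\bs{z}_\gamma]h_\gamma\}\|_{L^p(0,\infty;\Wqs(\Omega))}$; since $e^{-\widetilde{\gamma}\tau}\le 1$, this is bounded by the \emph{same} quadratic nonlinear estimates already proved in Theorem \ref{Thm-2.2} (the embeddings $\xipqs \hookrightarrow L^\infty(0,\infty;\Bso)$ etc., Hölder in time, and $\bs{W}^{1,q}\hookrightarrow \bs{L}^\infty$ for $q>d$ as in \eqref{1.19}), now applied to $\bs{z}_\gamma, h_\gamma$. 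Thus the contraction constant and the smallness threshold $r_1$ are, up to harmless absolute constants, inherited from Theorem \ref{Thm-2.2}, so for $\|\{\bs{z}_0,h_0\}\|_{\VbqpO} < r_1$ (possibly after shrinking $r_1$) one obtains a unique small solution $\{\bs{z}_\gamma, h_\gamma\} \in \xipqs \times \xipq$. By uniqueness of the solution furnished by Theorem \ref{Thm-2.2}, $e^{\widetilde{\gamma} t}\{\bs{z}(t), h(t)\}$ coincides with this weighted solution, hence lies in $\xipqs \times \xipq$; the trace embedding \eqref{2.18} then gives $\| e^{\widetilde{\gamma} t}\{\bs{z}(t),h(t)\}\|_{\VbqpO} \le C\|e^{\widetilde{\gamma}\cdot}\{\bs{z},h\}\|_{\xipqs\times\xipq} \le M_{\widetilde{\gamma}}\|\{\bs{z}_0,h_0\}\|_{\VbqpO}$ uniformly in $t$, which is precisely \eqref{2.19}.

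The main obstacle I anticipate is verifying that the quadratic nonlinear estimates survive \emph{uniformly on the infinite time interval} in the weighted norm — i.e. that the constants in the estimates of $\calN_q$ and $\calM_q[\bs{z}]$ from the proof of Theorem \ref{Thm-2.2} do not degenerate as $T\to\infty$ and are not spoiled by the exponential weight. This is where the choice of $\widetilde{\gamma}$ strictly below $\gamma_0$ is essential: one needs the shifted generator to remain sectorial with maximal regularity constant bounded independently of the (now infinite) horizon, which follows from part (iii) of Theorem \ref{Thm-2.1} since that statement is already up to $T=\infty$. A secondary, more technical point is the bookkeeping of the coupling structure: $\calM_q[\bs{z}]h = \bs{z}\cdot\nabla h$ is bilinear in the two components, so the fixed-point space must control $\bs{z}$ in a norm strong enough ($\bs{W}^{1,q}\cap\bs{L}^\infty$ pointwise in time, via $\xipqs \hookrightarrow C([0,\infty];\Bso)$ and $q>d$) to close the estimate for the heat nonlinearity; but this is exactly the structure already exploited in Theorem \ref{Thm-2.2}, so no genuinely new difficulty arises, only careful transcription.
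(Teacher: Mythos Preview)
Your argument is correct, but it follows a genuinely different route from the paper's own proof in Section~\ref{Sec-9}. The paper does \emph{not} introduce the exponential weight $e^{\widetilde{\gamma}t}$ and re-run the fixed-point argument in a weighted space. Instead, it works directly with the unweighted solution $\{\bs{z},h\}$ and proceeds in two steps: first, it combines the semigroup decay \eqref{2.7}, the trace embedding \eqref{2.18}, maximal regularity \eqref{7.6}, and the quadratic estimates \eqref{8.15}--\eqref{8.16} to obtain a pointwise bound of the form
\[
\norm{\{\bs{z}(t),h(t)\}}_{\VbqpO}\le \what{M}\Big[e^{-\gamma_0 t}+4\what{M}C_3 r_1\Big]\norm{\{\bs{z}_0,h_0\}}_{\VbqpO};
\]
second, it chooses $T$ large and $r_1$ small so that the bracket at $t=T$ is a constant $\beta<1$, and then iterates via the nonlinear semigroup property to get $\norm{\{\bs{z}(nT),h(nT)\}}\le\beta^n\norm{\{\bs{z}_0,h_0\}}$, which yields exponential decay by the standard Datko/Pazy argument. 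Your weighted-space approach is arguably cleaner---it gives $\widetilde{\gamma}$ directly as any number strictly below $\gamma_0$, and avoids the discrete-time iteration---whereas the paper's approach is slightly more elementary in that it never needs to re-verify maximal regularity for the shifted generator $\BA_{F,q}+\widetilde{\gamma}I$, and it makes the dependence of $\widetilde{\gamma}$ on $\gamma_0$ and $r_1$ more explicit (see Remark~\ref{I-Rmk-9.1}). Both methods are standard for semilinear parabolic stabilization and lead to the same conclusion with comparable smallness thresholds.
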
	
	\nin See Remark \ref{9.1} for the relationship between $\gamma_0$ in \eqref{2.7} and $\wti{\gamma}$ in \eqref{2.19}.
	
	\subsection{Local well-posedness and uniform (exponential) stabilization near an unstable equilibrium solution $\{ \bs{y}_e, \theta_e \}$ of the original Boussinesq system \eqref{1.1} by means of a finite dimensional, explicit, spectral based feedback control pair $\{ v, \bs{u} \}$ localized on $\{ \wti{\Gamma}, \omega \}$.}\label{Sec-2.4.N}
	
	\nin The results of this subsection - the main ones of the present paper - are an immediate corollary of Theorem \ref{Thm-2.2} and \ref{Thm-2.3} of Section \ref{Sec-2.3} via the change of variable \eqref{1.12}.
	
	\begin{thm}\label{Thm-2.4}
		Let $\ds d = 2,3, \ q > d, \ 1 < p < \frac{2q}{2q-1}$. Consider the original Boussinesq problem \eqref{1.1}. Let $\{ \bs{y}_e, \theta_e \}$ be a given unstable equilibrium solution as guaranteed by Theorem \ref{Thm-1.1} for a steady state problem \eqref{1.2}: i.e. assume \eqref{1.38}. For a constant $\rho > 0$, let the initial condition $\{ \bs{y}_0, \theta_0 \}$ in \eqref{1.1e} be in $\ds \Bto \times \BsO$ and satisfy		
		\begin{multline}
			\calV_{\rho} = \left\{ \{ \bs{y}_0, \theta_0 \} \in \Bto \times \BsO: \right.\\
			\left. \norm{\{ \bs{y}_0, \theta_0 \} - \{ \bs{y}_e, \theta_e \}}_{\Bto \times \BsO} \leq \rho \right\}, \ \rho > 0.
		\end{multline}
		If $\rho > 0$ is sufficiently small, then: for each $\ds \{ \bs{y}_0, \theta_0 \} \in \calV_{\rho}$, let
		\begin{equation}\label{2.21}
			v = F \bbm \bs{y} - \bs{y}_e \\ \theta - \theta_e \ebm = \sum_{k = 1}^K \ip{P_N \bbm \bs{y} - \bs{y}_e \\ \theta - \theta_e \ebm}{\bs{p}_k} \bs{f}_k
		\end{equation}
		\vspace{-0.5cm}
		\begin{subequations}\label{2.22}
			\begin{align}
				\bs{u} &= \wti{J} \bbm \bs{y} - \bs{y}_e \\ \theta - \theta_e \ebm = \sum_{k = 1}^K \ip{P_N \bbm \bs{y} - \bs{y}_e \\ \theta - \theta_e \ebm}{\bs{q}_k} \bs{u}_k \label{2.22a}\\
				J \bbm \bs{y} - \bs{y}_e \\ \theta - \theta_e \ebm &= P_q \left( m \left( \wti{J} \bbm \bs{y} - \bs{y}_e \\ \theta - \theta_e \ebm \right) \right) \label{2.22b}
			\end{align}
		\end{subequations}
		that is, the finite dimensional operators $\ds F \in \calL(\WqsO, L^q(\Gamma))$ and $\ds J \in \calL(\WqsO, \what{\bs{L}}^q_{\sigma}(\Omega))$ be the same as in \eqref{2.2a}, \eqref{2.3} for the linearized $\bs{w}$-problem \eqref{2.4}; or as in \eqref{2.10} for the translated $\{ \bs{z}, h \}$-problem \eqref{2.11}, having the same boundary vectors $\bs{f}_k$ and interior vectors $\bs{p}_k, \bs{q}_k \in (\WqsuN)^*$ and $\bs{u}_k \in \what{\bs{L}}^q_{\sigma}(\Omega)) $ as in Theorem \ref{Thm-2.1}. Substitute $v$ in \eqref{2.21} and $\bs{u}$ in \eqref{2.22a} in \eqref{1.1d} and \eqref{1.1a} respectively, to obtain the original problem \eqref{1.11} in feedback form:
		\begin{subequations}\label{2.23}
			\begin{empheq}[left=\empheqlbrace]{align}
			\bs{y}_t - \nu \Delta \bs{y} + (\bs{y} \cdot \nabla) \bs{y} - \gamma (\theta - \bar{\theta}) \bs{e}_d+ \nabla \pi &= \nonumber \\ m\left(\sum_{k = 1}^K \ip{P_N \bbm \bs{y} - \bs{y}_e \\ \theta - \theta_e \ebm}{\bs{q}_k} \bs{u}_k\right) &+ \bs{f}(x)   \text{ in } Q \\
			\theta_t - \kappa \Delta \theta + \bs{y} \cdot \nabla \theta &=  g(x) \text{ in } Q\\
			\text{div }\bs{y} &= 0   \text{ in } Q\\
			\bs{y} = 0, \ \theta = \sum_{k = 1}^K \ip{P_N \bbm \bs{y} - \bs{y}_e \\ \theta - \theta_e \ebm}{\bs{p}_k} & \bs{f}_k \text{ on } \Sigma\\
			\bs{y}(0,x) = \bs{y}_0, \quad \theta(0,x) & = \theta_0 \text{ on } \Omega.
			\end{empheq}
		\end{subequations}
	\end{thm}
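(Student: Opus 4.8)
\textbf{Proof proposal for Theorem \ref{Thm-2.4}.} The plan is to obtain Theorem \ref{Thm-2.4} as a direct translation of Theorems \ref{Thm-2.2} and \ref{Thm-2.3} via the change of variables \eqref{1.12}, namely $\bs{z} = \bs{y} - \bs{y}_e$, $h = \theta - \theta_e$, $\chi = \pi - \pi_e$. First I would observe that this substitution is an affine bijection between trajectories of the original closed-loop Boussinesq system \eqref{2.23} and trajectories of the translated closed-loop $\{\bs{z},h\}$-problem \eqref{2.11} (equivalently \eqref{2.12}): indeed \eqref{1.13} was derived from \eqref{1.1} precisely by this substitution, and the feedback laws \eqref{2.21}, \eqref{2.22} for the $\{\bs{y},\theta\}$-variables are, by construction, nothing but the feedback laws \eqref{2.10} for the $\{\bs{z},h\}$-variables re-expressed through \eqref{1.12}, using the \emph{same} finite-dimensional operators $F$ and $J$, the same boundary vectors $\bs{f}_k$, and the same interior vectors $\bs{p}_k, \bs{q}_k \in (\WqsuN)^*$, $\bs{u}_k \in \what{\bs{L}}^q_{\sigma}(\Omega)$ supplied by Theorem \ref{Thm-2.1} (with the reduced-dimension refinement from Appendix \ref{app-C}). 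Since $\{\bs{y}_e,\theta_e\}$ is a time-independent equilibrium, $\pa_t \bs{y} = \pa_t \bs{z}$ and $\pa_t\theta = \pa_t h$, and the nonlinear terms transform exactly as in the passage from \eqref{1.1} to \eqref{1.13}; thus the two closed-loop systems are genuinely equivalent.

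Next I would translate the hypothesis. The neighborhood $\calV_\rho$ of $\{\bs{y}_e,\theta_e\}$ in $\Bto \times \BsO$ is, under \eqref{1.12}, mapped isometrically onto the ball $\{\|\{\bs{z}_0,h_0\}\|_{\Bto\times\BsO} \le \rho\}$ centered at the origin; hence choosing $\rho < r_1$, with $r_1$ the radius furnished by Theorem \ref{Thm-2.2} (identified there in \eqref{8.31}), places the translated initial datum $\{\bs{z}_0,h_0\} = \{\bs{y}_0-\bs{y}_e,\ \theta_0-\theta_e\}$ inside the basin of attraction \eqref{2.13}. Theorem \ref{Thm-2.2} then yields local well-posedness: a unique fixed-point mild solution \eqref{2.14} of the closed-loop $\{\bs{z},h\}$-problem in the maximal-regularity space $\xipqs \times \xipq$, with the trace embedding \eqref{2.18} into $C([0,\infty];\VbqpO)$. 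Transporting back via \eqref{1.12} gives the corresponding unique solution $\{\bs{y},\theta\} = \{\bs{z}+\bs{y}_e,\ h+\theta_e\}$ of \eqref{2.23} in the affine maximal-regularity class, which is the asserted well-posedness statement.

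For the stabilization conclusion, I would invoke Theorem \ref{Thm-2.3}: it guarantees $\widetilde{\gamma}>0$ and $M_{\widetilde{\gamma}}>0$ with $\|\{\bs{z},h\}(t)\|_{\VbqpO} \le M_{\widetilde{\gamma}} e^{-\widetilde{\gamma} t}\|\{\bs{z}_0,h_0\}\|_{\VbqpO}$, $t\ge 0$. Rewriting $\{\bs{z},h\}(t) = \{\bs{y}(t)-\bs{y}_e,\ \theta(t)-\theta_e\}$ and $\{\bs{z}_0,h_0\} = \{\bs{y}_0-\bs{y}_e,\ \theta_0-\theta_e\}$, this reads
\begin{equation*}
\norm{\bbm \bs{y}(t) - \bs{y}_e \\ \theta(t) - \theta_e \ebm}_{\VbqpO} \leq M_{\widetilde{\gamma}}\, e^{-\widetilde{\gamma} t}\, \norm{\bbm \bs{y}_0 - \bs{y}_e \\ \theta_0 - \theta_e \ebm}_{\VbqpO}, \quad t \geq 0,
\end{equation*}
i.e. the closed-loop trajectory of \eqref{2.23} converges exponentially to the equilibrium $\{\bs{y}_e,\theta_e\}$ in the topology of $\VbqpO \equiv \Bto\times\BsO$, in the sense of local uniform stabilization. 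The decay rate $\widetilde{\gamma}$ and its relation to $\gamma_0$ of \eqref{2.7} are as in Remark \ref{9.1}.

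There is essentially \emph{no hard analytic obstacle} at this final stage: all the substantive work — constructing the finite-dimensional feedback $\{F,J\}$ of reduced dimension (relying on the unique continuation property of Theorem \ref{Thm-B.1}), proving analyticity, maximal $L^p$-regularity and exponential decay of $\BA_{F,q}$ (Theorems \ref{Thm-6.1}, \ref{Thm-6.2}, \ref{Thm-7.1}), and absorbing the nonlinearities $\calN_q, \calM_q[\bs{z}]$ by a fixed-point/barrier argument in the critical Besov setting (Theorems \ref{Thm-2.2}, \ref{Thm-2.3}) — has already been carried out. The only points requiring care are bookkeeping ones: verifying that the feedback operators $F \in \calL(\WqsO,L^q(\Gamma))$ and $J \in \calL(\WqsO,\what{\bs{L}}^q_{\sigma}(\Omega))$ act on the \emph{difference} $\{\bs{y}-\bs{y}_e,\ \theta-\theta_e\}$ exactly as they act on $\{\bs{z},h\}$ (so that \eqref{2.21}, \eqref{2.22} substituted into \eqref{1.1d}, \eqref{1.1a} reproduce \eqref{2.11} after translation, not some perturbation of it), and confirming that the pressure term, the buoyancy term $-\gamma(\theta-\bar\theta)\bs{e}_d$, and the Oseen linearization $L_e(\cdot)$ all transform consistently — which they do, since this is precisely the computation already recorded in the passage from \eqref{1.1} to \eqref{1.13}. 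Hence the proof is a one-line corollary modulo this routine verification.
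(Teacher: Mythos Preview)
Your proposal is correct and matches the paper's own approach exactly: the paper states (just before Theorem \ref{Thm-2.4}) that the results of this subsection ``are an immediate corollary of Theorem \ref{Thm-2.2} and \ref{Thm-2.3} of Section \ref{Sec-2.3} via the change of variable \eqref{1.12},'' and gives no further argument. Your write-up simply makes this one-line remark explicit, so nothing more is needed.
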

	re-written abstractly after application of the Helmholtz projection $P_q$ on the $\bs{y}$-equation, as
	\begin{subequations}\label{2.24}
		\begin{empheq}[left=\empheqlbrace]{align}
		\frac{d\bs{y}}{dt} - \calA_q \bs{y} + \calC_{\gamma} \theta + \calN_q \bs{y} &= P_q \Bigg( m \bigg( \sum_{k = 1}^{K} \ip{P_N \bbm \bs{y} - \bs{y}_e \\ \theta - \theta_e \ebm}{\bs{q}_k} \bs{u}_k \bigg) \Bigg) \label{2.24a}\\
		\frac{d \theta}{dt} + \calC_{\theta_e} \bs{y} + \calM_q[\bs{y}] \theta = &-\calB_q  \bigg( \theta - D \ds \sum_{k = 1}^{K} \ip{P_N \bbm \bs{y} - \bs{y}_e \\ \theta - \theta_e \ebm}{\bs{p}_k} f_k \bigg). \label{2.24b}
		\end{empheq}
	\end{subequations}
	Let $\ds d = 2,3, \ q > d, \ 1 < p < \frac{2q}{2q-1}$. If $\rho > 0$ is sufficiently small, then 
	\begin{enumerate}[(i)]
		\item the feedback problem \eqref{2.24} admits a unique (fixed point nonlinear semigroup) solution
		\begin{align}
			\hspace{-0.5cm} \{ \bs{y}, \theta \} \in \xipqs \times \xipq &\equiv L^p(0,\infty; \calD(\BA_{F,q})) \cap W^{1,p}(0,\infty;\bs{W}^q_{\sigma}(\Omega)) \\
			\xipqs \times  \xipq & \hookrightarrow C([0, \infty ];\VbqpO),
		\end{align}
		as in \eqref{2.18};
		\item there exists $\wti{\gamma} > 0, M_{\wti{\gamma}} > 0$ such that said solution $\{ \bs{y}, \theta \}$ satisfies
		\begin{equation}\label{2.27}
		\hspace{-0.5cm} \norm{\bbm \bs{y}(t) - \bs{y}_e \\ \theta(t) - \theta_e \ebm}_{\VbqpO} \leq M_{\widetilde{\gamma}} e^{-\widetilde{\gamma} t}         \norm{\bbm \bs{y}_0 - \bs{y}_e \\ \theta_0 - \theta_e \ebm}_{\VbqpO}, \quad
		t \geq 0. 		\qedsymbol
		\end{equation}
	\end{enumerate}

	\subsection{Comparison with the literature.}\label{Sec-2.4}
	\begin{enumerate}[1.]
		\item \textit{The critical feature - the ignition key - to get the analysis of the present paper succeed: an inverse theory problem of the Unique Continuation Property type, which is established via Carleman-type estimates} \cite{TW.1}. These tools are used in the preliminary Section \ref{Sec-4}, dealing with the finite dimensional unstable projected problem \eqref{3.6a} at the level of establishing the essential Kalman controllability condition \eqref{4.31}. This ultimately rests on the UCP reported in Appendix \ref{app-B}, one of the several UCPs proved in \cite{TW.1}, by means of Carleman-type inequalities. The original Carleman estimates (characterized by a suitable exponential weight) were introduced in \cite{Car} in 1939 to establish uniqueness of a PDE on two variables. A historical account is given in \cite[Section 1.1, pp2-3]{K-T.1} from which we quote: ``\textit{In 1979, the growing interest to global uniqueness results for multidimensional CIPs with the lateral data stimulated Klibanov to apply the Carleman estimates for establishing such results. At the same time, a similar global uniqueness theorem was independently formulated and proven by Bukhgeim. As a result, the method of Carleman estimates was presented to the inverse problem community in the joint paper \cite{B-K.1}. The detailed proofs were published in \cite{B.1} and \cite{K.1}}". See also \cite{K.2}, \cite{K.3} for early literature and \cite{K-K.1} for a hyperbolic paper. The authors of the present paper are most pleased to offer a contribution to the special volume of JIIP to recognize the pioneering work- and subsequent intensive production-of M. V. Klibanov on Carleman estimates and inverse theory. For a discussion on Carleman-type inequalities in integral form with lower-order terms and pointwise Carleman estimates with no lower order terms, we refer to \cite{L-R-S.1} and to \cite[Section 1.5]{L-T.10}. A recent book on these topics is \cite{I.1}.
		 
		\item \textit{The Besov space setting}. With reference to both the ``Motivation" of Subsection \ref{Sec-1.1} as well as Subsection \ref{Sec-1.3}, it was already emphasized that all prior literature on the problem of feedback stabilization of either the Navier-Stokes equations or, subsequently, the Boussinesq system is carried out in a Hilbert-Sobolev setting. As already noted, this treatment is inadequate to obtain \underline{finite dimensionality} in full generality of the localized \underline{tangential boundary } feedback control for the 3d-Navier-Stokes equations. This obstacle then motivated the introduction of the $\bs{L}^q$-Sobolev/Besov setting in \cite{LPT.1}, \cite{LPT.3}  with tight indices, see \eqref{1.4}, \eqref{1.11}, that does not recognize boundary conditions, see Remark \ref{Rmk-1.2} and subsequent orientation, in order to solve affirmatively such open problem on the \underline{finite dimensionality} in 3d-Navier-Stokes \underline{tangential boundary feedback} stabilization. Reference  \cite{LPT.1} on localized interior controls sets the preparatory stage for the more demanding boundary control case in \cite{LPT.3}.
		
		\item \textit{The Boussinesq system}. As already noted in the ``Motivation" of Subsection \ref{Sec-1.1}, the present paper is the first contribution in the Besov setting toward the uniform stabilization of the Boussinesq system with a localized \textit{boundary} stabilizing feedback control (finite dimensional): namely, the Dirichlet boundary control for the heat component acting on $\wti{\Gamma}$. It follows the preliminary test case with two stabilizing feedback controls - one for the fluid component and one for the heat component - localized on an arbitrary small set $\omega \subset \Omega$. In turn, the present paper sets the stage for attacking our final goal: uniform stabilization of the Boussinesq system by means of a localized boundary, finite dimensional feedback stabilizing control acting this time on the Navier-Stokes component, of the same type as in the case of the N-S alone \cite{LPT.3}. The topic of stabilization (by open loop controls) of Navier-Stokes equations originated in the pioneering work of \cite{F}.
		
		\item \textit{Review of the literature on the feedback stabilization of the Boussinesq system.} The first contribution is due to \cite{Wang} via internal feedback controls in the Hilbert setting $\bs{H}\times L^2(\Omega$), where $\bs{H}$ is the usual Hilbert $\bs{L}^2_{\sigma}(\Omega)$-specialization of \eqref{1.5} \cite{CF:1980}. \uline{The feedback controllers are both infinite-dimensional} of the type: $\bs{u}= -k(\bs{y}-\bs{y}_e)$ and $ v= -k(\theta-\theta_e),$ for large $k$, under technical assumptions on the localization of the controls. Paper \cite{Lef} also studies the feedback stabilization problem with, eventually, localized controls, which again are \uline{infinite dimensional}; eg defined in terms of a sub-differential. These results are in stark contrast with the present work. Here - first, we establish that the stabilizing control is  finite dimensional; and, second, we show that its fluid component is of reduced dimension, as expressed only by means of $(d-1)$-components to include necessarily the $d$\textsuperscript{th} component. Finally paper \cite{RRR:2019} studies the feedback stabilization problem with, this time, mixed boundary and claims to provide a rigorous treatment of problems studied in \cite{BH:2013}, \cite{BHH:2016}.
		
		\item It is as a by-product of the described UCP, as applied to the adjoint static problem rather than the original static problem, that we can extract  the further benefit of obtaining the closed-loop feedback control acting on the fluid component of reduced dimension: $(d-1)$ rather than $d$. There is however a key difference with respect to a similar property in \cite{LPT.2} with two localized internal controls. Namely, in \cite{LPT.2} the $d$\textsuperscript{th} component of the fluid control is not needed; in the present case, it is essential. This dimension reduction in the closed-loop feedback stabilizing fluid control is in line with the open-loop controllability results in \cite{cg2}, \cite{cl1}. No such reduction of components is present in the case of uniform stabilization of the Navier-Stokes equations per-se.
		
		\item \textit{Geometric Multiplicity.} We also point out that, as in \cite{LT1:2015}, \cite{LT2:2015} and \cite{LPT.1}, \cite{LPT.3} for the Navier-Stokes equations, in the case of the Boussinesq system, the number of needed controls will be related to the more desirable \underline{geometric} multiplicity, not the larger \underline{algebraic} multiplicity as in prior treatments, \cite{BT:2004}, \cite[p 276]{Barbu} of the unstable eigenvalues, by using the classical test for controllability of a system in Jordan form \cite{CTC:1984}, \cite[p 464]{B-M1}. This allows one also to obtain constructively an explicit form of the finite dimensional feedback control; and, moreover, to show that the \uline{feedback control acting on the fluid may be taken of reduced dimension}: one unit less, i.e. $d-1$, than the fluid component of dimension $d$. As noted in point 5. above, this is due to the Unique Continuation Property of the adjoint problem, reported in Appendix \ref{app-B}. It represents an additional contribution of the present paper.
		
		\item \textit{Maximal regularity of the linearized feedback problem.} In the present contribution, following \cite{LPT.1}, \cite{LPT.2}, \cite{LPT.3}, the analysis of the original non-linear problem is carried out in the context of the property of \uline{maximal regularity of the linearized feedback} $\bs{w}$-problem \eqref{5.3a}; that is, of the operator  $\BA_{F,q}$ in 
		\eqref{5.3a}-\eqref{5.5b}. This applies to both well-posedness (in Section \ref{Sec-8}) as well as stabilization (in Section \ref{Sec-9}) of the nonlinear feedback problem. This is in contrast with prior treatments, such as \cite{BT:2004}, \cite{Barbu}, which often rely on chopping the nonlinearity and carrying out a limit process. The \uline{maximal regularity approach} introduced for stabilization problems in \cite{LPT.1} is cleaner and more desirable both technically and conceptually. On the other hand, applicability of Maximal Regularity requires well balanced spaces. Recent developments in this particular area \cite{We:2001}, \cite{SS:2007}, \cite{PS:2016}, \cite{KW:2001}, \cite{KW:2004}, \cite{Dore:2000} etc. were critical for carrying out our analysis.
	\end{enumerate}
	
	\section{Beginning with the proof of Theorem \ref{Thm-2.1}, Spectral decomposition of the linearized $\bs{w}$-problem \eqref{1.31} or \eqref{1.32}.}\label{Sec-3}
	
	\nin We return to the assumed starting point of the present paper, which is that the \textit{free} (uncontrolled) dynamics operator $\BA_q$ in \eqref{1.34} is unstable, see \eqref{1.38}. Its properties are collected in Theorem \ref{Thm-1.2}. Accordingly, its eigenvalues satisfy the statement which includes their location in \eqref{1.38}. Denote by $P_N$ and $P_N^*$ (which actually depend on $q$) the projections given explicitly by \cite[p 178]{TK:1966}, \cite{BT:2004}, \cite{BLT1:2006}
	\begin{subequations}\label{3.1}
		\begin{align}
		P_N &= -\frac{1}{2 \pi i}\int_{\boldsymbol{\Gamma}}\left( \lambda I - \BA_q \right)^{-1}d \lambda : \bs{W}^q_{\sigma} \text{ onto } (\bs{W}^q_{\sigma})^u_N \subset \bs{L}^q_{\sigma}(\Omega) \times L^q(\Omega) \label{3.1a}\\
		P_N^* &= -\frac{1}{2 \pi i}\int_{\bar{\boldsymbol{\Gamma}}}\left( \lambda I - \BA_q^* \right)^{-1}d \lambda : (\bs{W}^q_{\sigma})^* \text{ onto } [(\bs{W}^q_{\sigma})^u_N]^* \subset \bs{L}^{q'}_{\sigma}(\Omega) \times L^{q'}(\Omega). \label{3.1b} 
		\end{align}
	\end{subequations}
	
	\noindent Here ${\boldsymbol{\Gamma}}$ (respectively, its conjugate counterpart $\bar{{\boldsymbol{\Gamma}}}$) is a smooth closed curve that separates the unstable spectrum from the stable spectrum of $\BA_q$ (respectively, $\BA_q^*$). As in \cite[Sect 3.4, p 37]{BLT1:2006}, following \cite{RT:1975}, \cite{RT:1980}, we decompose the space $\bs{W}^q_{\sigma}=\bs{W}^q_{\sigma}(\Omega) \equiv \lso \times \lqo$ into the sum of two complementary subspaces (not necessarily orthogonal):	
	\begin{multline}\label{3.2}
	\bs{W}^q_{\sigma} = (\bs{W}^q_{\sigma})^u_N \oplus (\bs{W}^q_{\sigma})^s_N; \ (\bs{W}^q_{\sigma})^u_N \equiv P_N \bs{W}^q_{\sigma} ;\ (\bs{W}^q_{\sigma})^s_N \equiv (I - P_N) \bs{W}^q_{\sigma};\\ \text{ dim } (\bs{W}^q_{\sigma})^u_N = N
	\end{multline}	
	\noindent where each of the spaces $(\bs{W}^q_{\sigma})^u_N$ and $(\bs{W}^q_{\sigma})^s_N$ (which depend on $q$, but we suppress such dependence) is invariant under $\BA_q$, and let	
	\begin{equation}\label{3.3}
	\BA^u_{q,N} = P_N \BA_q = \BA_q |_{(\bs{W}^q_{\sigma})^u_N} ; \quad \BA^s_{q,N} = (I - P_N) \BA_q = \BA_q |_{(\bs{W}^q_{\sigma})^s_N}
	\end{equation}	
	\noindent be the restrictions of $\BA_q$ to $(\bs{W}^q_{\sigma})^u_N$ and $(\bs{W}^q_{\sigma})^s_N$ respectively. The original point spectrum (eigenvalues) $\{ \lambda_j \}_{j=1}^{\infty} $ of $\BA_q$ is then split into two sets, recall \eqref{1.38}	
	\begin{equation}\label{3.4}
	\sigma (\BA^u_{q,N}) = \{ \lambda_j \}_{j=1}^{N}; \quad  \sigma (\BA^s_{q,N}) = \{ \lambda_j \}_{j=N+1}^{\infty},
	\end{equation}	
	\noindent and $(\bs{W}^q_{\sigma})^u_N$ is the generalized eigenspace of $\BA^u_{q,N}$ in \eqref{3.3}. The system \eqref{1.32} on $\bs{W}^q_{\sigma} \equiv \lso \times \lqo$ can accordingly be decomposed as	
	\begin{equation}\label{3.5}
	\bs{w} = \bs{w}_N + \boldsymbol{\zeta}_N, \quad \bs{w}_N = P_N \bs{w}, \quad \boldsymbol{\zeta}_N = (I-P_N)\bs{w}.
	\end{equation}	
	\noindent After applying $P_N$ and $(I-P_N)$ (which commute with $\BA_q$) on \eqref{1.32}, we obtain via \eqref{3.3}	
	\begin{subequations}\label{3.6} 
		\begin{equation}\label{3.6a}
		\text{on } (\bs{W}^q_{\sigma})^u_N: \bs{w}'_N - \BA^u_{q,N} \bs{w}_N = P_N \bbm P_{q} (m\bs{u}) \\ \calB_qDv \ebm; \quad  \bs{w}_N(0) = P_N \bbm \bs{w}_f (0) \\ w_h(0) \ebm
		\end{equation}
		\begin{equation}\label{3.6b}
		\text{on } (\bs{W}^q_{\sigma})^s_N: \bs{\zeta}'_N - \BA^s_{q,N} \bs{\zeta}_N = (I-P_N) \bbm P_{q} (m\bs{u}) \\  \calB_qDv \ebm; \quad  \bs{\zeta}_N(0) = (I - P_N) \bbm \bs{w}_f (0) \\ w_h(0) \ebm
		\end{equation}
	\end{subequations}
	\noindent respectively, where $\ds \bs{w}_N = \{ \bs{w}_{f,N}, w_{h,N} \}, \ \bs{\zeta}_N = \{ \bs{\zeta}_{f,N}, \zeta_{h,N} \}$. See \cite[Appendix A]{BLT1:2006} for the extension of $P_N$ outside $\Wqs$.
	
	\section{Proof of Theorem \ref{Thm-2.1}. Global well-posedness and uniform exponential stabilization of the linearized $\bs{w}$-problem \eqref{1.32} on the space $\bs{W}^q_{\sigma}(\Omega) \equiv \lso \times \lqo$ or the space $\VbqpO \equiv  \Bto \times \BsO$.}\label{Sec-4}
	
	\subsection{Orientation.}\label{Sec-4.1}
	 \nin We  shall appeal to several  technical developments in \cite{LPT.1}, where the case of the N-S equations has been studied. We will have to adopt and transfer some of its procedures to the  case of the Boussinesq system. Thus, properties such as maximal regularity and the entire  development for uniform stabilization of the linearized Boussinesq dynamics need to be established. We have already noted that while analyticity and maximal regularity are equivalent properties in the Hilbert setting \cite{DeS} this is not so in the Banach setting where maximal regularity is a more general and more delicate property. Moreover, it is only after we establish  uniform stabilization that we can claim maximal regularity up to infinity. This needs to be asserted by direct analysis. To proceed, we recall the state space $\ds \bs{W}^q_{\sigma}(\Omega) = \lso \times \lqo \equiv (\bs{W}^q_{\sigma})^u_N \oplus (\bs{W}^q_{\sigma})^s_N$ in \eqref{1.34}, \eqref{3.2}. The unstable uncontrolled operator is $\ds \BA_q$ in \eqref{1.34}.\\
	
	\noindent The same strategy employed in \cite{LPT.1} seeks to show that the linearized $\bs{w}$-dynamics \eqref{2.4} in feedback form - that is, the operator $\BA_{_{F,q}}$ in \eqref{2.5}, \eqref{2.6b} - of the translated non-linear $\{\bs{z}, h\}$-problem is uniformly stable in the desired setting in terms of finite dimensional feedback controls. Here $\bs{w} = \{ \bs{w}_f, w_h \}$ comprises the $d$-dimensional fluid component $\bs{w}_f$ and scalar heat component $w_h$. To this end, we first seek to establish that the finite dimensional projection - the $\bs{w}_N$-equation in \eqref{3.6a} is controllable on $(\bs{W}^q_{\sigma})^u_N$, hence exponentially stabilizable with an arbitrarily large decay rate \cite[p. 44]{Za}. This is done in Theorem \ref{Thm-4.1} (controllability) and Theorem \ref{Thm-4.2} (stabilization). Once it is established (Theorem \ref{Thm-6.1}) that the operator $\BA_{_{F,q}}$ generates a s.c. analytic semigroup, the next step is to examine the corresponding $\boldsymbol{\zeta}_N$-equation, \eqref{3.6b}, where the arbitrarily large decay rate of the feedback $\bs{w}_N$-equation combined with the exponential stability on $(\bs{W}^q_{\sigma})^s_N$ of the s.c. analytic semigroup $\ds e^{\BA_{q,N}^s t}$ (whose spectrum is on the left of $Re \ \lambda_{N+1} < 0$), yields the desired result. This is Theorem \ref{Thm-6.2}. One needs to emphasize that all this works thanks to the present corresponding Unique Continuation Property, this time of the Boussinesq system, that is Theorem \ref{Thm-B.1} in Appendix \ref{app-B}. It is this UCP that permits one to verify the Kalman algebraic condition \eqref{4.31} of Theorem \ref{Thm-4.1}. This is established asserting the Claim in the proof of Theorem \ref{Thm-4.1}. Such condition is equivalent to linear independence of certain vectors occurring in \eqref{4.31} below, so that the finite dimensional projected $\bs{w}_N$-dynamics satisfies the controllability condition of Kalman or Hautus. See corresponding cases in \cite{RT:2009}, \cite{RT:2008}. The proof of Theorem \ref{Thm-4.1} invokes all $d$-components of the fluid vectors $\bs{u}_k$. However, one can do better. The UCP of Theorem \ref{Thm-B.1}, Appendix \ref{app-B} does \uline{not} include boundary conditions. On the other hand, in our present case, the UCP is applied to the adjoint eigenproblem \eqref{4.3} below, which of course includes also homogeneous B. C. such as \eqref{4.10d}. It then turns out that by taking advantage also of these homogeneous B. C. \eqref{4.10d}, it is possible through a proof of a UCP given in \cite[Theorem 6, Theorem 7]{TW.1} to obtain an improvement whereby the new version of Theorem \ref{Thm-4.1} (verification of the Kalman algebraic condition \eqref{4.31}) involves only $(d-1)$-components of the $d$-vector $\bs{u}_k$, to include necessarily the $d$\textsuperscript{th} component. Such improvement is given in Appendix \ref{app-C}. The proof uses a variation of the proof of the UCP \cite[Theorem 6 or 7]{TW.1}. This result is in contrast with two other cases. First, the uniform stabilization study of the Boussinesq system by localized interior feedback controls for both fluid and heat components (acting on an arbitrarily small internal set $\omega \subset \Omega$) \cite{LPT.2}, again $(d-1)$-components of the fluid vector $\bs{u}_k$ are involved, but this time, it is the $d$\textsuperscript{th} component \uline{that may be omitted}, in contrast with the case of the present paper. Second, in the cases of uniform stabilization of the Navier-Stokes equations, either by internal localized \cite{LPT.1} or boundary localized controls \cite{LPT.3}, all $d$-components of the fluid vector $\bs{u}_k$ are needed. The explanation is in the nature of the Boussinesq \uline{adjoint} eigenproblem. Henceforth the conceptual advantage of having our results invoking only $(d-1)$-components of the fluid vectors $\bs{u}_k$, including the $d$\textsuperscript{th} component will be responsible for heavier notation as described below.\\
	
	\noindent For each $i = 1, \dots, M$, we denote by  $\ds \{\boldsymbol{\Phi}_{ij}\}_{j=1}^{\ell_i},  \{\boldsymbol{\Phi}^*_{ij}\}_{j=1}^{\ell_i}$ the normalized, linearly independent eigenfunctions of $\BA_q$, respectively $\BA_q^*$, say, on
	\begin{multline}\label{4.1}
	\bs{W}^q_{\sigma}(\Omega) \equiv \lso \times \lqo
	\mbox{ and } \\
	(\bs{W}^q_{\sigma}(\Omega))^* \equiv (\lso)' \times (\lqo)' = \bs{L}^{q'}_{\sigma}(\Omega) \times L^{q'}(\Omega), \quad
	\frac{1}{q} + \frac{1}{q'} = 1,
	\end{multline}
	\noindent (where in the last equality we have invoked the identity below \eqref{1.6}) corresponding to the $M$ distinct unstable eigenvalues $\lambda_1, \ldots, \lambda_M$ of $\BA_q$ and $\overline{\lambda}_1, \ldots, \overline{\lambda}_M$ of $\BA_q^*$ respectively, either on $\bs{W}^q_{\sigma}(\Omega)$ or on $\bs{V}^{q,p}(\Omega)$:
	\begin{align}
	\BA_q \boldsymbol{\Phi}_{ij} &= \lambda_i \boldsymbol{\Phi}_{ij} \in \calD(\BA_q) = [\bs{W}^{2,q}(\Omega) \cap \bs{W}^{1,q}_0(\Omega) \cap \lso] \times [W^{2,q}(\Omega) \cap W^{1,q}_0(\Omega)]  \label{4.2}\\
	\BA_q^* \boldsymbol{\Phi}_{ij}^* &= \bar{\lambda}_i \boldsymbol{\Phi}_{ij}^* \in \calD(\BA_q^*) = [\bs{W}^{2,q'}(\Omega) \cap \bs{W}^{1,q'}_0(\Omega) \cap \lo{q'}] \times [W^{2,q'}(\Omega) \cap W^{1,q'}_0(\Omega)]. \label{4.3}
	\end{align}
	\nin It is the adjoint problem \eqref{4.3} which is of our interest:
	
	\subsection{The adjoint operator $\BA_q^*$ of $\BA_q$ in \eqref{1.34}.}
	
	\nin  It is computed in \cite{LPT.2} that the adjoint $\BA_q^*$ of $\BA_q$ is given by
	\begin{multline}\label{4.4}
	\BA_q^* = \bbm \calA_q^* & -\calC_{\theta_e}^* \\ -\calC_{\gamma}^* & \calB_q^* \ebm : \bs{W}^{q'}_{\sigma}(\Omega) = \bs{L}^{q'}_{\sigma}(\Omega) \times L^q(\Omega) \supset \calD(\BA_q^*) = \calD(\calA_q^*) \times \calD(\calB_q^*) \\ = (\bs{W}^{2,q'}(\Omega) \cap \bs{W}^{1,q'}_{0}(\Omega) \cap \bs{L}^{q'}_{\sigma}(\Omega)) \times (W^{2,q'}(\Omega) \cap W^{1,q'}_{0}(\Omega)) \longrightarrow \bs{W}^{q'}_{\sigma}(\Omega).
	\end{multline}
	\nin where the adjoints $\calC_{\theta_e}^*$ and $\calC_{\gamma}^*$ of the operators $\calC_{\theta_e}$ and $\calC_{\gamma}$ in \eqref{1.21}, \eqref{1.22} are	
	\begin{align}
		\calC_{\theta_{e}}^* \psi^* &= P_{q'} ( \psi^* \nabla \theta_e ) \in \bs{L}^{q'}_{\sigma}(\Omega), \quad \calC_{\gamma}^* \boldsymbol{\varphi}^* =  - \gamma(P_{q'} \boldsymbol{\varphi}^*) \cdot\bs{e}_d , \nonumber\\
		&\hspace{5cm}  \calC_{\gamma}^* \in \mathcal{L}( \lo{q'} , L^{q'}( \Omega )). \label{4.5}\\	
		\calA_q^* &= - \left( \nu A^*_q + A^*_{o,q} \right), \ A^*_q \bs{f} = - P_{q'} \Delta \bs{f}, \nonumber \\ &\hspace{3cm} \calD(A^*_q) \equiv \bs{W}^{2,q'}(\Omega) \cap \bs{W}^{1,q'}_{0}(\Omega) \cap \bs{L}^{q'}_{\sigma}(\Omega) \label{4.6}\\
		\left( A_{o.q} \right)^* &= A^*_{o,q} = P_{q'}(L_e)^*: \bs{W}^{1,q'}(\Omega) \longrightarrow \lo{q'} \label{4.7}\\
		\calB_q^* \psi &= - \kappa \Delta \psi - \bs{y}_e \cdot \nabla \psi, \ \calD(\calB_q^*) = W^{2,q'}(\Omega) \cap W^{1,q'}_{0}(\Omega). \label{4.8}
	\end{align}
	\nin The expression in \eqref{4.8} is obtained from \eqref{1.18} integration by parts using $\ds \bs{y}_e|_{\Gamma} = 0$ and div $\bs{y}_e = 0$ from (\hyperref[1.2c]{1.2c-d}). See Appendix \ref{app-B}. Notice that in passing from $\BA_q$ to $\BA_q^*$ the operators $\calC_{\gamma}$ and $\calC_{\theta_e}$ switch places with their adjoints. This fact has a key implication on the needed Unique Continuation Property. With $\bs{\Phi^*} = [\bs{\varphi}^*, \psi^*]$, the explicit version of $\BA_q^* \bs{\Phi} = \lambda \bs{\Phi}^*$ is by \eqref{4.4}
	\begin{subequations}\label{4.9}
		\begin{empheq}[left=\empheqlbrace]{align}
			\calA_q^* \bs{\varphi}^* - \calC_{\theta_e}^* \psi^* &= \lambda \bs{\varphi}^* \label{4.9a}\\
			-\calB_q^* \psi^* - \calC_{\gamma}^* \bs{\varphi}^* &= \lambda \psi^*  \label{4.9b}
		\end{empheq}
	\end{subequations}
	\nin while its PDE-version is, by virtue of \eqref{4.4}-\eqref{4.8}
	\begin{subequations}\label{4.10}
		\begin{empheq}[left=\empheqlbrace]{align}
		-\nu \Delta \boldsymbol{\varphi^*} + L^*_e(\boldsymbol{\varphi^*})+ \psi^* \nabla \theta_e + \nabla \pi &= \lambda \boldsymbol{\varphi^*}   &\text{in } \Omega      \label{4.10a} \\
		-\kappa \Delta \psi^* - \bs{y}_e \cdot \nabla \psi^* - \gamma  \boldsymbol{\varphi^*}  \cdot\bs{e}_d &= \lambda \psi^* &\text{in } \Omega \label{4.10b}\\
		\text{div } \boldsymbol{\varphi^*} &= 0   &\text{in } \Omega  \label{4.10c} \\
		\boldsymbol{\varphi^*} = 0, \ \psi^* &= 0 &\text{on } \Gamma.    \label{4.10d}
		\end{empheq}		
	\end{subequations}
	\subsection{Critical step: uniform stabilization of the unstable finite dimensional $\bs{w}_N$-projection \eqref{3.6a} by feedback controls $\{ v, \bs{u} \}$.}\label{Sec-4.3}
	
	\nin For simplicity and space constraints, we provide here explicitly only the conceptually and computationally more amenable case, where the operator $\BA_{q,N}^u$ in\eqref{3.3} is \uline{semisimple} \cite{TK:1966} on the space $\ds \WqsuN$ of dimension $N = \ell_1 + \cdots + \ell_M$. How to treat the general case is known \cite{LT1:2015}, \cite{LT2:2015}, \cite{LPT.1}, \cite{LPT.3}. It is much more computationally intensive.\\
	
	\nin \textbf{Assumption:} Thus assume henceforth that
		\begin{equation}\label{4.11}
		\text{\nin $\BA_{q,N}^u$ is semisimple on $\ds \WqsuN$:}		
		\end{equation}
		that is, for the unstable eigenvalues $\ds \lambda_1, \dots, \lambda_M$ geometric and algebraic multiplicity coincide.\qedsymbol\\
	
	\nin Accordingly, let $\ds \bs{w} \in \WqsuN$, then \cite[p51]{TK:1966}
	\begin{equation}\label{4.12}
		\WqsuN \ni \bs{w}_N = \sum_{i, j = 1}^{M, \ell_i} \ip{\bs{w}_N}{\bs{\Phi}^*_{ij}}_{\Wqs,\Wqss}\bs{\Phi}_{ij}, \ \bs{\Phi}_{ij} = \bbm \bs{\varphi_{ij}} \\ \psi_{ij} \ebm.
	\end{equation}
	\nin Define $\beta_i$ and $\beta$ the following ordered bases of length $\ell_i$ and $N$, respectively, corresponding to the unstable eigenvalues
	\begin{subequations}\label{4.13}
		\begin{align}
			\beta_i &= \left[ \bs{\Phi}_{i1}, \dots, \bs{\Phi}_{i \ell_i} \right], \ i = 1, \dots, M \label{4.13a}\\
			\beta &= \beta_1 \cup \beta_2 \cup \dots \cup \beta_M = \left[ \bs{\Phi}_{11}, \dots, \bs{\Phi}_{1 \ell_1}, \bs{\Phi}_{21}, \dots, \bs{\Phi}_{2 \ell_2}, \dots, \bs{\Phi}_{M1}, \dots, \bs{\Phi}_{M \ell_M}  \right]. \label{4.13b}
		\end{align}
	\end{subequations}
	\nin Thus the matrix representation of the operator $\BA_{q,N}^u$ in \eqref{3.6a} on $\ds \WqsuN$ with respect to the basis $\beta$ is
	\begin{equation}\label{4.14}
		\left[ \BA_{q,N}^u \right]_{\beta} = \Lambda = \bbm
		\lambda_1 I_1 & & & \text{\huge0}\\
		& \lambda_2 I_2 & & &\\
		& & \ddots & &\\
		\text{\huge0} & & & \lambda_M I_M 
		\ebm: \ N \times N, \ I_i: \ell_i \times \ell_i.
	\end{equation}
	
	\nin Next, with reference to \eqref{3.6a} with $m \bs{u} \in \bs{L}^q(\omega), \ P_q(m\bs{u}) \in \bs{L}^q_{\sigma}(\omega)$ and $v \in L^q(\Gamma)$, we shall show that:
	\begin{align}
		P_N \bbm P_q(m\bs{u}) \\ \calB_q Dv \ebm &= \sum_{i, j = 1}^{M, \ell_i} \ip{P_N \bbm P_q(m\bs{u}) \\ \calB_q Dv \ebm}{\bs{\Phi^*_{ij}}}_{\Wqs,\Wqss} \bs{\Phi}_{\ij} \label{4.15}\\
		&= \sum_{i, j = 1}^{M, \ell_i} \left[ \ip{m \bs{u}}{\bs{\varphi}^*_{ij}}_{\bs{L}^q_{\sigma}(\omega),\bs{L}^{q'}_{\sigma}(\omega)} - \ip{v}{\left.\frac{\partial \psi_{ij}^*}{\partial \nu}\right|_{\Gamma}}_{L^q(\Gamma),L^{q'}(\Gamma)} \right] \bs{\Phi}_{ij} \label{4.16}
	\end{align}
	\nin In fact, since $\ds P_N^* \bs{\Phi}^*_{ij} = \bs{\Phi}^*_{ij}$ and $\ds P_{q'} \bs{\varphi}^*_{ij} = \bs{\varphi}^*_{ij}$, we obtain from \eqref{4.15}, recalling $\Wqs$ from \eqref{1.34}, $\ds \bs{\Phi}^*= \left[ \bs{\varphi_{ij}^*},\ \psi_{ij}^* \right]$:
	\begin{align}
		P_N \bbm P_q(m\bs{u}) \\ \calB_q Dv \ebm &= \sum_{i, j = 1}^{M, \ell_i} \left[ \ip{P_q(m \bs{u})}{\bs{\varphi}^*_{ij}}_{\bs{L}^q_{\sigma}(\omega),\bs{L}^{q'}_{\sigma}(\omega)} + \ip{v}{D^*\calB^*_q \psi_{ij}^*}_{L^q(\Gamma),L^{q'}(\Gamma)} \right] \bs{\Phi}_{ij} \label{4.17} \\
		&= \sum_{i, j = 1}^{M, \ell_i} \left[ \ip{m \bs{u}}{\bs{\varphi}^*_{ij}}_{\bs{L}^q_{\sigma}(\omega),\bs{L}^{q'}_{\sigma}(\omega)} + \ip{v}{\left. -\frac{\partial \psi_{ij}^*}{\partial \nu}\right|_{\Gamma}}_{L^q(\Gamma),L^{q'}(\Gamma)} \right] \bs{\Phi}_{ij} \label{4.18}
	\end{align}
	\nin and \eqref{4.16} is verified. In going from \eqref{4.17} to \eqref{4.18} we have recalled $\ds D^* \calB^*_qf = -\frac{\partial f}{\partial \nu}, \ f \in \calD(\calB^*_q)$, see \eqref{B.11} of Appendix \ref{app-B} for $\kappa = 1$. Next, we seek an interior control $\bs{u}$ acting on $\omega$ and a boundary control $v$ acting on $\wti{\Gamma}$ at first of the form
	\begin{align}
		\bs{u} &= \sum_{k = 1}^{K} \mu_k(t) \bs{u}_k, \ \bs{u}_k \in \WqsuN \subset \lso \label{4.19}\\
		v &= \sum_{k = 1}^{K} \nu_k(t)f_k \in \calF \subset W^{2-\rfrac{1}{q},q}(\wti{\Gamma}) \label{4.20}
	\end{align}
	\nin where 
	\begin{equation}\label{4.21}
		\calF = \text{span } \left\{ \left. \frac{ \partial \psi_{ij}^* }{\partial \nu}\right|_{\wti{\Gamma}}, \ i = 1,\dots,M; \ j = 1, \dots, \ell_i \right\} \subset W^{2-\rfrac{1}{q},q}(\wti{\Gamma}).
	\end{equation}
	\nin Thus substituting \eqref{4.19}, \eqref{4.20}, in \eqref{4.16} we obtain
	\begin{multline}
		P_N \bbm P_q(m\bs{u}) \\ \calB_q Dv \ebm = \sum_{i, j = 1}^{M, \ell_i} \sum_{k = 1}^K \left[ \ip{m \bs{u}_k}{\bs{\varphi}^*_{ij}}_{\bs{L}^q_{\sigma}(\omega),\bs{L}^{q'}_{\sigma}(\omega)}\mu_k(t) \right.\\ \left.- \ip{f_k}{\left.\frac{\partial \psi_{ij}^*}{\partial \nu}\right|_{\Gamma}}_{L^q(\Gamma),L^{q'}(\Gamma)}\nu_k(t) \right] \bs{\Phi}_{ij}. \label{4.22}
	\end{multline}
	\nin Accordingly, in view of \eqref{4.22}, we can rewrite the $\bs{w}_N$-problem \eqref{3.6a} as 
	\begin{multline}
		\text{on } \WqsuN: \bs{w}'_N = \BA_{q,N}^u \bs{w}_N + \sum_{i, j = 1}^{M, \ell_i} \sum_{k = 1}^K \left[ \ip{m \bs{u}_k}{\bs{\varphi}^*_{ij}}_{\bs{L}^q_{\sigma}(\omega),\bs{L}^{q'}_{\sigma}(\omega)}\mu_k(t)  \right.\\ -\left.\ip{f_k}{\left.\frac{\partial \psi_{ij}^*}{\partial \nu}\right|_{\Gamma}}_{L^q(\Gamma),L^{q'}(\Gamma)}\nu_k(t) \right] \bs{\Phi}_{ij}. \label{4.23}
	\end{multline}
	\nin Eq \eqref{4.23} is the perfect counterpart of \cite[Eq (6.24)]{LPT.3}.\\
	
	\nin On the basis of expansion \eqref{4.23}, we introduce the $\ell_i \times K$ matrix $W_i$ and the $\ell_i \times K$ matrix $U_i$, $i = 1, \dots, M$
	\begin{equation}\label{4.24}
		W_i = \bbm \ip{f_1}{\partial_{\nu} \psi^*_{i1}|_{\Gamma}}_{\wti{\Gamma}} & \cdots & \ip{f_K}{\partial_{\nu} \psi^*_{i1}|_{\Gamma}}_{\wti{\Gamma}} \\
		\ip{f_1}{\partial_{\nu} \psi^*_{i2}|_{\Gamma}}_{\wti{\Gamma}} & \cdots & \ip{f_K}{\partial_{\nu} \psi^*_{i2}|_{\Gamma}}_{\wti{\Gamma}}\\
		\vdots & & \vdots\\
		\ip{f_1}{\partial_{\nu} \psi^*_{i \ell_i}|_{\Gamma}}_{\wti{\Gamma}} & \cdots & \ip{f_K}{\partial_{\nu} \psi^*_{i \ell_i}|_{\Gamma}}_{\wti{\Gamma}} \ebm: \ \ell_i \times K \ \ip{ \ }{ \ }_{\wti{\Gamma}} = \ip{ \ }{ \ }_{L^q(\wti{\Gamma}), L^{q'}(\wti{\Gamma})} 
	\end{equation}
	\begin{equation}\label{4.25}
		U_i = \bbm \ip{\bs{u}_1}{\bs{\varphi}^*_{i1}}_{\omega} & \cdots & \ip{\bs{u}_K}{\bs{\varphi}^*_{i1}}_{\omega} \\
		\ip{\bs{u}_1}{\bs{\varphi}^*_{i2}}_{\omega} & \cdots & \ip{\bs{u}_K}{\bs{\varphi}^*_{i2}}_{\omega} \\
		\vdots & & \vdots\\
		\ip{\bs{u}_1}{\bs{\varphi}^*_{i \ell_i}}_{\omega} & \cdots & \ip{\bs{u}_K}{\bs{\varphi}^*_{i \ell_i}}_{\omega} \\ \ebm: \ \ell_i \times K \ \ip{ \ }{ \ }_{\omega} = \ip{ \ }{ \ }_{\bs{L}^q_{\sigma}(\omega)} 
	\end{equation}
	\begin{equation}\label{4.26}
		W = \norm{\ip{f_r}{\partial_{\nu} \psi^*_{ij}|_{\Gamma}}_{\wti{\Gamma}}} = \bbm W_1 \\ W_2 \\ \vdots \\ W_M \ebm; \ N \times K, \quad U = \norm{\ip{\bs{u}_r}{\bs{\varphi}^*_{ij}}_{\omega}} = \bbm U_1 \\ U_2 \\ \vdots \\ U_M \ebm: N \times K.
	\end{equation}
	\nin Next, we represent the $N$-dimensional vector $\bs{w}_N \in \WqsuN$ given by \eqref{4.12} as column vector $\hat{\bs{w}} = [\bs{w}_N]_{\beta}$, with respect to the basis $\beta$ in \eqref{4.13b}. Rewrite \eqref{4.12} as
	\begin{multline}\label{4.27}
		\bs{w}_N = \sum_{i, j = 1}^{M, \ell_i} \bs{w}^{ij}_{N} \bs{\Phi}_{ij}; \ \text{set } \hat{\bs{w}} = \text{col} \left[ \bs{w}^{11}_{N}, \dots, \bs{w}^{1 \ell_1}_{N}, \dots, \bs{w}^{i1}_{N}, \dots, \bs{w}^{i \ell_i}_{N}, \dots, \right.\\
		\left. \bs{w}^{M1}_{N}, \dots, \bs{w}^{M \ell_M}_{N}  \right]
	\end{multline}
	\nin $N = \ell_1 + \dots + \ell_M$. Thus, by \eqref{4.22}-\eqref{4.27} we can write 
	\begin{equation}\label{4.28}
		\bbm \ds P_N \bbm P_q(m\bs{u}) \\ \calB_q Dv \ebm \ebm_{\beta} = \bbm W_1 \\ W_2 \\ \vdots \\ W_M \ebm \hat{\nu}_K + \bbm U_1 \\ U_2 \\ \vdots \\ U_M \ebm \hat{\mu}_k: \ N \times 1, \quad \hat{\nu}_K = \bbm \nu^1_K \\ \vdots \\ \nu^k_K \\ \vdots \\ \nu^K_K \ebm, \ \hat{\mu}_K = \bbm \mu^1_K \\ \vdots \\ \mu^k_K \\ \vdots \\ \mu^K_K \ebm.
	\end{equation}
	\nin Then, in $\BC^N$, with respect to the basis $\beta$ of normalized eigenvectors of $\BA_{q,N}^u$, we may rewrite system \eqref{4.23} via \eqref{4.14} and \eqref{4.28} as
	\begin{equation}\label{4.29}
		\hat{\bs{w}}'_N = \Lambda \hat{\bs{w}}_N + W \hat{\nu}_K + U \hat{\mu}_K = \Lambda \hat{\bs{w}}_N + B \bbm \hat{\nu}_K \\ \hat{\mu}_K \ebm, \quad B = [W, U]: N \times 2K.
	\end{equation}
	\nin Then, the operator system \eqref{4.23} on $\ds \WqsuN$ has in \eqref{4.29} its representation in $\BC^N$, characterized by the pair $\ds \{ \Lambda, B \}$, with $\Lambda$ the free dynamics matrix and $B$ the control matrix. Our next goal is to guarantee that the pair $\ds \{ \Lambda, B \}$ is controllable.\\
	
	\nin We then obtain the following counterpart of \cite[Theorem 6.1]{LPT.3}.
	
	\begin{thm}\label{Thm-4.1}
		Assume \eqref{4.11}; that is, that $\BA_{q,N}^u$ is semisimple on $\WqsuN$. System \eqref{4.29} i.e. the pair  $\ds \{ \Lambda, B \}$ is controllable in case
		\begin{equation}\label{4.30}
			\text{rank } [W_i, \ U_i] = \ell_i, \ i = 1, \dots, M.
		\end{equation}
		\nin In fact, with reference to \eqref{4.19} and \eqref{4.20}, it is possible to select boundary vectors $f_1, \dots, f_K \in \calF \subset W^{2-\rfrac{1}{q}, q}(\wti{\Gamma})$ with support on $\wti{\Gamma}$, and interior vectors $\bs{u}_1, \dots, \bs{u}_K \in \bs{L}^q_{\sigma}(\omega)$ with support on $\omega$ such that \eqref{4.30} holds true, specifically 
		\begin{multline}\label{4.31}
			\text{rank } [W_i, U_i] = \\ \text{rank} \bbm \ip{f_1}{\partial_{\nu} \psi^*_{i1}|_{\Gamma}}_{\wti{\Gamma}} & \cdots & \ip{f_K}{\partial_{\nu} \psi^*_{i1}|_{\Gamma}}_{\wti{\Gamma}} & \ip{\bs{u}_1}{\bs{\varphi}^*_{i1}}_{\omega} & \cdots & \ip{\bs{u}_K}{\bs{\varphi}^*_{i1}}_{\omega}\\
			\begin{picture}(0,0)\multiput(164,22)(0,-9){7}{\line(0,-1){6}}\end{picture}
			\ip{f_1}{\partial_{\nu} \psi^*_{i2}|_{\Gamma}}_{\wti{\Gamma}} & \cdots & \ip{f_K}{\partial_{\nu} \psi^*_{i2}|_{\Gamma}}_{\wti{\Gamma}} & \ip{\bs{u}_1}{\bs{\varphi}^*_{i2}}_{\omega} & \cdots & \ip{\bs{u}_K}{\bs{\varphi}^*_{i2}}_{\omega}\\
			\vdots & & \vdots & \vdots & & \vdots\\
			\ip{f_1}{\partial_{\nu} \psi^*_{i \ell_i}|_{\Gamma}}_{\wti{\Gamma}} & \cdots & \ip{f_K}{\partial_{\nu} \psi^*_{i \ell_i}|_{\Gamma}}_{\wti{\Gamma}} & \ip{\bs{u}_1}{\bs{\varphi}^*_{i \ell_i}}_{\omega} & \cdots & \ip{\bs{u}_K}{\bs{\varphi}^*_{i \ell_i}}_{\omega} \ebm \\ = \ell_i, \ i = 1, \dots, M. 
		\end{multline}
	\end{thm}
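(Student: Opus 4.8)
The plan is to reduce the controllability of the pair $\{\Lambda, B\}$ to the rank condition \eqref{4.30} by invoking the classical Hautus/Kalman test for a system in Jordan (here diagonal, by the semisimplicity assumption \eqref{4.11}) form. Since $\Lambda$ in \eqref{4.14} is block-diagonal with blocks $\lambda_i I_i$ of size $\ell_i$, and the $\lambda_i$ are distinct, the standard result (see \cite{CTC:1984}, \cite[p 464]{B-M1}) states that $\{\Lambda, B\}$ is controllable if and only if for each $i$ the $\ell_i$ rows of $B$ corresponding to the $\lambda_i$-block are linearly independent; those rows are precisely $[W_i, U_i]$, giving \eqref{4.30}. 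So the content of the theorem is really the second assertion: that boundary vectors $f_k$ and interior vectors $\bs{u}_k$ can actually be chosen so that \eqref{4.31} holds.

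To establish the existence of such vectors, I would argue by contradiction with a dimension count, for each fixed $i$ separately. Fix $i$ and suppose that for \emph{every} choice of $f_1,\dots,f_K \in \calF$ supported on $\wti{\Gamma}$ and $\bs{u}_1,\dots,\bs{u}_K \in \bs{L}^q_\sigma(\omega)$ supported on $\omega$, the matrix $[W_i, U_i]$ has rank $< \ell_i$. Taking $K \geq \ell_i$ (indeed $K = \sup_i \ell_i$ as in the statement) and letting the $f_k, \bs{u}_k$ range freely, this forces the existence of a fixed nonzero vector $c = (c_1,\dots,c_{\ell_i}) \in \BC^{\ell_i}$ — independent of the test vectors, by a standard argument passing to a common annihilator — such that
\begin{equation*}
\sum_{j=1}^{\ell_i} \overline{c_j}\, \ip{f}{\partial_\nu \psi^*_{ij}|_\Gamma}_{\wti{\Gamma}} = 0 \quad \text{for all } f \text{ supported on } \wti{\Gamma}, \qquad
\sum_{j=1}^{\ell_i} \overline{c_j}\, \ip{\bs{u}}{\bs{\varphi}^*_{ij}}_{\omega} = 0 \quad \text{for all } \bs{u} \text{ supported on } \omega.
\end{equation*}
Setting $\bs{\varphi}^* = \sum_j \overline{c_j}\, \bs{\varphi}^*_{ij}$ and $\psi^* = \sum_j \overline{c_j}\, \psi^*_{ij}$, which is again an eigenfunction of $\BA_q^*$ for the eigenvalue $\bar\lambda_i$ (by semisimplicity, this is a genuine eigenfunction, not a generalized one), the two identities read $\bs{\varphi}^* \equiv 0$ on $\omega$ and $\partial_\nu \psi^*|_{\wti{\Gamma}} \equiv 0$.

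The main obstacle — the ``ignition key'' of the whole paper — is then to derive a contradiction from $\{\bs{\varphi}^* \equiv 0 \text{ on } \omega,\ \partial_\nu \psi^*|_{\wti{\Gamma}} = 0\}$ together with the adjoint eigensystem \eqref{4.10a}--\eqref{4.10d}. This is exactly where the Unique Continuation Property of Theorem \ref{Thm-B.1} in Appendix \ref{app-B} enters: the overdetermined adjoint static problem \eqref{4.10}, augmented with the homogeneous conditions $\bs{\varphi}^*|_\omega = 0$ and $\partial_\nu \psi^*|_{\wti{\Gamma}} = 0$ (the latter on top of the already-present Dirichlet condition $\psi^*|_\Gamma = 0$ from \eqref{4.10d}, so $\psi^*$ has zero Cauchy data on $\wti\Gamma$), forces $\bs{\varphi}^* \equiv 0$ and $\psi^* \equiv 0$ on all of $\Omega$. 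Since the $\{\bs{\varphi}^*_{ij}, \psi^*_{ij}\}_{j=1}^{\ell_i}$ are linearly independent, this means $c = 0$, a contradiction. Hence for each $i$ there \emph{is} an admissible choice making $\text{rank}[W_i, U_i] = \ell_i$; combining the finitely many choices over $i=1,\dots,M$ (a generic/Baire-category or successive-refinement argument on the finite-dimensional parameter space of test vectors, as in \cite{LPT.1}, \cite{LPT.3}) yields a single choice of $f_1,\dots,f_K$ and $\bs{u}_1,\dots,\bs{u}_K$ for which \eqref{4.31} holds simultaneously for all $i$. I expect the UCP invocation to be essentially a citation to Appendix \ref{app-B}, with the only delicate bookkeeping being the matching of the $\kappa = 1$ normalization in \eqref{B.11} and the correct identification of the trace term $D^*\calB_q^*\psi^* = -\partial_\nu\psi^*$ used in passing from \eqref{4.17} to \eqref{4.18}.
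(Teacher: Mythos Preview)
Your overall strategy is correct and matches the paper's: reduce controllability to the block-rank condition \eqref{4.30} via the Hautus test for a diagonal $\Lambda$, then argue by contradiction that a nontrivial common annihilator $c$ would produce an eigenfunction $\bs{\Phi}^* = \{\bs{\varphi}^*,\psi^*\}$ of $\BA_q^*$ satisfying the overdetermination $\bs{\varphi}^*\equiv 0$ in $\omega$ and $\partial_\nu\psi^*|_{\wti\Gamma}=0$, and then use unique continuation to force $\bs{\Phi}^*\equiv 0$, contradicting linear independence.

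There is, however, a genuine gap in your invocation of the UCP. You write that Theorem~\ref{Thm-B.1} applies directly to the data $\{\bs{\varphi}^*|_\omega=0,\ \partial_\nu\psi^*|_{\wti\Gamma}=0\}$ together with the Dirichlet condition $\psi^*|_\Gamma=0$. But the hypothesis \eqref{B.5d} of Theorem~\ref{Thm-B.1} requires $\psi^*\equiv 0$ \emph{in $\omega$}, not merely zero Cauchy data for $\psi^*$ on the boundary portion $\wti\Gamma$. Zero Cauchy data on $\wti\Gamma$ does not by itself give $\psi^*\equiv 0$ in $\omega$, because the thermal equation \eqref{4.10b} is coupled to $\bs{\varphi}^*$ through the term $-\gamma\,\bs{\varphi}^*\cdot\bs{e}_d$. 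The paper inserts an intermediate step you are missing: since $\bs{\varphi}^*\equiv 0$ in $\omega$ in particular gives $\varphi^{*(d)}\equiv 0$ in $\omega$, the coupling term drops out and \eqref{4.10b} restricted to $\omega$ becomes the \emph{scalar} elliptic equation $\kappa\Delta\psi^* + \bs{y}_e\cdot\nabla\psi^* = -\bar\lambda_i\psi^*$ in $\omega$, now overdetermined with $\psi^*|_{\wti\Gamma}=\partial_\nu\psi^*|_{\wti\Gamma}=0$ on $\wti\Gamma\subset\partial\omega$. A classical scalar UCP (Carleman/H\"ormander) then yields $\psi^*\equiv 0$ in $\omega$, and only at that point does Theorem~\ref{Thm-B.1} apply to conclude $\bs{\Phi}^*\equiv 0$ in $\Omega$. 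You should make this two-stage structure explicit; without it the citation of Theorem~\ref{Thm-B.1} is premature.
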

	\begin{proof}
		This is the counterpart of the proof \cite[Theorem 6.1]{LPT.2}. As in that paper, the present proof rests on a suitable Unique Continuation Property, Theorem \ref{Thm-B.1} of Appendix \ref{app-B} as documented below.\\
		
		\nin In general. in seeking that the $\ell_i$ rows (of length $2K$) in matrix \eqref{4.31} be linearly independent, we see that the full rank statement \eqref{4.31} will hold true if and only if we can exclude that each of the two sets of vectors
		\begin{equation}\label{4.32}
			\left\{ \partial_{\nu} \psi_{i1}^*, \dots, \partial_{\nu} \psi_{i \ell_i}^* \right\} \text{ in } L^q(\wti{\Gamma}) \quad \text{and} \quad \left\{ \bs{\varphi}_{i1}^*, \dots, \bs{\varphi}_{i \ell_i}^* \right\} \text{ in } \bs{L}^q_{\sigma}(\omega)
		\end{equation}
		\nin are linearly independent with the same linear independence relation in the two cases; that is, if and only if we can establish that \textit{we cannot} have simultaneously 
		\begin{equation}\label{4.33}
			\partial_{\nu} \psi_{i \ell_i}^* = \sum_{j = 1}^{\ell_i - 1} \alpha_j \partial_{\nu} \psi_{ij}^* \text{ in } L^q(\wti{\Gamma}) \quad \text{and} \quad \bs{\varphi}_{i \ell_i}^* = \sum_{j = 1}^{\ell_i - 1} \alpha_j \bs{\varphi}_{ij}^*  \text{ in } \bs{L}^q_{\sigma}(\omega)
		\end{equation}
		\nin with the same constants $\alpha_1, \dots, \alpha_{\ell_i - 1}$ in both expansions \cite{LPT.2}.\\
		
		\nin \uline{Claim: Statement \eqref{4.33} is false.} By contradiction, suppose that both linear combinations in \eqref{4.33} hold true. Define the $(d+1)$-vector $\bs {\Phi}^* = \{ \bs{\varphi}^*, \psi^* \}$ (depending on $i$) on $\lso \times L^q(\Omega)$ by setting
		\begin{equation}\label{4.34}
			\bs{\Phi}^* \equiv \bbm \bs{\varphi}^* \\ \psi^* \ebm \equiv \sum_{j = 1}^{\ell_i - 1} \bbm \alpha_j \bs{\varphi}_{ij}^* \\ \alpha_j \psi_{ij}^* \ebm - \bbm \bs{\varphi}_{i \ell_i}^* \\ \psi_{i \ell_i}^*\ebm = \sum_{j = 1}^{\ell_i - 1} \alpha_j \bs{\Phi}^*_{ij} - \bs{\Phi}^*_{i \ell_i}, \ i = 1, \dots, M; \ q \geq 2.
		\end{equation}
		\nin Thus, in view of \eqref{4.33}, we obtain
		\begin{equation}\label{4.35}
			\bs{\varphi}^* \equiv 0 \text{ in } \omega, \quad \partial_{\nu} \psi^*|_{\wti{\Gamma}} \equiv 0.
		\end{equation}
		\nin Moreover, since $\ds \bs{\Phi}^*_{ij} = \{ \bs{\varphi}^*_{ij}, \psi^*_{ij} \}$ is an eigenvector of the operator $\ds \BA_{q,N}^*$ corresponding to the (unstable) eigenvalue $\bar{\lambda}_i$, then so is $\ds  \bs{\Phi}^*: \ \BA_{q,N}^* \bs{\Phi}^* = \bar{\lambda}_i \bs{\Phi}^*$. Then $\ds \bs{\Phi}^* = \{ \bs{\varphi}^*, \psi^* \}$ satisfies the corresponding PDE version of the eigenvector identity as given by \eqref{4.10}, see \eqref{B.4} in Appendix \ref{app-B}
		\begin{subequations}\label{4.36}
			\begin{empheq}[left=\empheqlbrace]{align}
			-\nu \Delta \boldsymbol{\varphi^*} + L^*_e(\boldsymbol{\varphi^*})+ \psi^* \nabla \theta_e + \nabla \pi &= \bar{\lambda}_i \boldsymbol{\varphi^*}   &\text{in } \Omega \label{4.36a} \\
			-\kappa \Delta \psi^* - \bs{y}_e \cdot \nabla \psi^* - \gamma  \boldsymbol{\varphi^*}  \cdot \bs{e}_d &= \bar{\lambda}_i \psi^* &\text{in } \Omega \label{4.36b}\\
			\text{div } \boldsymbol{\varphi^*} &= 0   &\text{in } \Omega  \label{4.36c} \\
			\boldsymbol{\varphi^*}|_{\Gamma} = 0, \ \psi^*|_{\Gamma} &= 0 &\text{on } \Gamma.    \label{4.36d}
			\end{empheq}
			\nin and in addition, the over-determined conditions in \eqref{4.35}:
			\begin{equation}\label{4.36e}
			\bs{\varphi}^* \equiv 0 \text{ in } \omega, \quad \partial_{\nu} \psi^*|_{\wti{\Gamma}} \equiv 0.
			\end{equation}		
		\end{subequations}
	\nin Write $\ds \bs{\varphi}^* = \{ \varphi^{*(1)}, \varphi^{*(2)}, \dots, \varphi^{*(d)} \}$ for the $d$-components of the vector $\bs{\varphi}^*$. Since $\bs{e}_d = \{ 0, \dots, 0, 1 \}$, the property $\varphi^{*d} = 0$ in $\omega$, contained in \eqref{4.36e}, used in \eqref{4.36b} implies:
	\begin{subequations}\label{4.37}
		\begin{empheq}[left=\empheqlbrace]{align}
		\kappa \Delta \psi^* + \bs{y}_e \cdot \nabla \psi^* &= -\bar{\lambda}_i \psi^* &\text{in } \omega \label{4.37a}\\
		\psi^*|_{\wti{\Gamma}} = 0, \ \partial_{\nu} \psi^*|_{\wti{\Gamma}} &= 0 &\text{on } \wti{\Gamma}. \label{4.37b}
		\end{empheq}
	\end{subequations}
	\nin recalling the boundary conditions for $\psi^*$ in \eqref{4.36d} and \eqref{4.36e} for the $\psi^*$-problem defined on $\omega$, with $\wti{\Gamma} \subset \partial \omega$, as in Fig 1. It is then a standard result \cite{Car}, \cite[Sect 19, pp 59-61]{M.1}, \cite[p 3]{H}, \cite{Kom.1} that the over-determined problem \eqref{4.37} implies
	\begin{equation}\label{4.38}
		\psi^* \equiv 0 \text{ in } \omega.
	\end{equation}
	\nin Then \eqref{4.38} and the full strength of \eqref{4.36d} give the over-determination
	\begin{equation}
	\bs{\varphi}^* \equiv 0 \text{ in } \omega, \ \psi^* \equiv 0 \text{ in } \omega
	\end{equation}
	\nin for problem \eqref{4.36}. By means of such over-determination, we can apply the Unique Continuation Property \cite[Theorem 5]{TW.1} recalled as Theorem \ref{Thm-B.1} in Appendix \ref{app-B} and conclude that
	\begin{equation}\label{4.40}
		\bs{\Phi}^* = \{ \bs{\varphi}^*, \psi^* \} \equiv 0 \text{ in } \Omega, \quad \pi \equiv \text{const.} \text{ in } \Omega,
	\end{equation}
	\nin or recalling \eqref{4.34}
	\begin{equation}
		\bs{\Phi}^*_{i \ell_i} = \sum_{j = 1}^{\ell_i - 1} \alpha_j \bs{\Phi}^*_{ij} \text{ in } \Wqs(\Omega) = \lso \times L^q(\Omega);
	\end{equation}
	\nin i.e. the set $\ds \{ \bs{\Phi}^*_{i1}, \dots, \bs{\Phi}^*_{i \ell_i} \}$ is linearly dependent on $\ds \Wqs(\Omega)$. But this is false by the very selection of such eigenvectors, see above \eqref{4.1}. Thus the two conditions in \eqref{4.33} cannot hold simultaneously. The claim is proved. Hence it is possible to select, in infinitely many ways, boundary functions $\ds f_1, \dots, f_K \in \calF \subset L^q(\Gamma)$ for $q \geq 2$ and interior $d$-vectors $\bs{u}_1, \dots, \bs{u}_K$ in $\lso$, such that the Kalman algebraic full rank conditions \eqref{4.31} hold true. Indeed they may be chosen independent of $i$. Start with \eqref{4.31} for $\bar{\imath}$ such that $\ds \ell_{\bar{\imath}} = K = \max \left\{ \ell_i, \ i = 1, \dots, M \right\}$ yielding $\ds f_1, \dots, f_K$ which are linearly independent. They also work in other $i$'s.
	\end{proof}
	
	\begin{rmk}\label{Rmk-4.1}
		The UCP in \cite[Theorem 5]{TW.1} reported as Theorem \ref{Thm-B.1} in Appendix \ref{app-B}, refers to problem (\hyperref[4.36]{4.36a-c}), without the B.C. \eqref{4.36d}, with the a-priori over-determination $\psi^* \equiv 0$ in $\omega$, and $\ds \{ \varphi^{*(1)}, \dots, \varphi^{*(d-1)} \} \equiv 0$ in $\omega$, thus not involving the last $d$-component $\varphi^{*(d)} \equiv 0$. In contrast, in the setting of proving the above Claim, we need $\varphi^{*(d)} \equiv 0$ in $\omega$ in order to deduce $\psi^* \equiv 0$ in $\omega$. The present proof on the validity of the rank conditions \eqref{4.31}, ultimately relying on the UCP of \cite[Theorem 5]{TW.1} in the Appendix \ref{app-B} requires the full strength of the fluid vectors $\bs{u}_1, \dots, \bs{u}_{\ell_i}$, each possessing $d$-components. No geometrical assumptions are involved for the pair $\{ \omega, \wti{\Gamma} \}$, except for $\omega$ being an interior subdomain touching the boundary $\wti{\Gamma}$ as in Fig 1. In Appendix \ref{app-C}, we report an improvement requiring only $(d-1)$ components from the fluid vectors $\bs{u}_1, \dots, \bs{u}_{\ell_i}$ to include necessarily the $d$\textsuperscript{th} components. \qedsymbol		
	\end{rmk}

	\begin{rmk}\label{Rmk-4.2}
		We have established Theorem \ref{Thm-4.1} under the simplifying semisimple assumption \eqref{4.11}. However, Theorem \ref{Thm-4.1} holds true in full generality. The corresponding proof is lengthy and technical. It may be given by following the scheme given in \cite{LT1:2015}, \cite{LT2:2015}, \cite{LPT.3} for the Navier-Stokes equations. It requires use of the controllability criterion for a finite dimensional pair $\{ A, B \}$ with $A$ in Jordan canonical form \cite{CTC:1984}, \cite{B-M1}. Henceforth, we proceed by taking that Theorem \ref{Thm-4.1} is true in full generality. \qedsymbol
	\end{rmk}

	\nin As a consequence Theorem \ref{Thm-4.1}, we obtain the following fundamental result on the uniform stabilization with arbitrarily preassigned decay rate, of the original unstable, finite dimensional $\bs{w}_N$-problem \eqref{3.6a}, the counterpart of \cite[Theorem 5.1]{LPT.2}.
	
	\begin{thm}\label{Thm-4.2}
		Let $\lambda_1,\ldots,\lambda_M$ be the unstable distinct eigenvalues of the operator $\ds \BA_q \ (= \BA_{q,N}^u )$ as in \eqref{1.34} with geometric multiplicity $\ell_i, \ i = 1, \dots, M$, and set $K = \sup \{ \ell_i; i = 1, \dots, M \}$. Let $\wti{\Gamma} $ be an open connected subset of the boundary $\Gamma$ of positive surface measure and $\omega$ be a localized collar supported by $\wti{\Gamma}$ (Fig.~1). Let $q \geq 2$. Given $\gamma_1 > 0$ arbitrarily large, we can construct two $K$-dimensional controllers: a boundary control $v = v_N$ acting with support on $\wti{\Gamma}$, of the form given by 				
		\begin{equation}\label{4.42}
		v= v_N = \sum^K_{k=1} \nu_k(t) f_k, \ f_k \in \calF \subset W^{2-\rfrac{1}{q},q}(\Gamma), \ q \geq 2,
		\end{equation}		
		\noindent $\calF$ defined in \eqref{4.21}, $q \geq 2$, $f_k$ supported on $\wti{\Gamma}$, and an interior control $\bs{u} = \bs{u}_N$ acting on $\omega$, of the form given by		
		\begin{equation}\label{4.43}
		\bs{u} = \bs{u}_N = \sum_{k = 1}^K \mu_k(t) \bs{u}_k, \quad \bs{u}_k \in \WqsuN \subset \lso, \quad \mu_k(t) = \text{scalar,}
		\end{equation}
		\noindent thus with interior vectors $[\bs{u}_1, \dots, \bs{u}_K]$ in the smooth subspace $\WqsuN$ of $\lso, \ 2 \leq q < \infty$, supported on $\omega$, such that, once inserted in the finite dimensional projected $\bs{w}_N$-system in \eqref{3.6a}, yields the system
		\begin{equation}\label{4.44}
		\bs{w}'_N = \BA^u_{q,N} \bs{w}_N + P_N \bbm P_q \left( m \left( \ds \sum_{k = 1}^{K} \mu_k(t) \bs{u}_k \right) \right) \\ \calB_qD \left( \ds \sum_{k = 1}^{K} \nu_k(t) f_k \right) \ebm ,
		\end{equation}
		\noindent whose solution then satisfies the estimate
		\begin{multline}\label{4.45}
		\norm{\bs{w}_N(t)}_{\lso} + \norm{v_N(t)}_{L^q(\wti{\Gamma})} + \norm{v'_N(t)}_{L^q(\wti{\Gamma})} + \\ \norm{\bs{u}_N(t)}_{\bs{L}^q_{\sigma}(\omega)} + \norm{\bs{u}'_N(t)}_{\bs{L}^q_{\sigma}(\omega)} \leq C_{\gamma_{1}} e^{-\gamma_1 t} \norm{P_Nw_0}_{\Wqs(\Omega)}, \quad t \geq 0.
		\end{multline}
		
		\noindent Moreover, such controllers $ v = v_N $ and $ \bs{u} = \bs{u}_N $ may be chosen in feedback form: that is, with reference to the explicit expressions \eqref{4.42} for $v$ and \eqref{4.43} for $\bs{u}$, of the form $\nu_k(t) = \ip{\bs{w}_N(t)}{\bs{p}_k}_{_{\WqsuN}}$ and $\mu_k(t) = \ip{\bs{w}_N(t)}{\bs{q}_k}_{_{\WqsuN}}$ for suitable vectors $\bs{p}_k \in \left( \WqsuN \right)^* \subset \lo{q'} \times L^{q'}(\Omega)$, $\bs{q}_k \in \left( \WqsuN \right)^* \subset \lo{q'} \times L^{q'}(\Omega)$ depending on $\gamma_1$, where $\ip{ \ }{\ }$ denotes the duality pairing $\ds \WqsuN \times \left( \WqsuN \right)^*$.\\
		
		\noindent In conclusion, $\bs{w}_N$ in \eqref{4.45} is the solution of the equation \eqref{4.44} on $\WqsuN$ rewritten explicitly as
		\begin{equation}\label{4.46}
		\bs{w}'_N = \BA^u_{q,N} \bs{w}_N + P_N \bbm P_q \left( m \left( \ds \sum_{k = 1}^{K} \ip{\bs{w}_N(t)}{\bs{q}_k}_{\WqsuN} \bs{u}_k \right) \right) \\ \calB_q D \left( \ds \sum_{k = 1}^{K} \ip{\bs{w}_N(t)}{\bs{p}_k}_{\WqsuN} f_k \right) \ebm ,
		\end{equation}
		\noindent $f_k$ supported on $\wti{\Gamma}$, $\bs{u}_k$ supported on $\omega$, rewritten in turn as
		\begin{equation}\label{4.47}
		\bs{w}'_N = \overline{A}^u \bs{w}_N, \ \bs{w}_N(t) = e^{\overline{A}^{u}t} P_N \bs{w}_0, \ \bs{w}_N(0) = P_N \bs{w}_0 \quad \text{on } \WqsuN.
		\end{equation}
	\end{thm}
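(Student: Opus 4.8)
The plan is to reduce Theorem \ref{Thm-4.2} to a pure finite–dimensional pole–assignment problem for the pair $\{\Lambda,B\}$ appearing in \eqref{4.29}, and then to re‑interpret the resulting algebraic feedback as an operator feedback on $\WqsuN$. By Theorem \ref{Thm-4.1} we may fix once and for all boundary functions $f_1,\dots,f_K\in\calF$ supported on $\wti{\Gamma}$ and interior vectors $\bs{u}_1,\dots,\bs{u}_K$ supported on $\omega$ (chosen, as in \eqref{4.43}, in the smooth finite–dimensional space $\WqsuN$), so that the rank conditions \eqref{4.30}–\eqref{4.31} hold; equivalently, the pair $\{\Lambda,B\}$ with $B=[W,U]:N\times 2K$ is controllable in $\BC^N$. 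This is the one step that carries genuine PDE content, and it has already been disposed of via the Unique Continuation Property of Theorem \ref{Thm-B.1}; everything that follows is standard linear systems theory plus bookkeeping of the duality pairings.

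Next I would invoke the classical pole–assignment theorem for a controllable finite–dimensional pair (Wonham; see also \cite[p.~44]{Za}): given $\gamma_1>0$ arbitrarily large, there exists a $2K\times N$ matrix $\mathbb{K}=\bbm \mathbb{K}_\nu \\ \mathbb{K}_\mu \ebm$ such that the closed–loop matrix $\Lambda+B\mathbb{K}$ has all its eigenvalues in the half–plane $\{\Re\lambda<-\gamma_1\}$. Substituting the feedback $\bbm \hat{\nu}_K \\ \hat{\mu}_K \ebm=\mathbb{K}\,\hat{\bs{w}}_N$ into \eqref{4.29} gives $\hat{\bs{w}}'_N=(\Lambda+B\mathbb{K})\hat{\bs{w}}_N$, whence $|\hat{\bs{w}}_N(t)|\le C_{\gamma_1}e^{-\gamma_1 t}|\hat{\bs{w}}_N(0)|$ in $\BC^N$ (if one wants the strict exponent one enlarges $\gamma_1$ infinitesimally, or places the spectrum slightly to the left).

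Then I would translate this back into $\WqsuN$. Through $\mathbb{K}$, each scalar $\nu_k(t)$ (resp.\ $\mu_k(t)$) is a linear functional of the coordinate vector $\hat{\bs{w}}_N=[\bs{w}_N]_\beta$, hence a linear functional of $\bs{w}_N\in\WqsuN$; since $\WqsuN$ is finite–dimensional, each such functional is represented by a vector $\bs{p}_k$ (resp.\ $\bs{q}_k$) in the dual $(\WqsuN)^*\subset\lo{q'}\times L^{q'}(\Omega)$, i.e.\ $\nu_k(t)=\ip{\bs{w}_N(t)}{\bs{p}_k}$, $\mu_k(t)=\ip{\bs{w}_N(t)}{\bs{q}_k}$, yielding precisely the feedback form \eqref{4.42}–\eqref{4.43}. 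Inserted into \eqref{3.6a}, these controls turn \eqref{4.44} into the autonomous system \eqref{4.46}–\eqref{4.47} on $\WqsuN$ with generator $\overline{A}^u$, whose matrix representation in the basis $\beta$ is exactly $\Lambda+B\mathbb{K}$. Since all norms on the finite–dimensional $\WqsuN$ are equivalent, the $\BC^N$ decay upgrades to $\|\bs{w}_N(t)\|_{\lso}\le C_{\gamma_1}e^{-\gamma_1 t}\|P_N\bs{w}_0\|_{\Wqs(\Omega)}$.

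Finally, the bounds on $v_N,v'_N,\bs{u}_N,\bs{u}'_N$ in \eqref{4.45} are immediate: $v_N(t)$ and $\bs{u}_N(t)$ are fixed finite linear combinations of the components of $\bs{w}_N(t)$ (the coefficients being the functionals $\ip{\cdot}{\bs{p}_k}$, $\ip{\cdot}{\bs{q}_k}$ composed with the fixed vectors $f_k$, $\bs{u}_k$), while $v'_N(t),\bs{u}'_N(t)$ are the same combinations applied to $\bs{w}'_N(t)=\overline{A}^u\bs{w}_N(t)$, and $\overline{A}^u$ is bounded on the finite–dimensional space; hence all five terms inherit the $e^{-\gamma_1 t}$ decay. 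Thus the only obstacle is the already–resolved rank condition \eqref{4.31}; the remainder is routine.
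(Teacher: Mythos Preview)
Your proposal is correct and follows essentially the same approach as the paper: invoke Theorem \ref{Thm-4.1} for controllability of $\{\Lambda,B\}$, apply the finite-dimensional pole-assignment theorem (the paper cites Popov's criterion in \cite[Theorem 2.9, p.~44]{Za}) to obtain a $2K\times N$ feedback matrix, and then translate the algebraic feedback back to functionals $\bs{p}_k,\bs{q}_k\in(\WqsuN)^*$ via the basis $\beta$. Your additional explicit justification of the decay of $v_N,v'_N,\bs{u}_N,\bs{u}'_N$ via boundedness of $\overline{A}^u$ on the finite-dimensional space is a welcome detail that the paper leaves to the reader.
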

	\begin{proof}
		The technical proof is similar circumstances was given in \cite{LT1:2015}, \cite{LT2:2015}, \cite{LPT.3}. Thus, we give here only some insight. The key new fact is the controllability test of Theorem \ref{Thm-4.1}. Having established the controllability condition for the pair $\{ \Lambda, B \}$, then by the well-known Popov's criterion on finite dimensional theory (see e.g. \cite[Theorem 2.9 p 44]{Za}), there exists a feedback matrix $Q: 2K \times N$, such that the spectrum of the matrix $[\Lambda + BQ]$ may be arbitrarily preassigned, in particular, to lie arbitrarily on the left half-plane $\ds \{\lambda: Re \ \lambda < -\gamma_1 < - Re \ \lambda_{N+1} \}$, with $\gamma_1 > 0$ preassigned, as desired. The resulting closed-loop system is
	\begin{equation}\label{4.48}
		(\hat{\bs{w}}_{N})' = \Lambda \hat{\bs{w}}_N + BQ \hat{\bs{w}}_N, \ \bbm \hat{\nu}_K \\ \hat{\mu}_K \ebm = Q \hat{\bs{w}}_N = \text{ feedback control}
	\end{equation}
	\begin{equation}\label{4.49}
		Q = \bbm \text{ row } \hat{p}_1 \\ \vdots \\ \text{ row } \hat{p}_K \\ \text{ row } \hat{q}_1 \\ \vdots \\ \text{ row } \hat{q}_K \ebm: 2K \times N; \ \hat{\nu}_K = \bbm \nu^1_K = \ip{\hat{\bs{p}}_1}{\hat{\bs{w}}_N} \\ \vdots \\ \nu^k_K = \ip{\hat{\bs{p}}_k}{\hat{\bs{w}}_N} \\ \vdots \\ \nu^K_K = \ip{\hat{\bs{p}}_K}{\hat{\bs{w}}_N} \ebm, \ \hat{\mu}_K = \bbm \mu^1_K = \ip{\hat{\bs{q}}_1}{\hat{\bs{w}}_N} \\ \vdots \\ \mu^k_K = \ip{\hat{\bs{q}}_k}{\hat{\bs{w}}_N} \\ \vdots \\ \mu^K_K = \ip{\hat{\bs{q}}_K}{\hat{\bs{w}}_N} \ebm
	\end{equation}
	\nin in the $\BC^N$-inner product. Thus, returning from $\BC^N \times \BC^N$ back to $\ds \WqsuN \times \left[ \WqsuN \right]^*$, there exist vectors $\bs{p}_1, \dots, \bs{p}_K$ and $\bs{q}_1, \dots, \bs{q}_K$ in $\ds \left[ \WqsuN \right]^*$ such that
	\begin{equation}\label{4.50}
		\nu_K^k = \ip{\bs{w}_N}{\bs{p}_k}, \ \mu_K^k = \ip{\bs{w}_N}{\bs{q}_k}, \ k = 1, \dots, K
	\end{equation}
	\nin where $\ip{ \ }{ \ }$ denotes the duality paring $\ds \WqsuN \times \left[ \WqsuN \right]^*$. This way, the closed loop system \eqref{4.46} corresponds to the $\BC^N$-system \eqref{4.48}. \eqref{4.49}.
	\end{proof}

	\section{The linearized $\bs{w}$-system \eqref{1.32} in feedback form.}\label{Sec-5}
	
	\nin We return to the open-loop linearized $\bs{w}$-problem \eqref{1.31}, \eqref{1.32b}, and use the same corresponding feedback operators
	\begin{align}
	v &= F \bs{w} = \sum_{k = 1}^K \ip{P_N \bs{w}}{\bs{p}_k}f_k, \ f_k \in \calF \subset W^{2 - \rfrac{1}{q},q}(\Gamma), \nonumber\\ & \hspace{1cm} \bs{p}_k \in (\bs{W}^u_N)^* \subset \bs{L}^{q'}_{\sigma}(\Omega) \times L^q(\Omega), \ q \geq 2,\ f_k \text{ supported on } \wti{\Gamma}.  \label{5.1}\\ 
	J \bs{w} &= P_qm(\bs{u}) = P_q m \left(\sum_{k = 1}^K \ip{P_N \bs{w}}{\bs{q}_k} \bs{u}_k\right), \nonumber\\ & \hspace{1cm} \bs{q}_k \in (\bs{W}^u_N)^* \subset \bs{L}^{q'}_{\sigma}(\Omega) \times L^q(\Omega), \ \bs{u}_k \text{ supported on } \omega, \label{5.2}
	\end{align}
	\nin that were employed in \eqref{4.46} to uniformly stabilizing the finite dimensional $\bs{w}_N$-problem \eqref{3.6a} with arbitrarily preassigned decay $\gamma_1$ in \eqref{4.45}. See also \eqref{2.2b}, \eqref{2.3}. We thus obtain from \eqref{1.32b} the resulting closed-loop linearized $\bs{w}$-problem in feedback form
	\begin{subequations}\label{5.3}
	\begin{equation}\label{5.3a}
	\frac{d \bs{w}}{dt} = \bbm \calA_q & -\calC_{\gamma} \\ -\calC_{\theta_e} & 0 \ebm \bs{w} + \bbm P_q \left( m \ds \sum_{k = 1}^K \ip{P_N \bs{w}}{\bs{q}_k}\bs{u}_k \right) \\ -\calB_q \left(w_2  - D \ds \sum_{k = 1}^K \ip{P_N \bs{w}}{\bs{p}_k}f_k\right) \ebm 
	\end{equation}
	\nin (compare with \eqref{4.46}) or by \eqref{5.1}, \eqref{5.2}
	\begin{equation}\label{5.3b}
	\frac{d \bs{w}}{dt} = \bbm \calA_q & -\calC_{\gamma} \\ -\calC_{\theta_e} & 0 \ebm \bs{w} + \bbm J \bs{w} \\ -\calB_q (w_2 - DF \bs{w}) \ebm \equiv \BA_{_{F,q}} \bs{w}
	\end{equation}
	\end{subequations}
	\nin as in \eqref{2.4b}. Thus $\ds \BA_{_{F,q}}$ defines the linearized $\bs{w}$-problem in feedback form, while $\BA_q$ in \eqref{1.34} is the free dynamics operator. We rewrite \eqref{5.3} as
	\begin{equation}\label{5.4}
	\frac{d \bs{w}}{dt} = \BA_{_{F,q}} \bs{w} = \hat{\BA}_{_{F,q}} \bs{w} + \Pi \bs{w}
	\end{equation}
	\nin where $\ds \hat{\BA}_{_{F,q}}$ is the streamlined operator that removes benign terms from $\ds \BA_{_{F,q}}$ and moves them into the perturbation operator $\Pi$
	\begin{subequations}\label{5.5}
	\begin{align}
	\hat{\BA}_{_{F,q}} \bs{w} &= \bbm -A_q \bs{w}_1 \\ -\calB_q \left( w_2 - DF \bs{w} \right) \ebm, \quad \Pi \bs{w} = \bbm A_{o,q} & -\calC_{\gamma} \\ -\calC_{\theta_e} & 0 \ebm \bs{w} + \bbm J \bs{w} \\ 0 \ebm \label{5.5a}\\
	\calD \left( \BA_{_{F,q}} \right) &= \calD \left( \hat{\BA}_{_{F,q}} \right) = \left\{ \bs{w} = \bbm \bs{w}_1  \\  w_2 \ebm \in \bs{W}^q_{\sigma}(\Omega) = \bls \times L^q(\Omega): \bs{w}_1 \in \calD(A_q), \right. \nonumber \\ &\left( w_2 - DF\bs{w} \right) \in \calD(\calB_q) \bigg\} \subset \bs{W}^{2,q}(\Omega) \cap \bs{W}^{1,q}_0(\Omega) \cap \lso \times W^{2,q}(\Omega) \label{5.5b} \\
	\calD(\calB_q) &= \calD(B_q) = W^{2,q}(\Omega) \cap W^{1,q}_0(\Omega); \ 
	 \calD(\Pi) = \calD(A_{o,q}) \times L^q(\Omega); \nonumber \\ & \hspace{5cm} Dv = DF\bs{w} \in W^{2,q}(\Omega) \label{5.5c}
	\end{align}
	\end{subequations}
	\nin recalling \eqref{1.14}, \eqref{1.15}, \eqref{1.18}, \eqref{1.23c}, \eqref{2.2b}.
	
	\section{The linearized feedback operator $\ds \BA_{_{F,q}}$ is the generator of a s.c. analytic, uniformly stable semigroup $\ds e^{\BA_{_{F,q}}t}$ on $\ds \Wqs(\Omega)$ and $\ds \VbqpO$.}
	
	\begin{thm}\label{Thm-6.1}
		The operator $\ds \BA_{_{F,q}}$ in \eqref{5.3b} generates a s.c., analytic semigroup $\ds e^{\BA_{_{F,q}}t}$ on $\ds \Wqs(\Omega) \equiv \lso \times L^q(\Omega)$ as well as in $\VbqpO \equiv \Bto \times B^{2-\rfrac{2}{p}}_{q,p}(\Omega)$.
	\end{thm}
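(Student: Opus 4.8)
The plan is to exploit the decomposition $\BA_{F,q} = \hat{\BA}_{F,q} + \Pi$ of \eqref{5.4}--\eqref{5.5}: first establish that the streamlined operator $\hat{\BA}_{F,q}$ generates a s.c. analytic semigroup on $\Wqs(\Omega) \equiv \lso \times \lqo$, then absorb $\Pi$ as a perturbation with relative bound zero, and finally transfer analyticity to $\VbqpO$ by real interpolation. For the first task I would start from the diagonal core of $\hat{\BA}_{F,q}$: by \eqref{5.5a}, $\hat{\BA}_{F,q}\bs{w} = \{-\nu A_q\bs{w}_1,\ -\calB_q(w_2 - DF\bs{w})\}$, and $-\nu A_q$ generates a bounded analytic semigroup on $\lso$ by Appendix \ref{app-A} (Theorems \ref{A-Thm-1.4}, \ref{A-Thm-1.5}), while $-\calB_q$ in \eqref{1.18} — being $-\kappa\Delta$ plus the lower-order drift $\bs{y}_e\cdot\nabla$ — generates an analytic semigroup on $\lqo$ by Agmon--Douglis--Nirenberg elliptic theory; hence $\mathrm{diag}(-\nu A_q,\ -\calB_q)$ is an analytic generator.

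The term $\calB_q DF$ is precisely what distinguishes $\hat{\BA}_{F,q}$ from this diagonal core, and this step is the main obstacle. By \eqref{5.5b} the heat state is $\psi := w_2 - DF\bs{w} \in \calD(\calB_q)$, where $DF\bs{w} = \sum_{k=1}^K \ip{P_N\bs{w}}{\bs{p}_k}\,Df_k$ is finite rank; but since $Df_k$ does not vanish on $\Gamma$, $\calB_q D f_k$ lies in the negative-order space $[\calD(\calB_q^*)]'$ rather than in $\lqo$, so naive bounded perturbation theory is unavailable. The decisive estimate comes from the regularity \eqref{1.23b} of the Dirichlet map: since $D f_k \in \calD(\calB_q^{\rfrac{1}{2q}-\ep})$ (the domains of $B_q$ and $\calB_q$ sharing equivalent fractional scales), one has $\calB_q D f_k \in \calD(\calB_q^{\beta - 1})$ with $\beta = \rfrac{1}{2q}-\ep \in (0,1)$, so that $\|e^{-\calB_q s}\,\calB_q D f_k\|_{\lqo} \le \|\calB_q^{1-\beta}e^{-\calB_q s}\|\,\|\calB_q^{\beta - 1}\calB_q D f_k\| \le C\,s^{\beta - 1}$, which is integrable near $s = 0$ precisely because $\beta > 0$. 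This makes $\calB_q DF$ an admissible (Desch--Schappacher-type) perturbation of the analytic generator $-\calB_q$, exactly as in \cite{LT3:2015}, \cite{LPT.1}, \cite{LPT.2}, \cite{LPT.3}; equivalently, the change of variables $\psi = w_2 - DF\bs{w}$ built into the domain \eqref{5.5b}, together with the finite-rank structure of $F$ (which makes the resolvent equation for $\hat{\BA}_{F,q}$ solvable for $\lambda$ large in a sector with the requisite $O(1/|\lambda|)$ bound), yields that $\hat{\BA}_{F,q}$ generates a s.c. analytic semigroup on $\Wqs(\Omega)$.

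It then remains to dispose of $\Pi$ and to pass to the Besov setting. By \eqref{5.5a}, $\Pi\bs{w} = \{A_{o,q}\bs{w}_1 - \calC_\gamma w_2 + J\bs{w},\ -\calC_{\theta_e}\bs{w}_1\}$; here $\calC_\gamma, \calC_{\theta_e}$ are bounded by \eqref{1.21}, \eqref{1.22}, $J$ is finite-dimensional hence bounded by \eqref{2.3}, and $A_{o,q}$ is $A_q^{1/2}$-bounded by \eqref{1.15}, \eqref{1.16}, hence $A_q$- (equivalently $\hat{\BA}_{F,q}$-) bounded with relative bound $0$ via the standard inequality $\|A_q^{1/2}f\| \le \delta\|A_q f\| + C_\delta\|f\|$. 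The perturbation theorem for analytic generators then gives that $\BA_{F,q} = \hat{\BA}_{F,q} + \Pi$ generates a s.c. analytic semigroup on $\Wqs(\Omega)$. Finally, $e^{-\nu A_q t}$ and $e^{-\calB_q t}$ are analytic on the domains $\calD(A_q)$, $\calD(\calB_q)$ as well as on $\lso$, $\lqo$, so by real interpolation and the identifications $(\lso,\calD(A_q))_{1-\rfrac1p,p} = \Bto$ from \eqref{1.11} and $(\lqo,\calD(\calB_q))_{1-\rfrac1p,p} = \BsO$ (Remark \ref{Rmk-1.3}, \cite{PSch2001}) they are analytic on $\Bto$ and on $\BsO$ — the ``analyticity by interpolation'' mechanism recorded after Theorem \ref{Thm-1.2}. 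Since the relative bounds above and the $s^{\beta - 1}$ estimate are scale-independent, the perturbations $\Pi$ and $\calB_q DF$ are handled verbatim in the interpolated setting, whence $e^{\BA_{F,q}t}$ is analytic on $\VbqpO \equiv \Bto \times \BsO$ as well.
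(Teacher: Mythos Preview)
Your argument is correct, but you take a genuinely different route from the paper on the key step of handling the boundary-feedback term $\calB_q DF$. The paper does not work with $\hat{\BA}_{F,q}$ directly at all: instead it passes to the \emph{adjoint} $\hat{\BA}_{F,q}^*$ on the reflexive space $\bs{W}^{q'}_\sigma(\Omega)$. Under dualization, the unbounded ``control-type'' perturbation $\calB_q DF$ (whose range lies only in $[\calD(\calB_q^*)]'$) becomes the ``observation-type'' perturbation $F^*D^*\calB_q^*$, which is much tamer: since $D^*\calB_q^{*\gamma}$ is bounded $L^{q'}(\Omega)\to L^{q'}(\Gamma)$ for $\gamma=\tfrac{1}{2q}-\ep$ (the dual of \eqref{1.23b}) and $F^*$ is bounded into $\bs{W}^{q'}_\sigma(\Omega)$, one gets $\|F^*D^*\calB_q^* v_2\|\le C\|\calB_q^{*1-\gamma}v_2\|$, i.e.\ a subordinate perturbation of fractional-power order $1-\gamma<1$. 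Standard Kato/Pazy perturbation theory then gives analyticity for $\hat{\BA}_{F,q}^*$, hence for $\hat{\BA}_{F,q}$ by reflexivity; the Besov statement is likewise obtained by interpolating on the adjoint side and dualizing back. Your direct approach via Desch--Schappacher admissibility (the $s^{\beta-1}$ smoothing estimate) or via the change of variables $\psi=w_2-DF\bs{w}$ and finite-rank resolvent analysis is equally valid, and makes the role of the Dirichlet-map regularity \eqref{1.23b} very explicit; the paper's adjoint trick buys a shorter argument that reduces everything to the classical relatively-bounded perturbation theorem without invoking extrapolation spaces or admissibility machinery.
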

	\begin{proof} \uline{First on $\Wqs(\Omega)$:} As the perturbation operator $\Pi$ in \eqref{5.5a} involves bounded operators $\ds \calC_{\gamma}, \ \calC_{\theta_e}, \ J$ (see \eqref{1.21}, \eqref{1.22}, \eqref{5.1}), as well as the benign operator $\ds A_{o,q}$, in \eqref{1.15} with $\ds A_{o,q} A^{-\rfrac{1}{2}}_q$ bounded on $\lso$, it suffices to show that the streamlined operator $\ds \hat{\BA}_{_{F,q}}$ in \eqref{5.5a} generates a s.c. analytic semigroup $\ds e^{\BA_{_{F,q}}t}$ on $\ds \Wqs(\Omega)$. To this end, it will be more convenient to show equivalently that the adjoint $\hat{\BA}_{_{F,q}}^*$ is the generator of a s.c. analytic semigroup $\ds e^{\hat{\BA}^*_{_{F,q}}t}$ on $\ds \left[ \Wqs(\Omega) \right]^* = \bs{W}_{\sigma}^{q'}(\Omega) = \lo{q'} \times L^{q'}(\Omega)$, since $\ds \Wqs(\Omega), \ 1 < q < \infty$, is a reflexive space. The adjoint $\hat{\BA}_{_{F,q}}^*$ of $\hat{\BA}_{_{F,q}}$ in \eqref{5.5a} is given by
		\begin{subequations}
			\begin{empheq}[left=\empheqlbrace]{align}
				\hat{\BA}_{_{F,q}}^* \bbm \bs{v}_1 \\ v_2 \ebm &= \bbm -A^*_q & 0 \\ 0 & -\calB^*_q \ebm \bbm \bs{v}_1 \\ v_2 \ebm + F^*D^*\calB^*_qv_2 \label{6.1a}\\
				\calD \left( \hat{\BA}^*_{_{F,q}} \right) &= \calD(A^*_q) \times \calD(B^*_q), \ \calD(B^*_q) = \calD(\calB^*_q) \label{6.1b} 
			\end{empheq}
		\end{subequations} 
	\nin by \eqref{1.23c}. Since the operator
	\begin{equation}\label{6.2}
		\hat{\BA}_q^* = \bbm -A^*_q & 0 \\ 0 & -\calB^*_q \ebm, \ \bs{W}_{\sigma}^{q'}(\Omega) \supset \calD \left( \hat{\BA}^*_q \right) = \calD(A_q^*) \times \calD(\calB_q^*) \longrightarrow \bs{W}_{\sigma}^{q'}(\Omega)
	\end{equation}
	\nin is plainly the generator of a s.c. analytic semigroup $\ds e^{\hat{\BA}^*_{_{F,q}}t}$ on $\ds \bs{W}^{q'}_{\sigma}(\Omega)$ (Appendix \ref{app-A}), it will suffice to show that the perturbation operator
	\begin{equation}\label{6.3}
		F^* D^* \calB_q^* \text{ is } \left( \hat{\BA}_q^* \right)^{\theta_0} = \bbm -A^{*\theta_0}_q & 0 \\ 0 & -\calB^{*\theta_0}_q \ebm-\text{bounded, for some constant } 0 < \theta_0 < 1
	\end{equation}
	\cite{P:1983}. In our case, it will be $\ds \theta_0 = 1 - \rfrac{1}{2q} + \varepsilon < 1$. In fact, recall via \eqref{1.23b} that $\ds D^* \calB^{*\gamma}_q \in \calL(L^{q'}(\Omega),L^{q'}(\Gamma)), \ \gamma = \rfrac{1}{2q} - \epsilon$. (In \eqref{6.3} and below we are taking that the fractional powers of $\calB^*_q$ are well-defined, for otherwise a translation will do it. Alternatively, in \eqref{6.1a}, write $\ds \calB^*_q = B^*_q + B^*_{o,q}$ from \eqref{1.18} as in \eqref{B.10} for $\kappa = 1$ where $\ds \calD \left(B^*_{o,q} \right) = \calD \left(B^{*\rfrac{1}{2}}_q \right)$. Do the argument with $B^*_q$ whose fractional powers are well-defined. This introduces an additional perturbation which is benign, as it can be handled by using 
	\begin{equation*}
		\norm{B^*_{o,q}v_2} = \norm{\left(B^*_{o,q}B^{*-\rfrac{1}{2}}_q \right) \left(B^{*-\rfrac{1}{2}}_q B^*_q \right)v_2} \leq C \norm{B^*_q v_2}
	\end{equation*}
	as desired.) Next, for $\ds v_2 \in \calD(\calB^{*1-\gamma}_q)$ and $\ds \bs{v}_1 \in \calD(A^{*1-\gamma}_q)$, we estimate since $\ds F^* \in \calL\left( L^{q'}(\Gamma), \bs{W}^{q'}_{\sigma}(\Omega) \right)$ in the norm of $\bs{W}_{\sigma}^{q'}(\Omega)$:
	\begin{align}
	\norm{F^*D^* \calB^*_q v_2} &= \norm{\left( F^*D^* \calB^{*^{\gamma}}_q \right) \calB^{*^{1-\gamma}}_q v_2} \leq C \norm{ \calB^{*^{1-\gamma}}_q v_2} \label{6.4}\\
	&\leq C \left[ \norm{ \calB^{*^{1-\gamma}}_q v_2} + \norm{ A^{*^{1-\gamma}}_q \bs{v}_1} \right] \label{6.5}\\
	&\leq C \norm{\bbm A^*_q & 0 \\ 0 & B^*_q \ebm^{1 - \gamma} \bbm \bs{v}_1 \\ v_2 \ebm} \label{6.6}
	\end{align}
	\nin and \eqref{6.6} proves \eqref{6.3}. In conclusion: the operator $\ds \BA^*_{_{F,q}}$ generates a s.c. analytic semigroup on $\ds \left( \Wqs(\Omega)^* \right) = \lo{q'} \times L^{q'}(\Omega)$; and hence as this space is reflexive, the operator $\BA_{_{F,q}}$ generates a s.c. analytic semigroup on $\ds \Wqs(\Omega) = \lso \times L^q(\Omega)$.\\
	
	\nin \uline{Next on $\VbqpO \equiv \Bto \times B^{2-\rfrac{2}{p}}_{q,p}(\Omega)$:} (See Remark \ref{Rmk-1.3}) 
	From the above conclusion on $\ds \BA_{_{F,q}}^*$ on $\left( \WqsO \right)^*$, it then follows that $\ds \BA_{_{F,q}}^*$ generates a s.c. analytic semigroup on $\ds \calD(\BA_{_{F,q}}^*) \equiv \calD(A^*_q) \times \calD(B^*_q)$, see \eqref{6.1b}. Then $\BA_{_{F,q}}^*$ generates a s.c. analytic semigroup on the real interpolation spaces between $\ds \calD(A^*_q) \times \calD(B^*_q)$ and $\ds \lo{q'} \times L^{q'}(\Omega)$, thus on the space $\ds \wti{\bs{B}}^{2-\rfrac{2}{p}}_{q',p}(\Omega) \times B^{2-\rfrac{2}{p}}_{q',p}(\Omega), \ 1 < p < \rfrac{2q'}{2q'-1}$. Recall the version of \eqref{1.11} for adjoints and get
	\begin{multline*}
		\left( \lo{q'}, \calD(A^*_q) \right)_{1-\frac{1}{p},p} \equiv \wti{\bs{B}}^{2-\rfrac{2}{p}}_{q',p}(\Omega), \ 1 < p < \frac{2q'}{2q'-1}, \\ 1 < q' < 2, \ q > 2, \ \frac{1}{q} + \frac{1}{q'} = 1. 
	\end{multline*}
	\nin For the second component recall Remark \ref{1.3}. It then finally follows, again by reflexivity of the space, that $\ds \BA_{_{F,q}}$ generates a s.c. analytic semigroup on the space $\ds \VbqpO \equiv \Bto \times B^{2-\rfrac{2}{p}}_{q,p}(\Omega)$.
	\end{proof}

	\nin The next result proves the sought-after uniform stabilization of the localized feedback $\bs{w}$-system \eqref{5.3} or \eqref{5.4}; that is, that the s.c. analytic semigroup $\ds e^{\BA_{_{F,q}}t}$ on $\ds \Wqs(\Omega),\ t \geq 0$ or on $\ds \VbqpO$, as guaranteed by Theorem \ref{Thm-6.1}, is uniformly stable.
	
	\begin{thm}\label{Thm-6.2}
		Under the same setting of Theorem \ref{Thm-4.2} concerning in particular the choice of the vectors $\bs{q}_k, \bs{p}_k, \bs{u}_k, f_k,$ the $\bs{w}$-problem \eqref{5.3}, \eqref{5.4} in feedback from is uniformly stable, with a decay rate $\gamma > 0, \ Re \ \lambda_{N+1} < -\gamma < 0$ either in $\ds \Wqs(\Omega) \equiv \lso \times L^q(\Omega)$ or else in $\ds \VbqpO \equiv \Bto \times B^{2-\rfrac{2}{p}}_{q,p}(\Omega)$.
		\begin{subequations}\label{6.7}
			\begin{equation}\label{6.7a}
				\norm{\bbm \bs{w}_f \\ w_h \ebm}_{(\cdot)} \leq C_{\gamma_0} e^{-\gamma_0 t} \norm{\bbm \bs{w}_f(0) \\ w_h(0) \ebm}_{(\cdot)}, \ t \geq 0.
			\end{equation}
		\nin In short, recalling the operator $\ds \BA_{_{F,q}}$ in \eqref{5.3b}
			\begin{equation}\label{6.7b}
				\norm{e^{\BA_{_{F,q}}t}}_{\calL (\cdot)} \leq C_{\gamma_0} e^{-\gamma_0 t}, \ t \geq 0.
			\end{equation} 
		\end{subequations}
	\nin Here, $(\cdot)$ denotes either the space $\Wqs(\Omega)$ or the space $\VbqpO$, see Remark \ref{Rmk-1.3}.		
	\end{thm}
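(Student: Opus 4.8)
The plan is to run the classical spectral-decomposition argument. Split $\bs{w}=\bs{w}_N+\bs{\zeta}_N$ with $\bs{w}_N=P_N\bs{w}\in\WqsuN$ and $\bs{\zeta}_N=(I-P_N)\bs{w}\in(\bs{W}^q_{\sigma})^s_N$ as in Section \ref{Sec-3}, and note that the feedback operators $F,J$ of \eqref{5.1}, \eqref{5.2} act on $\bs{w}$ only through $P_N\bs{w}$. Since $P_N$ commutes with the free dynamics $\BA_q$ and the control enters as the fixed vectors $\{f_k,\bs{u}_k\}$ modulated by the scalars $\ip{P_N\bs{w}}{\bs{p}_k}$, $\ip{P_N\bs{w}}{\bs{q}_k}$, applying $P_N$ to the closed-loop system \eqref{5.3} reproduces verbatim the finite-dimensional closed-loop equation \eqref{4.46}. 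By Theorem \ref{Thm-4.2} its solution decays at the preassigned rate $\gamma_1$, i.e. $\norm{\bs{w}_N(t)}_{\lso\times L^q(\Omega)}+\norm{v_N(t)}+\norm{\bs{u}_N(t)}\le C_{\gamma_1}e^{-\gamma_1 t}\norm{P_N\bs{w}_0}$. Fix $\gamma_0$ with $\Re\,\lambda_{N+1}<-\gamma_0<0$ and choose $\gamma_1>\gamma_0$.

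Second, apply $(I-P_N)$ to \eqref{5.3}, obtaining a non-autonomous equation for $\bs{\zeta}_N$ whose forcing $(I-P_N)\bbm P_q(m\bs{u}_N) \\ \calB_q Dv_N \ebm$ depends only on the already-decaying $\bs{w}_N$. The generator $\BA^s_{q,N}$ is the restriction of the analytic-semigroup generator $\BA_q$ to the stable invariant subspace, with $\sigma(\BA^s_{q,N})=\{\lambda_j\}_{j\ge N+1}\subset\{\Re\,\lambda<-\gamma_0\}$; for analytic semigroups the growth bound equals the spectral bound, so $\norm{e^{\BA^s_{q,N}t}}\le C_{\gamma_0}e^{-\gamma_0 t}$. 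Writing $\bs{\zeta}_N$ by variation of parameters and using the analytic smoothing estimate $\norm{e^{\BA^s_{q,N}t}(I-P_N)\bbm 0 \\ \calB_q Dg \ebm}\le C\,t^{-(1-\frac{1}{2q}+\epsilon)}e^{-\gamma_0 t}\norm{g}_{L^q(\Gamma)}$, which follows from \eqref{1.23b} ($D\in\calL(L^q(\Gamma),\calD(B_q^{1/(2q)-\epsilon}))$, $\calB_q=B_q+$ benign lower order) together with analyticity, the convolution of the integrable kernel $\tau^{-(1-\frac{1}{2q}+\epsilon)}$ against $e^{-\gamma_1(t-\tau)}$ is $O(e^{-\gamma_0 t})$; the interior term $P_q m\bs{u}_N$ is bounded and only easier. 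Hence $\norm{\bs{\zeta}_N(t)}_{\Wqs(\Omega)}\le C_{\gamma_0}e^{-\gamma_0 t}\norm{\bs{w}_0}_{\Wqs(\Omega)}$, and adding the two pieces gives \eqref{6.7} on $\Wqs(\Omega)$.

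For $\VbqpO$ I would first repeat the decomposition unchanged: $P_N$ has finite rank with range in $\calD(\BA_q)\subset\VbqpO$, hence is bounded on $\VbqpO$, and, all norms being equivalent on the finite-dimensional $\WqsuN$, one gets $\norm{\bs{w}_N(t)}_{\VbqpO}\le Ce^{-\gamma_1 t}\norm{\bs{w}_0}_{\VbqpO}$ at once; for $\bs{\zeta}_N$ the stable part of $\BA_{F,q}$ generates an analytic semigroup on the corresponding Besov subspace, obtained — as in the proof of Theorem \ref{Thm-6.1} — by real interpolation between $(\bs{W}^q_{\sigma})^s_N$ and $\calD(\BA^s_{q,N})$ and still of spectral bound $\Re\,\lambda_{N+1}$, so the same convolution estimate (now read in the Besov topology via \eqref{1.11}, \eqref{1.23b}) closes the argument. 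Alternatively, and more directly, the Besov decay follows from the $\Wqs$-decay just proved: since $\BA_{F,q}$ commutes with $e^{\BA_{F,q}t}$, $\norm{e^{\BA_{F,q}t}x}_{\calD(\BA_{F,q})}\le\norm{e^{\BA_{F,q}t}x}+\norm{e^{\BA_{F,q}t}\BA_{F,q}x}\le C e^{-\gamma_0 t}\norm{x}_{\calD(\BA_{F,q})}$, i.e. the same exponential decay holds on $\calD(\BA_{F,q})$ with the graph norm, and real interpolation at index $1-\tfrac1p$ between $\Wqs$ and $\calD(\BA_{F,q})$ then yields the decay on $\VbqpO=(\Wqs,\calD(\BA_q))_{1-1/p,p}$, up to the benign boundary shift in $\calD(\BA_{F,q})$ (cf. Theorem \ref{Thm-6.1}).

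I expect the main obstacle to be the rigorous handling of the unbounded boundary input $\calB_q Dv_N$ in the $\bs{\zeta}_N$-convolution — keeping the smoothing exponent $1-\frac{1}{2q}+\epsilon$ strictly below $1$ so the kernel is integrable, and carrying this through not merely in $\Wqs(\Omega)$ but in the tight Besov topology of \eqref{1.11}, where boundary traces are delicate — together with verifying that all decay constants are uniform in $t$, so that the two components combine into a single rate $\gamma_0$ with $\Re\,\lambda_{N+1}<-\gamma_0<0$.
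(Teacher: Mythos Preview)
Your proposal is correct and follows essentially the same route as the paper: spectral splitting $\bs{w}=\bs{w}_N+\bs{\zeta}_N$, Theorem \ref{Thm-4.2} for the finite-dimensional part with preassigned rate $\gamma_1>\gamma_0$, variation of parameters \eqref{6.8} for $\bs{\zeta}_N$ driven by the decaying controls against the stable analytic semigroup $e^{\BA^s_{q,N}t}$ satisfying \eqref{6.9}, and then interpolation to pass from $\Wqs(\Omega)$ to $\VbqpO$. The paper's own proof is terser --- it states the variation-of-parameters formula and decay of $e^{\BA^s_{q,N}t}$, then defers the convolution details (including the handling of the unbounded $\calB_qDv_N$ via the smoothing you spell out) to \cite[Section 10]{LPT.3}; your explicit treatment of the integrable singularity $t^{-(1-\frac{1}{2q}+\varepsilon)}$ is exactly what that reference does.

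One small caution on your alternative route (b) for the Besov estimate: interpolating between $\Wqs(\Omega)$ and $\calD(\BA_{F,q})$ is not quite what the paper does, since $\calD(\BA_{F,q})\neq\calD(\BA_q)$ (the second component carries the nonhomogeneous boundary condition $w_2-DF\bs{w}\in\calD(\calB_q)$, cf. \eqref{5.5b}), and the identification of the interpolation space with $\VbqpO$ would need justification. The paper instead interpolates the \emph{stable} semigroup $e^{\BA^s_{q,N}t}$ between $\Wqs(\Omega)$ and $\calD(\BA_q)=\calD(A_q)\times\calD(B_q)$ (no boundary shift there) to get \eqref{6.9} directly on $\VbqpO$, and then reruns the same convolution argument --- which is precisely your primary route (a).
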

	\begin{proof}
		The proof is similar to that of \cite{LPT.3}, making critical use of Theorem \ref{Thm-4.2} on the uniform stabilization \eqref{4.45} of the $\bs{w}_N$-system \eqref{4.38} in feedback form with arbitrary decay rate $\gamma_1 > 0$, in particular $-\gamma_1 < Re \ \lambda_{N+1} < 0$. Next, one examines the impact of the constructive feedback controls in \eqref{4.42}, \eqref{4.43} and \eqref{4.46} on the $\zeta_N$-dynamics \eqref{3.6b}, whose explicit solution is given by the variation of parameter formula
		\begin{equation}\label{6.8}
		\zeta_N (t) = e^{\BA_{q,N}^s} \zeta_N (0) + \int_{0}^{t} e^{\BA_{q,N}^s(t - \tau)}(I - P_N) \bbm P_q(m\bs{u}_N)(\tau) \\[1mm] \calB_q Dv_N(\tau) \ebm d \tau.
		\end{equation}
		\nin where 
		\begin{equation}\label{6.9}
		\norm{e^{\BA_{q,N}^s}}_{\calL(\cdot)} \leq C_{\gamma_0} e^{-\gamma_0 t}, \ 0 \geq t, \ 0 < \gamma_0 < \abs{Re \ \lambda_{N+1}};
		\end{equation}
		\nin while $\bs{u}_N(t), \ v_N(t)$ decay with an arbitrary large exponential rate $\gamma_1 > 0$ as in \eqref{4.45}. The validity of \eqref{6.9} on $\ds \WqsO$ follows by \eqref{3.3} and \eqref{3.4}, $\ds \BA_{q,N}^s$ being the generator of a s.c. analytic semigroup $\ds e^{\BA_q t}$ (Theorem \ref{Thm-1.2}), restricted to the stable subspace $\ds (\Wqs)^s_N$ in \eqref{3.2}. The validity of \eqref{6.9} on $\ds \VbqpO$ follows by interpolation as $\ds \calD(\BA_q) = \calD(\calA_q) \times \calD(\calB_q) = \calD(A_q) \times \calD(B_q)$, see \eqref{1.34}.
	\end{proof}
	\nin  Details are in \cite[Section 10]{LPT.3}.
	
	\section{Maximal $L^p$-regularity on $\ds \Wqs(\Omega)$ of the linearized feedback operator $\ds \BA_{_{F,q}}$ in \eqref{5.3b} up to $T = \infty$.}\label{Sec-7}
	\nin Consider the following abstract dynamics
	\begin{equation}\label{7.1}
	\zeta_t = \BA_{_{F,q}} \zeta + \chi, \quad \zeta(0) = \zeta_0,  \quad \text{in } \bs{W}^q_{\sigma}(\Omega)
	\end{equation}
	\begin{equation}\label{7.2}
	\zeta(t) = e^{\BA_{_{F,q}}}\zeta_0 + \int_{0}^{t} e^{\BA_{_{F,q}}(t-s)}\chi(s)ds
	\end{equation}
	\nin The main result of the present section is
	\begin{thm}\label{Thm-7.1}
	With reference to the $\bs{w}$-problem \eqref{5.3} in feedback form defining the feedback operator $\ds \BA_{_{F,q}}$, assume the setting of Theorem \ref{Thm-4.2} regarding the choice of the vectors $\bs{q}_k, \bs{p}_k, \bs{u}_k, f_k,$ in \eqref{5.1}, \eqref{5.2}, so that Theorem \ref{6.2} holds true. Then the operator $\ds \BA_{_{F,q}}$ has maximal $L^p$-regularity on $\bs{W}^q_{\sigma}(\Omega)$ up to $T = \infty$;
	\begin{equation}\label{7.3}
		\BA_{_{F,q}} \in MReg \left( L^p \left( 0, \infty; \bs{W}^q_{\sigma}(\Omega) \right) \right).
	\end{equation}
	\nin That is,
	\begin{equation}\label{7.4}
		(L \chi)(t) = \int_{0}^{t} e^{\BA_{_{F,q}}(t-s)}\chi(s)ds
	\end{equation}
	\end{thm}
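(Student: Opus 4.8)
The plan is to prove \eqref{7.3} in two stages: (1) first establish that $\BA_{_{F,q}}$ has maximal $L^p$-regularity on every \emph{finite} interval $(0,T)$, and (2) then bootstrap to $T=\infty$ using the uniform exponential decay of $e^{\BA_{_{F,q}}t}$ obtained in Theorem \ref{Thm-6.2}. Throughout I would use that $\bs{W}^q_{\sigma}(\Omega) = \lso \times L^q(\Omega)$ is a UMD (hence reflexive) space for $1<q<\infty$, so that the perturbation theory for maximal regularity and the $\mathcal{R}$-sectoriality characterization of Weis are available \cite{KW:2004}, \cite{We:2001}, \cite{PS:2016}.

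For Stage (1), I would reuse the decomposition $\BA_{_{F,q}} = \hat{\BA}_{_{F,q}} + \Pi$ of \eqref{5.4}--\eqref{5.5} already employed in the proof of Theorem \ref{Thm-6.1}. The streamlined operator $\hat{\BA}_{_{F,q}}$ is, modulo the Dirichlet-map coupling $DF$, the block-diagonal generator $\mathrm{diag}(-A_q, -\calB_q)$; both $-A_q$ (Appendix \ref{app-A}) and $-\calB_q$ possess maximal $L^p$-regularity on finite intervals, hence so does their direct sum, and --- exactly as in the proof of Theorem \ref{Thm-6.1} --- the $DF$-term is absorbed as a perturbation controlled by a fractional power of exponent $\theta_0 = 1 - \rfrac{1}{2q} + \varepsilon < 1$. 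Similarly, the operator $\Pi$ of \eqref{5.5a} consists of the bounded operators $\calC_{\gamma}, \calC_{\theta_e}, J$ together with $A_{o,q}$, which is $A_q^{\rfrac{1}{2}}$-bounded by \eqref{1.15}--\eqref{1.16}. Since relatively bounded perturbations of order strictly less than $1$ preserve maximal regularity over a bounded time interval, this gives $\BA_{_{F,q}} \in MReg\big(L^p(0,T;\bs{W}^q_{\sigma}(\Omega))\big)$ for every $T < \infty$.

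For Stage (2) --- the essential point --- I would cover $[0,\infty) = \bigcup_{n\ge 0}[n,n+1)$ and, for $n\ge 1$ and $t\in[n,n+1)$, use the shifted representation of the convolution $u = L\chi$:
\[
u(t) = e^{\BA_{_{F,q}}(t-(n-1))}\,u(n-1) + \int_{n-1}^{t} e^{\BA_{_{F,q}}(t-s)}\chi(s)\,ds ,
\]
so that the semigroup argument $t-(n-1)\in[1,2)$ stays bounded away from $0$. Analyticity then bounds $\BA_{_{F,q}}e^{\BA_{_{F,q}}\tau}$ uniformly for $\tau\in[1,2]$, contributing a term $\le C\norm{u(n-1)}_{\bs{W}^q_{\sigma}}$ to $\norm{\BA_{_{F,q}}u}_{L^p(n,n+1;\bs{W}^q_{\sigma})}$, while the integral term is controlled by the finite-interval maximal regularity of Stage (1) on the window of length $2$, contributing $\le C\norm{\chi}_{L^p(n-1,n+1;\bs{W}^q_{\sigma})}$. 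Summing the $p$-th powers over $n$ leaves the single nontrivial sum $\sum_{n}\norm{u(n-1)}^p_{\bs{W}^q_{\sigma}}$, which I would estimate using Theorem \ref{Thm-6.2}: $\norm{u(m)}_{\bs{W}^q_{\sigma}} \le C_{\gamma_0}\int_0^m e^{-\gamma_0(m-s)}\norm{\chi(s)}_{\bs{W}^q_{\sigma}}\,ds$, so that the discrete Young inequality (with the summable kernel $m\mapsto e^{-\gamma_0 m}$) yields $\sum_m \norm{u(m)}^p_{\bs{W}^q_{\sigma}} \le C\norm{\chi}^p_{L^p(0,\infty;\bs{W}^q_{\sigma})}$; the initial window $(0,1)$, where $u(0)=0$, is handled directly by Stage (1). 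Combining with the identity $u_t = \BA_{_{F,q}}u + \chi$ gives $\norm{u_t}_{L^p(0,\infty;\bs{W}^q_{\sigma})} + \norm{\BA_{_{F,q}}u}_{L^p(0,\infty;\bs{W}^q_{\sigma})} \le C\norm{\chi}_{L^p(0,\infty;\bs{W}^q_{\sigma})}$, which is \eqref{7.3}. Alternatively, and more conceptually, Stage (1) makes $\BA_{_{F,q}} - \omega_0$ $\mathcal{R}$-sectorial of angle $<\rfrac{\pi}{2}$ for $\omega_0>0$ large, while Theorem \ref{Thm-6.2} forces $\sigma(\BA_{_{F,q}})\subset\{\Re\,\lambda<-\gamma_0\}$; these two facts combine to upgrade $\BA_{_{F,q}}$ itself to an $\mathcal{R}$-sectorial operator of angle $<\rfrac{\pi}{2}$, equivalent to \eqref{7.3} on the UMD space $\bs{W}^q_{\sigma}(\Omega)$ by Weis's theorem.

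I expect the main obstacle to be precisely the summability in Stage (2): one must show that the carry-over (memory) terms $e^{\BA_{_{F,q}}(t-(n-1))}u(n-1)$ do not accumulate as $n\to\infty$, and this is exactly where the exponential stability of Theorem \ref{Thm-6.2} --- itself the output of the Kalman/UCP analysis of Section \ref{Sec-4} and the subsequent feedback construction --- is indispensable; without a strictly negative spectral bound the finite-$T$ maximal-regularity constant degenerates as $T\to\infty$.
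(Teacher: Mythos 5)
Your proposal matches the paper's own proof in substance: the paper likewise establishes maximal $L^p$-regularity on finite intervals via the decomposition $\BA_{_{F,q}} = \hat{\BA}_{_{F,q}} + \Pi$ (carried out on the adjoint $\hat{\BA}_{_{F,q}}^*$ on the reflexive dual, where the Dirichlet-map term $F^*D^*\calB_q^*$ is a genuine $\theta_0$-fractional-power-bounded perturbation, citing the Dore and Kunstmann--Weis perturbation theorems), and then upgrades to $T=\infty$ by invoking the uniform exponential stability of Theorem \ref{Thm-6.2}, exactly your two-stage scheme. The only difference is one of detail: the paper treats the finite-interval-plus-stability-implies-$T=\infty$ step as a known fact, whereas you write out the covering/discrete-Young argument (and the alternative $\mathcal{R}$-sectoriality route) explicitly --- both correct and standard.
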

	\nin continuous:
	\begin{equation}\label{7.5}
	L^p(0, \infty; \bs{W}^q_{\sigma}(\Omega)) \longrightarrow L^p \left(0, \infty; \calD \left(\BA_{_{F,q}} \right) \right)
	\end{equation}
	\nin so that continuously from \eqref{7.1} with $\zeta_0 = 0$:
	\begin{subequations}\label{7.6}		
	\begin{align}
	\chi \in L^p \left(0, \infty; \Wqs(\Omega) \right) &\longrightarrow \zeta \in \xipqs \times \xipq \nonumber 
	\\ & \hspace{.6cm} \in L^p \left(0, \infty; \calD \left(\BA_{_{F,q}} \right) \right) \cap W^{1,p}(0, \infty; \bs{W}^q_{\sigma}(\Omega))\\
	\longrightarrow \bs{X}^{\infty}_{p,q} \times X^{\infty}_{p,q} &\equiv L^p \left(0, \infty; \bs{W}^{2,q}(\Omega) \right) \times L^p \left(0, \infty; W^{2,q}(\Omega) \right)
	\end{align}
	\end{subequations}
	\begin{proof}
		Again, as in the proof of Theorem \ref{Thm-6.1} about analyticity of $\ds e^{\BA_{_{F,q}}t}$, it suffices to show that the streamlined operator $\ds \hat{\BA}_{_{F,q}}$ in \eqref{5.4}, \eqref{5.5} has maximal $L^p$-regularity on $\Wqs(\Omega)$ up to $T < \infty$. Equivalently, since $\Wqs(\Omega)$ is reflexive, that the adjoint operator $\ds \hat{\BA}^*_{_{F,q}}$ has maximal $L^p$-regularity on $\ds \left(\Wqs(\Omega)\right)^* = \bs{W}^{q'}_{\sigma}(\Omega)$ up to $T < \infty$. To establish this, we return to estimate \eqref{6.6} showing statement \eqref{6.3}, with $\ds \theta_0 = 1 - \gamma = 1 - \rfrac{1}{2q} + \varepsilon < 1$. Now we invoke that $\ds \BA_q^*$, hence $\hat{\BA}^*_q$ in \eqref{6.2} has maximal $L^p$-regularity on $\bs{W}^{q'}_{\sigma}(\Omega)$, (by duality on Theorem \hyperref[1.2]{1.2(ii)}, since $\Wqs(\Omega)$ is a reflexive space.) We then apply known perturbation theory \cite[Theorem 6.2, p311]{Dore:2000}, or \cite[Remark 1i, p 426 for $\beta = 1$]{KW:2001} to conclude that $\ds \hat{\BA}_{_{F,q}}^* \in MReg (L^p(0,T; \bs{W}^{q'}_{\sigma}(\Omega))), \ T < \infty$, and hence $\ds \hat{\BA}_{_{F,q}} \in MReg (L^p(0,T; \Wqs(\Omega))), \ T < \infty$. Then we obtain $\ds \BA_{_{F,q}} \in MReg (L^p(0,T; \Wqs(\Omega))), \ T < \infty$. But $\ds e^{\BA_{_{F,q}}t}$ is uniformly stable on $\ds \Wqs(\Omega)$ by Theorem \ref{Thm-6.2}. Hence we obtain $\ds \BA_{_{F,q}} \in MReg (L^p(0,\infty; \Wqs(\Omega)))$, and Theorem \ref{Thm-7.1} is proved.
	\end{proof}
	\nin We next examine the regularity of the term $\ds e^{\BA_{_{F,q}}t}\zeta_0$ due to the initial condition $\zeta_0$ as in \eqref{7.2}.
	
	\begin{thm}\label{Thm-7.2}
		\begin{enumerate}[(i)]
			\item Let $\ds 1 < p < \frac{2q'}{2q'-1},\ 1 < q' \leq 2,\ 2 \leq q,\ \frac{1}{q} + \frac{1}{q'} = 1$. Consider the adjoint operator $\ds \BA_{_{F,q}}^* = \hat{\BA}_{_{F,q}}^* + \Pi^*$ of $\ds \BA_{_{F,q}}$ in \eqref{5.4}, where $\ds \calD \left( \BA_{_{F,q}}^* \right) = \calD \left( \hat{\BA}_{_{F,q}}^* \right) = \calD \left(A^*_q \right) \times \calD \left(B^*_q \right) $ by \eqref{6.1b}. The adjoint s.c. analytic semigroup $\ds e^{\BA^*_{_{F,q}}t}$ on $\ds \bs{W}^{q'}_{\sigma}(\Omega)$ is uniformly stable, by duality on \eqref{6.7b}. Then
			\begin{multline}\label{7.7}
			e^{\BA^*_{_{F,q}}t}: \text{ continuous } \equiv\bs{V}^{q',p}_b(\Omega) \equiv \wti{\bs{B}}^{2-\rfrac{2}{p}}_{q',p}(\Omega) \times B^{2-\rfrac{2}{p}}_{q',p}(\Omega) \\
			=(\lo{q'}, \calD(A^*_q))_{1 - \frac{1}{p},p} \times ( L^{q'}(\Omega), \calD(B^*_q))_{1 - \frac{1}{p},p} \\ \longrightarrow \bs{X}^{\infty}_{p,q',\sigma} \times X^{\infty}_{p,q'} \equiv L^p(0, \infty; \calD(\BA^*_{_{F,q}})) \cap W^{1,p}(0, \infty; \bs{W}^{q'}_{\sigma}(\Omega))
			\end{multline}
			\item Consider the original s.c. analytic feedback semigroup $\ds e^{\BA_{_{F,q}}t}$ on $\ds \bs{W}^q_{\sigma}(\Omega)$, which is uniformly stable here by \eqref{6.7b}. Let $\ds 1 < p < \frac{2q}{2q-1},\ 2 \leq q$.
			\begin{multline}\label{7.8}
			e^{\BA_{_{F,q}}t}: \text{ continuous } \equiv\bs{V}^{q,p}_b(\Omega) \equiv \wti{\bs{B}}^{2-\rfrac{2}{p}}_{q,p}(\Omega) \times B^{2-\rfrac{2}{p}}_{q,p}(\Omega) \\
			= \left( \lso, \calD(A_q) \right)_{1 - \frac{1}{p},p} \times \left( L^q(\Omega), \calD(B_q) \right)_{1 - \frac{1}{p},p} \\ \longrightarrow \xipqs \times X^{\infty}_{p,q} \equiv L^p \left(0, \infty; \calD \left( \BA_{_{F,q}} \right) \right) \cap W^{1,p}\left(0, \infty; \bs{W}^q_{\sigma}(\Omega) \right)
			\end{multline}
		\end{enumerate}	
	\end{thm}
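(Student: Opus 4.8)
The plan is to read Theorem~\ref{Thm-7.2} as the assertion that the pure initial–value maps $\zeta_0\mapsto e^{\BA_{F,q}\cdot}\zeta_0$ and $\zeta_0\mapsto e^{\BA_{F,q}^{*}\cdot}\zeta_0$ carry the correct trace space into the corresponding space of $L^p$‑maximal regularity. This is a standard corollary of analyticity, exponential stability, and maximal regularity up to $T=\infty$, all already in hand from Theorems~\ref{Thm-6.1},~\ref{Thm-6.2},~\ref{Thm-7.1}, combined with the identification of the relevant real interpolation spaces via \eqref{1.11} and Remark~\ref{Rmk-1.3}. I would prove (i) and (ii) in parallel: (ii) is the ``primal'' case applied directly to $\BA_{F,q}$ on $\WqsO$, and (i) is the same argument applied to $\BA_{F,q}^{*}$ on $\bs{W}^{q'}_\sigma(\Omega)$, which by reflexivity of $\WqsO$ inherits analyticity (proof of Theorem~\ref{Thm-6.1}), exponential stability (dual of Theorem~\ref{Thm-6.2}), and maximal $L^p$‑regularity up to $\infty$ (proof of Theorem~\ref{Thm-7.1}).

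\textbf{Step 1 (the abstract trace mechanism).} For an exponentially stable generator $\calA$ of an s.c.\ analytic semigroup on a Banach space $X$, the initial trace of the class $L^p(0,\infty;\calD(\calA))\cap W^{1,p}(0,\infty;X)$ is the real interpolation space $(X,\calD(\calA))_{1-\frac1p,p}$, and $\zeta_0\mapsto e^{\calA\cdot}\zeta_0$ is a bounded right inverse of that trace. Equivalently, using the Butzer--Berens characterization $\zeta_0\in(X,\calD(\calA))_{1-\frac1p,p}\iff \calA e^{\calA\cdot}\zeta_0\in L^p(0,\infty;X)$ together with $\norm{e^{\calA t}\zeta_0}_X\le Me^{-\delta t}\norm{\zeta_0}_X\in L^p(0,\infty)$ to control $e^{\calA\cdot}\zeta_0$ itself in $L^p(0,\infty;X)$, one gets $e^{\calA\cdot}\zeta_0\in W^{1,p}(0,\infty;X)\cap L^p(0,\infty;\calD(\calA))$ with norm $\lesssim\norm{\zeta_0}_{(X,\calD(\calA))_{1-\frac1p,p}}$ (see \cite{HA:1995},~\cite{Dore:2000},~\cite{PS:2016}, as in \cite{LPT.1}). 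Applying this with $(\calA,X)=(\BA_{F,q},\WqsO)$ gives the mapping in (ii) into $\xipqs\times \xipq$, and with $(\calA,X)=(\BA_{F,q}^{*},\bs{W}^{q'}_\sigma(\Omega))$ gives the mapping in (i) into $\bs{X}^\infty_{p,q',\sigma}\times X^\infty_{p,q'}$, once the trace spaces are identified.

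\textbf{Step 2 (identification of the trace spaces).} For (i): by \eqref{6.1b}, and since $\Pi^{*}$ and $F^{*}D^{*}\calB_q^{*}$ are $(\hat{\BA}_q^{*})^{\theta_0}$‑bounded with $\theta_0<1$ (estimate \eqref{6.6}), one has $\calD(\BA_{F,q}^{*})=\calD(\hat{\BA}_{F,q}^{*})=\calD(A_q^{*})\times\calD(B_q^{*})$ with equivalent graph norms; real interpolation commuting with finite direct sums, the trace space is $(\lo{q'},\calD(A_q^{*}))_{1-\frac1p,p}\times(L^{q'}(\Omega),\calD(B_q^{*}))_{1-\frac1p,p}$. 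The hypothesis $1<p<\frac{2q'}{2q'-1}$ is precisely $0<2-\frac2p<\frac1{q'}$, so the adjoint version of \eqref{1.11} (the one already invoked in the proof of Theorem~\ref{Thm-6.1}) identifies the first factor with $\Btos$, while Remark~\ref{Rmk-1.3} (cf.\ \cite{PSch2001}) identifies the second with $B^{2-\frac2p}_{q',p}(\Omega)$; hence the trace space is $\bs{V}^{q',p}_b(\Omega)$, which is \eqref{7.7}. For (ii) the extra point is that, by \eqref{5.5b}, $\calD(\BA_{F,q})=\calD(\hat{\BA}_{F,q})$ is a \emph{twisted} product: the boundary‑lifting map $\bs{w}=\{\bs{w}_1,w_2\}\mapsto\{\bs{w}_1,\,w_2-DF\bs{w}\}$ is, as in \cite{LPT.1}, a Banach‑space isomorphism of $\calD(\BA_{F,q})$ onto $\calD(A_q)\times\calD(B_q)$ that extends to an isomorphism of $\WqsO$ onto itself (since $DF$ is bounded and finite‑rank into $\bs{W}^{2,q}(\Omega)$, recall \eqref{2.2b}), hence an isomorphism of the interpolation couple. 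Therefore $(\WqsO,\calD(\BA_{F,q}))_{1-\frac1p,p}\cong(\lso,\calD(A_q))_{1-\frac1p,p}\times(L^q(\Omega),\calD(B_q))_{1-\frac1p,p}=\Bto\times B^{2-\frac2p}_{q,p}(\Omega)=\VbqpO$, now by \eqref{1.11} directly (valid for $1<p<\frac{2q}{2q-1}$) and Remark~\ref{Rmk-1.3}. This is \eqref{7.8}.

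\textbf{Main obstacle.} The mechanism of Step~1 is routine once exponential stability and maximal regularity up to $\infty$ are available; the delicate part is Step~2 --- namely that in the admissible $p$‑range the real interpolation space is the \emph{full} Besov space $\Bto$ (resp.\ $\Btos$) carrying only the intrinsic solenoidal condition $\bs{g}\cdot\nu|_\Gamma=0$ rather than a Dirichlet trace, which is exactly the second alternative in \eqref{1.11} and the reason the hypotheses pin down $1<p<\frac{2q}{2q-1}$ (resp.\ $\frac{2q'}{2q'-1}$) --- together with verifying that the boundary‑lifting $DF$ hidden inside $\calD(\BA_{F,q})$, and the $(\hat{\BA}_q^{*})^{\theta_0}$‑bounded perturbations inside $\BA_{F,q}^{*}$, are harmless for real interpolation, i.e.\ that $\calD(\BA_{F,q})$ and $\calD(\BA_{F,q}^{*})$ agree with the model product domains \emph{with equivalent norms}, not merely as sets. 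This is where estimate \eqref{6.6} and the finite‑rank/smoothing nature of $DF$ do the work.
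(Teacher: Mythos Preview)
Your proposal is correct and follows exactly the approach the paper intends: the paper's own proof consists solely of the sentence ``The proof is similar to that in \cite[Section 11]{LPT.3},'' and what you outline---the abstract trace/right-inverse mechanism for an exponentially stable analytic generator with maximal $L^p$-regularity up to $T=\infty$, combined with the identification of the trace space via \eqref{1.11}, Remark~\ref{Rmk-1.3}, and the boundary-lifting isomorphism induced by $DF$---is precisely the content of that reference (and of \cite[Section 11]{LPT.1}). Your explicit flagging of the two delicate points (the $p$-range ensuring the Besov space carries no Dirichlet trace, and the equivalence of graph norms on $\calD(\BA_{F,q})$ versus the model product $\calD(A_q)\times\calD(B_q)$ via the finite-rank smoothing of $DF$) is accurate and, if anything, more explicit than what the paper itself records.
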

	\begin{proof}
		The proof is similar to that in \cite[Section 11]{LPT.3}.
	\end{proof}

	\newsavebox{\zhvector}
	\begin{lrbox}{\zhvector} $\bbm \bs{z} \\ h \ebm$ \end{lrbox}

	\section{Proof of Theorem \ref{Thm-2.2}. Well-posedness on $\xipqs \times \xipq$ of the non-linear \usebox{\zhvector}-dynamics in feedback form.}\label{Sec-8}
	
	\noindent In this section we return to the translated non-linear $\ds \bbm \bs{z} \\ h \ebm$-dynamics \eqref{1.28} or \eqref{1.29} and apply to it the feedback control pair $\{\bs{u},v\}$	
	\begin{equation}\label{8.1}
	\bbm P_q m(\bs{u}) \\ v \ebm = \bbm P_q \Bigg( m \bigg(\ds \sum_{k = 1}^{K} \ip{P_N \bbm \bs{z} \\ h \ebm}{\bs{q}_k} \bs{u}_k \bigg) \Bigg) \\[2mm] \ds \sum_{k = 1}^{K} \ip{P_N \bbm \bs{z} \\ h \ebm}{\bs{p}_k} f_k \ebm = \bbm J \bbm \bs{z} \\ h \ebm \\[3mm] F \bbm \bs{z} \\ h \ebm \ebm
	\end{equation}
	
	\noindent as in \eqref{2.10} that is, of the same structure as the feedback operators $J$ and $F$ in \eqref{5.1}, \eqref{5.2} identified on the RHS of the linearized $\ds \bs{w} = \bbm \bs{w}_f \\ w_h \ebm$-dynamics \eqref{5.3}. These are the feedback operators which produced the s.c. analytic, uniformly stable semigroup $\ds e^{\BA_{F,q}t}$ on $\ds \SqsO = \lso \times \lqo$ or on $ \VbqpO \equiv  \Bto \times \BsO$, (Theorem \ref{Thm-6.1} and Theorem \ref{Thm-6.2}) possessing $L^p$-maximal regularity on $\WqsO$ up to $T = \infty$; (Theorem \ref{Thm-7.1}). Thus, returning to \eqref{1.28} or \eqref{1.29}, in this section we consider the following feedback nonlinear problem, see \eqref{2.12}	
	\begin{multline}\label{8.2}
	\frac{d}{dt} \bbm \bs{z} \\ h \ebm = \BA_{F,q} \bbm \bs{z} \\ h \ebm - \bbm \calN_q & 0 \\ 0 & \calM_q[\bs{z}] \ebm \bbm \bs{z} \\ h \ebm; \quad \BA_{F,q}\bbm \bs{z} \\ h \ebm = \bbm \calA_q & -\calC_{\gamma} \\ -\calC_{\theta_e} & 0 \ebm \bbm \bs{z} \\ h \ebm \\+ \bbm J \bbm \bs{z} \\ h \ebm \\ -\calB_q\left(h - DF \bbm \bs{z} \\ h \ebm \right) \ebm,
	\end{multline}	
	\noindent specifically as in \eqref{2.11}
	\begin{subequations}\label{8.3}
		\begin{empheq}[left=\empheqlbrace]{align}
		\frac{d\bs{z}}{dt} - \calA_q \bs{z} + \calC_{\gamma} h + \calN_q \bs{z} &= P_q \left( m \left( \sum_{k = 1}^{K} \ip{P_N \bbm \bs{z} \\ h \ebm}{\bs{q}_k} \bs{u}_k \right) \right) \label{8.3a}\\
		\frac{dh}{dt} + \calC_{\theta_e} \bs{z} + \calM_q[\bs{z}]h &= -\calB_q \left( h - D \left( \ds \sum_{k = 1}^{K} \ip{P_N \bbm \bs{z} \\ h \ebm}{\bs{p}_k} f_k \right) \right). \label{8.3b}
		\end{empheq}
	\end{subequations}
	
	\noindent The variation of parameter formula for Eq \eqref{8.2} is
	
	\begin{equation}\label{8.4}
	\bbm \bs{z} \\ h \ebm (t) = e^{\BA_{F,q}t} \bbm \bs{z}_0 \\ h_0 \ebm - \int_{0}^{t} e^{\BA_{F,q}(t - \tau)} \bbm \calN_q \bs{z}(\tau) \\[1mm] \calM_q[\bs{z}]h(\tau) \ebm d \tau.
	\end{equation}
	
	\noindent We already know from \eqref{2.7} or \eqref{6.7b} that for $\ds \{\bs{z}_0, h_0\} \in \Bto \times \BsO \equiv \VbqpO, \ 1 < p < \frac{2q}{2q-1}$, we have: there is $M_{\gamma_0}$ such that
	
	\begin{equation}\label{8.5}
	\norm{e^{\BA_{F,q}t} \bbm \bs{z}_0 \\ h_0 \ebm}_{\VbqpO} \leq M_{\gamma_0} e^{-\gamma_0t} \norm{\bbm \bs{z}_0 \\ h_0 \ebm}_{\VbqpO}, \ t \geq 0
	\end{equation}
	\noindent with $M_{\gamma_0}$ possibly depending on $p,q$. Maximal $L^p$-regularity properties corresponding to the solution operator formula \eqref{8.4} were established in Theorem \ref{Thm-7.1}, Theorem \ref{Thm-7.2}. Accordingly, for	
	\begin{align}
	\bs{b}_0 &\equiv \{\bs{z}_0, h_0\} \in \Bto \times \BsO \equiv \VbqpO \label{8.6}\\
	\bs{f} &\equiv \{\bs{f}_1, f_2\} \in \xipqs \times \xipq \nonumber \\ 
	&\hspace{1cm} \equiv L^p(0, \infty, \calD (\BA_{F,q})) \cap W^{1,p}(0,\infty; \SqsO) \text{ (see \eqref{2.15})} \label{8.7}\\
	\calD(\BA_{F,q}) &\subset \calD(A_q) \times W^{2,q}(\Omega) \nonumber \\ &\hspace{1cm} = [\bs{W}^{2,q}(\Omega) \cap \bs{W}^{1,q}_0(\Omega) \cap \lso] \times W^{2,q}(\Omega)\label{8.8} \text{ (see \eqref{5.5b})}\\
	\SqsO &= \lso \times \lqo \text{ (see \eqref{1.30})} \nonumber	
	\end{align}
	\begin{subequations}\label{8.9}
		\begin{align}
			\xbipq &\equiv L^p \left(0,\infty; \left( \bs{W}^{2,q}(\Omega) \cap \bs{W}^{1,q}_0(\Omega) \cap \lso \right) \times W^{2,q}(\Omega) \right) \nonumber \\ & \hspace{6cm} \cap W^{1,p}(0,\infty; \WqsO); \label{8.9a} \\
			\xipq &\equiv L^p(0,\infty; W^{2,q}(\Omega) \cap W^{1,q}_0(\Omega)) \cap W^{1,q}(0,\infty; \lqo), \label{8.9b}
		\end{align}
	\end{subequations}
	\noindent as in \eqref{8.4}, we define the operator
	\begin{equation}\label{8.10}
	\calF (\bs{b}_0, \bs{f}) \equiv e^{\BA_{F,q}t} \bs{b}_0 - \int_{0}^{t} e^{\BA_{F,q}(t - \tau)} \bbm \calN_q \bs{f}_1(\tau) \\[1mm] \calM_q[\bs{f}_1]f_2(\tau) \ebm d \tau.
	\end{equation}
	
	\noindent The main result of this section is Theorem \ref{Thm-2.2} restated as
	
	\begin{thm}\label{Thm-8.1}
		Let $\ds d = 2,3, \ q > d, \ 1 < p < \frac{2q}{2q-1}$. There exists a positive constant $r_1 > 0$ (identified in the proof below in \eqref{8.31} such that if
		\begin{equation}\label{8.11}
		\norm{\bs{b}_0}_{\VqpO} = \norm{\{ \bs{z}_0, h_0 \}}_{\Bto \times \BsO} < r_1,
		\end{equation}
		then the operator $\calF$ in \eqref{8.10} has a unique fixed point non-linear semigroup solution in $\xipqs \times \xipq$, see \eqref{2.15}-\eqref{2.18}, or \eqref{8.7}
		\begin{equation}\label{8.12}
		\calF \bpm \bbm \bs{z}_0 \\ h_0 \ebm, \bbm \bs{z} \\ h \ebm \epm = \bbm \bs{z} \\ h \ebm, \ \text{or} \ \bbm \bs{z} \\ h \ebm (t) =  e^{\BA_{F,q}t} \bbm \bs{z}_0 \\ h_0 \ebm - \int_{0}^{t} e^{\BA_{F,q}(t - \tau)} \bbm \calN_q \bs{z}(\tau) \\[1mm] \calM_q[\bs{z}]h(\tau) \ebm d \tau,
		\end{equation}
		\noindent which therefore is the unique solution of problem \eqref{8.2} = \eqref{8.3} in $\ds \xipqs \times \xipq$.
	\end{thm}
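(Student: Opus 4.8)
\nin \textbf{Proof plan.} The plan is to solve the feedback system \eqref{8.2}\,$=$\,\eqref{8.3}, equivalently the fixed-point equation \eqref{8.12} for the operator $\calF(\bs{b}_0,\cdot)$ of \eqref{8.10}, by a Banach contraction argument on a small ball of the $L^p$-maximal regularity space $\xipqs\times\xipq$. Three structural facts already established are used as black boxes: (a) the linear feedback semigroup $e^{\BA_{F,q}t}$ is analytic and uniformly exponentially stable on $\WqsO$ and on $\VbqpO$ (Theorems \ref{Thm-6.1}, \ref{Thm-6.2}); (b) $\BA_{F,q}$ has $L^p$-maximal regularity up to $T=\infty$ on $\WqsO$ (Theorem \ref{Thm-7.1}), so the convolution map $L$ of \eqref{7.4} is bounded from $L^p(0,\infty;\WqsO)$ into $\xipqs\times\xipq$ with a constant independent of the time horizon; (c) the trace embeddings \eqref{2.16}--\eqref{2.18} together with the fact, from Theorem \ref{Thm-7.2}(ii), that $\bs{b}_0\mapsto e^{\BA_{F,q}t}\bs{b}_0$ is bounded $\VbqpO\to\xipqs\times\xipq$.

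\nin The first substantive step is to record the \textbf{quadratic/bilinear nonlinear estimates} for the maps $\bs{z}\mapsto\calN_q(\bs{z})=P_q[(\bs{z}\cdot\nabla)\bs{z}]$ and $(\bs{z},h)\mapsto\calM_q[\bs{z}]h=\bs{z}\cdot\nabla h$, namely that they are bounded and locally Lipschitz from $\xipqs\times\xipq$ into $L^p(0,\infty;\lso)$ and $L^p(0,\infty;\lqo)$ respectively, with bounds of the form $\norm{\calN_q(\bs{z}_1)-\calN_q(\bs{z}_2)}_{L^p(0,\infty;\lso)}\le C(\norm{\bs{z}_1}+\norm{\bs{z}_2})\norm{\bs{z}_1-\bs{z}_2}$, and likewise for $\calM_q$, all norms being those of $\xipqs$ or $\xipqs\times\xipq$. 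These follow from term-by-term H\"older splitting in which one factor is placed in $\bs{L}^\infty(\Omega)$ --- licit because $q>d$ gives $\bs{W}^{1,q}(\Omega)\hookrightarrow\bs{L}^\infty(\Omega)$, cf.\ \eqref{1.19}, \eqref{1.20} --- and the complementary factor in $\bs{L}^q(\Omega)$, combined with interpolation in time between $L^p(0,\infty;\bs{W}^{2,q}(\Omega))$ (coming from $\calD(\BA_{F,q})\subset\calD(A_q)\times\bs{W}^{2,q}(\Omega)$, see \eqref{5.5b}) and $C([0,\infty];\VbqpO)$ (the embedding \eqref{2.18}); this is precisely where the tight index range $q>d$, $1<p<\tfrac{2q}{2q-1}$ is consumed, exactly as in \cite{LPT.1}, \cite{LPT.2}, \cite{LPT.3}.

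\nin With these estimates in hand the \textbf{contraction} is routine. Writing $C_0$ for the norm of $\bs{b}_0\mapsto e^{\BA_{F,q}t}\bs{b}_0$ in $\calL(\VbqpO,\xipqs\times\xipq)$ and $C_1$ for the product of the maximal-regularity constant of \eqref{7.5} with the bilinear constant above, one obtains, on the closed ball $B_R$ of radius $R$ in $\xipqs\times\xipq$,
\[
\norm{\calF(\bs{b}_0,\bs{f})}_{\xipqs\times\xipq}\le C_0\norm{\bs{b}_0}_{\VbqpO}+C_1R^2,\qquad
\norm{\calF(\bs{b}_0,\bs{f})-\calF(\bs{b}_0,\bs{g})}\le 2C_1R\,\norm{\bs{f}-\bs{g}},
\]
for $\bs{f},\bs{g}\in B_R$. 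Choosing $R=2C_0\norm{\bs{b}_0}_{\VbqpO}$ and requiring $4C_0C_1\norm{\bs{b}_0}_{\VbqpO}<1$ makes $\calF(\bs{b}_0,\cdot)$ a strict contraction of $B_R$ into itself; this fixes the threshold $r_1$ of \eqref{8.11} (the precise value carried through \eqref{8.31}) essentially as $r_1=(4C_0C_1)^{-1}$. Banach's theorem then yields a unique fixed point $\{\bs{z},h\}\in\xipqs\times\xipq$, i.e.\ the variation-of-parameters identity \eqref{8.12}, which by \eqref{2.16}--\eqref{2.18} lies in $C([0,\infty];\VbqpO)$ and, by the usual identification of mild and strong solutions under maximal regularity, is the unique solution of \eqref{8.2}\,$=$\,\eqref{8.3}; the nonlinear semigroup (flow) property follows from uniqueness.

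\nin \textbf{Main obstacle.} The one genuinely delicate point is the \emph{global-in-time} bilinear estimate: one must interpolate the $L^p$-in-time $\bs{W}^{2,q}$-in-space information against the $C$-in-time $\Bto$-in-space information so that $\calN_q(\bs{z})$ and $\calM_q[\bs{z}]h$ land in $L^p(0,\infty;\lso)$, resp.\ $L^p(0,\infty;\lqo)$, \emph{with a constant independent of the horizon $(0,\infty)$}. This is what forces the tight exponents and is the reason maximal regularity up to $T=\infty$ (Theorem \ref{Thm-7.1}), rather than only on finite intervals, is indispensable --- with finite-interval maximal regularity alone the same scheme would deliver merely local-in-time existence. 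Everything else is a standard fixed-point computation once the nonlinear estimates are in place.
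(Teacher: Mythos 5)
Your proposal is correct and follows essentially the same route as the paper: the same quadratic/Lipschitz estimates for $\calN_q$ and $\calM_q[\cdot]$ via $\bs{W}^{1,q}(\Omega)\hookrightarrow\bs{L}^\infty(\Omega)$ ($q>d$) and the embedding $\xipqs\hookrightarrow L^\infty(0,\infty;\lso)$, combined with global-in-time maximal regularity (Theorem \ref{Thm-7.1}) and Theorem \ref{Thm-7.2}, feeding a ball-invariance-plus-contraction argument (the paper's Theorems \ref{Thm-8.2} and \ref{Thm-8.3}). The only difference is the bookkeeping of the radii ($R=2C_0\norm{\bs{b}_0}$ versus the paper's quadratic inequality \eqref{8.29}--\eqref{8.31}), which is immaterial.
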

	\noindent The proof of Theorem \ref{Thm-2.2} = Theorem \ref{Thm-8.1} is accomplished in two steps.\\	
	
	\noindent \underline{Step 1:}
	
	\begin{thm}\label{Thm-8.2}
		Let $d = 2,3,  \ q > d$ and $\ds 1 < p < \frac{2q}{2q - 1}$. There exists a positive constant $r_1 > 0$ (identified in the proof below in (\ref{8.31})) and a subsequent constant $r > 0$ (identified in the proof below in (\ref{8.29})) depending on $r_1 > 0$ and a constant $C$ in (\ref{8.28}), such that with $\ds \norm{\bs{b}_0}_{\VqpO} < r_1$ as in (\ref{8.11}), the operator $\ds \calF(\bs{b}_0,\bs{f})$ in (\ref{8.10}) maps a ball $B(0,r)$ in $\ds \xipqs \times \xipq$ into itself. \qedsymbol
	\end{thm}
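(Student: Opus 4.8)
The plan is to show that $\calF(\bs{b}_0, \cdot)$ maps a small ball of $\xipqs \times \xipq$ into itself by splitting $\calF$ into its linear part $e^{\BA_{F,q}t}\bs{b}_0$ and the Duhamel convolution against the nonlinearity $\{\calN_q\bs{f}_1, \calM_q[\bs{f}_1]f_2\}$. For the linear term, I would invoke Theorem \ref{Thm-7.2}(ii): since $\bs{b}_0 \in \VbqpO = \wti{\bs{B}}^{2-2/p}_{q,p}(\Omega) \times B^{2-2/p}_{q,p}(\Omega)$ is exactly the real-interpolation trace space of the generator $\BA_{F,q}$, the map $\bs{b}_0 \mapsto e^{\BA_{F,q}t}\bs{b}_0$ is bounded from $\VbqpO$ into $\xipqs \times \xipq$, with the bound uniform up to $T=\infty$ because $e^{\BA_{F,q}t}$ is uniformly stable (Theorem \ref{Thm-6.2}). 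Hence $\norm{e^{\BA_{F,q}t}\bs{b}_0}_{\xipqs \times \xipq} \le C_0 \norm{\bs{b}_0}_{\VbqpO} \le C_0 r_1$. For the Duhamel term, maximal $L^p$-regularity up to $T=\infty$ (Theorem \ref{Thm-7.1}) gives that convolution against $e^{\BA_{F,q}t}$ is bounded $L^p(0,\infty;\WqsO) \to \xipqs \times \xipq$, so it suffices to estimate $\norm{\{\calN_q\bs{f}_1, \calM_q[\bs{f}_1]f_2\}}_{L^p(0,\infty;\WqsO)}$ in terms of $\norm{\bs{f}}_{\xipqs \times \xipq}$.

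The core of the argument is therefore the nonlinear estimate
$$
\norm{\calN_q \bs{f}_1}_{L^p(0,\infty;\lso)} + \norm{\calM_q[\bs{f}_1]f_2}_{L^p(0,\infty;\lqo)} \le C \norm{\bs{f}}_{\xipqs \times \xipq}^2,
$$
which I would prove via the embedding \eqref{2.16}--\eqref{2.17}: $\xipqs \hookrightarrow C([0,\infty);\Bso)$ and likewise for $\xipq$, together with the Sobolev embedding $\bs{W}^{1,q}(\Omega) \hookrightarrow \bs{L}^\infty(\Omega)$ valid for $q > d$ (this is the point where $q > d$, hence $q > 3$ for $d=3$, enters — see \eqref{1.19}). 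The quadratic term $\calN_q\bs{f}_1 = P_q[(\bs{f}_1\cdot\nabla)\bs{f}_1]$ is controlled by writing one factor in $L^\infty$ (via the trace space being continuously embedded in $\bs{W}^{1,q}$, hence in $\bs{L}^\infty$, uniformly in $t$) and the gradient factor in $L^q$, then raising to the $p$-th power and integrating in time using that $\bs{f}_1 \in L^p(0,\infty;\calD(A_q)) \subset L^p(0,\infty;\bs{W}^{2,q})$ so $\nabla\bs{f}_1 \in L^p(0,\infty;\bs{W}^{1,q})$. The term $\calM_q[\bs{f}_1]f_2 = \bs{f}_1 \cdot \nabla f_2$ is handled symmetrically. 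The upshot is $\norm{\calF(\bs{b}_0,\bs{f})}_{\xipqs \times \xipq} \le C_0 r_1 + C r^2$ whenever $\norm{\bs{f}}_{\xipqs \times \xipq} \le r$; choosing first $r$ so that $Cr^2 \le r/2$ (i.e. $r \le 1/(2C)$, cf. \eqref{8.29}), and then $r_1$ so that $C_0 r_1 \le r/2$ (cf. \eqref{8.31}), gives $\calF(\bs{b}_0,\cdot): B(0,r) \to B(0,r)$.

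The main obstacle I anticipate is bookkeeping the nonlinear product estimate \emph{uniformly up to $T=\infty$} rather than on a finite interval: on a finite interval one could absorb time factors cheaply, but here the $L^p(0,\infty)$ norms must close, which is precisely why one needs the exponential decay of $e^{\BA_{F,q}t}$ (Theorem \ref{Thm-6.2}) built into the maximal-regularity statement up to $T=\infty$ and into the trace embeddings \eqref{2.16}--\eqref{2.18} over $[0,\infty)$. A secondary technical point is verifying that the nonlinear map indeed lands in the right space — that $\calN_q\bs{f}_1$ and $\calM_q[\bs{f}_1]f_2$ are well-defined elements of $\WqsO$ for $\bs{f}$ in the regularity class — which again reduces to the embeddings $\calD(\BA_{F,q}) \subset \calD(A_q)\times \bs{W}^{2,q}$ from \eqref{5.5b} and $\bs{W}^{1,q}\hookrightarrow\bs{L}^\infty$ for $q>d$. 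Everything else is a routine fixed-point setup; the contraction property needed for uniqueness is the separate Step 2 and is not part of this statement.
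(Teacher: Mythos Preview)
Your overall architecture is right and matches the paper: split $\calF$ into the semigroup term (handled by Theorem~\ref{Thm-7.2}(ii)) and the Duhamel term (handled by maximal regularity, Theorem~\ref{Thm-7.1}), then close a quadratic inequality $C(r_1+r^2)\le r$.

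There is, however, a genuine gap in your nonlinear estimate. You claim the trace space $\Bso$ embeds continuously in $\bs{W}^{1,q}(\Omega)$, hence in $\bs{L}^\infty(\Omega)$. This is false in the regime of the theorem: the constraint $1<p<\frac{2q}{2q-1}$ forces $0<2-\frac{2}{p}<\frac{1}{q}<1$, so the Besov smoothness index is strictly below $1$ (in fact below $\frac{1}{q}$), and $\Bso$ does \emph{not} embed in $\bs{W}^{1,q}(\Omega)$. With your allocation of the factors, the estimate does not close.

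The fix is simply to swap the roles of the two factors, which is exactly what the paper does (see \eqref{8.17}--\eqref{8.26} and the cited \cite[Eq.~(8.19)]{LPT.1}). For $\calM_q[\bs{f}_1]f_2=\bs{f}_1\cdot\nabla f_2$ (and analogously for $\calN_q$): put the \emph{undifferentiated} factor $\bs{f}_1$ into $L^\infty(0,\infty;\lso)$ via the embedding $\xipqs\hookrightarrow L^\infty(0,\infty;\lso)$ of \eqref{8.23a}--\eqref{8.23b} (this requires only $\lso$-values, not $\bs{W}^{1,q}$); and put the \emph{gradient} factor $\nabla f_2$ into $L^p(0,\infty;\bs{L}^\infty(\Omega))$ via $f_2\in L^p(0,\infty;W^{2,q}(\Omega))$, hence $\nabla f_2\in L^p(0,\infty;\bs{W}^{1,q}(\Omega))\hookrightarrow L^p(0,\infty;\bs{L}^\infty(\Omega))$ by the Sobolev embedding \eqref{8.24}, which is where $q>d$ enters. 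Then
\[
\norm{\bs{f}_1\cdot\nabla f_2}_{L^p(0,\infty;L^q(\Omega))}
\le \norm{\bs{f}_1}_{L^\infty(0,\infty;\lso)}\,\norm{\nabla f_2}_{L^p(0,\infty;\bs{L}^\infty(\Omega))}
\le C\,\norm{\bs{f}_1}_{\xipqs}\,\norm{f_2}_{\xipq},
\]
and similarly $\norm{\calN_q\bs{f}_1}_{L^p(0,\infty;\lso)}\le C\norm{\bs{f}_1}_{\xipqs}^2$. With this correction your Steps~1--4 go through unchanged and coincide with the paper's proof.
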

	\noindent Theorem \ref{Thm-8.1} will follow then from Theorem \ref{Thm-8.2} after establishing that\\
	
	\noindent \underline{Step 2:}
	
	\begin{thm}\label{Thm-8.3}
		Let $d = 2,3,  \ q > d$ and $\ds 1 < p < \frac{2q}{2q - 1}$. There exists a positive constant $r_1 > 0$ such that if $\ds \norm{\bs{b}_0}_{\VqpO} < r_1$ as in (\ref{8.11}), there exists a constant $0 < \rho_0 < 1$ (identified in (\ref{8.56})), such that the operator $\calF(\bs{b}_0, \bs{f})$ in (\ref{8.10}) defines a contraction in the ball $B(0,\rho_0)$ of $\xipqs \times \xipq$. \qedsymbol
	\end{thm}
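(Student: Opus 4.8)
The plan is to establish Theorem~\ref{Thm-8.3} by the standard fixed-point contraction estimate, building directly on the self-mapping property of Theorem~\ref{Thm-8.2}. First I would recall that, by Theorem~\ref{Thm-7.1} and Theorem~\ref{Thm-7.2}, the linear solution map $(\bs{b}_0,\bs{g}) \mapsto e^{\BA_{F,q}\cdot}\bs{b}_0 + \int_0^{\cdot} e^{\BA_{F,q}(\cdot-\tau)}\bs{g}(\tau)\,d\tau$ is bounded from $\VbqpO \times L^p(0,\infty;\WqsO)$ into $\xipqs \times \xipq$, with a constant $C_{MR}$ independent of $T=\infty$ thanks to the uniform stability of $e^{\BA_{F,q}t}$ (Theorem~\ref{Thm-6.2}). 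Hence, writing for $\bs{f}=\{\bs{f}_1,f_2\}$ and $\bs{f}^{\sharp}=\{\bs{f}_1^{\sharp},f_2^{\sharp}\}$ in $B(0,\rho_0)$,
\begin{equation*}
\calF(\bs{b}_0,\bs{f}) - \calF(\bs{b}_0,\bs{f}^{\sharp}) = -\int_0^t e^{\BA_{F,q}(t-\tau)} \bbm \calN_q\bs{f}_1(\tau) - \calN_q\bs{f}_1^{\sharp}(\tau) \\[1mm] \calM_q[\bs{f}_1]f_2(\tau) - \calM_q[\bs{f}_1^{\sharp}]f_2^{\sharp}(\tau) \ebm d\tau,
\end{equation*}
the problem reduces to bounding the two nonlinear differences in $L^p(0,\infty;\WqsO) = L^p(0,\infty;\lso \times \lqo)$ by $C\,(\rho_0)\,\|\bs{f}-\bs{f}^{\sharp}\|_{\xipqs\times\xipq}$ with a Lipschitz constant that tends to $0$ as $\rho_0\to 0$.

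The core of the argument is therefore the bilinear-type estimates for $\calN_q$ and $\calM_q$. For the fluid term I would use $\calN_q\bs{f}_1 - \calN_q\bs{f}_1^{\sharp} = P_q[(\bs{f}_1\cdot\nabla)(\bs{f}_1-\bs{f}_1^{\sharp})] + P_q[((\bs{f}_1-\bs{f}_1^{\sharp})\cdot\nabla)\bs{f}_1^{\sharp}]$, then bound each summand pointwise in $t$ in $\lso$ via H\"older, $\|(\bs{a}\cdot\nabla)\bs{b}\|_{\bs{L}^q} \le \|\bs{a}\|_{\bs{L}^{\infty}}\|\nabla\bs{b}\|_{\bs{L}^q}$, using the embedding $\bs{W}^{1,q}(\Omega)\hookrightarrow \bs{L}^{\infty}(\Omega)$ valid for $q>d$ (cited after \eqref{1.19}), and finally integrate in time invoking the trace/embedding \eqref{2.16}: $\xipqs \hookrightarrow C([0,\infty];\Bso)$ together with $\xipqs \hookrightarrow L^p(0,\infty;\bs{W}^{2,q}(\Omega))$ from \eqref{7.6}, so that the product $\|\bs{f}_1\|_{C([0,\infty];\bs{W}^{1,q})}\|\bs{f}_1-\bs{f}_1^{\sharp}\|_{L^p(0,\infty;\bs{W}^{2,q})}$ is controlled by $\|\bs{f}_1\|_{\xipqs}\|\bs{f}_1-\bs{f}_1^{\sharp}\|_{\xipqs}$. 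The heat term $\calM_q[\bs{f}_1]f_2 - \calM_q[\bs{f}_1^{\sharp}]f_2^{\sharp} = \bs{f}_1\cdot\nabla(f_2-f_2^{\sharp}) + (\bs{f}_1-\bs{f}_1^{\sharp})\cdot\nabla f_2^{\sharp}$ is treated identically in $\lqo$, now using \eqref{2.17}, $\xipq \hookrightarrow C([0,\infty];\BsO)$, and $\xipq \hookrightarrow L^p(0,\infty;W^{2,q}(\Omega))$. Collecting, one gets
\begin{equation*}
\norm{\calF(\bs{b}_0,\bs{f}) - \calF(\bs{b}_0,\bs{f}^{\sharp})}_{\xipqs\times\xipq} \le C_{MR}\,C_{emb}\,\rho_0\,\norm{\bs{f}-\bs{f}^{\sharp}}_{\xipqs\times\xipq},
\end{equation*}
so choosing $\rho_0$ small enough that $C_{MR}C_{emb}\rho_0 =: \kappa < 1$ — compatibly with the requirement $\rho_0 \le r$ from Theorem~\ref{Thm-8.2}, which in turn forces $r_1$ small via \eqref{8.29}, \eqref{8.31} — yields the contraction. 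Banach's fixed point theorem on the complete metric space $B(0,\rho_0)\subset \xipqs\times\xipq$ then gives the unique fixed point, completing Theorem~\ref{Thm-8.3} and hence Theorem~\ref{Thm-8.1} = Theorem~\ref{Thm-2.2}.

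The main obstacle I anticipate is not conceptual but book-keeping: one must verify that every constant appearing — the maximal-regularity constant $C_{MR}$, the embedding constants in \eqref{2.16}--\eqref{2.18}, and the H\"older constants — is genuinely independent of the time horizon, which is exactly why uniform stability up to $T=\infty$ (Theorems~\ref{Thm-6.2}, \ref{Thm-7.1}) was established first; without it the $L^p(0,\infty)$ norms would blow up. A secondary subtlety is keeping the two smallness thresholds consistent: the radius $r$ of the invariant ball from Theorem~\ref{Thm-8.2} and the contraction radius $\rho_0$ here must be chosen so that $\rho_0 \le r$ while simultaneously $\rho_0$ is small enough to beat the Lipschitz constant, and both must be compatible with the data bound $r_1$; I would simply take $\rho_0 = \min\{r,\ (2C_{MR}C_{emb})^{-1}\}$ and then shrink $r_1$ accordingly. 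Everything else is the routine product-estimate machinery already used in \cite{LPT.1}, \cite{LPT.3} for the Navier--Stokes case, now carried out componentwise on the coupled $\{\bs{z},h\}$ system.
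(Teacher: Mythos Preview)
Your overall architecture is exactly that of the paper: subtract the two $\calF$'s, invoke maximal regularity of $\BA_{F,q}$ up to $T=\infty$ (Theorem~\ref{Thm-7.1}) to reduce to $L^p(0,\infty;\WqsO)$ bounds on $\calN_q\bs{f}_1-\calN_q\bs{f}_1^{\sharp}$ and $\calM_q[\bs{f}_1]f_2-\calM_q[\bs{f}_1^{\sharp}]f_2^{\sharp}$, apply an add-and-subtract, and extract a Lipschitz constant proportional to the ball radius. The paper does precisely this in \eqref{8.33}--\eqref{8.56}.

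However, the specific H\"older split you propose, $\|(\bs{a}\cdot\nabla)\bs{b}\|_{\bs{L}^q}\le \|\bs{a}\|_{\bs{L}^{\infty}}\|\nabla\bs{b}\|_{\bs{L}^q}$, does not close in the regime $1<p<\tfrac{2q}{2q-1}$. You then need $\sup_t\|\bs{a}(t)\|_{\bs{L}^{\infty}(\Omega)}$, which via $\bs{W}^{1,q}\hookrightarrow\bs{L}^{\infty}$ would require $\bs{a}\in C([0,\infty];\bs{W}^{1,q})$; but the trace embedding \eqref{2.16} only yields $C([0,\infty];\Bso)$ with $2-\tfrac{2}{p}<\tfrac{1}{q}<1$, and this Besov space does \emph{not} embed into $\bs{W}^{1,q}$ (nor into $\bs{L}^{\infty}$, since $2-\tfrac{2}{p}<\tfrac{d}{q}$). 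The fix, and what the paper actually does in \eqref{8.17}--\eqref{8.26} and \eqref{8.44}--\eqref{8.48}, is to reverse the roles: use $\|(\bs{a}\cdot\nabla)\bs{b}\|_{\bs{L}^q}\le \|\bs{a}\|_{\bs{L}^q}\|\nabla\bs{b}\|_{\bs{L}^{\infty}}$, place the $L^{\infty}$-in-time on the bare factor $\bs{a}$ via the cheap embedding $\xipqs\hookrightarrow L^{\infty}(0,\infty;\lso)$ of \eqref{8.23}, and absorb the $\bs{L}^{\infty}(\Omega)$-norm on $\nabla\bs{b}$ through $\bs{W}^{1,q}\hookrightarrow\bs{L}^{\infty}$ and the full $L^p(0,\infty;\bs{W}^{2,q})$ regularity coming from $\xipqs$. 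With that single swap your argument goes through verbatim and coincides with the paper's.
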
	
	
	\noindent The Banach contraction principle then establishes Theorem \ref{Thm-8.1}, once we prove Theorems \ref{Thm-8.2} and  \ref{Thm-8.3}. These are proved below. \\
	
	\noindent \underline{Proof of Theorem \ref{Thm-8.2}}\\
	
	\noindent \textit{Step 1:} We start from the definition (\ref{8.10}) of $\ds \calF(\bs{b}_0, \bs{f})$ and invoke the maximal regularity properties \eqref{7.8} of Theorem \ref{Thm-7.2} for $\ds e^{\BA_{F,q}t}$ and \eqref{7.6} = \eqref{2.15} of Theorem \ref{Thm-7.1} for the integral term in  \eqref{8.10}. We then obtain from (\ref{8.10})	
	\begin{align}
	\norm{\calF (\bs{b}_0, \bs{f})}_{\xipqs \times \xipq} &\leq \norm{e^{\BA_{F,q}t}\bs{b}_0}_{\xipqs \times \xipq}\nonumber\\
	&+\norm{\int_{0}^{t} e^{\BA_{F,q}(t - \tau)} \bbm \calN_q \bs{f}_1 (\tau) \\[1mm] \calM_q[\bs{f}_1] f_2 (\tau) \ebm d \tau}_{\xipqs \times \xipq} \label{8.13}\\
	\leq C \bigg[ \norm{\bs{b}_0}_{\VbqpO} + &\norm{\calN_q \bs{f}_1}_{\lplqs} + \norm{\calM_q[\bs{f}_1]f_2}_{\lplq} \bigg]. \label{8.14}
	\end{align}	
	\noindent \textit{Step 2:} Regarding the term $\ds \calN_q \bs{f}_1$ we can invoke \cite[Eq (8.19)]{LPT.1} to obtain	
	\begin{equation}
	\norm{\calN_q \bs{f}_1}_{\lplqs} \leq C \norm{\bs{f}_1}^2_{\xipqs}, \ \bs{f}_1 \in \xipqs \label{8.15}.
	\end{equation}	
	\noindent Regarding the term $\ds \calM_q[\bs{f}_1]f_2$, we can trace the proof in \cite[from (8.10) $\longrightarrow $ (8.18)]{LPT.1} (which yielded estimate \eqref{8.14}). For the sake of clarity, we shall reproduce the computations in the present case with $\ds \calM_q [\bs{f}_1] f_2 = \bs{f}_1 \cdot \nabla f_2$, see \eqref{1.20}, \textit{mutatis mutandis}. We shall obtain	
	\begin{equation}\label{8.16}
	\norm{\calM_q [\bs{f}_1] f_2}_{\lplq} \leq C \norm{\bs{f}_1}_{\xipqs} \norm{f_2}_{\xipq}, \ \bs{f}_1 \in \xipqs, f_2 \in \xipq.
	\end{equation}	
	\noindent In fact, let us compute from \eqref{1.20}	
	\begin{align}
	\norm{\calM_q [\bs{f}_1] f_2}_{\lplq}^p &\leq \int_{0}^{\infty} \norm{\bs{f}_1 \cdot \nabla f_2}^p_{L^q(\Omega)} dt \label{8.17}\\
	&\leq \int_{0}^{\infty} \bigg\{ \int_{\Omega} \abs{\bs{f}_1(t,x)}^q \abs{\nabla f_2(t,x)}^q d\Omega \bigg\}^{\rfrac{p}{q}}dt\\
	\leq \int_{0}^{\infty} \bigg\{  \bigg[ \sup_{\Omega} &\abs{\nabla f_2(t,x)}^q  \bigg]^{\rfrac{1}{q}} \bigg[\int_{\Omega} \abs{\bs{f}_1(t,x)}^q d\Omega  \bigg]^{\rfrac{1}{q}} \bigg\}^p dt\\
	&\leq \int_{0}^{\infty} \norm{\nabla f_2 (t, \cdot)}^p_{\bs{L}^{\infty}(\Omega)} \norm{\bs{f}_1(t, \cdot)}^p_{\lso} dt\\
	\leq \sup_{0 \leq t \leq \infty} &\norm{\bs{f}_1(t, \cdot)}^p_{\bs{L}_{\sigma}^q(\Omega)} \int_{0}^{\infty} \norm{\nabla f_2 (t, \cdot)}^p_{\bs{L}^{\infty}(\Omega)} dt\\
	&= \norm{\bs{f}_1}^p_{L^{\infty}(0,\infty; \lso)}\norm{\nabla f_2}^p_{L^p(0,\infty;\bs{L}^{\infty}(\Omega))}. \label{8.22}
	\end{align}
	\noindent See also \cite[Eq (8.14)]{LPT.1}. The following embeddings hold true (see the stronger Eq \eqref{2.16}):
	\begin{enumerate}[(i)]
		\item \cite[Proposition 4.3, p 1406 with $\mu = 0, s = \infty, r = q$]{GGH:2012}  so that the required formula reduces to $1 \geq \rfrac{1}{p}$, as desired
		\begin{subequations}\label{8.23}
			\begin{align}
			\bs{f}_1 \in \xipqs \hookrightarrow \bs{f}_1 &\in L^{\infty}(0,\infty; \lso) \label{8.23a}\\
			\text{ so that, } \norm{\bs{f}_1}_{L^{\infty}(0,\infty; \lso)} &\leq C\norm{\bs{f}_1}_{\xipqs}; \label{8.23b}
			\end{align}
		\end{subequations}
		\item \cite[Theorem 2.4.4, p 74 requiring $C^1$-boundary]{SK:1989}
		\begin{equation}\label{8.24}
		W^{1,q}(\Omega) \subset L^{\infty}(\Omega) \text{ for q} > \text{dim } \Omega = d, \ d = 2,3,
		\end{equation}
	\end{enumerate}
	
	\noindent so that, with $p>1, q>d$:
	\begin{align}
	\norm{\nabla f_2}^p_{L^p(0,\infty; \bs{L}^{\infty}(\Omega))} &\leq C \norm{ \nabla f_2}^p_{L^p(0,\infty; \bs{W}^{1,q}(\Omega))} \leq C \norm{f_2}^p_{L^p(0,\infty; W^{2,q}(\Omega))} \label{8.25}\\
	&\leq C \norm{f_2}^p_{\xipq}. \label{8.26}
	\end{align}
	
	\noindent In going from (\ref{8.25}) to (\ref{8.26}) we have recalled the definition of $f_2 \in \xipq$ in \eqref{8.9b} or \eqref{2.15}, as $f_2$ was taken at the outset in $\ds \calD(\calB_q) \subset L^q(\Omega)$. Then the sought-after final estimate (\ref{8.16}) of the nonlinear term $\ds \calM_q[\bs{f}_1]f_2$ is obtained from substituting (\ref{8.23b}) and (\ref{8.26}) into the RHS of (\ref{8.22}).\\
	
	\noindent \textit{Step 3:} Substituting estimates (\ref{8.15}) and (\ref{8.16}) on the RHS of (\ref{8.14}), we finally obtain	
	\begin{equation}\label{8.27}
	\norm{\calF(\bs{b}_0, \bs{f}}_{\xipqs \times \xipq} \leq C \Big\{ \norm{\bs{b}_0}_{\VbqpO} + \norm{\bs{f}_1}_{\xipqs} \big( \norm{\bs{f}_1}_{\xipqs} + \norm{f_2}_{\xipq} \big) \Big\}.
	\end{equation}	
	\noindent See \cite[Eqt (8.20)]{LPT.1}.\\
	
	\noindent \textit{Step 4:} We now impose restrictions on the data on the RHS of (\ref{8.27}): $\bs{b}_0$ is in a ball of radius $r_1 > 0$ in $\ds \VbqpO = \Bto \times \BsO$ and $\bs{f} = \{ \bs{f}_1, f_2\}$ lies in a ball of radius $r > 0$ in $\ds \xipqs \times \xipq$. We further demand that the final result $\ds \calF(\bs{b}_0, \bs{f})$ shall lie in a ball of radius $r > 0$ in $\ds \xipqs \times \xipq$. Thus, we obtain from \eqref{8.27}	
	\begin{align}
	\norm{\calF(\bs{b}_0, \bs{f})}_{\xipqs \times \xipq} &\leq C \Big\{ \norm{\bs{b}_0}_{\VbqpO} + \norm{\bs{f}_1}_{\xipqs} \big( \norm{\bs{f}_1}_{\xipqs} + \norm{f_2}_{\xipq} \big) \Big\} \nonumber\\
	&\leq C(r_1 + r \cdot r) \leq r. \label{8.28}
	\end{align}	
	\noindent This implies
	\begin{equation}\label{8.29}
	Cr^2 - r + Cr_1 \leq 0 \quad \text{or} \quad \frac{1 - \sqrt{1-4C^2r_1}}{2C} \leq r \leq \frac{1 + \sqrt{1-4C^2r_1}}{2C}
	\end{equation}
	\noindent whereby
	\begin{equation}\label{8.30}
	\begin{Bmatrix}
	\text{ range of values of r }
	\end{Bmatrix}
	\longrightarrow \text{ interval } \Big[ 0, \frac{1}{C} \Big], \text{ as } r_1 \searrow 0,
	\end{equation}
	\noindent a constraint which is guaranteed by taking
	\begin{equation}\label{8.31}
	r_1 \leq \frac{1}{4C^2},\ C \text{ being the constant in } (\ref{8.28}) \ \left(\text{w.l.o.g. } C > \frac{1}{4} \right).
	\end{equation}
	\noindent We have thus established that by taking $r_1$ as in (\ref{8.31})  and subsequently $r$ as in (\ref{8.29}), then the map
	\begin{multline}\label{8.32}
	\calF(\bs{b}_0, \bs{f}) \text{ takes: }
	\begin{Bmatrix}
	\text{ ball in } \VbqpO \\
	\text{of radius } r_1
	\end{Bmatrix}
	\times
	\begin{Bmatrix}
	\text{ ball in } \xipqs \times \xipq \\
	\text{of radius } r
	\end{Bmatrix}
	\text{ into }\\
	\begin{Bmatrix}
	\text{ ball in } \xipqs \times \xipq \\
	\text{of radius } r
	\end{Bmatrix}, \ d < q, \ 1 < p < \frac{2q}{2q-1}.
	\end{multline}
	\noindent This establishes Theorem \ref{Thm-8.2}. \qedsymbol \\
	
	\noindent \underline{Proof of Theorem \ref{Thm-8.3}}
	
	\noindent \textit{Step 1:} For $\bs{f} = \{ \bs{f}_1, f_2 \}, \bs{g} = \{ \bs{g}_1, g_2 \}$ both in the ball of $\ds \xipqs \times \xipq$ of radius $r$ obtained in the proof of Theorem \ref{Thm-8.2}, we estimate from (\ref{8.10}):	
	\begin{align}
	&\norm{\calF (\bs{b}_0, \bs{f}) - \calF(\bs{b}_0,\bs{g})}_{\xipqs \times \xipq} = \nonumber\\ & \hspace{2.5cm} \norm{\int_{0}^t e^{\BA_{F,q}(t - \tau)} \bbm \calN_q\bs{f}_1(\tau) - \calN_q \bs{g}_1(\tau) \\[1mm] \calM_q[\bs{f}_1]f_2(\tau) - \calM_q[\bs{g}_1]g_2(\tau)\ebm d \tau}_{\xipqs \times \xipq} \nonumber\\
	&\leq \wti{m} \Big[ \norm{\calN_q\bs{f}_1 - \calN_q\bs{g}_1}_{\lplqs} + \norm{\calM_q[\bs{f}_1]f_2 - \calM_q[\bs{g}_1]g_2}_{L^p(0,\infty;L^q(\Omega))} \Big] \label{8.33}
	\end{align}
	\noindent after invoking the maximal regularity property \eqref{7.6} of Theorem \ref{Thm-7.1}, as in \eqref{8.14}.\\
	
	\noindent \textit{Step 2:} As to the first term of the RHS of (\ref{8.33}), we can invoke \cite[Eq (8.41)]{LPT.1} and obtain
	\begin{multline}\label{8.34}
	\norm{\calN_q\bs{f}_1 - \calN_q\bs{g}_1}_{\lplqs} \leq 2^{\rfrac{1}{p}}C^{\rfrac{1}{p}} \norm{\bs{f}_1 - \bs{g}_1}_{\xipqs} \left( \norm{\bs{f}_1}_{\xipqs} \right.\\ \left. + \norm{\bs{g}_1}_{\xipqs} \right).
	\end{multline}
	\noindent Regarding the second term on the RHS of (\ref{8.33}) involving $\calM_q$, we can track the proof of \cite[from (8.28) to (8.41)]{LPT.1} (which yielded estimate (\ref{8.34})) \textit{mutatis mutandis}. For the sake of clarity, we shall reproduce the computations in the present case, recalling from (\ref{1.22}) that $\ds \calM_q[\bs{f}_1]f_2 = \bs{f}_1 \cdot \nabla f_2, \calM_q[\bs{g}_1]g_2 = \bs{g}_1 \cdot \nabla g_2$. We shall obtain
	\begin{multline}\label{8.35}
	\norm{\calM_q[\bs{f}_1]f_2 - \calM_q[\bs{g}_1]g_2}^p_{\lplq} \leq C \Big\{ \norm{\bs{f}_1 - \bs{g}_1}^p_{\xipqs} \norm{f_2}^p_{\xipq} \\+ \norm{\bs{g}_1}^p_{\xipqs} \norm{f_2 - g_2}^p_{\xipq} \Big\}.
	\end{multline}
	\noindent In fact, adding and subtracting
	\begin{align}
	\calM_q[\bs{f}_1]f_2 - \calM_q[\bs{g}_1]g_2 &= \bs{f}_1 \cdot \nabla f_2 - \bs{g}_1 \cdot \nabla g_2 \nonumber\\
	&= \bs{f}_1 \cdot \nabla f_2 - \bs{g}_1 \cdot \nabla f_2 + \bs{g}_1 \cdot \nabla f_2 - \bs{g}_1 \cdot \nabla g_2 \nonumber\\
	&= (\bs{f}_1 - \bs{g}_1)\cdot \nabla f_2 + \bs{g}_1 \cdot \nabla (f_2 - g_2) = A + B. \label{8.36}
	\end{align}
	\noindent Thus, using $(*): \ \abs{A + B}^p \leq 2^p \big[ \abs{A}^p + \abs{B}^p  \big]$ \cite[p. 12]{TL:1980}, we estimate
	\begin{align}
	\norm{\calM_q[\bs{f}_1]f_2 - \calM_q[\bs{g}_1]g_2}_{\lplq}^p  & = \int_{0}^{\infty} \bigg\{ \bigg[ \int_{\Omega} \abs{\bs{f}_1 \cdot \nabla f_2 - \bs{g}_1 \cdot \nabla g_2}^q d \Omega \bigg]^{\rfrac{1}{q}}\bigg\}^p dt\\
	\text{(by \eqref{8.36})} \qquad &= \int_{0}^{\infty} \bigg[ \int_{\Omega} \abs{A+B}^q d \Omega \bigg]^{\rfrac{p}{q}}dt\\
	&\leq 2^p \int_{0}^{\infty} \bigg\{ \int_{\Omega} \big[\abs{A}^q + \abs{B}^q \big] d \Omega \bigg\}^{\rfrac{p}{q}}dt\\
	&= 2^p \int_{0}^{\infty} \bigg\{ \Big[ \int_{\Omega} \abs{A}^q d \Omega + \int_{\Omega} \abs{B}^q d \Omega   \Big]^{\rfrac{1}{q}} \bigg\}^p dt\\
	&= 2^p \int_{0}^{\infty} \bigg\{ \Big[ \norm{A}^q_{L^q(\Omega)} + \norm{B}^q_{L^q(\Omega)} \Big]^{\rfrac{1}{q}} \bigg\}^p dt\\
	\left(\text{by } (*) \mbox{ with } p \to \frac{1}{q} \right) \qquad
	&\leq 2^p \cdot 2^{\rfrac{p}{q}}\int_{0}^{\infty} \Big\{ \norm{A}_{L^q(\Omega)} + \norm{B}_{L^q(\Omega)} \Big\}^p dt\\
	(\text{by } (*) ) \qquad
	&\leq 2^{p+p+\rfrac{p}{q}}\int_{0}^{\infty} \Big[ \norm{A}^p_{L^q(\Omega)} + \norm{B}^p_{L^q(\Omega)} \Big] dt \label{8.43}\\
	&= 2^{p+p+\rfrac{p}{q}}\int_{0}^{\infty} \Big[ \norm{(\bs{f}_1 - \bs{g}_1)\cdot \nabla f_2}^p_{L^q(\Omega)}\nonumber \\ &\hspace{1cm}+\norm{\bs{g}_1 \cdot \nabla(f_2 - g_2)}^p_{L^q(\Omega)} \Big] dt \label{8.44}\\
	&\leq  2^{p+p+\rfrac{p}{q}}\int_{0}^{\infty} \Big\{ \norm{\bs{f}_1 - \bs{g}_1}^p_{\lso} \norm{\nabla f_2}^p_{L^q(\Omega)}\nonumber \\ &\hspace{1cm}+\norm{\bs{g}_1}^p_{\lso} \norm{\nabla(f_2 - g_2)}^p_{L^q(\Omega)} \Big\} dt, \label{8.45}
	\end{align}
	\noindent recalling the definitions of $A$ and $B$ in (\ref{8.36}) in passing from (\ref{8.43}) to (\ref{8.44}). \cite[This is the counterpart Eq (8.36)]{LPT.1}. Next we proceed by majorizing the $L^q(\Omega)$-norm for $\nabla f_2$ and $\nabla (f_2 - g_2)$ by the $L^{\infty}(\Omega)$-norm. We obtain
	\begin{align}
	\norm{\calM_q[\bs{f}_1]f_2 - \calM_q[\bs{g}_1]g_2}_{\lplq}^p &\leq \text{RHS of (\ref{8.45})} \nonumber\\
	&\hspace{-3.5cm} \leq  c 2^{p+p+\rfrac{p}{q}} \bigg\{ \sup_{0 \leq t \leq \infty} \norm{\bs{f}_1 - \bs{g}_1}^p_{\lso} \int_{0}^{\infty} \norm{\nabla f_2}^p_{L^{\infty}(\Omega)}\nonumber \\ & \hspace{-1.5cm} + \sup_{0 \leq t \leq \infty} \norm{\bs{g}_1}^p_{\lso} \int_{0}^{\infty} \norm{\nabla(f_2 - g_2)}^p_{L^{\infty}(\Omega)} dt \bigg\} \label{8.46}\\
	&\hspace{-3.5cm} \leq c 2^{p+p+\rfrac{p}{q}} \Big\{\norm{\bs{f}_1 - \bs{g}_1}^p_{L^{\infty}(0,\infty; \lso)} \norm{\nabla f_2}^p_{L^p(0,\infty; L^{\infty}(\Omega))}\nonumber \\
	&\hspace{-1.85cm} + \norm{\bs{g}_1}^p_{L^{\infty}(0,\infty; \lso)} \norm{\nabla(f_2 - g_2)}^p_{L^p(0,\infty; L^{\infty}(\Omega))} \Big\}\label{8.47}\\
	& \hspace{-5.5cm} \text{ by (\ref{8.23b}) and (\ref{8.26})}  \nonumber \\
	& \hspace{-3.5cm}\leq C \Big\{ \norm{\bs{f}_1 - \bs{g}_1}_{\xipqs}^p \norm{f_2}_{\xipq}^p + \norm{\bs{g}_1}_{\xipqs}^p  \norm{f_2 - g_2}_{\xipq}^p \Big\},  \label{8.48}
	\end{align}
	\noindent counterpart of \cite[Eq (8.39)]{LPT.1}.\\
	
	\noindent \textit{Step 3:} We substitute estimate (\ref{8.34}) and estimate (\ref{8.48}) on the RHS of (\ref{8.33}) and obtain via $(*)$
	
	\begin{align}
	\norm{\calF(\bs{b}_0,\bs{f}) - \calF(\bs{b}_0,\bs{g})}^p_{\xipqs \times \xipq} &\leq 2^p \wti{m}^p  \Big\{ \norm{\calN_q \bs{f}_1 - \calN_q \bs{g}_1}^p_{\lplq} \nonumber
	\\&+ \norm{\calM_q [\bs{f}_1] f_2 - \calM_q[\bs{g}_1] g_2}^p_{L^p(0,\infty;L^q(\Omega))} \Big\} \label{8.49}\\
	&\hspace{-2.5cm} \leq C_p \wti{m}^p  \Big\{ \norm{\bs{f}_1 - \bs{g}_1}^p_{\xipqs} \Big( \norm{\bs{f}_1}_{\xipqs} + \norm{\bs{g}_1}_{\xipqs} \Big)^p \nonumber
	\\&\hspace{-2.2cm} + \norm{\bs{f}_1 - \bs{g}_1}^p_{\xipqs} \norm{f_2}^p_{\xipq} + \norm{\bs{g}_1}^p_{\xipqs} \norm{f_2 - g_2}^p_{\xipq} \Big\}. \label{8.50}
	\end{align}
	\noindent This is the counterpart of \cite[Eq (8.42)]{LPT.1}.\\
	
	\noindent \textit{Step 4:} Next pick the points $\bs{f} = \{ \bs{f}_1, f_2 \}$ and $\bs{g} = \{ \bs{g}_1, g_2 \}$ in a ball of $\ds \xipqs \times \xipq$ of radius $R$:
	\begin{align}
	\norm{\bs{f}}_{\xipqs \times \xipq} &= \norm{\bs{f}_1}_{\xipqs} + \norm{f_2}_{\xipq} < R \label{8.51}\\
	\norm{\bs{g}}_{\xipqs \times \xipq} &= \norm{\bs{g}_1}_{\xipqs} + \norm{g_2}_{\xipq} < R. \label{8.52}
	\end{align}
	\noindent Then (\ref{8.51}), \eqref{8.52} used (\ref{8.50}) implies
	\begin{align}
	\norm{\calF(\bs{b}_0,\bs{f}) - \calF(\bs{b}_0,\bs{g})}^p_{\xipqs \times \xipq} &\leq C_p \Big\{ \norm{\bs{f}_1 - \bs{g}_1}^p_{\xipqs} \big[ (2R)^p + R^p \big] \nonumber \\
	&\hspace{2cm}+ \norm{f_2 - g_2}^p_{\xipq} R^p \Big\}\\
	&\leq C_p \Big\{ \norm{\bs{f}_1 - \bs{g}_1}^p_{\xipqs} \big[ (2^p + 1)R^p \big] \nonumber \\
	&\hspace{2cm}+ \norm{f_2 - g_2}^p_{\xipq} R^p \Big\} \nonumber\\
	\big( K_p = C_p (2^p + 1) \big) \quad &\leq K_p R^p \Big\{ \norm{\bs{f}_1 - \bs{g}_1}^p_{\xipqs} + \norm{f_2 - g_2}^p_{\xipq} \Big\}\nonumber\\
	[a>0, b>0, \ a^p + b^p \leq (a + b)^p] \quad &\leq K_p R^p \Big\{ \norm{\bs{f}_1 - \bs{g}_1}_{\xipqs} + \norm{f_2 - g_2}_{\xipq} \Big\}^p \label{8.54}
	\end{align}
	using $K_p = C_p(2^p + 1)$ and $a^p + b^p \leq (a+b)^p$ for $a>0, b>0$.
	Finally,
	\begin{align}
	\norm{\calF(\bs{b}_0,\bs{f}) - \calF(\bs{b}_0,\bs{g})}_{\xipqs \times \xipq} &\leq  K_p^{\rfrac{1}{p}} R \Big\{ \norm{\bs{f}_1 - \bs{g}_1}_{\xipqs} + \norm{f_2 - g_2}_{\xipq} \Big\} \nonumber\\
	&= \rho_0 \norm{\bs{f} - \bs{g}}_{\xipqs \times \xipq} \label{8.55}
	\end{align}
	\noindent and $\ds \calF(\bs{b}_0,\bs{f})$ is a contraction on the space $\xipqs \times \xipq$ as soon as
	\begin{equation}\label{8.56}
	\rho_0 = K_p^{\rfrac{1}{p}}R < 1 \quad \text{or} \quad R < \frac{1}{K_p^{\rfrac{1}{p}}}.
	\end{equation}
	\noindent In this case, the map $\ds \calF(\bs{b}_0, \bs{f})$ defined in (\ref{8.10}) has a fixed point $\ds \bbm \bs{z} \\ h \ebm$ in $\xipqs \times \xipq$:
	\begin{equation}\label{8.57}
	\calF \bpm \bs{b}_0, \bbm \bs{z} \\ h \ebm \epm = \bbm \bs{z} \\ h \ebm, \ \text{or} \ \bbm \bs{z} \\ h \ebm (t) =  e^{\BA_{F,q}t} \bbm \bs{z}_0 \\ h_0 \ebm - \int_{0}^{t} e^{\BA_{F,q}(t - \tau)} \bbm \calN_q \bs{z}(\tau) \\[1mm] \calM_q[\bs{z}]h(\tau) \ebm d \tau,
	\end{equation}
	\noindent and such point $\ds \bbm \bs{z} \\ h \ebm$ is the unique solution of the translated non-linear system (\ref{8.2}), or (\ref{8.3}), with finite dimensional control
	\begin{equation*}
	\bbm P_qm(\bs{u}) \\[2mm] v \ebm = \bbm \ds P_q \Bigg( m \bigg( \sum_{k = 1}^{K} \bigg(P_N \bbm \bs{z} \\ h \ebm, \bs{p}_k \bigg) \bs{u}_k \bigg) \Bigg) \\[3mm] \ds \sum_{k = 1}^{K} \bigg(P_N \bbm \bs{z} \\ h \ebm, \bs{p}_k \bigg) f_k \ebm
	\end{equation*}
	
	\noindent in feedback form, as described by Eq \eqref{8.1}. Theorem \ref{Thm-8.3} and hence Theorem \ref{Thm-8.1} are proved. \qedsymbol
	
	\begin{rmk} \label{R5.1}
		Recall from \eqref{2.1} and the statement of Theorem \ref{Thm-2.1}, that $\ds \bs{u}_k \in (\widehat{\bs{W}}^q_{\sigma})^u_N \subset  \widehat{\bs{L}}^{q}_{\sigma}(\Omega)$ as established in Appendix \ref{app-C}. This means that the feedback control acting on the fluid variable $\bs{u}$ is of reduced dimension $(d-1)$ to include the $d$\textsuperscript{th} component. \qedsymbol
	\end{rmk}
	
	\section{Proof of Theorem \ref{Thm-2.3}: local exponential decay of the non-linear $ [\bs{z},h]$ translated dynamics \eqref{2.11} = \eqref{2.12} with finite dimensional localized feedback controls and $(d-1)$ dimensional $\bs{u}_K$.}
	\label{Sec-9}
	
	\noindent In this section we return to the feedback problem \eqref{2.12}, rewritten equivalently as in \eqref{2.14}
	\begin{equation}\label{9.1}
	\bbm \bs{z} \\ h \ebm (t) =  e^{\BA_{F,q}t} \bbm \bs{z}_0 \\ h_0 \ebm - \int_{0}^{t} e^{\BA_{F,q}(t - \tau)} \bbm \calN_q \bs{z}(\tau) \\[1mm] \calM_q[\bs{z}]h(\tau) \ebm d \tau,
	\end{equation}
	\noindent For $\bs{b}_0 = [\bs{z}_0, h_0]$ in a small ball of $\ds \VbqpO = \Bto \times \BsO$, Theorem \ref{Thm-8.1}(= Theorem \ref{Thm-2.2}) provides a unique solution $\{\bs{z}, h\}$ in a small ball of $\ds \xipqs \times \xipq$. We recall from \eqref{6.7} of Theorem \ref{Thm-6.2}	
	\begin{equation}\label{9.2}
	\norm{e^{\BA_{F,q}t} \bbm \bs{z}_0 \\ h_0 \ebm}_{\VbqpO} \leq M_{\gamma_0} e^{-\gamma_0 t} \norm{\bbm \bs{z}_0 \\ h_0 \ebm}_{\VbqpO}, t \geq 0
	\end{equation}
	\noindent $\ds \VbqpO = \Bto \times \BsO$. Our goal is to show that for $\ds [\bs{z}_0, h_0]$ in a small ball of $\ds \VbqpO$, problem (\ref{9.1}) satisfies the exponential decay
	\begin{multline}\label{9.3}
	\norm{\bbm \bs{z} \\ h \ebm(t)}_{\VbqpO} \leq C_a e^{-a t} \norm{\bbm \bs{z}_0 \\ h_0 \ebm}_{\VbqpO}, t \geq 0,\\ \text{for some constants } a > 0,\ C = C_a \geq 1.
	\end{multline}
	
	\noindent \textit{Step 1:} Starting from (\ref{9.1}) and using the decay (\ref{9.2}), we estimate
	\begin{align}
	\norm{\bbm \bs{z}(t) \\ h(t) \ebm}_{\VbqpO} &\leq  M_{\gamma_0}e^{-\gamma_0 t} \norm{\bbm \bs{z}_0 \\ h_0 \ebm}_{\VbqpO} + \nonumber \\
	\sup_{0 \leq t \leq \infty} &\norm{\int_{0}^{t} e^{\BA_{F,q}(t - \tau)} \bbm \calN_q \bs{z}(\tau) \\[2mm] \calM_q[\bs{z}]h(\tau) \ebm d \tau}_{\VbqpO} \label{9.4}\\
	\leq M_{\gamma_0}e^{-\gamma_0 t} \norm{\bbm \bs{z}_0 \\ h_0 \ebm}_{\VbqpO} &+ C_1 \norm{\int_{0}^{t} e^{\BA_{F,q}(t - \tau)} \bbm \calN_q \bs{z}(\tau) \\[2mm] \calM_q[\bs{z}]h(\tau) \ebm d \tau}_{\xipqs \times \xipq} \label{9.5}\\
	\leq M_{\gamma_0}e^{-\gamma_0 t} \norm{\bbm \bs{z}_0 \\ h_0 \ebm}_{\VbqpO} &+ C_2 \bigg[ \norm{\calN_q \bs{z}}_{\lplqs} + \nonumber \\
	&\hspace{2cm}\norm{\calM_q[\bs{z}]h}_{\lplq} \bigg] \label{9.6}\\
	\leq M_{\gamma_0}e^{-\gamma_0 t} \norm{\bbm \bs{z}_0 \\ h_0 \ebm}_{\VbqpO} &+ C_3 \bigg[ \norm{\bs{z}}^2_{\xipqs} + \norm{\bs{z}}_{\xipqs} \norm{h}_{\xipq} \bigg]. \label{9.7}
	\end{align}
	\noindent In going from (\ref{9.4}) to \eqref{9.5} we have recalled the embedding $\ds \xipqs \times \xipq \hookrightarrow L^{\infty} \left( 0, \infty; \VbqpO \right)$ from \eqref{2.16}-\eqref{2.18}. Next, in going from \eqref{9.5} to \eqref{9.6} we have used the maximal regularity property \eqref{7.6} of Theorem \ref{Thm-7.1}. Finally, to go from \eqref{9.6} to \eqref{9.7}, we have invoked \eqref{8.15} for $\ds \calN_q \bs{z}$ and \eqref{8.16} for $\ds \calM_q [\bs{z}]h$. Thus, the conclusion of \textit{Step 1} is
	\begin{align}
	\norm{\bbm \bs{z}(t) \\ h(t) \ebm}_{\VbqpO} &\leq M_{\gamma_0}e^{-\gamma_0 t} \norm{\bbm \bs{z}_0 \\ h_0 \ebm}_{\VbqpO} + C_3 \norm{\bs{z}}_{\xipqs} \bigg[ \norm{\bs{z}}_{\xipqs} + \norm{h}_{\xipq} \bigg] \label{9.8}\\
	&= M_{\gamma_0}e^{-\gamma_0 t} \norm{\bbm \bs{z}_0 \\ h_0 \ebm}_{\VbqpO} + C_3 \norm{\bs{z}}_{\xipqs} \norm{[\bs{z}, h]}_{\xipqs \times \xipq}. \label{9.9}
	\end{align}	
	\noindent \textit{Step 2:} We now return to (\ref{9.1}) and take the $\ds \xipqs \times \xipq$ norm across:
	\begin{align}
	\norm{\bbm \bs{z} \\ h \ebm(t)}_{\xipqs \times \xipq} &\leq  \norm{e^{\BA_{F,q}t} \bbm \bs{z}_0 \\ h_0 \ebm}_{\xipqs \times \xipq} + \nonumber \\
	&\hspace{1.5cm} \norm{\int_{0}^{t} e^{\BA_{F,q}(t - \tau)} \bbm \calN_q \bs{z}(\tau) \\[1mm] \calM_q[\bs{z}]h(\tau) \ebm d \tau}_{\xipqs \times \xipq} \label{9.10}\\
	(\text{by \eqref{7.8}} \ &\leq M_1 \norm{\bbm \bs{z}_0 \\ h_0 \ebm}_{\VbqpO} + C_3 \norm{\bs{z}}_{\xipqs} \norm{[\bs{z}, h]}_{\xipqs \times \xipq} \label{9.11}
	\end{align}
	\noindent by invoking \eqref{7.8} of Theorem \ref{Thm-7.2} on the first semigroup term in (\ref{9.10}) and the estimate from (\ref{9.5}) to (\ref{9.9}) on the second integral term in (\ref{9.10}). Thus (\ref{9.11}) is established and implies
	\begin{equation}\label{9.12}
	\norm{\bbm \bs{z} \\ h \ebm}_{\xipqs \times \xipq}  \Big[ 1 - C_3 \norm{\bs{z}}_{\xipqs} \Big]\leq M_1 \norm{\bbm \bs{z}_0 \\ h_0 \ebm}_{\VbqpO}.
	\end{equation}
	\noindent [This is the counterpart of \cite[Eq (9.7)]{LPT.1}].\\
	
	\noindent \textit{Step 3:} The well-posedness Theorem \ref{Thm-8.1} says that
	\begin{equation}\label{9.13}
	\begin{Bmatrix}
	\text{ If } \norm{\bbm \bs{z}_0 \\ h_0 \ebm}_{\VbqpO} \leq r_1 \\[4mm]
	\text{for } r_1 \text{ sufficiently small}
	\end{Bmatrix}
	\implies
	\begin{Bmatrix}
	\text{ The solution } [\bs{z},h] \text{ satisfies} \\[3mm]
	\norm{\bbm \bs{z} \\ h \ebm}_{\xipqs \times \xipq} \leq r
	\end{Bmatrix}
	\end{equation}	
	\noindent where the constant $r$ satisfies the constraint (\ref{8.29}) in terms of the constant $r_1$ and some constant $C$ in (\ref{8.27}) or (\ref{8.28}) or (\ref{8.15}) or (\ref{8.16}). We seek to guarantee that we have
	\begin{equation}\label{9.14}
	\norm{\bs{z}}_{\xipqs} \leq \frac{1}{2C_3} < \frac{1}{2C},  \mbox{ hence }
	\frac{1}{2} \leq \Big[ 1 - C_3 \norm{\bs{z}}_{\xipqs} \Big],
	\end{equation}
	\noindent where we can always take the constant $C_3$ in \eqref{9.14} greater than the constant $C$ in \eqref{8.29}, \eqref{8.30}, \eqref{8.31}.Then \eqref{9.14} can be achieved by choosing $r_1 > 0$ sufficiently small. In fact, as $r_1 \searrow 0$. Eq. \eqref{8.30} shows that the interval $r_{\min} \leq r \leq r_{\max}$ of corresponding values of $r$ tends to the interval $[0,\frac{1}{C}]$. Thus, \eqref{9.14} can be achieved as $r_{\min} \searrow 0$: $0 < r_{\min} < r < \frac{1}{2C}$. Next (\ref{9.14}) implies that (\ref{9.12}) becomes
	\begin{equation}\label{9.15}
	\norm{\bbm \bs{z} \\ h \ebm}_{\xipqs \times \xipq} \leq 2M_1 \norm{\bbm \bs{z}_0 \\ h_0 \ebm}_{\VbqpO} \leq 2M_1r_1.
	\end{equation}
	\noindent by \eqref{9.13}. Substituting (\ref{9.15}) in estimate (\ref{9.9}) then yields
	\begin{align}
	\norm{\bbm \bs{z}(t) \\ h(t) \ebm}_{\VbqpO} &\leq M_{\gamma_0} e^{-\gamma_0 t}  \norm{\bbm \bs{z}_0 \\ h_0 \ebm}_{\VbqpO} + 2M_1 C_3 \norm{\bs{z}}_{\xipqs} \norm{\bbm \bs{z}_0 \\ h_0 \ebm}_{\VbqpO} \nonumber\\
	&\leq \what{M} \Big[ e^{-\gamma_0 t} + 4 \what{M} C_3 r_1 \Big] \norm{\bbm \bs{z}_0 \\ h_0 \ebm}_{\VbqpO} \label{9.16}
	\end{align}
	\noindent again by (\ref{9.15}) with $\ds \what{M} = \max \{ M_{\gamma_0}, M_1 \}$. This is the counterpart of \cite[Eq (9.16)]{LPT.1}.\\
	
	\noindent \textit{Step 4:} We now take $T$ sufficiently large and $r_1 > 0$ sufficiently small so that
	\begin{equation}\label{9.17}
	\beta = \what{M} \left[e^{-\gamma_0 T} + 4 \what{M} C_3 r_1 \right] < 1.
	\end{equation}
	\noindent Then (\ref{9.16}) implies
	\begin{equation}\label{9.18}
	\norm{\bbm \bs{z}(T) \\ h(T) \ebm}_{\VbqpO} \leq \beta \norm{\bbm {\bs{z}}_0 \\ h_0 \ebm}_{\VbqpO}
	\end{equation}
	\noindent $\ds \VbqpO = \Bto \times \BsO$, and hence
	\begin{equation}\label{9.19}
	\norm{\bbm \bs{z}(nT) \\ h(nT) \ebm}_{\VbqpO} \leq \beta \norm{\bbm \bs{z}((n-1)T) \\ h((n-1)T) \ebm}_{\VbqpO} \leq \beta^n \norm{\bbm \bs{z}_0 \\ h_0 \ebm}_{\VbqpO}.
	\end{equation}
	\noindent Since $\beta < 1$, the semigroup property of the evolution implies \cite{Bal:1981} that there are constants $M_{\wti{\gamma}} \geq 1, \wti{\gamma} >0$ such that
	\begin{equation}\label{9.20}
	\norm{\bbm \bs{z}(t) \\ h(t) \ebm}_{\VbqpO} \leq M_{\wti{\gamma}} e^{-\wti{\gamma}t} \norm{\bbm \bs{z}_0 \\ h_0 \ebm}_{\VbqpO}
	\end{equation}
	\noindent with $\ds \norm{[\bs{z}_0, h_0]}_{\VbqpO}  \leq r_1 = $ small. This proves \eqref{2.18}, i.e Theorem \ref{Thm-2.3}. \qedsymbol
	
	\begin{rmk}\label{I-Rmk-9.1}
		The above computations - (\ref{9.17}) through (\ref{9.19}) - can be used to support qualitatively the intuitive expectation that ``the larger the decay rate $\gamma_0$ in (\ref{6.7}) = (\ref{9.2}) of the linearized feedback $\bs{w}$-dynamics (\ref{5.4}), the larger the decay rate $\wti{\gamma}$ in (\ref{9.20}) of the nonlinear feedback $\{\bs{z},h\}$-dynamics \eqref{9.1} or \eqref{2.13}; hence the larger the rate $\wti{\gamma}$ in (\ref{9.20}) of the original $\{\bs{y},\theta\}$-dynamics in \eqref{1.1}".\\
		
		\noindent The following considerations are somewhat qualitative. Let $S(t)$ denote the non-linear semigroup in the space $\VbqpO \equiv  \Bto \times \BsO$, with infinitesimal generator $\ds \big[ \BA_{_{F,q}} - \calN_q \big]$ describing the feedback $\{\bs{z},h\}$-dynamics, \eqref{9.1} or \eqref{2.13} as guaranteed by the well posedness Theorem \ref{Thm-2.2} = Theorem \ref{Thm-8.1}. Thus, $\bbm \bs{z}(t) \\ h(t) \ebm = S(t)\bbm \bs{z}_0 \\ h_0 \ebm$ on $\VbqpO$. By (\ref{9.17}), we can rewrite (\ref{9.18}) as:
		\begin{equation}\label{9.21}
		\norm{S(T)}_{\calL \big(\VbqpO \big)} \leq \beta < 1.
		\end{equation}
		\noindent It follows from \cite[p 178]{Bal:1981} via the semigroup property that
		\begin{equation}\label{9.22}
		- \wti{\gamma} \ \ \text{is just below} \ \ \frac{\ln \beta}{T} < 0.
		\end{equation}
		\noindent Pick $r_1 > 0$ in (\ref{9.17}) so small that $4\widehat{M}^2 C_3 r_1$ is negligible, so that $\beta$ is just above $\ds \widehat{M} e^{- \gamma_0 T}$, so $\ln \beta$ is just above $\ds \big[ \ln \widehat{M} - \gamma_0 T \big]$, hence
		\begin{equation}\label{9.23}
		\frac{\ln \beta}{T} \text{ is just above } \Big[ (-\gamma_0) + \frac{\ln \widehat{M}}{T}\Big].
		\end{equation}
		\noindent Hence, by (\ref{9.22}), (\ref{9.23}),
		\begin{equation}\label{9.24}
		\wti{\gamma} \sim \gamma_0 - \frac{\ln \widehat{M}}{T}
		\end{equation}
		\noindent and the larger $\gamma_0$, the larger is $\wti{\gamma}$, as desired.
	\end{rmk}
	
	\begin{appendices}
		
		\section{Some auxiliary results for the Stokes and Oseen operators: analytic semigroup generation, maximal regularity, domains of fractional powers.}
		\label{app-A}

		\renewcommand{\theequation}{{\rm A}.\arabic{equation}}
		\setcounter{equation}{0}		
		\setcounter{theorem}{0}
		\renewcommand{\thetheorem}{{\sc A}.\arabic{theorem}}
		
		In this section we collect some known results used in the paper. As a prerequisite of the present Appendix \ref{app-A}, we make reference to the paragraph  \textbf{Definition of Besov spaces $\bs{B}^s_{q,p}$ on domains of class $C^1$ as real interpolation of Sobolev spaces}, Eqts (\ref{1.7})-(\ref{1.11}) and Remark \ref{Rmk-1.2}.

		\begin{enumerate}[(a)]
			\item \textbf{The Stokes and Oseen operators generate a strongly continuous analytic semigroup on $\lso$, $1 < q < \infty$}.\\
			
			\begin{thm}\label{A-Thm-1.2}
				Let $d \geq 2, 1 < q < \infty$ and let $\Omega$ be a bounded domain in $\mathbb{R}^d$ of class $C^3$. Then
				\begin{enumerate}[(i)]
					\item the Stokes operator $-A_q = P_q \Delta$ in \eqref{1.14}, repeated here as
					\begin{equation}\label{A-1.17}
					-A_q \psi  = P_q \Delta \psi , \quad
					\psi \in \mathcal{D}(A_q) = \bs{W}^{2,q}(\Omega) \cap \bs{W}^{1,q}_0(\Omega) \cap \lso
					\end{equation}
					generates a s.c. analytic semigroup $e^{-A_qt}$ on $\lso$. See \cite{Gi:1981} and the review paper \cite[Theorem 2.8.5 p 17]{HS:2016}.			
					\item The Oseen operator $\calA_q$ in \eqref{1.17} \label{A-Thm-1.2(iii)}
					\begin{equation}\label{A-1.18}
					\calA_q  = - (\nu A_q + A_{o,q}), \quad \calD(\calA_q) = \calD(A_q) \subset \lso
					\end{equation}
					generates a s.c. analytic semigroup $e^{\calA_qt}$ on $\lso$. This follows as $A_{o,q}$ is relatively bounded with respect to $A^{\rfrac{1}{2}}_q$, see \eqref{1.15}: thus a standard theorem on perturbation of an analytic semigroup generator applies \cite[Corollary 2.4, p 81]{P:1983}.
					
					\item \begin{subequations}\label{A-1.19}
						\begin{align}
						0 \in \rho (A_q) &= \text{ the resolvent set of the Stokes operator } A_q\\
						\begin{picture}(0,0)
						\put(-40,10){ $\left\{\rule{0pt}{18pt}\right.$}\end{picture}
						A_q^{-1} &: \lso \longrightarrow \lso \text{ is compact}.
						\end{align}
					\end{subequations}			
					\item The s.c. analytic Stokes semigroup $e^{-A_qt}$ is uniformly stable on $\lso$: there exist constants $M \geq 1, \delta > 0$ (possibly depending on $q$) such that
					\begin{equation}\label{A-1.20}
					\norm{e^{-A_qt}}_{\calL(\lso)} \leq M e^{-\delta t}, \ t > 0.
					\end{equation}
					
				\end{enumerate}
			\end{thm}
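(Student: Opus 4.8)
The statement to be proved is Theorem A.2, which collects the standard functional-analytic facts about the Stokes operator $A_q$ and the Oseen operator $\calA_q$ on $\lso$: analyticity of the semigroups, compactness of $A_q^{-1}$, and uniform stability of $e^{-A_qt}$. My plan is to treat each of the four items in turn, citing the literature for the genuinely hard analytic input and supplying the short perturbation/compactness arguments myself.

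\textbf{Item (i): analyticity of the Stokes semigroup.} This is the deep ingredient, and I would not attempt to reprove it. The plan is to invoke the classical $\bs{L}^q$-theory of the Stokes operator: for a bounded domain $\Omega$ of class $C^3$ (or even $C^{1,1}$) in $\BR^d$, $d\geq 2$, and $1<q<\infty$, the operator $-A_q = P_q\Delta$ with domain $\bs{W}^{2,q}(\Omega)\cap\bs{W}^{1,q}_0(\Omega)\cap\lso$ generates a bounded analytic semigroup on $\lso$. This is due to Giga \cite{Gi:1981}; I would cite it together with the survey statement in \cite[Theorem 2.8.5, p.~17]{HS:2016}. The key structural fact behind it is that $A_q$ admits a bounded $H^\infty$-calculus (equivalently, sectoriality with angle $<\pi/2$), which is exactly what is recorded in Appendix~\ref{app-A}; analyticity of the semigroup is then immediate.

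\textbf{Item (ii): analyticity of the Oseen semigroup.} Here the plan is a soft perturbation argument. Recall from \eqref{1.15}--\eqref{1.16} that $A_{o,q}$ has domain $\calD(A_q^{1/2}) = \bs{W}^{1,q}_0(\Omega)\cap\lso$, and that $A_{o,q}A_q^{-1/2}$ is bounded on $\lso$; in particular $A_{o,q}$ is $A_q$-bounded with relative bound $0$ in the sense that for every $\varepsilon>0$ there is $C_\varepsilon$ with $\norm{A_{o,q}f}\leq \varepsilon\norm{A_qf} + C_\varepsilon\norm{f}$, which follows from the moment (interpolation) inequality $\norm{A_q^{1/2}f}\leq \varepsilon\norm{A_qf}+C_\varepsilon\norm{f}$. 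Since $\calA_q = -(\nu A_q + A_{o,q})$ with $\calD(\calA_q)=\calD(A_q)$, the standard theorem on perturbation of generators of analytic semigroups by relatively bounded operators of relative bound zero \cite[Corollary 2.4, p.~81]{P:1983} applies and yields that $\calA_q$ generates a s.c.\ analytic semigroup on $\lso$. This is essentially a one-line citation once (i) is in hand.

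\textbf{Items (iii) and (iv): invertibility, compactness, uniform stability.} For (iii), the plan is: first note $0\in\rho(A_q)$ — this is classical and can be seen either from the Stokes resolvent estimates of \cite{Gi:1981} or, more elementarily, from the coercivity of the associated form in the $q=2$ case together with the fact that the Stokes problem $-A_q u = f$, $\div u = 0$, $u|_\Gamma=0$ has a unique solution in $\bs{W}^{2,q}\cap\bs{W}^{1,q}_0\cap\lso$ for every $f\in\lso$ (elliptic regularity for the Stokes system on a $C^3$ domain). Compactness of $A_q^{-1}:\lso\to\lso$ then follows because $A_q^{-1}$ maps $\lso$ boundedly into $\calD(A_q)\subset\bs{W}^{2,q}(\Omega)$, and the embedding $\bs{W}^{2,q}(\Omega)\hookrightarrow\bs{L}^q(\Omega)$ is compact on the bounded domain $\Omega$ (Rellich--Kondrachov); restricting to $\lso$ preserves compactness. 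For (iv), uniform stability of $e^{-A_qt}$: since $A_q$ has compact resolvent, its spectrum is discrete and consists of eigenvalues of finite multiplicity; the sectoriality from (i) confines them to a sector in the right half-plane, and the coercivity/positivity of $A_q$ (e.g.\ $\op{Re}\ip{A_q u}{u}>0$ for $u\neq 0$ in the Hilbert case, extended via the known spectral picture for general $q$, the spectrum being $q$-independent) shows $\op{Re}\sigma(A_q)\geq\delta$ for some $\delta>0$. Combining the spectral bound with analyticity (which upgrades the spectral bound to a growth bound for analytic semigroups) gives $\norm{e^{-A_qt}}_{\calL(\lso)}\leq Me^{-\delta t}$.

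\textbf{Main obstacle.} The only genuinely nontrivial point is item (i), and there I am entirely reliant on the deep $\bs{L}^q$-Stokes theory \cite{Gi:1981}; everything else is routine perturbation theory, Rellich compactness, and the spectral-bound-equals-growth-bound principle for analytic semigroups. So the "proof" is essentially an organized assembly of citations plus two short standard arguments (the relative-bound-zero perturbation for the Oseen part, and the compact-embedding argument for $A_q^{-1}$).
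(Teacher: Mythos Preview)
Your proposal is correct and matches the paper's own treatment: the paper does not give a separate proof but embeds the justification directly in the theorem statement, citing \cite{Gi:1981} and \cite{HS:2016} for (i), invoking the relative boundedness of $A_{o,q}$ with respect to $A_q^{1/2}$ together with \cite[Corollary 2.4, p.~81]{P:1983} for (ii), and leaving (iii)--(iv) as stated facts. Your Rellich--Kondrachov argument for compactness and the spectral-bound-equals-growth-bound argument for uniform stability are exactly the standard justifications the paper takes for granted.
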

			\item \textbf{Domains of fractional powers, $\calD(A_q^{\alpha}), 0 < \alpha < 1$ of the Stokes operator $A_q$ on $\lso, 1 < q < \infty$}.
			We elaborate on \eqref{1.16}
			\begin{thm}\label{A-Thm-1.3}
				For the domains of fractional powers $\calD(A_q^{\alpha}), 0 < \alpha < 1$, of the Stokes operator $A_q$ in \eqref{A-1.17} = \eqref{1.14}, the following complex interpolation relation holds true \cite{Gi:1985} and \cite[Theorem 2.8.5, p 18]{HS:2016}
				\begin{equation}\label{A-1.21}
				[ \calD(A_q), \lso ]_{1-\alpha} = \calD(A_q^{\alpha}), \ 0 < \alpha < 1, \  1 < q < \infty;
				\end{equation}
				in particular
				\begin{equation}\label{A-1.22}
				[ \calD(A_q), \lso ]_{\frac{1}{2}} = \calD(A_q^{\rfrac{1}{2}}) \equiv \bs{W}_0^{1,q}(\Omega) \cap \lso.
				\end{equation}
				Thus, on the space $\calD(A_q^{\rfrac{1}{2}})$, the norms
				\begin{equation}\label{A-1.23}
				\norm{\nabla \ \cdot \ }_{L^q(\Omega)} \text{ and } \norm{ \ }_{L^q(\Omega)}
				\end{equation}
				are related via Poincar\'{e} inequality.
			\end{thm}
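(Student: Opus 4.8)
The plan is to deduce \eqref{A-1.21} from the interplay between complex interpolation and fractional powers for operators admitting bounded imaginary powers, and then to read off \eqref{A-1.22} and the Poincar\'e statement from the concrete description of $\calD(A_q^{\rfrac{1}{2}})$. First I would record the two structural inputs. Input (a): on a bounded domain of class $C^3$ the Stokes operator $A_q$ is a positive operator on $\lso$ — it is injective and sectorial, with $0\in\rho(A_q)$ and compact resolvent by Theorem \ref{A-Thm-1.2}(iii) — which, moreover, admits a bounded $H^{\infty}$-calculus of angle strictly less than $\pi/2$; in particular $\{A_q^{is}\}_{s\in\BR}$ is a strongly continuous group on $\lso$ with $\norm{A_q^{is}}_{\calL(\lso)}\leq Ce^{\theta_q\abs{s}}$ for some $C\geq1$, $0\le\theta_q<\pi$ and all $s\in\BR$. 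This is the deep ingredient, and I would simply cite it (\cite{Gi:1985}; see also the survey \cite[Theorem 2.8.5, pp.\ 17--18]{HS:2016} and the references therein). Input (b): the classical theorem — due to Triebel, Seeley and Yagi, see also Haase's functional-calculus monograph — that whenever a positive operator $A$ on a Banach space $X$ has bounded imaginary powers with exponential angle $<\pi$, then $[X,\calD(A)]_{\beta}=\calD(A^{\beta})$ with equivalent norms for every $0<\beta<1$.

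Granting (a) and (b), relation \eqref{A-1.21} is immediate: taking $A=A_q$, $X=\lso$, $\beta=1-\alpha$ and using $[\calD(A_q),\lso]_{1-\alpha}=[\lso,\calD(A_q)]_{\alpha}$ gives $[\calD(A_q),\lso]_{1-\alpha}=\calD(A_q^{\alpha})$ for $0<\alpha<1$, $1<q<\infty$. For the distinguished value $\alpha=\rfrac{1}{2}$ it then remains to identify $\calD(A_q^{\rfrac{1}{2}})$ with $\bs{W}^{1,q}_0(\Omega)\cap\lso$. For $q=2$ this is elementary (form methods, or the square-root theorem for the self-adjoint Stokes operator). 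For general $q\neq2$ I would once more appeal to Giga \cite{Gi:1985}, where precisely the identity $\calD(A_q^{\rfrac{1}{2}})=\bs{W}^{1,q}_0(\Omega)\cap\lso$, together with the norm equivalence $\norm{A_q^{\rfrac{1}{2}}\,\cdot\,}_{L^q(\Omega)}\simeq\norm{\nabla\,\cdot\,}_{L^q(\Omega)}$ on this space, is established. (One inclusion, $\calD(A_q^{\rfrac{1}{2}})\hookrightarrow\bs{W}^{1,q}_0(\Omega)\cap\lso$, can be recovered abstractly by interpolating $\calD(A_q)\hookrightarrow\bs{W}^{2,q}(\Omega)\cap\bs{W}^{1,q}_0(\Omega)$ and $\lso\hookrightarrow\bs{L}^q(\Omega)$ against the scalar identity $[L^q(\Omega),W^{2,q}(\Omega)\cap W^{1,q}_0(\Omega)]_{\rfrac{1}{2}}=W^{1,q}_0(\Omega)$, valid since the Dirichlet Laplacian also has bounded imaginary powers; the reverse inclusion is the genuine obstacle, because the Helmholtz projection $P_q$ fails to preserve $\bs{W}^{1,q}_0(\Omega)$ when $q\neq2$, and this is exactly the technical heart of Giga's argument.) This yields \eqref{A-1.22}.

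Finally, the norm assertion follows at once from \eqref{A-1.22}. Since $\Omega$ is bounded and every $\boldsymbol{\varphi}\in\calD(A_q^{\rfrac{1}{2}})\subset\bs{W}^{1,q}_0(\Omega)$ has zero trace on $\Gamma$, Poincar\'e's inequality gives $\norm{\boldsymbol{\varphi}}_{L^q(\Omega)}\leq C_{\Omega}\norm{\nabla\boldsymbol{\varphi}}_{L^q(\Omega)}$, while trivially $\norm{\nabla\boldsymbol{\varphi}}_{L^q(\Omega)}\leq\norm{\boldsymbol{\varphi}}_{\bs{W}^{1,q}(\Omega)}$; hence $\norm{\nabla\,\cdot\,}_{L^q(\Omega)}$ is an equivalent norm on $\calD(A_q^{\rfrac{1}{2}})$, equivalent in turn to the graph norm $\norm{A_q^{\rfrac{1}{2}}\,\cdot\,}_{L^q(\Omega)}$ by the equivalence cited above. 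In summary, the only genuinely nontrivial steps are the bounded imaginary powers of $A_q$ and, downstream of it, the concrete identification of $\calD(A_q^{\rfrac{1}{2}})$ for $q\neq2$; for both I would rely on \cite{Gi:1985}, \cite[Theorem 2.8.5]{HS:2016}, while the remainder is a routine assembly from abstract interpolation theory and Poincar\'e's inequality.
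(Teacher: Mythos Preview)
Your proposal is correct and in fact goes well beyond what the paper does: the paper offers no proof of this theorem at all, simply stating it as a known result with citations to Giga \cite{Gi:1985} and the survey \cite[Theorem 2.8.5, p.\ 18]{HS:2016}. Your outline---bounded imaginary powers of $A_q$ feeding into the abstract BIP-to-complex-interpolation theorem, followed by Giga's concrete identification of $\calD(A_q^{\rfrac{1}{2}})$ and a straightforward Poincar\'e argument---is exactly the content that lies behind those citations and is the standard route to \eqref{A-1.21}--\eqref{A-1.23}.
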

			
			\item \textbf{The Stokes operator $-A_q$ and the Oseen operator $\calA_q, 1 < q < \infty$ generate s.c. analytic semigroups on the Besov space, from \eqref{1.11}} \label{A-Sec-1.10d}
			\begin{subequations}\label{A-1.24}
				\begin{align}
				\Big( \lso,\mathcal{D}(A_q) \Big)_{1-\frac{1}{p},p} &= \Big\{ \bs{g} \in \Bso : \text{ div } \bs{g} = 0, \ \bs{g}|_{\Gamma} = 0 \Big\} \nonumber\\
				& \hspace{2cm} \text{if } \frac{1}{q} < 2 - \frac{2}{p} < 2; \label{A-1.24a}\\
				\Big( \lso,\mathcal{D}(A_q) \Big)_{1-\frac{1}{p},p} &= \Big\{ \bs{g} \in \Bso : \text{ div } \bs{g} = 0, \ \bs{g}\cdot \nu|_{\Gamma} = 0 \Big\} \nonumber\\
				&\equiv \Bto, \ \text{ if } 0 < 2 - \frac{2}{p} < \frac{1}{q}.  \label{A-1.24b}
				\end{align}	
			\end{subequations}
			Theorem \ref{A-Thm-1.2} states that the Stokes operator $-A_q$ generates a s.c. analytic semigroup on the space $\lso, \ 1 < q < \infty$, hence on the space $\calD(A_q)$ in \eqref{1.14} = (\ref{A-1.17}), with norm $\ds \norm{ \ \cdot \ }_{\calD(A_q)} = \norm{ A_q \ \cdot \ }_{\lso}$ as $0 \in \rho(A_q)$. Then, one obtains that the Stokes operator $-A_q$ generates a s.c. analytic semigroup on the real interpolation spaces in (\ref{A-1.24}). Next, the Oseen operator $\calA_q = -(\nu A_q + A_{o,q})$ in \eqref{A-1.18} = \eqref{1.17} likewise generates a s.c. analytic semigroup $\ds e^{\calA_q t}$ on $\ds \lso$ since $A_{o,q}$ is relatively bounded w.r.t. $A_q^{\rfrac{1}{2}},$ as $A_{o,q}A_q^{-\rfrac{1}{2}}$ is bounded on $\lso$. Moreover $\calA_q$ generates a s.c. analytic semigroup on $\ds \calD(\calA_q) = \calD(A_q)$ (equivalent norms). Hence $\calA_q$ generates a s.c. analytic semigroup on the real interpolation space of (\ref{A-1.24}). Here below, however, we shall formally state the result only in the case $\ds 2-\rfrac{2}{p} < \rfrac{1}{q}$. i.e. $\ds  1 < p < \rfrac{2q}{2q-1}$, in the space $\ds \Bto$, as this does not contain B.C., Remark \ref{Rmk-1.2}. The objective of the present paper is precisely to obtain stabilization results on spaces that do not recognize B.C.
			
			\begin{thm}\label{A-Thm-1.4}
				Let $1 < q < \infty, 1 < p < \rfrac{2q}{2q-1}$
				\begin{enumerate}[(i)]
					\item The Stokes operator $-A_q$ in \eqref{A-1.17} = \eqref{1.14} generates a s.c. analytic semigroup $e^{-A_qt}$ on the space $\Bt(\Omega)$ defined in \eqref{1.11} = \eqref{A-1.24b} which moreover is uniformly stable, as in \eqref{A-1.20},
					\begin{equation}\label{A-1.25}
					\norm{e^{-A_qt}}_{\calL \big(\Bto\big)} \leq M e^{-\delta t}, \quad t > 0.
					\end{equation}
					\item The Oseen operator $\calA_q$ in \eqref{A-1.18} = \eqref{1.17} generates a s.c. analytic semigroup $e^{\calA_qt}$ on the space $\Bto$ in \eqref{1.11} = \eqref{A-1.24}.
				\end{enumerate}
			\end{thm}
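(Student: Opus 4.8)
The plan is to obtain both assertions as a corollary of Theorem~\ref{A-Thm-1.2} by transplanting the already-known analyticity (and, for the Stokes case, exponential stability) from $\lso$ first onto the graph space $\calD(A_q)$ and then onto the real interpolation space $\Bto = \big(\lso,\calD(A_q)\big)_{1-\frac1p,p}$ appearing in \eqref{1.11} = \eqref{A-1.24b}. First I would record that, by Theorem~\ref{A-Thm-1.2}(iii), $0\in\rho(A_q)$, so $A_q$ is a boundedly invertible generator on $\lso$; consequently $A_q$ is an isomorphism from $\calD(A_q)$ (endowed with the graph norm $\norm{A_q\cdot}_{\lso}$, which is equivalent to the $\bs{W}^{2,q}$-norm) onto $\lso$, and this isomorphism \emph{intertwines} the semigroup with itself: $e^{-A_qt}A_q = A_q e^{-A_qt}$ on $\calD(A_q)$. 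Hence the restriction of $e^{-A_qt}$ to $\calD(A_q)$ is similar (via $A_q$) to $e^{-A_qt}$ on $\lso$, so it is again a strongly continuous analytic semigroup, and inherits the uniform exponential bound \eqref{A-1.20}. The same intertwining argument applies verbatim to the Oseen operator once one checks $0\in\rho(\calA_q)$, or more simply one uses a translate $\calA_q-\omega I$ for $\omega$ large; analyticity is translation-invariant so this is harmless for part~(ii).

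The second step is the interpolation step. A strongly continuous semigroup that acts consistently on two Banach spaces $X_0\hookrightarrow X_1$ and is analytic on each of them is automatically analytic on every real interpolation space $(X_1,X_0)_{\eta,p}$, $0<\eta<1$, $1\le p\le\infty$; this is a standard fact (the resolvent estimate $\norm{\lambda(\lambda I - \calA)^{-1}}\le C$ on a sector passes to the interpolation space by the interpolation inequality for bounded operators, equivalently one interpolates the analytic-semigroup characterization of $\calA$). Applying this with $X_1=\lso$, $X_0=\calD(A_q)$, $\eta=1-\frac1p$, and using the identification \eqref{1.11}/\eqref{A-1.24b} of the resulting interpolation space as precisely $\Bto$ (valid in the stated index range $1<p<\frac{2q}{2q-1}$, i.e. $0<2-\frac2p<\frac1q$), we conclude that both $-A_q$ and $\calA_q$ generate s.c. analytic semigroups on $\Bto$. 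For the Stokes case, the exponential decay \eqref{A-1.25} follows because the uniform bound $\norm{e^{-A_qt}}\le Me^{-\delta t}$ holds on both $\lso$ (by \eqref{A-1.20}) and on $\calD(A_q)$ (by the intertwining in Step~1, with the \emph{same} constants $M,\delta$), and a uniform bound valid on the two endpoint spaces is inherited by the interpolation space with the same exponential rate $\delta$ and a constant controlled by $M$.

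I do not anticipate a serious obstacle here: this is a routine "transplant analyticity along an interpolation scale" argument, and every nontrivial ingredient—analyticity on $\lso$, the relative boundedness of $A_{o,q}$ with respect to $A_q^{1/2}$, $0\in\rho(A_q)$, compactness of $A_q^{-1}$, and the interpolation identity \eqref{1.11}—has already been established in Theorem~\ref{A-Thm-1.2}, in \eqref{1.15}, and in the discussion preceding \eqref{1.11}. The one point requiring a little care is the equivalence of norms $\norm{\cdot}_{\calD(\calA_q)}\simeq\norm{\cdot}_{\calD(A_q)}$ used to identify the top endpoint space for the Oseen operator with the same $\calD(A_q)$; this is exactly the content of \eqref{1.17} together with the lower-order nature of $A_{o,q}$, so the interpolation scale is unchanged and the identification $\big(\lso,\calD(\calA_q)\big)_{1-\frac1p,p}=\Bto$ persists. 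The only genuinely external input is the general interpolation-of-analytic-semigroups lemma, for which I would cite the standard references on maximal regularity and interpolation already in the bibliography (e.g. \cite{HA:1995}, \cite{W:1985}).
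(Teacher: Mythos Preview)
Your proposal is correct and follows essentially the same route as the paper: the paper's argument (given in the paragraph immediately preceding the theorem statement in Appendix~A(c)) is precisely to pass from analyticity on $\lso$ to analyticity on $\calD(A_q)$ (using $0\in\rho(A_q)$ so that $\norm{\,\cdot\,}_{\calD(A_q)}=\norm{A_q\,\cdot\,}_{\lso}$), and then to the real interpolation space \eqref{A-1.24b}, with the Oseen case handled via $\calD(\calA_q)=\calD(A_q)$ and the relative $A_q^{1/2}$-boundedness of $A_{o,q}$. Your write-up supplies more detail (the explicit intertwining and the interpolation-of-analytic-semigroups lemma) than the paper's terse version, but the strategy is identical.
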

			\item \textbf{Space of maximal $L^p$ regularity on $\lso$ of the Stokes operator $-A_q, \ 1 < p < \infty, \ 1 < q < \infty $ up to $T = \infty$.}
			We return to the dynamic Stokes problem in $\{\boldsymbol{\varphi}(t,x), \pi(t,x) \}$
			\begin{subequations}\label{A-1.26}
				\begin{align}
				\boldsymbol{\varphi}_t - \Delta \boldsymbol{\varphi} + \nabla \pi &= \bs{F} &\text{ in } (0, T] \times \Omega \equiv Q   \label{N!-13a}\\		
				div \ \boldsymbol{\varphi} &\equiv 0 &\text{ in } Q\\
				\begin{picture}(0,0)
				\put(-80,5){ $\left\{\rule{0pt}{35pt}\right.$}\end{picture}
				\left.\boldsymbol{\varphi} \right \rvert_{\Sigma} &\equiv 0 &\text{ in } (0, T] \times \Gamma \equiv \Sigma\\
				\left. \boldsymbol{\varphi} \right \rvert_{t = 0} &= \boldsymbol{\varphi}_0 &\text{ in } \Omega,
				\end{align}
			\end{subequations}
			
			rewritten in abstract form, after applying the Helmholtz projection $P_q$ to (\ref{N!-13a}) and recalling $A_q$ in \eqref{A-1.17} = \eqref{1.14} as
			\begin{equation}\label{A-1.27}
			\boldsymbol{\varphi}' + A_q \boldsymbol{\varphi} = \Fs \equiv P_q \bs{F}, \quad \boldsymbol{\varphi}_0 \in \lqaq.
			\end{equation}
			
			Next, we introduce the space of maximal regularity for $\{\boldsymbol{\varphi}, \boldsymbol{\varphi}'\}$ as \cite[p 2; Theorem 2.8.5.iii, p 17]{HS:2016}, \cite[p 1404-5]{GGH:2012}, with $T$ up to $\infty$:
			\begin{equation}\label{A-1.28}
			\wti{\bs{X}}^T_{p,q, \sigma} = L^p(0,T;\calD(A_q)) \cap {W}^{1,p}(0,T;\lso)
			\end{equation}
			(recall \eqref{A-1.17} = \eqref{1.14} for $\calD(A_q)$) and the corresponding space for the pressure as
			\begin{equation}\label{A-1.29}
			\wti{Y}^T_{p,q} = L^p(0,T;\widehat{W}^{1,q}(\Omega)), \quad \widehat{W}^{1,q}(\Omega) = W^{1,q}(\Omega) / \mathbb{R}.
			\end{equation}
			The following embedding, also called trace theorem, holds true \cite[Theorem 4.10.2, p 180, BUC for $T=\infty$]{HA:2000}, \cite{PS:2016}.
			\begin{multline}\label{A-1.30}
			\xttpqs \subset \xbttpq \equiv L^p(0,T; \bs{W}^{2,q}(\Omega)) \cap W^{1,p}(0,T; \bs{L}^q(\Omega)) \\ \hookrightarrow C \Big([0,T]; \Bso \Big).
			\end{multline}
			For a function $\bs{g}$ such that $div \ \bs{g} \equiv 0, \ \bs{g}|_{\Gamma} = 0$ we have $\bs{g} \in \xbttpq \iff \bs{g} \in \xttpqs$.\\
			The solution of Eq \eqref{A-1.27} is
			\begin{equation}\label{A-1.31}
			\boldsymbol{\varphi}(t) = e^{-A_qt} \boldsymbol{\varphi}_0 + \int_{0}^{t} e^{-A_q(t-s)} \Fs(\tau) d \tau.
			\end{equation}
			The following is the celebrated result on maximal regularity on $\lso$ of the Stokes problem due originally to Solonnikov \cite{VAS:1977} reported in \cite[Theorem 2.8.5.(iii) and Theorem 2.10.1 p24 for $\boldsymbol{\varphi}_0 = 0$]{HS:2016}, \cite{S:2006}, \cite[Proposition 4.1 , p 1405]{GGH:2012}.
			\begin{thm}\label{A-Thm-1.5}
				Let $1 < p,q < \infty, T \leq \infty$. With reference to problem \eqref{A-1.26} = \eqref{A-1.27}, assume
				\begin{equation}\label{A-1.32}
				\Fs \in L^p(0,T;\lso), \ \boldsymbol{\varphi}_0 \in \Big( \lso, \calD(A_q)\Big)_{1-\frac{1}{p},p}.
				\end{equation}
				Then there exists a unique solution $\boldsymbol{\varphi} \in \xttpqs, \pi \in \yttpq$ to the dynamic Stokes problem \eqref{A-1.26} or \eqref{A-1.27}, continuously on the data: there exist constants $C_0, C_1$ independent of $T, \Fs, \boldsymbol{\varphi}_0$ such that via (\ref{A-1.30})
				\begin{equation}\label{A-1.33}
				\begin{aligned}
				C_0 \norm{\boldsymbol{\varphi}}_{C \big([0,T]; \Bso \big)} &\leq \norm{\boldsymbol{\varphi}}_{\xttpqs} +  \norm{\pi}_{\yttpq}\\
				\equiv \norm{\boldsymbol{\varphi}'}_{L^p(0,T;\lso)} &+ \norm{A_q \boldsymbol{\varphi}}_{L^p(0,T;\lso)} +  \norm{\pi}_{\yttpq}\\
				\leq C_1 \bigg \{ \norm{\Fs}_{L^p(0,T;\lso)}  &+ \norm{\boldsymbol{\varphi}_0}_{\big( \lso, \calD(A_q)\big)_{1-\frac{1}{p},p}} \bigg \}.
				\end{aligned}
				\end{equation}
				In particular,
				\begin{enumerate}[(i)]
					\item With reference to the variation of parameters formula \eqref{A-1.31} of problem \eqref{A-1.27} arising from the Stokes problem \eqref{A-1.26}, we have recalling \eqref{A-1.28}: the map
					\begin{align}
					\Fs &\longrightarrow \int_{0}^{t} e^{-A_q(t-\tau)}\Fs(\tau) d\tau \ : \text{continuous} \label{A-1.34}\\
					L^p(0,T;\lso) &\longrightarrow \xttpqs \equiv L^p(0,T; \calD(A_q)) \cap W^{1,p}(0,T; \lso) \label{A-1.35}.				
					\end{align}			
					\item The s.c. analytic semigroup $e^{-A_q t}$ generated by the Stokes operator $-A_q$ (see \eqref{A-1.17}= \eqref{1.14}) on the space $\ds \Big( \lso, \calD(A_q)\Big)_{1-\frac{1}{p},p}$ (see statement below \eqref{A-1.24}) satisfies
					\begin{subequations}\label{A-1.36}
						\begin{multline}
						e^{-A_q t}: \ \text{continuous} \quad \Big( \lso, \calD(A_q)\Big)_{1-\frac{1}{p},p} \longrightarrow \xttpqs \equiv \\ L^p(0,T; \calD(A_q)) \cap W^{1,p}(0,T; \lso) \label{A-1.36a}.
						\end{multline}
						In particular via \eqref{A-1.24b}, for future use, for $1 < q < \infty, 1 < p < \frac{2q}{2q - 1}$, the s.c. analytic semigroup $\ds e^{-A_q t}$ on the space $\ds \Bto$, satisfies
						\begin{equation}
						e^{-A_q t}: \ \text{continuous} \quad \Bto \longrightarrow \xttpqs. \label{A-1.36b}
						\end{equation}
					\end{subequations}				
					\item Moreover, for future use, for $1 < q < \infty, 1 < p < \frac{2q}{2q - 1}$, then \eqref{A-1.33} specializes to
					\begin{equation}\label{A-1.37}
					\norm{\boldsymbol{\varphi}}_{\xttpqs} + \norm{\pi}_{\yttpq} \leq C \bigg \{ \norm{\Fs}_{L^p(0,T;\lso)} + \norm{\boldsymbol{\varphi}_0}_{\Bto} \bigg \}.
					\end{equation}
				\end{enumerate}		
			\end{thm}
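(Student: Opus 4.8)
The statement is a synthesis of classical facts about the Stokes semigroup, so the plan is to assemble them rather than reprove everything from scratch. First I would record, from Theorem \ref{A-Thm-1.2}, that $-A_q$ generates a bounded, strongly continuous analytic semigroup on $\lso$, $1<q<\infty$, with $0\in\rho(A_q)$, and that this semigroup is exponentially stable (Theorem \ref{A-Thm-1.2}(iv)), i.e. \eqref{A-1.20}. The core analytic input — and the main obstacle, although here it is quoted rather than proved — is that $A_q$ enjoys maximal $L^p$-regularity on a bounded time interval. I would obtain this by checking that the resolvent family $\{\lambda(\lambda+A_q)^{-1}:\lambda\in\Sigma\}$ is $\calR$-bounded on a sector $\Sigma$ of angle $>\pi/2$ (equivalently, that $A_q$ has a bounded $H^\infty$-calculus, or bounded imaginary powers), which is exactly Solonnikov's resolvent estimate \cite{VAS:1977} for the Stokes system on a bounded smooth domain in its $\calR$-bounded refinement \cite{GGH:2012}, \cite{HS:2016}. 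Since $\lso$ is a closed subspace of the UMD space $L^q(\Omega)$, the Weis characterization (or, alternatively, the Dore--Venni theorem \cite{Dore:2000}) then yields $A_q\in MReg(L^p(0,T;\lso))$ for every finite $T$, i.e. the continuity of the Duhamel map in \eqref{A-1.34}--\eqref{A-1.35}.

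Next I would promote finite-$T$ maximal regularity to $T=\infty$. Here the exponential decay \eqref{A-1.20} is decisive: because $0\in\rho(A_q)$ and the $\calR$-sectorial estimate holds on a sector containing a left neighbourhood of the origin, the Mikhlin-multiplier (Laplace-transform) argument behind the convolution bound is uniform in $T$; concretely one reduces the case $T=\infty$ to the finite case by the standard cut-and-glue argument exploiting $\norm{e^{-A_qt}}_{\calL(\lso)}\le Me^{-\delta t}$, obtaining the $T$-independent constants in \eqref{A-1.33}, \eqref{A-1.34}, \eqref{A-1.36a}. I would then identify the trace space: by the classical trace theorem for maximal regularity (\cite[Thm 4.10.2]{HA:2000}, \cite{PS:2016}), the set of initial data $\boldsymbol\varphi_0$ for which the mild solution \eqref{A-1.31} lies in $\xttpqs$ is precisely the real interpolation space $(\lso,\calD(A_q))_{1-1/p,p}$, together with the embedding $\xttpqs\hookrightarrow C([0,T];(\lso,\calD(A_q))_{1-1/p,p})$ — $BUC$ when $T=\infty$. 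Invoking the interpolation identity \eqref{A-1.24} (= \eqref{1.10}--\eqref{1.11}) rewrites this space as $\Bso$ in general, and as $\Bto$ in the range $1<p<\rfrac{2q}{2q-1}$; composing with \eqref{A-1.30} supplies the embedding constant $C_0$ in \eqref{A-1.33} and proves part (ii), i.e. \eqref{A-1.36a}--\eqref{A-1.36b}. Uniqueness of $\boldsymbol\varphi$ is the uniqueness for the abstract Cauchy problem $\boldsymbol\varphi'+A_q\boldsymbol\varphi=\Fs$, which holds since $-A_q$ generates a $C_0$-semigroup.

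Finally I would recover the pressure. Applying $I-P_q$ to the momentum equation in \eqref{A-1.26} and using $\boldsymbol\varphi'=-A_q\boldsymbol\varphi+\Fs$ gives $\nabla\pi=(I-P_q)(\bs{F}+\Delta\boldsymbol\varphi)\in L^p(0,T;\bs{L}^q(\Omega))$, with norm controlled by $\norm{\Fs}_{L^p(0,T;\lso)}+\norm{A_q\boldsymbol\varphi}_{L^p(0,T;\lso)}$; since $\Omega$ is connected, de Rham's lemma fixes $\pi$ uniquely modulo constants, i.e. in $\yttpq=L^p(0,T;W^{1,q}(\Omega)/\BR)$, and adding this estimate to the velocity bound completes \eqref{A-1.33}. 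Parts (i) and (iii) are then immediate specializations: taking $\boldsymbol\varphi_0=0$ isolates the Duhamel term \eqref{A-1.34}--\eqref{A-1.35}, while restricting to $1<p<\rfrac{2q}{2q-1}$ and using \eqref{A-1.24b} collapses \eqref{A-1.33} to \eqref{A-1.37}. As flagged above, the one genuinely hard ingredient is the $\calR$-sectoriality (bounded $H^\infty$-calculus) of the Stokes operator on a bounded domain, which rests on the full Helmholtz-decomposition and resolvent machinery and is here taken from the cited literature; the remaining steps — the $T$-independent bootstrap via exponential stability, the interpolation/trace identification, and the elementary pressure reconstruction — are routine once that input is in hand.
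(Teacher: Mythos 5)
Your proposal is correct as a proof outline, but it is worth noting that the paper itself offers \emph{no proof} of Theorem \ref{A-Thm-1.5}: the result is quoted verbatim from the literature (Solonnikov \cite{VAS:1977}, as reported in \cite{HS:2016}, \cite{S:2006}, \cite[Proposition 4.1]{GGH:2012}), so there is no internal argument to compare against. What you have written is the standard modern derivation of that cited result, and you correctly isolate the one genuinely deep ingredient --- the $\calR$-sectoriality (equivalently, bounded $H^\infty$-calculus) of $A_q$ on $\lso$, combined with Weis's characterization of maximal $L^p$-regularity on the UMD space $\lso$ --- as something that must itself be imported from the literature; the remaining steps (promotion to $T=\infty$ via $0\in\rho(A_q)$ and the exponential decay \eqref{A-1.20}, identification of the trace space $\big(\lso,\calD(A_q)\big)_{1-\frac{1}{p},p}$ with the embedding \eqref{A-1.30}, and pressure recovery via $I-P_q$ and de Rham) are routine and you execute them correctly. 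Two minor points: first, since the data in \eqref{A-1.27} is $\Fs=P_q\bs{F}$ rather than $\bs{F}$ itself, your pressure identity $\nabla\pi=(I-P_q)(\bs{F}+\Delta\boldsymbol{\varphi})$ should be read with the gradient part $(I-P_q)\bs{F}$ absorbed into the (equivalence class of the) pressure, so that the estimate closes in terms of $\norm{\Fs}_{L^p(0,T;\lso)}$ and $\norm{A_q\boldsymbol{\varphi}}_{L^p(0,T;\lso)}$ alone, as \eqref{A-1.33} requires; second, the claim that the constants in \eqref{A-1.33} are independent of $T$ does rely on invertibility of $A_q$ together with exponential stability --- for a merely sectorial, non-invertible generator the finite-$T$ constants would degenerate --- and you do invoke exactly these facts, so the argument is sound. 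In short: a correct reconstruction of a result the paper takes as a black box.
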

			
			\item \textbf{Maximal $L^p$ regularity on $\lso$ of the Oseen operator $\calA_q, \ 1 < p < \infty, \ 1 < q < \infty$, up to $T < \infty$.} We next transfer the maximal regularity of the Stokes operator $(-A_q)$ on $\lso$-asserted in Theorem \ref{A-Thm-1.5} into the maximal regularity of the Oseen operator $\calA_q = -\nu A_q - A_{o,q}$ in (\ref{A-1.18}) exactly on the same space $\xttpqs$ defined in (\ref{A-1.28}), however only up to $T < \infty$.
			
			\noindent Thus, consider the dynamic Oseen problem in $\{ \boldsymbol{\varphi}(t,x), \pi(t,x) \}$ with equilibrium solution $\bs{y}_e$, see (\ref{1.2}):		
			\begin{subequations}\label{A-1.38}
				\begin{empheq}[left=\empheqlbrace]{align}
				\boldsymbol{\varphi}_t - \Delta \boldsymbol{\varphi} + L_e(\boldsymbol{\varphi}) + \nabla \pi &= \bs{F} &\text{ in } (0, T] \times \Omega \equiv Q \label{A-1.38a}\\		
				div \ \boldsymbol{\varphi} &\equiv 0 &\text{ in } Q\\
				\left. \boldsymbol{\varphi} \right \rvert_{\Sigma} &\equiv 0 &\text{ in } (0, T] \times \Gamma \equiv \Sigma\\
				\left. \boldsymbol{\varphi}\right \rvert_{t = 0} &= \boldsymbol{\varphi}_0 &\text{ in } \Omega,
				\end{empheq}
			\end{subequations}
			\begin{equation}
			L_e(\boldsymbol{\varphi}) = (\bs{y}_e . \nabla) \boldsymbol{\varphi} + (\boldsymbol{\varphi}. \nabla) \bs{y}_e \hspace{3cm} \label{A-1.39}
			\end{equation}
			rewritten in abstract form, after applying the Helmholtz projector $P_q$ to (\ref{A-1.38a}) and recalling $\calA_q$ in (\ref{A-1.18}), as
			\begin{equation}\label{A-1.40}
			\boldsymbol{\varphi}_t = \calA_q \boldsymbol{\varphi} + P_q \bs{F} = - \nu A_q \boldsymbol{\varphi} - A_{o,q}\boldsymbol{\varphi}+ \Fbs, \quad \boldsymbol{\varphi}_0 \in \big( \lso, \calD(A_q)\big)_{1-\frac{1}{p},p}
			\end{equation}
			whose solution is via \eqref{1.18} = \eqref{A-1.18}
			\begin{equation}\label{A-1.41}
			\boldsymbol{\varphi}(t) = e^{\calA_qt} \boldsymbol{\varphi}_0 + \int_{0}^{t} e^{\calA_q(t-\tau)} \Fbs(\tau) d \tau,
			\end{equation}
			\begin{equation}\label{A-1.42}
			\boldsymbol{\varphi}(t) = e^{-\nu A_qt}\boldsymbol{\varphi}_0 + \int_{0}^{t} e^{-\nu A_q(t-\tau)} \Fbs(\tau) d \tau - \int_{0}^{t} e^{- \nu A_q(t-\tau)} A_{o,q} \boldsymbol{\varphi}(\tau) d \tau.
			\end{equation}
			
			\begin{thm}\label{A-Thm-1.6}
				Let $1 < p,q < \infty, \ 0 < T < \infty$. Assume (as in \eqref{A-1.32})
				\begin{equation}\label{A-1.43}
				\Fbs \in L^p \big( 0, T; \bs{L}^q_{\sigma} (\Omega) \big), \quad \boldsymbol{\varphi}_0 \in \lqaq
				\end{equation}
				where $\calD(A_q) = \calD(\calA_q)$, see \eqref{A-1.18} = \eqref{1.18}. Then there exists a unique solution $\boldsymbol{\varphi} \in \xttpqs, \ \pi \in \yttpq$ of the dynamic Oseen problem \eqref{A-1.38}, continuously on the data: that is, there exist constants $C_0, C_1$ independent of $\Fbs, \boldsymbol{\varphi}_0$ such that
				\begin{align}
				C_0 \norm{\boldsymbol{\varphi}}_{C \big([0,T]; \Bso \big)} &\leq \norm{\boldsymbol{\varphi}}_{\xttpqs} + \norm{\pi}_{\yttpq}\nonumber \\ &\equiv \norm{\boldsymbol{\varphi}'}_{L^p(0,T;\bs{L}^q(\Omega))} + \norm{A_q \boldsymbol{\varphi}}_{L^p(0,T;\bs{L}^q(\Omega))} + \norm{\pi}_{\yttpq}\\
				&\leq C_T \left \{ \norm{\Fbs}_{L^p(0,T;\lso)}  + \norm{\boldsymbol{\varphi}_0}_{\lqaq} \right \}
				\end{align}
				where $T < \infty$. Equivalently, for $1 < p, q < \infty$
				\begin{enumerate}[i.]
					\item The map
					\begin{equation}
					\begin{aligned}
					\Fbs \longrightarrow \int_{0}^{t} e^{\calA_q(t-\tau)}\Fbs(\tau) d\tau \ : \text{continuous}&\\
					L^p(0,T;\lso) &\longrightarrow L^p \big(0,T;\calD(\calA_q) = \calD(A_q) \big)\label{A-1.46}
					\end{aligned}			
					\end{equation}
					where then automatically, see (\ref{A-1.40})
					\begin{equation}
					L^p(0,T;\lso) \longrightarrow W^{1,p}(0,T;\lso) \label{A-1.47}
					\end{equation}
					and ultimately via \eqref{A-1.28}
					\begin{equation}
					L^p(0,T;\lso) \longrightarrow \xttpqs \equiv L^p \big(0,T;\calD(A_q) \big) \cap W^{1,p}(0,T;\lso). \label{A-1.48}
					\end{equation}
					\item The s.c. analytic semigroup $e^{\calA_q t}$ generated by the Oseen operator $\calA_q$ (see \eqref{A-1.18} = \eqref{1.18}) on the space $\ds \lqaq $ satisfies for $1 < p, q < \infty$
					\begin{equation}
					e^{\calA_q t}: \ \text{continuous} \quad \lqaq \longrightarrow L^p \big(0,T;\calD(\calA_q) = \calD(A_q)  \big) \label{A-1.49}
					\end{equation}
					and hence automatically by (\ref{A-1.28})
					\begin{equation}
					e^{ \calA_q t}: \ \text{continuous} \quad \lqaq \longrightarrow \xttpqs. \label{A-1.50}
					\end{equation}
					In particular, for future use, for $1 < q < \infty, 1 < p < \frac{2q}{2q - 1}$, we have that the s.c. analytic semigroup $\ds e^{\calA_q t}$ on the space $\ds \Bto$, satisfies
					\begin{equation}
					e^{\calA_q t}: \ \text{continuous} \quad \Bto \longrightarrow L^p \big(0,T;\calD(\calA_q) = \calD(A_q)  \big), \ T < \infty. \label{A-1.51}
					\end{equation}
					and hence automatically
					\begin{equation}
					e^{ \calA_q t}: \ \text{continuous} \quad \Bto \longrightarrow \xttpqs , \ T < \infty. \label{A-1.52}
					\end{equation}
				\end{enumerate}
			\end{thm}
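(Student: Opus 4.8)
The plan is to transfer the maximal regularity of the Stokes operator, established in Theorem \ref{A-Thm-1.5} up to $T=\infty$, to the Oseen operator $\calA_q = -\nu A_q - A_{o,q}$ by treating $A_{o,q}$ as a lower-order perturbation, at the cost of restricting to a finite horizon $T<\infty$. The key structural input is \eqref{1.15}--\eqref{1.16} (together with Theorem \ref{A-Thm-1.3}): the first-order operator $A_{o,q}$ satisfies $\norm{A_{o,q}\boldsymbol{\varphi}}_{\lso} \le C_q \norm{A_q^{\rfrac{1}{2}}\boldsymbol{\varphi}}_{\lso}$, i.e.\ $A_{o,q}A_q^{-\rfrac{1}{2}}$ is bounded on $\lso$, so $A_{o,q}$ is $A_q^{\alpha}$-bounded with $\alpha=\rfrac{1}{2}<1$. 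By the moment inequality $\norm{A_q^{\rfrac{1}{2}}\boldsymbol{\varphi}}_{\lso} \le \ep\norm{A_q\boldsymbol{\varphi}}_{\lso} + C_\ep\norm{\boldsymbol{\varphi}}_{\lso}$, valid for every $\ep>0$, $A_{o,q}$ is a relatively bounded perturbation of $\nu A_q$ with relative bound $0$.

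First I would invoke Theorem \ref{A-Thm-1.5} for the rescaled Stokes operator $\nu A_q$ (the rescaling is harmless), which supplies the maximal $L^p$-regularity estimate \eqref{A-1.33} on $\lso$ up to $T=\infty$ and the associated continuity statements \eqref{A-1.34}--\eqref{A-1.36}. Then I would apply a perturbation theorem for maximal regularity — the same tool used in the proof of Theorem \ref{Thm-7.1}, namely \cite[Theorem 6.2, p.\ 311]{Dore:2000} or \cite[Remark 1i, p.\ 426]{KW:2001} — which guarantees that, since $-\nu A_q$ has maximal $L^p$-regularity and $A_{o,q}$ is $A_q^{\alpha}$-bounded with $\alpha<1$, the operator $\calA_q = -\nu A_q - A_{o,q}$ again has maximal $L^p$-regularity on $\lso$ over any finite interval $(0,T)$. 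The reason the argument is confined to $T<\infty$ is that the constant $C_\ep$ produced by the interpolation inequality — equivalently, the constants produced by a Neumann-series / fixed-point treatment of the variation-of-parameters identity \eqref{A-1.42} on $\xttpqs$, where the smallness needed to close the iteration comes from a factor $T^{\theta}$ multiplying $\norm{A_q^{\rfrac{1}{2}}\boldsymbol{\varphi}}$ for data with zero trace and the full interval is reached in finitely many steps — degenerates as $T\to\infty$; unlike the genuinely stable situation of Theorem \ref{Thm-7.1}, where uniform stability of the feedback semigroup is separately available.

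Having established $\calA_q\in MReg(L^p(0,T;\lso))$, $T<\infty$, the remaining assertions follow routinely. The continuity of the convolution map in \eqref{A-1.46}--\eqref{A-1.48} is precisely the content of maximal regularity for $\calA_q$, combined with $\calD(\calA_q)=\calD(A_q)$ from \eqref{1.17}; and then $\boldsymbol{\varphi}'=\calA_q\boldsymbol{\varphi}+\Fbs\in L^p(0,T;\lso)$ forces $\boldsymbol{\varphi}\in W^{1,p}(0,T;\lso)$, hence $\boldsymbol{\varphi}\in\xttpqs$. For the semigroup statement \eqref{A-1.49}--\eqref{A-1.50} I would use that the trace space (i.e.\ the admissible class of initial data) of the $\xttpqs$-solutions is the real interpolation space $\big(\lso,\calD(\calA_q)\big)_{1-\rfrac1p,p}$, which equals $\lqaq$ because $\calD(\calA_q)=\calD(A_q)$; writing $e^{\calA_q t}\boldsymbol{\varphi}_0$ as the solution with $\Fbs\equiv 0$ and invoking the estimate just proved yields $e^{\calA_q t}:\lqaq\to\xttpqs$ continuously. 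The specialization to $\Bto$ in \eqref{A-1.51}--\eqref{A-1.52}, for $1<q<\infty$ and $1<p<\rfrac{2q}{2q-1}$, is then merely the identification $\lqaq=\Bto$ recorded in \eqref{1.11} = \eqref{A-1.24b}.

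The main obstacle is the perturbation step and, concretely, articulating precisely why it only yields $T<\infty$: one must exploit that $A_{o,q}$ is genuinely ``half-order'' relative to $A_q$ (controlled by $A_q^{\rfrac12}$), so that its relative bound with respect to $\nu A_q$ is $0$ — exactly the hypothesis demanded by the cited perturbation theorems — while simultaneously resisting the temptation to claim a stability-type bound up to $T=\infty$, which at this stage is unavailable and would require the extra spectral information used only later for the feedback operator $\BA_{F,q}$.
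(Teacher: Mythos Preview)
Your proposal is correct and matches the approach the paper indicates: the paper does not give an in-text proof but states ``A proof is given in \cite[Appendix B]{LPT.1}'' after explicitly announcing (in the paragraph preceding the theorem) that the argument consists in transferring the Stokes maximal regularity of Theorem~\ref{A-Thm-1.5} to the Oseen operator $\calA_q=-\nu A_q-A_{o,q}$ by perturbation, with the loss $T<\infty$; your use of the $A_q^{1/2}$-boundedness of $A_{o,q}$ from \eqref{1.15}--\eqref{1.16} together with the perturbation theorems \cite{Dore:2000}, \cite{KW:2001} is exactly this strategy, and your remark about the variation-of-parameters identity \eqref{A-1.42} also reflects how the cited reference \cite{LPT.1} organizes the argument.
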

			\noindent A proof is given in \cite[Appendix B]{LPT.1}.
		\end{enumerate}
		
		\section{Material in support of the proof of Theorem \ref{Thm-4.1}: the required UCP and $\ds D^*\calB_q^*f$ in \eqref{4.18}.}\label{app-B}
		\setcounter{equation}{0}
		\setcounter{theorem}{0}
		\renewcommand{\theequation}{{\rm B}.\arabic{equation}}
		\renewcommand{\thetheorem}{{\c B}.\arabic{theorem}}
		
		
	\subsection{The required UCP.}\label{Sec-B.1}
	\nin We return to the operator $\ds \BA_q$ in \eqref{1.34}:
		\begin{multline}\label{B.1}
		\BA_q = \bbm \calA_q & -\calC_{\gamma} \\ -\calC_{\theta_e} & -\calB_q \ebm : \bs{W}^q_{\sigma}(\Omega) = \lso \times \lqo \supset \calD(\BA_q) = \calD(\calA_q) \times \calD(\calB_q) \\ = (\bs{W}^{2,q}(\Omega) \cap \bs{W}^{1,q}_{0}(\Omega) \cap \lso) \times (W^{2,q}(\Omega) \cap W^{1,q}_{0}(\Omega)) \longrightarrow \bs{W}^q_{\sigma}(\Omega).
		\end{multline}
		\nin With $ \boldsymbol{\Phi} = [\boldsymbol{\varphi}, \psi ],$ the PDE-version of $ \BA_q  \boldsymbol{\Phi} = \lambda \boldsymbol{\Phi} $ is
		\begin{subequations}\label{B.2}
			\begin{empheq}[left=\empheqlbrace]{align}
			- \nu \Delta \boldsymbol{\varphi} + L_e(\boldsymbol{\varphi}) + \nabla \pi - \gamma \psi \bs{e}_d&= \lambda \boldsymbol{\varphi}   &\text{in } \Omega  \label{B.2a} \\
			- \kappa \Delta \psi - \bs{y}_e \cdot \nabla \psi + \boldsymbol{\varphi} \cdot \nabla \theta_e &= \lambda \psi &\text{in } \Omega \label{B.2b}\\
			\text{div } \boldsymbol{\varphi} &= 0   &\text{in } \Omega  \label{B.2c} \\
			\boldsymbol{\varphi} = 0, \ \psi &= 0 &\text{on } \Gamma.    \label{B.2d}
			\end{empheq}		
		\end{subequations}\\
		\nin Several UCP for over-determined versions of the eigenproblem \eqref{B.2} are given in \cite{TW.1}. However, establishing in Theorem \ref{Thm-4.1}	controllability of the finite dimensional projected problem \eqref{4.23} or \eqref{4.29} via verification of the Kalman rank condition \eqref{4.31} involves the following UCP for the adjoint problem. First, recall from \eqref{4.4} that the adjoint $\ds \BA_q^*$ or $\BA_q$ in \eqref{B.1} is		
		\begin{multline}\label{B.3}
		\BA_q^* = \bbm \calA_q^* & -\calC_{\theta_e}^* \\ -\calC_{\gamma}^* & -\calB_q^* \ebm : \bs{W}^{q'}_{\sigma}(\Omega) = \lsoo \times \lqoo \supset \calD(\BA_q^*) = \calD(\calA_q^*) \times \calD(\calB_q^*) \\ = (\bs{W}^{2,q'}(\Omega) \cap \bs{W}^{1,q'}_{0}(\Omega) \cap \lsoo) \times (W^{2,q'}(\Omega) \cap \bs{W}^{1,q'}_{0}(\Omega)) \longrightarrow \bs{W}^{q'}_{\sigma}(\Omega).
		\end{multline}
		
		\nin With $ \boldsymbol{\Phi}^* = [\boldsymbol{\varphi}^*, \psi^* ],  $ the PDE-version of $ \BA_q^*  \boldsymbol{\Phi}^* = \lambda \boldsymbol{\Phi}^* $ is
		
		\begin{subequations}\label{B.4}
			\begin{empheq}[left=\empheqlbrace]{align}
			-\nu \Delta \boldsymbol{\varphi^*} + L^*_e(\boldsymbol{\varphi^*})+ \psi^* \nabla \theta_e + \nabla \pi &= \lambda \boldsymbol{\varphi^*}   &\text{in } \Omega      \label{B.4a} \\
			-\kappa \Delta \psi^* - \bs{y}_e \cdot \nabla \psi^* - \gamma  \boldsymbol{\varphi^*}  \cdot\bs{e}_d &= \lambda \psi^* &\text{in } \Omega \label{B.4b}\\
			\text{div } \boldsymbol{\varphi^*} &= 0   &\text{in } \Omega  \label{B.4c} \\
			\boldsymbol{\varphi^*} = 0, \ \psi^* &= 0 &\text{on } \Gamma.    \label{B.4e}
			\end{empheq}		
		\end{subequations}
		
		\nin The UCP invoked in the proof of Theorem \ref{Thm-4.1} is
		\begin{thm}\label{Thm-B.1}{\cite[Theorem 5]{TW.1}}
			Let $\ds \{ \bs{\varphi}, \psi, \pi \} \in \left[ \bs{W}^{2,q}(\Omega) \cap \lso \right] \times W^{2,q}(\Omega), \ \pi \in W^{1,q}(\Omega),$ be a solution of the following dual problem
			\begin{subequations}
				\begin{empheq}[left=\empheqlbrace]{align}
				-\nu \Delta \boldsymbol{\varphi^*} + L^*_e(\boldsymbol{\varphi^*})+ \psi^* \nabla \theta_e + \nabla \pi &= \lambda \boldsymbol{\varphi^*}   &\text{in } \Omega      \label{B.5a} \\
				-\kappa \Delta \psi^* - \bs{y}_e \cdot \nabla \psi^* - \gamma  \boldsymbol{\varphi^*}  \cdot\bs{e}_d &= \lambda \psi^* &\text{in } \Omega \label{B.5b}\\
				\text{div } \boldsymbol{\varphi^*} &= 0   &\text{in } \Omega  \label{B.5c} \\
				\left\{\varphi^{*(1)}, \dots, \varphi^{*(d-1)} \right\} = 0, \ \psi^* &= 0 &\text{on } \omega \label{B.5d}
				\end{empheq}		
			\end{subequations}
			\nin with over-determination in \eqref{B.5d} Then 
			\begin{equation}
				\bs{\varphi}^* = 0, \ \psi^* = 0, \ \pi = const \ \text{in } \Omega. \ \qed
			\end{equation}
		\end{thm}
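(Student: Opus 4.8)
The plan is to reduce Theorem~\ref{Thm-B.1} to a Carleman-based unique continuation argument for a coupled second-order elliptic system, after first upgrading the over-determination \eqref{B.5d} to the vanishing of the \emph{full} pair $\{\bs{\varphi}^*,\psi^*\}$ on $\omega$. The point is that the heat equation \eqref{B.5b} automatically takes care of the last fluid component: on the open set $\omega$ one has $\psi^*\equiv 0$, hence $\nabla\psi^*\equiv 0$, $\Delta\psi^*\equiv 0$ and $\lambda\psi^*\equiv 0$ there, so plugging this into \eqref{B.5b} and using $\bs{e}_d=\{0,\dots,0,1\}$ together with $\gamma>0$ forces $\varphi^{*(d)}\equiv 0$ on $\omega$. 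Combined with the hypothesis $\{\varphi^{*(1)},\dots,\varphi^{*(d-1)}\}\equiv 0$ on $\omega$ in \eqref{B.5d}, this gives $\bs{\varphi}^*\equiv 0$ and $\psi^*\equiv 0$ on $\omega$, i.e.\ genuine vanishing Cauchy data there. (It is exactly this step that makes the $d$-th component indispensable and that sets the present UCP apart from the interior-control analogue in \cite{LPT.2}, where the $d$-th component could be dropped instead.)

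Next I would eliminate the pressure from the momentum equation \eqref{B.5a}. Taking the divergence of \eqref{B.5a} and using $\operatorname{div}\bs{\varphi}^*=0$ yields a Poisson equation for $\pi$ with right-hand side controlled by derivatives of $\{\bs{\varphi}^*,\psi^*\}$ with smooth ($W^{1,q}$, hence after bootstrap smooth enough) coefficients; interior elliptic regularity then upgrades $\pi$ to $W^{2,q}_{\mathrm{loc}}$. Substituting $\nabla\pi$ back into \eqref{B.5a}, one is left with a second-order system for the triple $\{\bs{\varphi}^*,\psi^*,\pi\}$ in which each unknown satisfies a Poisson-type equation whose right-hand side involves the others through lower-order terms only: the first-order couplings come from the Oseen drift $L_e^*(\bs{\varphi}^*)$, the transport term $\bs{y}_e\cdot\nabla\psi^*$, and the reconstituted $\nabla\pi$, while $\psi^*\nabla\theta_e$ and $\gamma\,\bs{\varphi}^*\cdot\bs{e}_d$ are of order zero. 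This is a coupled (cascade) elliptic system with \emph{diagonal principal part} — a Laplacian for each component — which is precisely the structure to which standard Carleman machinery applies. If the ambient solution is only in the stated $L^q$-class, a preliminary interior elliptic bootstrap first places $\{\bs{\varphi}^*,\psi^*\}$ in $H^2_{\mathrm{loc}}$, so that $L^2$ Carleman estimates become available.

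Then I would invoke a Carleman estimate with large parameter $\tau$ and a pseudoconvex weight $e^{\tau\phi}$ (e.g.\ a suitably perturbed radial weight), valid for a diagonal-principal-part second-order system, the lower-order couplings being absorbed into the $O(\tau)$ terms for $\tau$ large. Localizing such an inequality across a ball that straddles the boundary of the current zero set, together with the vanishing Cauchy data on $\omega$ established in the first step, forces $\{\bs{\varphi}^*,\psi^*\}$ to vanish on a strictly larger open set; a standard chain-of-balls / connectedness argument (legitimate since $\Omega$ is connected) propagates the vanishing throughout, giving $\bs{\varphi}^*\equiv 0$ and $\psi^*\equiv 0$ in $\Omega$. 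Feeding this back into \eqref{B.5a} yields $\nabla\pi\equiv 0$, i.e.\ $\pi\equiv\text{const}$ in $\Omega$, which is the assertion. In the paper this is simply quoted from \cite[Theorem~5]{TW.1} (built on \cite{RT:2009}); were one to write it out, the \textbf{main obstacle} is the Carleman estimate itself — proving it uniformly in $\tau$ for the \emph{coupled} Oseen--heat operator so that the first-order Oseen and transport contributions (and any residual pressure term after elimination) are genuinely absorbed — and, if one insists on the $L^q$ formulation rather than passing through $H^2_{\mathrm{loc}}$, either proving an $L^q$ Carleman inequality directly or carefully interfacing the $L^q$ hypotheses with the $L^2$ estimate via interior regularity.
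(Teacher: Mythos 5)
The first thing to say is that the paper does not prove this statement at all: Theorem \ref{Thm-B.1} is imported verbatim from \cite[Theorem 5]{TW.1} (which builds on \cite{RT:2009}) and is used as a black box, so there is no in-paper argument to measure yours against. Judged on its own terms, your reconstruction has the right architecture, and your opening move is exactly the point of the theorem: since $\psi^*\equiv 0$ on the open set $\omega$, all of its derivatives vanish there, and \eqref{B.5b} then forces $\gamma\,\bs{\varphi}^*\cdot\bs{e}_d=0$, i.e. $\varphi^{*(d)}\equiv 0$ on $\omega$ (this uses $\gamma\neq 0$, which holds since $\gamma=\bar g/\bar\theta>0$). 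This is precisely why the over-determination \eqref{B.5d} may omit the $d$\textsuperscript{th} fluid component, consistently with Remark \ref{Rmk-4.1}.

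The genuine gaps are in the handling of the pressure. First, ``substituting $\nabla\pi$ back into \eqref{B.5a}'' after solving a Poisson equation for $\pi$ is a nonlocal operation and is not legitimate as a pointwise step; the correct move --- which you half take --- is to keep $\pi$ as an additional scalar unknown, so that $\{\bs{\varphi}^*,\psi^*,\pi\}$ solves a $(d+2)$-component second-order system with diagonal Laplacian principal part and only first-order couplings. For that you must actually check that $\Delta\pi$ is of first order in $\{\bs{\varphi}^*,\psi^*\}$: taking the divergence of $L_e^*(\bs{\varphi}^*)$ produces second derivatives of $\bs{\varphi}^*$ a priori, and it is only the constraint $\text{div }\bs{\varphi}^*=0$ (together with $\text{div }\Delta\bs{\varphi}^*=\Delta\,\text{div }\bs{\varphi}^*=0$) that cancels them; this needs to be said. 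Second, and more importantly, to launch the Carleman/chain-of-balls propagation for the full triple you need vanishing data on $\omega$ for $\pi$ as well, not just for $\{\bs{\varphi}^*,\psi^*\}$. This is available --- restricting \eqref{B.5a} to $\omega$, where $\bs{\varphi}^*\equiv 0$ and $\psi^*\equiv 0$, gives $\nabla\pi\equiv 0$ on $\omega$, so $\pi$ is constant there and may be normalized to zero --- but your write-up never records it, and without it the unique continuation step for the coupled system containing $\pi$ cannot start. With these two points repaired, the remaining burden is, as you acknowledge, the Carleman inequality itself for the diagonal system with first-order couplings; that is the actual content of \cite{TW.1}, \cite{RT:2009}.
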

		
		\begin{proof}[Proof of $\ds D^* \calB^* f = -\kappa\frac{\partial f}{\partial \nu} \bigg|_{\Gamma}, \ f \in \calD(\calB_q^*)$, in \eqref{4.18} essentially by Green's formula]\  
			\begin{enumerate}[1.]
				\item Let $1 < q < \infty$ and define
				\begin{equation}\label{B.7}
					B_{o,q}h = \bs{y}_e \cdot h, \ L^q(\Omega) \supset \calD(B_{o,q}) = W^{1,q}_0(\Omega) \longrightarrow L^q(\Omega)
				\end{equation}
				\nin Then 
				\begin{equation}\label{B.8}
					B_{o,q}^*h = -\bs{y}_e \cdot f, \ L^{q'}(\Omega) \supset \calD(B_{o,q}^*) = W^{1,q'}_0(\Omega) \longrightarrow L^{q'}(\Omega)
				\end{equation}
				\nin In fact recalling $\bs{y}_e|_{\Gamma} \equiv 0$ and div $\bs{y}_e \equiv 0$ in $\Omega$ from (\hyperref[1.2c]{1.2c-d}), we compute in the duality pairing $L^q(\Omega), \ L^{q'}(\Omega)$, with $h \in \calD(B_{o,q}), \ f \in \calD(B^*_{o,q})$ by \eqref{B.7}:
				\begin{align}
					\ip{B_{o,q}h}{f} &= \int_{\Omega} f \bs{y}_e \cdot \nabla h d\Omega = \int_{\Gamma}\cancel{hf\bs{y}_e\cdot \nu d \Gamma} - \int_{\Omega} h \text{ div}(f\bs{y}_e) d\Omega \nonumber\\
					&= - \int_{\Omega} h \bs{y}_e \cdot \nabla f d\Omega = \ip{h}{B^*_{o,q}f}_{\Omega} \label{B.9}
				\end{align}
				\item We return to the operator $\calB_q$ in \eqref{1.18}, with $h \in \calD(\calB_q)$
				\begin{equation}\label{B.10}
					\calB_q h = -\kappa \Delta h + B_{o,q}h, \ \text{so that } \calB_q^* h = -\kappa \Delta f + B_{o,q}^*f, \ f \in \calD(\calB_q^*)
				\end{equation} 
				\nin We shall show that
				\begin{equation}\label{B.11}
					\ds D^* \calB^* f = -\kappa \frac{\partial f}{\partial \nu} \bigg|_{\Gamma}, \ f \in \calD(\calB_q^*) = W^{2,q'}(\Omega) \cap W^{1,q'}_0(\Omega)
				\end{equation}
				\nin In fact, by \eqref{B.8} and \eqref{B.10} we compute with $v \in L^2(\Gamma)$ recalling the definition of $D$ in \eqref{1.23a}
				\begin{align}
					\ip{D^* \calB^*_q f}{v}_{\Gamma} &= \ip{\calB^*_q f}{Dv}_{\Omega} = \ip{\Delta f}{-\kappa Dv}_{\Omega} + \ip{f}{B^*_{o,q}Dv}_{\Omega}\\
					&= \ip{f}{\cancel{(-\kappa \Delta - \bs{y}_e \cdot \nabla)} Dv}_{\Omega} + \int_{\Gamma} \frac{\partial f}{\partial \nu} (-\kappa Dv) d \Gamma + \int_{\Gamma}\cancel{f \left(\kappa \frac{\partial Dv}{\partial \nu}\right)} d \Gamma\\
					&= \ip{-\kappa \frac{\partial f}{\partial \nu}}{v}_{\Gamma}, \quad \text{for all } v \in L^q(\Gamma).
				\end{align}
			\end{enumerate}
		\nin and \eqref{B.11} is established.
		\end{proof}

		\section{Validation of the Kalman controllability conditions \eqref{4.31} with fluid vectors $\ds \{ \bs{u}_1, \dots, \bs{u}_{\ell_i}\}, \ \ell_i \leq K, \ i = 1, \dots, M$ having only $(d-1)$ components.}\label{app-C}
		\setcounter{equation}{0}
		\setcounter{theorem}{0}
		\renewcommand{\theequation}{{\rm C}.\arabic{equation}}
		\renewcommand{\thetheorem}{{\c C}.\arabic{theorem}}
		
		\nin For the sake of the clarity, we shall consider separately the cases $d = 2$ and $d = 3$.\\
		
		\nin \uline{Case $d = 2$.} Express the $2$-dimensional vectors $\bs{u}_i$ and $\bs{\varphi}_{ij}^*$ of Section \ref{Sec-4} in terms of their two components.
		\begin{equation}\label{C.1}
			\bs{u}_i = \left[u^{(1)}_i, u^{(2)}_i \right], \ \bs{\varphi}^*_{ij} = \left[ \varphi_{ij}^{*(1)}, \varphi_{ij}^{*(2)} \right], \ i = 1, \dots, M, \ j = 1, \dots, \ell_i
		\end{equation}
		\nin Here we shall recall the matrix $\bs{W}_i$ in \eqref{4.24} but we shall replace the matrix $U_i$ in \eqref{4.25} with the following matrix $U_i^{(2)}$: 
		\begin{equation}
			U^{(2)}_i = \bbm \ip{u^{(2)}_1}{\varphi_{i1}^{*(2)}}_{\omega} & \dots &  \ip{u^{(2)}_{\ell_i}}{\varphi_{i1}^{*(2)}}_{\omega} \\[2mm] 
			\ip{u^{(2)}_2}{\varphi_{i2}^{*(2)}}_{\omega} & \dots &  \ip{u^{(2)}_{\ell_i}}{\varphi_{i2}^{*(2)}}_{\omega}\\
			\vdots & & \vdots \\
			\ip{u^{(2)}_1}{\varphi_{i \ell_i}^{*(2)}}_{\omega} & \dots &  \ip{u^{(2)}_{\ell_i}}{\varphi_{i \ell_i}^{*(2)}}_{\omega}
			\ebm: \ell_i \times K
		\end{equation}
		\nin where now the duality pairing $\ip{ \ }{\ }_{\omega}$ involves two scalar functions. Next, for $d = 2$, we shall establish the Kalman algebraic rank condition for the finite dimensional, unstable, feedback problem \eqref{4.23}, by employing not the full strength of the $2$-dimensional vectors $\bs{u}_i$ and $\bs{\varphi}_{ij}^*$ as in \eqref{4.31} involving the matrix $U_i$ in \eqref{4.25}, but instead replacing $U_i$ with $U_i^{(2)}$. This way, only the second (scalar) components $u^{(2)}_i$ and $\varphi_{ij}^{*(2)}$ of the $2$-dimensional vectors $\bs{u}_i$ and $\bs{\varphi}_{ij}^*$ in \eqref{C.1} are needed. Recall that we are dealing with a pair $\{ \omega, \wti{\Gamma} \}$ as in Fig 1. The counterpart of Theorem \ref{Thm-4.1} is now
		
		\begin{thm}\label{Thm-C.1}
			Let $d = 2$. It is possible to select boundary vectors $f_1, \dots, f_K$ in $\calF \subset W^{2 - \rfrac{1}{q},q}(\wti{\Gamma})$ with support on $\wti{\Gamma}$, and scalar second components $\{ u^{(2)}_1, \dots, u^{(2)}_{\ell_i} \}$ as in \eqref{C.1}, such that
			\begin{equation}\label{C.3}
				\text{rank } \left[ W_i, \ U_i^{(2)} \right] = \ell_i, \ i, \dots, M.
			\end{equation}
		\end{thm}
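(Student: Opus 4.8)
The plan is to mimic the proof of Theorem \ref{Thm-4.1} (the Claim), but this time extracting from a putative linear dependence only the \emph{scalar} second components. By the same reduction as in \eqref{4.32}--\eqref{4.33}, the rank condition \eqref{C.3} fails for some $i$ if and only if there exist constants $\alpha_1,\dots,\alpha_{\ell_i-1}$ such that simultaneously
\begin{equation*}
\partial_{\nu}\psi^*_{i\ell_i} = \sum_{j=1}^{\ell_i-1}\alpha_j\,\partial_{\nu}\psi^*_{ij}\ \text{ in }L^q(\wti{\Gamma}),
\qquad
\varphi^{*(2)}_{i\ell_i} = \sum_{j=1}^{\ell_i-1}\alpha_j\,\varphi^{*(2)}_{ij}\ \text{ in }L^q(\omega),
\end{equation*}
with the \emph{same} scalars in both relations; the point is that now only the second component of the $\bs{\varphi}^*_{ij}$ appears. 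So, arguing by contradiction, form the eigenfunction combination $\bs{\Phi}^* = \{\bs{\varphi}^*,\psi^*\} = \sum_{j=1}^{\ell_i-1}\alpha_j\bs{\Phi}^*_{ij} - \bs{\Phi}^*_{i\ell_i}$ as in \eqref{4.34}. Since each $\bs{\Phi}^*_{ij}$ solves the adjoint eigenproblem \eqref{B.4} for the eigenvalue $\bar\lambda_i$, so does $\bs{\Phi}^*$, and by construction it satisfies the over-determined conditions
\begin{equation*}
\varphi^{*(2)} \equiv 0\ \text{ in }\omega,\qquad \partial_{\nu}\psi^*|_{\wti{\Gamma}} \equiv 0,
\end{equation*}
in addition to the homogeneous boundary conditions \eqref{B.4e}, in particular $\psi^*|_{\wti{\Gamma}}=0$.

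The key new mechanism — replacing the step $\varphi^{*(d)}\equiv 0$ in $\omega \Rightarrow \psi^*\equiv 0$ in $\omega$ of Theorem \ref{Thm-4.1} — is to run that implication in the present $d=2$ setting. Since $\bs{e}_2 = \{0,1\}$, the term $\gamma\,\bs{\varphi}^*\cdot\bs{e}_2 = \gamma\,\varphi^{*(2)}$ in the heat equation \eqref{B.4b} vanishes on $\omega$, so $\psi^*$ satisfies on $\omega$ the homogeneous second-order elliptic equation $\kappa\Delta\psi^* + \bs{y}_e\cdot\nabla\psi^* = -\bar\lambda_i\psi^*$ together with the Cauchy data $\psi^*|_{\wti{\Gamma}} = 0$, $\partial_{\nu}\psi^*|_{\wti{\Gamma}} = 0$ on the boundary portion $\wti{\Gamma}\subset\partial\omega$. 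By the classical Carleman/Holmgren-type uniqueness result already invoked in \eqref{4.37}--\eqref{4.38} (\cite{Car}, \cite{M.1}, \cite{H}, \cite{Kom.1}) this forces $\psi^*\equiv 0$ in $\omega$. Now we have gained $\psi^*\equiv 0$ on $\omega$ in addition to $\varphi^{*(2)}\equiv 0$ on $\omega$; but we still need $\varphi^{*(1)}\equiv 0$ on $\omega$ to match the hypotheses of the UCP Theorem \ref{Thm-B.1}. For $d=2$ the divergence-free condition \eqref{B.4c} reads $\partial_1\varphi^{*(1)} + \partial_2\varphi^{*(2)} = 0$ on $\omega$; since $\varphi^{*(2)}\equiv 0$ on $\omega$, this gives $\partial_1\varphi^{*(1)} \equiv 0$ on $\omega$, so $\varphi^{*(1)}$ depends only on $x_2$ there. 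Combined with the boundary trace $\bs{\varphi}^*|_{\wti{\Gamma}} = 0$ from \eqref{B.4e} (and the geometry of Fig 1, where $\wti{\Gamma}$ is a genuine arc of $\partial\omega$ not contained in a single line $x_2 = \text{const}$), one concludes $\varphi^{*(1)}\equiv 0$ on $\omega$, hence $\bs{\varphi}^*\equiv 0$ and $\psi^*\equiv 0$ on $\omega$. Then Theorem \ref{Thm-B.1} (applied with the a-priori over-determination on $\omega$) yields $\bs{\Phi}^*\equiv 0$ and $\pi\equiv\text{const}$ in $\Omega$, i.e. $\bs{\Phi}^*_{i\ell_i} = \sum_{j=1}^{\ell_i-1}\alpha_j\bs{\Phi}^*_{ij}$, contradicting the linear independence of the eigenvectors $\{\bs{\Phi}^*_{i1},\dots,\bs{\Phi}^*_{i\ell_i}\}$ selected above \eqref{4.1}. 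This proves the Claim, hence \eqref{C.3}, and the boundary vectors $f_1,\dots,f_K$ may be chosen independent of $i$ exactly as at the end of the proof of Theorem \ref{Thm-4.1}.

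The main obstacle I anticipate is the passage $\varphi^{*(2)}\equiv 0$ on $\omega \Rightarrow \varphi^{*(1)}\equiv 0$ on $\omega$: the divergence-free relation alone only gives that $\varphi^{*(1)}$ is independent of $x_1$, and to kill it entirely one must use the homogeneous Dirichlet trace on $\wti{\Gamma}$ together with a geometric non-degeneracy of the collar $\omega$ relative to $\wti{\Gamma}$ — precisely, that $\wti{\Gamma}$ is not a segment of a coordinate line. One should either impose this mild geometric hypothesis on $\{\omega,\wti{\Gamma}\}$ (it is consistent with Fig 1) or, alternatively, invoke directly the sharper unique continuation theorems \cite[Theorem 6, Theorem 7]{TW.1}, which are designed exactly to absorb the homogeneous boundary conditions and deliver the conclusion from $\{\varphi^{*(1)},\dots,\varphi^{*(d-1)}\}\equiv 0$, $\psi^*\equiv 0$ on $\omega$ without this ad hoc argument. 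The $d=3$ case is entirely analogous: with $\bs{e}_3=\{0,0,1\}$, one sets $\varphi^{*(1)}\equiv\varphi^{*(2)}\equiv 0$ on $\omega$ (or $\varphi^{*(2)}\equiv\varphi^{*(3)}$-free pairs as in the statement — but here the $d$\textsuperscript{th} component is the one that must be retained, since it is $\varphi^{*(3)}$ that feeds the heat equation), deduces $\psi^*\equiv 0$ on $\omega$ from \eqref{B.5b} exactly as above, and then applies Theorem \ref{Thm-B.1} with the over-determination $\{\varphi^{*(1)},\varphi^{*(2)}\}\equiv 0$, $\psi^*\equiv 0$ on $\omega$ to conclude $\bs{\Phi}^*\equiv 0$ in $\Omega$.
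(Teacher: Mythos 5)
Your proposal is correct and follows essentially the same route as the paper's own proof of Theorem \ref{Thm-C.1}: the same reduction to the simultaneous linear-dependence relations \eqref{C.4}, the same construction of $\bs{\Phi}^*$, the same use of $\bs{e}_2=(0,1)$ to turn \eqref{4.36b} into the over-determined Cauchy problem \eqref{4.37} yielding $\psi^*\equiv 0$ in $\omega$, and the same divergence-plus-boundary-trace argument for $\varphi^{*(1)}$. The only refinement in the paper is precisely at the ``obstacle'' you flag: vanishing of $\varphi^{*(1)}$ is established not on all of $\omega$ but on a nonempty open subset $\wti{\omega}\subset\omega$ consisting of points whose horizontal lines stay in $\omega$ and reach $\wti{\Gamma}$ (this is what the collar geometry of Fig 1 guarantees, rather than $\wti{\Gamma}$ not lying on a line $x_2=\mathrm{const}$), and the UCP of Theorem \ref{Thm-B.1} is then applied on $\wti{\omega}$ instead of $\omega$, which suffices.
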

	\begin{proof}
		We shall appropriately modify the proof of Theorem \ref{Thm-4.1}. As in this proof, the crux is to establish that we cannot have simultaneously
	\begin{equation}\label{C.4}
	\partial_{\nu} \psi_{i \ell_i}^* = \sum_{j = 1}^{\ell_i - 1} \alpha_j \partial_{\nu} \psi_{ij}^* \text{ in } L^q(\wti{\Gamma}) \text{ and } \varphi^{*(2)}_{i \ell_i} = \sum_{j = 1}^{\ell_i - 1} \alpha_j \varphi^{*(2)}_{ij}  \text{ in } L^q_{\sigma}(\omega)
	\end{equation}
	\nin with the same constant $\alpha_1, \dots, \alpha_{\ell_i-1}$ in both expression.\\
	
	\nin \uline{Claim: Statement \eqref{C.4} is false.} By contradiction, suppose that both linear combinations in \eqref{C.4} hold true. Next, as in \eqref{4.34} define the $(d+1)$-vector $\ds \bs{\Phi}^* \equiv \{ \bs{\varphi}^*, \psi^* \}$ by 
	\begin{equation}\label{C.5}
	\bs{\Phi}^* \equiv \bbm \bs{\varphi}^* \\ \psi^* \ebm = \sum_{j = 1}^{\ell_i - 1} \bbm \alpha_j \bs{\varphi}_{ij}^* \\ \alpha_j \psi_{ij}^* \ebm - \bbm \bs{\varphi}_{i \ell_i}^* \\ \psi_{i \ell_i}^*\ebm = \sum_{j = 1}^{\ell_i - 1} \alpha_j \bs{\Phi}^*_{ij} - \bs{\Phi}^*_{i \ell_i}, \ i = 1, \dots, M; \ q \geq 2,
	\end{equation}
	\nin in $\Wqs(\Omega) \equiv \lso \times L^q(\Omega)$, with $\ds \bs{\Phi}^*_{ij} \equiv \{ \bs{\varphi}^*_{ij}, \psi^*_{ij} \}$ eigenvector of $\ds \BA_{q,N}^*$ or $\ds \BA_q^*$, as in \eqref{4.3}. Then, in view of \eqref{C.4}, we now obtain
	\begin{equation}\label{C.6}
	\varphi^{*(2)} \equiv 0 \text{ in } \omega, \quad \partial_{\nu} \psi^*|_{\wti{\Gamma}} \equiv 0.
	\end{equation}
	\nin (in place of \eqref{4.35}). As in the proof of Theorem \ref{Thm-4.1}, since the $\ds \Phi^*_{ij}$ are eigenvectors of $\ds \BA_{q,N}^*$, or $\ds \BA_q^*$, so is the vector $\ds \bs{\Phi}^* \equiv \{ \bs{\varphi}^*, \psi^* \}$ defined in \eqref{C.5}. Thus $\ds \bs{\Phi}^* \equiv \{ \bs{\varphi}^*, \psi^* \}$ satisfies the PDE version (\hyperref[4.36a]{4.36a-d}) of the eigenproblem for the dual $\ds \BA_q^*$, which is now augmented with the over-determined conditions in \eqref{C.6}. The fact that now $\ds \bs{\varphi}^*|_{\Gamma} = 0$ by \eqref{4.36d} on the entire boundary $\Gamma$, permits a-fortiori the argument of \cite[Theorems 6, 7]{TW.1} to hold true. Namely, invoke $\ds \varphi^{*(2)} \equiv 0$ in $\omega$ in \eqref{C.6} to obtain, as in the proof of Theorem \ref{Thm-4.1}, the over-determined problem \eqref{4.37} for $\psi^*$ on $\omega$, with the conclusion that
	\begin{equation}\label{C.7}
		\psi^* \equiv 0 \text{ in } \omega, \text{ as in } \eqref{4.38}
	\end{equation}
	\nin Next, the divergence condition \eqref{4.36c} yields in view of \eqref{C.6}: $div \ \bs{\varphi}^* = \varphi^{*(1)}_{x_1} + \varphi^{*(2)}_{x_2} = \varphi^{*(1)}_{x_1} = 0$ in $\omega$. Hence $\varphi^{*(1)}(x_1, x_2) \equiv c(x_2)$ in $\omega$, where $c(x_2)$ is a function constant w.r.t. $x_1$ and depending only on $x_2$ in $\omega$. Next, let $\ds P = \{ x_1(P), x_2(P) \}$ be an arbitrary point of $\omega$. Consider the line $\ell$ passing through the point $P$ and parallel to the $x_1$-axis. On such a line $\ell$, the value $\ds \varphi^{*(1)}(x_1, x_2(P)) \equiv c_2(x_2(P))$ is constant w.r.t. $x_1$, as long as $\ell$ intersect $\omega$. By definition of the small set $\omega$ supported by $\wti{\Gamma}$, there is a non-empty open subset $\wti{\omega} \subset \omega$, where this happens: for all points $P$ in $\wti{\omega}$, the line $\ell$ remains in $\wti{\omega}$ and hits the boundary $\wti{\Gamma}$, where condition \eqref{4.36d} applies for $\ds \varphi^{*(1)}|_{\Gamma} = 0$. Thus $\varphi^{*(1)} \equiv 0$ in $\wti{\omega} \subset \omega$. Recalling \eqref{C.6}, we finally have
	\begin{equation}\label{C.8}
		\bs{\varphi}^* = \{ \varphi^{*(1)}, \varphi^{*(2)} \} \equiv 0 \text{ in } \wti{\omega} \subset \omega, \text{ along with } \psi^* \equiv 0 \text{ in } \wti{\omega} \text{ by } \eqref{C.7}.
	\end{equation}
	\nin We can then apply \cite[Theorem 5 for $\wti{\omega}$]{TW.1} and conclude that
	\begin{equation}\label{C.9}
		\bs{\varphi}^* = \{ \varphi^{*(1)}, \varphi^{*(2)} \} \equiv 0 \text{ in } \Omega, \ \psi^* \equiv 0 \text{ in } \Omega, \ p \equiv \text{ const in } \Omega.   
	\end{equation}
	\nin The rest of the proof proceeds as in Theorem \ref{Thm-4.1} following \eqref{4.40}. Theorem \ref{Thm-C.1} is established.
	\end{proof}
	\nin \uline{Case $d = 3$.} Now we express the $3$-dimensional vectors $\bs{u}_i$ and $\bs{\varphi}^*_{ij}$ of Section \ref{Sec-4} in terms of their components as
	\begin{equation}\label{C.10}
	\bs{u}_i = \left[u^{(1)}_i, u^{(2)}_i, u^{(3)}_i \right], \ \bs{\varphi}^*_{ij} = \left[ \varphi_{ij}^{*(1)}, \varphi_{ij}^{*(2)}, \varphi_{ij}^{*(3)} \right], \ i = 1, \dots, M, \ j = 1, \dots, \ell_i.
	\end{equation}
	We now distinguish two sub-cases.\\
	
	\nin \uline{Sub-case $\{1,3\}$.} Here, we extract only the first and third components, while we omit the second component. Accordingly, we introduce the following $2$-dimension vectors $\bs{u}^{(1,3)}_i$ and $\bs{\varphi}^{*(1,3)}$ and the corresponding matrix $\ds U^{(1,3)}_i$
	\begin{eqnarray}\label{C.11}
		\bs{u}_i^{(1,3)} = \left[u^{(1)}_i, u^{(3)}_i \right], \ \bs{\varphi}^{*(1,3)}_{ij} = \left[ \varphi_{ij}^{*(1)}, \varphi_{ij}^{*(3)} \right] \hspace{1.5cm} \nonumber\\
		U^{(1,3)}_i = \bbm \ip{\bbm u^{(1)}_1 \\[1mm] u^{(3)}_1 \ebm}{\bbm \varphi_{i1}^{*(1)} \\[1mm] \varphi_{i1}^{*(3)} \ebm}_{\omega} & \dots & \ip{\bbm u^{(1)}_{\ell_i} \\[1mm] u^{(3)}_{\ell_i} \ebm}{\bbm \varphi_{i1}^{*(1)} \\[1mm] \varphi_{i1}^{*(3)} \ebm}_{\omega} \\ 
		\dots & \dots & \dots	\\	
		\ip{\bbm u^{(1)}_1 \\[1mm] u^{(3)}_1 \ebm}{\bbm \varphi_{i \ell_i}^{*(1)} \\[1mm] \varphi_{i \ell_i}^{*(3)} \ebm}_{\omega} & \dots & \ip{\bbm u^{(1)}_{\ell_i} \\[1mm] u^{(3)}_{\ell_i} \ebm}{\bbm \varphi_{i \ell_i}^{*(1)} \\[1mm] \varphi_{i \ell_i}^{*(3)} \ebm}_{\omega}
		\ebm 		
	\end{eqnarray} 
	\nin \uline{Sub-case $\{2,3\}$.} Here, we extract only the second and third components, while we omit the first component. Accordingly, we introduce the following $2$-dimension vectors $\bs{u}^{(2,3)}_i$ and $\bs{\varphi}^{*(2,3)}$ and the corresponding matrix $\ds U^{(2,3)}_i$
	\begin{eqnarray}\label{C.12}
	\bs{u}_i^{(2,3)} = \left[u^{(2)}_i, u^{(3)}_i \right], \ \bs{\varphi}^{*(2,3)}_{ij} = \left[ \varphi_{ij}^{*(2)}, \varphi_{ij}^{*(3)} \right] \hspace{1.5cm} \nonumber\\
	U^{(2,3)}_i = \bbm \ip{\bbm u^{(2)}_1 \\[1mm] u^{(3)}_1 \ebm}{\bbm \varphi_{i1}^{*(2)} \\[1mm] \varphi_{i1}^{*(3)} \ebm}_{\omega} & \dots & \ip{\bbm u^{(2)}_{\ell_i} \\[1mm] u^{(3)}_{\ell_i} \ebm}{\bbm \varphi_{i1}^{*(2)} \\[1mm] \varphi_{i1}^{*(3)} \ebm}_{\omega} \\ 
	\dots & \dots & \dots\\	
	\ip{\bbm u^{(2)}_1 \\[1mm] u^{(3)}_1 \ebm}{\bbm \varphi_{i \ell_i}^{*(2)} \\[1mm] \varphi_{i \ell_i}^{*(3)} \ebm}_{\omega} & \dots & \ip{\bbm u^{(2)}_{\ell_i} \\[1mm] u^{(3)}_{\ell_i} \ebm}{\bbm \varphi_{i \ell_i}^{*(2)} \\[1mm] \varphi_{i \ell_i}^{*(3)} \ebm}_{\omega}
	\ebm 		
	\end{eqnarray} 
	\nin Next, for $d = 3$, we shall establish the Kalman algebraic rank condition for the finite dimensional, unstable, feedback problem \eqref{4.23}, by replacing the matrix $U_i$ in \eqref{4.25}, with either the matrix $U^{(1,3)}_i$ in \eqref{C.11} (Sub-case $\{1,3\}$); or else with the matrix $U^{2,3}_i$ in \eqref{C.12} (Sub-case $\{2, 3\}$). This way, the third (scalar) components $u^{(3)}_i$ and $\varphi^{*(3)}_{ij}$ of the $3$-dimensional vectors $\bs{u}_i$ and $\bs{\varphi}^*_{ij}$ in \eqref{C.10} are needed in both sub-cases. The counterpart of Theorem \ref{Thm-4.1} (or Theorem \ref{Thm-C.1}) is now
	
	\begin{thm}\label{Thm-C.2}
		Let $d = 3$. It is possible to select boundary vectors $f_1, \dots, f_K$ in $\calF \subset W^{2 - \rfrac{1}{q},q}(\wti{\Gamma})$ with support on $\wti{\Gamma}$, and 
		\begin{enumerate}[(i)]
			\item either $2$-dimensional vectors $\bs{u}^{(1,3)}_i = \left\{ u^{(1)}_i, u^{(3)}_i \right\}$ as in \eqref{C.11}
			\item or else $2$-dimensional vectors $\bs{u}^{(2,3)}_i = \left\{ u^{(2)}_i, u^{(3)}_i \right\}$ as in \eqref{C.12},
		\end{enumerate}such that
		\begin{equation}\label{C.13}
		\text{rank } \left[ W_i, \ U_i^{(1,3)} \right] = \ell_i, \ i, \dots, M, \text{ in case } (i)
		\end{equation}
		\nin or
		\begin{equation}\label{C.14}
		\text{rank } \left[ W_i, \ U_i^{(2,3)} \right] = \ell_i, \ i, \dots, M, \text{ in case } (ii)
		\end{equation}
		\nin respectively.
	\end{thm}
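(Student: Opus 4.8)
\textbf{Proof proposal for Theorem \ref{Thm-C.2}.} The plan is to mimic, almost verbatim, the proof of Theorem \ref{Thm-C.1} for $d=2$, replacing the single omitted component by the appropriate pair of retained components, and then to invoke the divergence-free condition once more to recover the one genuinely missing fluid component on a suitable sub-collar $\wti{\omega}$. As in all the preceding arguments, the full-rank statement \eqref{C.13} (resp.\ \eqref{C.14}) is equivalent to excluding that we can have simultaneously, for the same constants $\alpha_1,\dots,\alpha_{\ell_i-1}$,
\begin{equation*}
\partial_{\nu}\psi^*_{i\ell_i} = \sum_{j=1}^{\ell_i-1}\alpha_j\,\partial_{\nu}\psi^*_{ij} \ \text{ in } L^q(\wti{\Gamma}), \qquad
\bigl\{\varphi^{*(1)}_{i\ell_i},\varphi^{*(3)}_{i\ell_i}\bigr\} = \sum_{j=1}^{\ell_i-1}\alpha_j\bigl\{\varphi^{*(1)}_{ij},\varphi^{*(3)}_{ij}\bigr\}\ \text{ in } \bs{L}^q_\sigma(\omega)
\end{equation*}
(in case (i); with the superscripts $(2),(3)$ in case (ii)). One then forms, exactly as in \eqref{C.5}, the vector $\bs{\Phi}^*=\{\bs{\varphi}^*,\psi^*\}=\sum_{j=1}^{\ell_i-1}\alpha_j\bs{\Phi}^*_{ij}-\bs{\Phi}^*_{i\ell_i}$, which is again an eigenvector of $\BA_q^*$ for $\bar\lambda_i$ and therefore solves the PDE eigenproblem (\hyperref[4.36a]{4.36a-d}), now with the over-determination $\varphi^{*(1)}\equiv\varphi^{*(3)}\equiv 0$ in $\omega$ (case (i)) — and in particular $\varphi^{*(3)}\equiv 0$ in $\omega$ — together with $\partial_\nu\psi^*|_{\wti{\Gamma}}\equiv 0$.

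The argument then proceeds in three steps. First, since $\bs{e}_3=\{0,0,1\}$, the property $\varphi^{*(3)}\equiv 0$ in $\omega$ plugged into \eqref{4.36b} gives the over-determined scalar problem $\kappa\Delta\psi^*+\bs{y}_e\cdot\nabla\psi^*=-\bar\lambda_i\psi^*$ in $\omega$ with $\psi^*|_{\wti{\Gamma}}=0=\partial_\nu\psi^*|_{\wti{\Gamma}}$, exactly as in \eqref{4.37}, whence $\psi^*\equiv 0$ in $\omega$ by the classical Carleman-based continuation result cited at \eqref{4.38}. Second — and this is the step that recovers the one missing fluid component — I would use the divergence condition $\varphi^{*(1)}_{x_1}+\varphi^{*(2)}_{x_2}+\varphi^{*(3)}_{x_3}=0$ in $\omega$. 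In case (i) it reduces to $\varphi^{*(2)}_{x_2}=0$ in $\omega$ (since $\varphi^{*(1)}=\varphi^{*(3)}=0$ there), so $\varphi^{*(2)}$ is independent of $x_2$; then, as in the $d=2$ proof, for every point $P$ in a suitable non-empty open sub-collar $\wti{\omega}\subset\omega$ the line through $P$ parallel to the $x_2$-axis stays in $\wti{\omega}$ and hits $\wti{\Gamma}$, where $\varphi^{*(2)}|_\Gamma=0$ by \eqref{4.36d}; hence $\varphi^{*(2)}\equiv 0$ in $\wti{\omega}$. Case (ii) is symmetric with the roles of $x_1$ and $\varphi^{*(1)}$ in place of $x_2$ and $\varphi^{*(2)}$. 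Combining, $\bs{\varphi}^*\equiv 0$ and $\psi^*\equiv 0$ in $\wti{\omega}$. Third, apply the UCP of Theorem \ref{Thm-B.1} on $\wti{\omega}$ (which only requires vanishing of $d-1$ fluid components and of $\psi^*$ on the set, here in fact all $d$ components vanish) to conclude $\bs{\varphi}^*\equiv 0$, $\psi^*\equiv 0$, $\pi\equiv\text{const}$ in $\Omega$; unwinding \eqref{C.5} contradicts the linear independence of $\{\bs{\Phi}^*_{i1},\dots,\bs{\Phi}^*_{i\ell_i}\}$ established above \eqref{4.1}. The selection of $f_1,\dots,f_K$ and of the reduced interior vectors then follows precisely the concluding paragraph of the proof of Theorem \ref{Thm-4.1}, choosing them for the index $\bar\imath$ with $\ell_{\bar\imath}=K$ and noting they also work for the other $i$. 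This proves \eqref{C.13} and \eqref{C.14}.

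The point I expect to require the most care is the second step: geometrically justifying the existence of the sub-collar $\wti{\omega}$ on which the lines parallel to the chosen coordinate axis both remain inside $\wti{\omega}$ and terminate on $\wti{\Gamma}$, so that the boundary condition \eqref{4.36d} is actually available at the far endpoint. This is why the $d$\textsuperscript{th} (here third) component is indispensable and cannot be the one omitted — the reduction works only because omitting component $k\in\{1,2\}$ leaves $\varphi^{*(3)}\equiv 0$ in $\omega$, which is exactly what is needed to kill $\psi^*$ via \eqref{4.36b}; omitting the third component instead would leave $\varphi^{*(3)}$ uncontrolled in \eqref{4.36b} and the chain would break. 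I would also take a moment to record that $\omega$ touching $\wti{\Gamma}$ as in Fig.~1, with $\wti{\Gamma}\subset\partial\omega$ a connected piece, suffices for this geometric sub-lemma, so that no extra geometric hypotheses on $\{\omega,\wti{\Gamma}\}$ are imposed beyond those already in force; the remaining steps are routine transcriptions of Theorems \ref{Thm-4.1} and \ref{Thm-C.1}.
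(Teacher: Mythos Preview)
Your proposal is correct and follows essentially the same route as the paper: contradiction via the combined vector $\bs{\Phi}^*$, use of $\varphi^{*(3)}\equiv 0$ in $\omega$ to force $\psi^*\equiv 0$ in $\omega$ through the over-determined scalar problem \eqref{4.37}, the divergence condition to reduce $\varphi^{*(2)}$ to a function independent of $x_2$, the geometric sub-collar argument to push $\varphi^{*(2)}$ to zero on $\wti{\omega}$ via the boundary condition \eqref{4.36d}, and finally Theorem \ref{Thm-B.1} on $\wti{\omega}$. The only cosmetic difference is that the paper phrases the geometric step in terms of planes $\pi_P$ parallel to the $\{x_1,x_3\}$-coordinate plane rather than lines parallel to the $x_2$-axis; your formulation with lines is in fact the more transparent one, since $\varphi^{*(2)}$ being constant in $x_2$ is exactly what is needed to propagate the zero boundary value along those lines.
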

	\begin{proof}
		We shall only give a proof in the sub-case $\{1,3\}$, as the proof of the sub-case $\{2,3\}$ is the same \textit{mutatis mutandis}. As in the proof of Theorem \ref{Thm-4.1} (or Theorem \ref{Thm-C.1}), the crux is to establish that we cannot have simultaneously
		\begin{equation}\label{C.15}
		\partial_{\nu} \psi_{i \ell_i}^* = \sum_{j = 1}^{\ell_i - 1} \alpha_j \partial_{\nu} \psi_{ij}^* \text{ in } L^q(\wti{\Gamma}) \quad \text{and} \quad \varphi^{*(1,3)}_{i \ell_i} = \sum_{j = 1}^{\ell_i - 1} \alpha_j \varphi^{*(1,3)}_{ij}  \text{ in } \bs{L}^q_{\sigma}(\omega)
		\end{equation}
		\nin with the same constants $\alpha, \dots, \alpha_{\ell_i - 1}$ in both expressions.
	\end{proof}
	\nin \uline{Claim: Statement \eqref{C.15} is false.} By contradiction, suppose that both linear combinations in \eqref{C.15} hold true. Next, define the $(d+1)$-vector $\ds \bs{\Phi}^* \equiv \{ \bs{\varphi}^*, \psi^* \}$ as in \eqref{4.34} (or \eqref{C.5}). Then, in view of \eqref{C.15}, we now obtain
	\begin{equation}\label{C.16}
	\bs{\varphi}^{*(1,3)} \equiv \left\{ \varphi^{*(1)}, \varphi^{*(3)} \right\} \equiv 0 \text{ in } \omega, \quad \partial_{\nu} \psi^*|_{\wti{\Gamma}} \equiv 0.
	\end{equation}
	\nin (in place of \eqref{4.35} or \eqref{C.6}). As in Theorem \ref{Thm-4.1} (or Theorem \ref{Thm-C.1}) the $\ds \bs{\Phi}^* \equiv \{ \bs{\varphi}^*, \psi^* \}$ is an eigenvector of $\BA_q^*$ and thus satisfies the corresponding PDE-version - that is problem (\hyperref[4.36]{4.36a-d}), augmented this time with the over-determined conditions in \eqref{C.16}. Next, invoke $\varphi^{*(3)} \equiv 0$ in $\omega$ from \eqref{C.16} to obtain, as this the proof of Theorem \ref{Thm-4.1}, the over-determined problem \eqref{4.37} for $\psi^*$ in $\omega$, with the conclusion that
	\begin{equation}\label{C.17}
		\psi^* \equiv 0 \text{ in } \omega,
	\end{equation}
	\nin as in \eqref{4.36} (or \eqref{C.7}). Next, the divergence condition \eqref{4.36c} yields in the view of \eqref{C.16}: $div \ \bs{\varphi}^* = \varphi^{*(1)}_{x_1} + \varphi^{*(2)}_{x_2} + \varphi^{*(3)}_{x_3} = \varphi^{*(2)}_{x_2} = 0$ in $\omega$. Hence $\varphi^{*(2)}(x_1, x_2, x_3) \equiv c(x_1, x_3)$ in $\omega$, where $c(x_1, x_3)$ is a function constant w.r.t. $x_2$ and depending only on $x_1$ and $x_3$ in $\omega$. Next, let $\ds P = \{ x_1(P), x_2(P), x_3(P) \}$ be an arbitrary point of $\omega$. Consider the plane $\pi_P$ passing through the point $P$ and parallel to the $\{x_1, x_3\}$-coordinate plane. As the point $\{ x_1, x_2(P), x_3 \}$ of $\omega$ runs over the plane $\pi_P$, the value $\ds \varphi^{*(2)}(x_1, x_2(P), x_3) = c(x_1, x_3)$ is independent of $x_2(P)$, as long as such plane $\pi_P$ intersects $\omega$. By definition of the small $\omega$ supported by $\wti{\Gamma}$, there is a non-empty open subset $\wti{\omega} \subset \omega$, where this happens: for all points $P$ in $\wti{\omega}$, the plane $\pi_P$ remains in $\wti{\omega}$ and hits the boundary $\wti{\Gamma}$, where condition \eqref{4.36d} applies for $\ds \varphi^{*(2)}|_{\Gamma} = 0$. Thus $\ds \varphi^{*(2)} \equiv 0$ in $\wti{\omega} \subset \omega$. Recalling \eqref{C.16} we conclude that
	\begin{equation}\label{C.18}
	\bs{\varphi}^* = \left\{ \varphi^{*(1)}, \varphi^{*(2)}, \varphi^{*(3)} \right\} \equiv 0 \text{ in } \wti{\omega} \subset \omega, \text{ along with } \psi^* \equiv 0 \text{ in } \wti{\omega} \text{ by } \eqref{C.17}.
	\end{equation}
	\nin We can then apply \cite[Theorem 5 for $\wti{\omega}$]{TW.1} and conclude that
	\begin{equation}\label{C.19}
	\bs{\varphi}^* = \left\{ \varphi^{*(1)}, \varphi^{*(2)}, \varphi^{*(3)} \right\} \equiv 0 \text{ in } \Omega, \  \psi^* \equiv 0 \text{ in } \Omega, \ p \equiv \text{ const in } \Omega.   
	\end{equation}
	\nin The rest of the proof proceeds as in Theorem \ref{Thm-4.1} following \eqref{4.40}. Theorem \ref{Thm-C.2} is established. \\
	\end{appendices}

	\nin The authors wish to thank the referee for much appreciated comments and suggestions.

\end{document}